\pdfoutput=1
\documentclass[11pt]{article}

\usepackage[utf8]{inputenc}
\usepackage[pdftex]{graphicx,xcolor}
\usepackage{makeidx}
\usepackage[unicode,bookmarksnumbered]{hyperref}
\makeatletter
\@ifpackagelater{hyperref}{2012/07/24}{
\hypersetup{pdftex,colorlinks=true,allcolors=blue,psdextra}
}{
\hypersetup{pdftex,colorlinks=true,allcolors=blue}
}
\makeatother
\usepackage{array,bookmark,chngcntr,enumitem,float,fullpage,stmaryrd,standalone}
\usepackage[nottoc]{tocbibind}
\usepackage{amsmath,amssymb,amsthm,mathtools,bm,tikz-cd}
\floatstyle{plain}
\restylefloat{table}
\counterwithin{table}{section}
\usepackage[capitalize]{cleveref}

\usepackage{mymacros}

\makeindex

\usetikzlibrary{calc}

\newcommand*\defn{\textbf}
\DeclareMathOperator\Mod{Mod}
\DeclareMathOperator\Str{Str}
\DeclareMathOperator\Age{Age}
\newcommand*\op{\mathrm{op}}
\mathlig{|=}{\models}
\makeatletter
\ifdefined\tikzcdset
\tikzcdset{models/.code={\tikzcdset{double line}\pgfsetarrows{cm bar[width=+0pt 4]-}}}
\else
\pgfarrowsdeclare{models |}{models |}
{
  \pgfmathsetlength{\pgfutil@tempdima}{.5\pgflinewidth-.5*\pgfinnerlinewidth}
  \pgfarrowsleftextend{-\pgfutil@tempdima}
  \pgfarrowsrightextend{\pgfutil@tempdima}
}
{
  \pgfmathsetlength{\pgfutil@tempdimb}{.5\pgflinewidth-.5*\pgfinnerlinewidth}
  \pgfutil@tempdima\pgflinewidth
  \pgfsetlinewidth{\pgfutil@tempdimb}
  \pgfsetdash{}{+0pt}
  \pgfsetroundcap
  \pgfpathmoveto{\pgfqpoint{0pt}{-2\pgfutil@tempdima}}
  \pgfpathlineto{\pgfqpoint{0pt}{2\pgfutil@tempdima}}
  \pgfusepathqstroke
}
\tikzset{/tikz/commutative diagrams/models/.code={\pgfkeys{/tikz/commutative diagrams/double line,/tikz/arrows=models |-}}}
\fi
\makeatother

\let\displaystyle\textstyle

\begin{document}

\title{Structurable equivalence relations}
\author{Ruiyuan Chen\footnote{Research partially supported by NSERC PGS D} \ and Alexander S. Kechris\footnote{Research partially
supported by NSF Grants DMS-0968710 and DMS-1464475}}
\date{}
\maketitle

\begin{abstract}
For a class $\@K$ of countable relational structures, a countable Borel equivalence relation $E$ is said to be $\@K$-structurable if there is a Borel way to put a structure in $\@K$ on each $E$-equivalence class.  We study in this paper the global structure of the classes of $\@K$-structurable equivalence relations for various $\@K$.  We show that $\@K$-structurability interacts well with several kinds of Borel homomorphisms and reductions commonly used in the classification of countable Borel equivalence relations.  We consider the poset of classes of $\@K$-structurable equivalence relations for various $\@K$, under inclusion, and show that it is a distributive lattice; this implies that the Borel reducibility preordering among countable Borel equivalence relations contains a large sublattice.  Finally, we consider the effect on $\@K$-structurability of various model-theoretic properties of $\@K$.  In particular, we characterize the $\@K$ such that every $\@K$-structurable equivalence relation is smooth, answering a question of Marks.
\end{abstract}

\tableofcontents

\section{Introduction}
\label{sec:intro}

\noindent \textbf{(A)} A countable Borel equivalence relation on a standard Borel
space $X$ is a Borel equivalence relation $E\subseteq X^2$ with the
property that every equivalence class $[x]_E$, $x\in X$, is countable. We denote by $\@E$ the class of countable Borel equivalence relations.
Over the last 25 years there has been an extensive study of countable
Borel equivalence relations and their connection with group actions and
ergodic theory.  An important aspect of this work
 is an understanding of the kind
of countable (first-order) structures that can be assigned in a
uniform Borel way to each class of a given equivalence relation.  
This is made precise in
the following definitions; see \cite{JKL}, Section 2.5. 

Let $L=\{R_i\mid i\in I\}$ be a countable relational language, where
$R_i$ has arity $n_i$, and $\@K$ a class of countable
structures in $L$ closed under isomorphism.  Let $E$ be a countable Borel equivalence
relation on a standard Borel space $X$.  An \defn{$L$-structure} on
$E$ is a Borel structure ${\#A}=(X,R^{\#A}_i
)_{i\in I}$ of $L$ with universe $X$ (i.e., each $R^\#A_i\subseteq
X^{n_i}$ is Borel) such that for $i\in I$ and $x_1,\dots ,
x_{n_i}\in X$, $R^\#A_i(x_1,\dots ,x_{n_i})\implies x_1\mathrel{E}x_2\mathrel{E}\dotsb
\mathrel{E}x_{n_i}$.  Then each $E$-class $C$ is the universe of the countable
$L$-structure $\#A|C$.
If for all such $C, \#A|C\in\@K $, we say that $\#A$ is a \defn{$\@K$-structure} on $E$.  Finally if $E$ admits a $\@K$-structure, we say 
that $E$ is \defn{$\@K$-structurable}.

Many important classes of countable Borel equivalence relations can be
described as the $\@K$-structurable relations for appropriate $\@K$.  For
example, the hyperfinite equivalence relations are exactly
the $\@K$-structurable relations, where $\@K$ is the class of 
linear orderings embeddable in $\#Z$.  The treeable equivalence relations
are the $\@K$-structurable relations, where $\@K$ is the class of
countable trees (connected acyclic graphs). The equivalence relations generated by a free Borel action of a countable group $\Gamma$ are the $\@K$-structurable relations, where $\@K$ is the class of
structures corresponding to free transitive $\Gamma$-actions.
The equivalence relations admitting no invariant probability Borel measure are the $\@K$-structurable relations, where $L = \{R,S\}$, $R$ unary and $S$ binary, and $\@K$ consists of all countably infinite structures $\#A = (A, R^\#A , S^\#A)$, with $R^\#A$ an infinite, co-infinite subset of $A$ and $S^\#A$ the graph of a bijection between $A$ and $R^\#A$.

For $L=\{R_i\mid i\in I\}$ as before and countable set $X$, we denote by $\Mod_X(L)$ the standard Borel space of countable
$L$-structures with universe $X$.  Clearly every countable $L$-structure is isomorphic to some 
$\#A\in \Mod_X(L)$, for $X\in \{1,2,\dots , \#N\}$.  Given a class $\@K$ of countable
$L$-structures, closed under isomorphism, we say that $\@K$ is
Borel if $\@K\cap \Mod_X(L)$ is Borel in $\Mod_X(L)$, for each countable set $X$.  We are
interested in Borel classes $\@K$ in this paper.  For any $L_{\omega_1\omega}$-sentence $\sigma$, the class of countable models of $\sigma$ is Borel. By a classical theorem of Lopez-Escobar \cite{LE}, every Borel class $\@K$ of $L$-structures is of this form, for some $L_{\omega_1 \omega}$-sentence $\sigma$. We will also refer to such $\sigma$ as a theory.

Adopting this model-theoretic point of view, given a theory $\sigma$ and a countable Borel equivalence relation $E$, we put 
\[
E\models\sigma
\]
if $E$ is $\@K$-structurable, where $\@K$ is the class of countable models of $\sigma$, and we say that $E$ is \defn{$\sigma$-structurable} if $E\models\sigma$. 
We denote by $\@E_\sigma\subseteq \@E$ the class of $\sigma$-structurable countable Borel equivalence relations. Finally we say that a class $\@C$ of countable Borel equivalence relations is \defn{elementary} if it is of the form $\@E_\sigma$, for some $\sigma$ (which \defn{axiomatizes} $\@C$). In some sense the main goal of this paper is to study the global structure of elementary classes.

First we characterize which classes of countable Borel equivalence relations are elementary. We need to review some standard concepts from the theory of Borel equivalence relations.  Given equivalence
relations $E,F$ on standard Borel spaces $X,Y$, resp., a 
\defn{Borel homomorphism} of $E$ to $F$ is a Borel map $f\colon X\to Y$ with
$x\mathrel{E}y\implies f(x)\mathrel{F}f(y)$. We denote this by $f\colon E\to_B  F$. If moreover $f$ is such that all restrictions $f|[x]_E \colon [x]_E \to [f(x)]_F$ are bijective, we say that $f$ is a \defn{class-bijective homomorphism}, in symbols $f\colon E\to^{cb}_B  F$. If such $f$ exists we also write $E\to^{cb}_B  F$. We similarly define the notion of \defn{class-injective homomorphism}, in symbols $\to^{ci}_B$.
A 
\defn{Borel reduction} of $E$ to $F$ is a Borel map $f\colon X\to Y$ with
$x\mathrel{E}y\iff f(x)\mathrel{F}f(y)$. We denote this by $f\colon E\leq_B  F$. If $f$ is also injective, it is called
a \defn{Borel embedding}, in symbols $f\colon E \sqsubseteq_B F$.  If there is a Borel reduction of $E$ to $F$
we write $E\leq_BF$ and if there is a Borel embedding we write
$E \sqsubseteq_B F$.  An \defn{invariant Borel embedding} is a Borel embedding $f$ as above with $f(X)$ $F$-invariant. We use the notation $f\colon E \sqsubseteq^i_B F$ and  $E \sqsubseteq^i_B F$ for these notions.
 By the usual
Schroeder-Bernstein argument, $E\sqsubseteq^i_BF \ {\rm and} \ F\sqsubseteq^i_B
E\iff E\cong_BF$, where $\cong_B$ is Borel isomorphism.

Kechris-Solecki-Todorcevic \cite[7.1]{KST} proved a universality result for theories of graphs, which was then extended to arbitrary theories by Miller; see Corollary \ref{thm:einftysigma}.

\begin{theorem}[Kechris-Solecki-Todorcevic, Miller]\label{thm:11}
For every theory $\sigma$, there is an invariantly universal $\sigma$-structurable countable Borel equivalence relation $E_{\infty\sigma}$, i.e., $E_{\infty\sigma} |= \sigma$, and $F \sqle_B^i E_{\infty\sigma}$ for any other $F |= \sigma$.
\end{theorem}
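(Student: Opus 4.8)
The plan is to construct $E_{\infty\sigma}$ as a single "universal" $\sigma$-structurable equivalence relation into which every other $\sigma$-structurable relation invariantly embeds. The natural candidate is a relation built on the space of all possible ways to put a $\sigma$-model on the orbits of a known universal countable Borel equivalence relation. Concretely, recall that there is an invariantly universal countable Borel equivalence relation $E_\infty$ (for instance the shift equivalence relation of a free action of the free group $F_\infty$, or the universal relation from \cite{KST}); the idea is to take a standard Borel space $Y$ whose points code pairs consisting of a point $x$ in the universal space together with a $\sigma$-structure on the $E_\infty$-class of $x$, and let $E_{\infty\sigma}$ be the induced equivalence relation on $Y$ that remembers the $E_\infty$-class while forgetting (up to class-wise isomorphism) the extra structure. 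By construction this $E_{\infty\sigma}$ carries a tautological $\sigma$-structure, so $E_{\infty\sigma}\models\sigma$.

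First I would make precise the relevant "space of structures." Using the Lopez-Escobar theorem cited above, fix an $L_{\omega_1\omega}$-sentence $\sigma$ axiomatizing $\@K$, so that $\@K\cap\Mod_X(L)$ is Borel for each countable $X$. Then I would take the universal relation $E_\infty$ on $X_\infty$ and form the Borel space of pairs $(x,\#A)$ where $\#A$ is an $L$-structure with universe $[x]_{E_\infty}$ satisfying $\sigma$ on that class; this is a standard Borel space because the satisfaction of an $L_{\omega_1\omega}$-sentence on a countable structure is Borel in the code for the structure. The projection $(x,\#A)\mapsto x$ is a Borel homomorphism, and I would define $E_{\infty\sigma}$ so that this projection is class-bijective, i.e. $E_{\infty\sigma}$ identifies $(x,\#A)$ with $(x',\#A')$ exactly when $x\mathrel{E_\infty}x'$. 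The second coordinate then assembles into a canonical Borel $\@K$-structure on $E_{\infty\sigma}$, witnessing $E_{\infty\sigma}\models\sigma$.

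Next I would verify invariant universality. Suppose $F\models\sigma$ on a space $Z$, witnessed by a Borel $\@K$-structure $\#B$ on $F$. Since $E_\infty$ is invariantly universal, there is an invariant Borel embedding $g\colon F\sqsubseteq^i_B E_\infty$. Transporting $\#B$ along the class-wise bijections furnished by $g$ gives, for each $F$-class $C$, a $\sigma$-model on the image $E_\infty$-class; pairing $g$ with this transported structure yields a Borel map $z\mapsto (g(z),\text{transported }\#B)$ into $Y$. I would check that this map is an invariant Borel embedding of $F$ into $E_{\infty\sigma}$: injectivity and the reduction property follow from those of $g$ together with the fact that $E_{\infty\sigma}$ refines along the $E_\infty$-class, and invariance of the image follows from invariance of $g(Z)$ and the fact that every $\sigma$-structure on an $E_\infty$-class in the image is accounted for.

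The main obstacle I expect is Borel uniformity: one must arrange the "space of pairs $(x,\#A)$" and the structure-transport map so that all choices are made in a genuinely Borel (not merely class-by-class) fashion, and one must handle the bookkeeping of identifying the universe of $\#A$ with the countable set $[x]_{E_\infty}$ via a Borel selection of enumerations of classes. This is exactly where a uniform Borel enumeration of the countable $E_\infty$-classes (available since $E_\infty$ is countable Borel) and the Borel-ness of the satisfaction relation for $L_{\omega_1\omega}$-sentences are needed; assembling these into a single standard Borel space and checking that $E_{\infty\sigma}$ is itself a \emph{countable} Borel equivalence relation (each class being a single $E_\infty$-class' worth of structures, collapsed) is the technical heart of the argument. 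I anticipate that the cleaner route, which I would ultimately adopt, is to cite the universality of $E_\infty$ and reduce the whole construction to transporting a Borel $\@K$-structure along an invariant embedding, thereby isolating the only delicate point as the Borel measurability of this transport.
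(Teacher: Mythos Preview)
Your overall strategy matches the paper's: build $E_{\infty\sigma}$ on the space of pairs $(x,\#A)$ with $x$ in the domain of $E_\infty$ and $\#A$ a $\sigma$-model on $[x]_{E_\infty}$, equip it with the tautological $\sigma$-structure, and embed any $\sigma$-structured $F$ by first invariantly embedding into $E_\infty$ and then pairing with the transported structure. The Borelness issue you flag (replacing $\Mod_{[x]_{E_\infty}}(\sigma)$ by $\Mod_{\#N}(\sigma)$ via a Borel family of enumerations of classes) is exactly the technical point the paper handles.

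However, your stated definition of the equivalence relation is wrong, and the confusion propagates through the sketch. You write that $E_{\infty\sigma}$ identifies $(x,\#A)$ with $(x',\#A')$ ``exactly when $x\mathrel{E_\infty}x'$,'' and earlier that it ``remembers the $E_\infty$-class while forgetting \dots\ the extra structure.'' With that definition a single $E_{\infty\sigma}$-class would be $[x]_{E_\infty}\times\Mod_{[x]_{E_\infty}}(\sigma)$, which is typically uncountable, and there would be no well-defined tautological $\sigma$-structure on it, since different points of the same class carry different second coordinates. This directly contradicts your own requirements that the projection be class-bijective and that a canonical $\sigma$-structure exist.

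The correct relation, and the one the paper uses, is $(x,\#A)\mathrel{E_{\infty\sigma}}(x',\#A')$ iff $x\mathrel{E_\infty}x'$ \emph{and} $\#A=\#A'$ (as structures on the common class $[x]_{E_\infty}=[x']_{E_\infty}$). Then each $E_{\infty\sigma}$-class is a single copy of an $E_\infty$-class, the projection is class-bijective, and the second coordinate genuinely gives a $\sigma$-structure. Your invariance check also needs adjustment: the image of $z\mapsto(g(z),\text{transported }\#B)$ is $E_{\infty\sigma}$-invariant not because ``every $\sigma$-structure on an $E_\infty$-class in the image is accounted for'' (only one specific structure is), but because $g$ has $E_\infty$-invariant image and the transported structure is constant along each $F$-class, so the image is a union of full $E_{\infty\sigma}$-classes. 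With these fixes your argument is the paper's.
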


Clearly $E_{\infty\sigma}$ is uniquely determined up to Borel isomorphism. In fact in Theorem \ref{thm:esigma} we formulate a ``relative" version of this result and its proof that allows us to capture more information. 

Next we note that clearly every elementary class is closed downwards under class-bijective Borel homomorphisms. We now have the following characterization of elementary classes (see Corollary \ref{thm:elem-char}).

\begin{theorem}\label{thm:12}
A class $\@C \subseteq \@E$ of countable Borel equivalence relations is elementary iff it is (downwards-)closed under class-bijective Borel homomorphisms and contains an invariantly universal element $E \in \@C$.
\end{theorem}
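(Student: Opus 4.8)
The plan is to prove the two directions separately, using Theorem \ref{thm:11} for the forward implication and (the relative version of) the universality theorem for the converse.

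For the forward direction, suppose $\@C = \@E_\sigma$ is elementary. Downward closure under $\to^{cb}_B$ is exactly the remark noted just above the statement: if $E_0 \models \sigma$ is witnessed by a $\@K$-structure $\#A$ and $f\colon F \to^{cb}_B E_0$, then pulling $\#A$ back along $f$ (declaring $R_i(\bar x)$ to hold iff $R_i^{\#A}(f(\bar x))$ does) yields a Borel structure on $F$ whose restriction to each $F$-class is, via the bijection $f$, isomorphic to a member of $\@K$; hence $F \models \sigma$. That $\@E_\sigma$ contains an invariantly universal element is precisely Theorem \ref{thm:11}, which provides $E_{\infty\sigma} \in \@E_\sigma$ with $F \sqsubseteq^i_B E_{\infty\sigma}$ for every $F \models \sigma$.

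For the converse, let $\@C$ be closed under $\to^{cb}_B$ with invariantly universal element $E_0 \in \@C$. The first step is to describe $\@C$ concretely. Since an invariant embedding is in particular a class-bijective homomorphism, invariant universality gives $F \in \@C \Rightarrow F \to^{cb}_B E_0$; conversely $F \to^{cb}_B E_0$ together with $E_0 \in \@C$ and closure gives $F \in \@C$. Thus
\[
  \@C = \{\, F \in \@E : F \to^{cb}_B E_0 \,\} = \{\, F \in \@E : F \sqsubseteq^i_B E_0 \,\},
\]
the second equality holding because each displayed condition characterizes membership in $\@C$. In particular, for this specific $E_0$ the relations $\to^{cb}_B E_0$ and $\sqsubseteq^i_B E_0$ cut out the same class. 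Note also that, combining Theorem \ref{thm:11} with downward closure, one has for every theory the identity $\@E_\sigma = \{F : F \to^{cb}_B E_{\infty\sigma}\}$, since $F \models \sigma \Rightarrow F \sqsubseteq^i_B E_{\infty\sigma} \Rightarrow F \to^{cb}_B E_{\infty\sigma}$, while $F \to^{cb}_B E_{\infty\sigma} \Rightarrow F \models \sigma$ by downward closure.

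The second step is to exhibit a theory $\sigma$ with $E_0 \models \sigma$ for which $E_0$ is itself invariantly universal among $\sigma$-structurable relations, so that $E_0 \cong_B E_{\infty\sigma}$ and hence $\@E_\sigma = \{F : F \sqsubseteq^i_B E_0\} = \@C$. Using the Feldman--Moore theorem, write $E_0$ as the orbit equivalence relation of a Borel action of a countable group $G$, and let $\#A_0$ be the induced Borel structure in the language $L = \{\dot g : g \in G\}$ of unary function symbols interpreted by the action, so that each $E_0$-class becomes a transitive $G$-set. Let $\sigma$ be an $L_{\omega_1\omega}$-sentence whose countable models are exactly the isomorphic copies of the class-structures $\#A_0 \mid C$; then $E_0 \models \sigma$ by construction, and by the forward-direction closure $\@C = \{F : F \to^{cb}_B E_0\} \subseteq \@E_\sigma$. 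It remains to prove the reverse inclusion $\@E_\sigma \subseteq \@C$, i.e.\ that every $F \models \sigma$ satisfies $F \sqsubseteq^i_B E_0$.

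The main obstacle is exactly this last inclusion: from an abstract Borel $\sigma$-structure on $F$ one must manufacture an honest invariant Borel embedding into $E_0$, not merely a class-by-class isomorphism of $G$-set types. This is the content of the relative universality statement, Theorem \ref{thm:esigma}, and I would reduce the claim to it: the hypothesis that $E_0$ is invariantly universal for the $\to^{cb}_B$-closed class $\@C$ is precisely what lets one recognize $(E_0,\#A_0)$ as an invariantly universal $\sigma$-structured relation, giving $E_0 \cong_B E_{\infty\sigma}$ and thus $\@E_\sigma = \@C$. Two subsidiary points need care along the way: ensuring the model class of $\sigma$ is Borel (if necessary enriching $\#A_0$ so that the relevant set of stabilizer types is Borel, invoking Lopez--Escobar as in the excerpt to produce the axiomatizing $\sigma$), and checking that the passage from a $\sigma$-structure on $F$ to an embedding into $E_0$ is uniform and Borel; both are where the bookkeeping of Theorem \ref{thm:esigma} is essential.
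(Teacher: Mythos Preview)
Your forward direction is fine and matches the paper. The converse, however, has a genuine gap that your final paragraph does not close.

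The theory $\sigma$ you build via Feldman--Moore records only the \emph{isomorphism type} of each class as a transitive $G$-set. But ``$F \to_B^{cb} E_0$'' requires an actual Borel map into the underlying space of $E_0$, not just a class-by-class matching of abstract $G$-set types. Many $E_0$-classes may be isomorphic as $G$-sets (e.g., all of them, if the action is free), so a $\sigma$-structure on $F$ carries no information about \emph{which} $E_0$-class, let alone which specific point, a given $F$-class should map to. Your appeal to Theorem~\ref{thm:esigma} is circular: that theorem gives you $E_{\infty\sigma}$ and tells you $E_{\infty\sigma} \in \@E_\sigma$, but to conclude $E_{\infty\sigma} \sqsubseteq_B^i E_0$ you would need $E_{\infty\sigma} \in \@C$, which presupposes $\@E_\sigma \subseteq \@C$ --- exactly the inclusion you are trying to prove. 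The sentence ``the hypothesis that $E_0$ is invariantly universal for $\@C$ is precisely what lets one recognize $(E_0,\#A_0)$ as an invariantly universal $\sigma$-structured relation'' asserts the conclusion without argument.

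The paper's route is to construct a different theory $\sigma_{E_0}$ whose language consists of unary predicates $R_0, R_1, \dotsc$ and whose models on a set $Y$ literally encode a map $Y \to 2^{\#N} \supseteq X$ (via $y \mapsto (i \mapsto R_i(y))$); the sentence $\sigma_{E_0}$ then asserts that this encoded map is a bijection from the universe onto some $E_0$-class. With this $\sigma_{E_0}$, a Borel $\sigma_{E_0}$-structure on $F$ \emph{is} a class-bijective Borel homomorphism $F \to_B^{cb} E_0$, so $\@E_{\sigma_{E_0}} = \{F : F \to_B^{cb} E_0\} = \@C$ immediately. The missing idea in your proposal is that the axiomatizing theory must encode the \emph{target point}, not merely the orbit type.
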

Examples of non-elementary classes include the class of non-smooth countable Borel equivalence relations (a countable Borel equivalence relation is \defn{smooth} if it admits a Borel transversal), the class of equivalence relations admitting an invariant Borel probability measure, and the class of equivalence relations generated by a free action of \emph{some} countable group. More generally, nontrivial unions of elementary classes are never elementary (see \cref{thm:union-nonelem}).

Next we show that every $E\in \@E$ is contained in a (unique) smallest (under inclusion) elementary class (see \cref{thm:elem-cone}).

\begin{theorem}\label{thm:13}
For every $E \in \@E$, there is a smallest elementary class containing $E$, namely $\@E_E := \{F \in \@E \mid F ->_B^{cb} E\}$.
\end{theorem}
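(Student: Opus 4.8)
The plan is to verify the two easy containments by hand and then to identify $\@E_E$ with an elementary class by writing down an explicit axiomatizing theory; once that identification is in place, elementarity holds by definition and minimality is immediate. First, $E \in \@E_E$ because the identity map witnesses $E \to^{cb}_B E$. Second, $\@E_E$ is contained in every elementary class $\@C$ with $E \in \@C$: such a $\@C$ is closed downwards under class-bijective Borel homomorphisms, so from $F \to^{cb}_B E$ and $E \in \@C$ we obtain $F \in \@C$. Hence, as soon as $\@E_E$ is shown to be elementary, it is automatically the smallest elementary class containing $E$.

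The heart of the matter is to produce a theory $\sigma_E$ with $\@E_{\sigma_E} = \@E_E$. I would build a canonical structure on $E$ as follows. By Feldman--Moore, fix Borel bijections $(g_m)_m$ of $X$ whose graphs generate $E$, and fix a sequence $(B_n)_n$ of Borel subsets of $X$ that separates points of $X$. Let $L$ have binary symbols $G_m$ and unary symbols $P_n$, and let $\#A_E$ be the Borel $L$-structure on $X$ with $G_m^{\#A_E} = \operatorname{graph}(g_m)$ and $P_n^{\#A_E} = B_n$; since $\operatorname{graph}(g_m) \subseteq E$, this is an $L$-structure on $E$. The key consequence of separation is that the itinerary map $\iota\colon x \mapsto \{n : x \in B_n\}$ is a Borel injection of $X$ into $2^{\#N}$, hence a Borel isomorphism onto its Borel image; in particular every class structure $\#A_E|C$ is rigid, because any automorphism preserves all $P_n$, hence all itineraries, hence fixes every point. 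Let $\sigma_E$ axiomatize the isomorphism-closure $\@K_E$ of $\{\#A_E|C : C \in X/E\}$. I would check that $\@K_E$ is Borel: a countable $L$-structure $\#M$ lies in $\@K_E$ iff its points have pairwise-distinct itineraries all lying in $\iota(X)$ and, transporting points of $\#M$ into $X$ via $\iota^{-1}$, the $G_m$ match the $g_m$ and the image is a full $E$-class; these are Borel conditions in $\#M$ precisely because $\iota^{-1}$ is Borel and the structures involved are rigid.

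Now I prove $\@E_{\sigma_E} = \@E_E$. If $F \in \@E_E$, witnessed by $f\colon F \to^{cb}_B E$, then pulling back $\#A_E$ along $f$ yields a Borel $L$-structure on $F$ each of whose classes is isomorphic to some $\#A_E|C$, so $F \models \sigma_E$. Conversely, suppose $F \models \sigma_E$ via a Borel $L$-structure $\#B$ on $Y$. Define $f\colon Y \to X$ by sending $y$ to the unique point of $X$ with the same $(P_n)$-itinerary as $y$; this point exists because the class of $y$ is isomorphic to some $\#A_E|C$, so $y$'s itinerary lies in $\iota(X)$, and $f = \iota^{-1}\circ(\text{itinerary})$ is Borel. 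Since any isomorphism of $\#B|[y]_F$ onto $\#A_E|C$ preserves itineraries, $f$ restricted to each $F$-class coincides with that isomorphism, and is therefore a bijection onto the full $E$-class $C$; thus $f\colon F \to^{cb}_B E$ and $F \in \@E_E$. This establishes $\@E_E = \@E_{\sigma_E}$, so $\@E_E$ is elementary; combined with the first paragraph it is the smallest elementary class containing $E$. (Alternatively, one may route through \cref{thm:12}: $\@E_E$ is closed under class-bijective homomorphisms by composition, and by \cref{thm:11} it contains the invariantly universal element $E_{\infty\sigma_E}$.)

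The step I expect to be the main obstacle is the converse inclusion $\@E_{\sigma_E}\subseteq\@E_E$ together with the Borelness of $\@K_E$, since both require encoding $E$ finely enough that each class structure pins down its own points inside $X$. The naive encoding recording only the action of the generators $g_m$ is too coarse: it would enlarge $\@E_E$ to the class of all relations carrying that action type and would lose the dependence on the specific $E$. It is exactly the point-separating predicates $(P_n)$ — which render the class structures rigid and permit Borel reconstruction of the homomorphism into the specific space $X$ via itineraries — that make the identification $\@E_E = \@E_{\sigma_E}$ go through.
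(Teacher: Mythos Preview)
Your proof is correct and follows essentially the same approach as the paper (\cref{thm:sigmae} and \cref{thm:elem-cone}): encode points of $X$ via separating unary predicates so that a $\sigma_E$-structure on $F$ amounts to a class-bijective map $F \to_B^{cb} E$ read off from itineraries. The only differences are cosmetic --- the paper works with the unary predicates alone (your binary $G_m$ are redundant once the $P_n$ separate points) and verifies Borelness of the class of models by exhibiting it as the image of a Borel injection from $\{\text{bijections } I \to (\text{some } E\text{-class})\} \subseteq X^I$ rather than by checking conditions directly.
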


Many classes of countable Borel equivalence relations that have been extensively studied, like hyperfinite or treeable ones, are closed (downwards) under Borel reduction. It turns out that every elementary class is contained in a (unique) smallest (under inclusion) elementary class closed under Borel reduction (see \cref{thm:red-elem}).

\begin{theorem}\label{thm:14}
For every elementary class $\@C$, there is a smallest elementary class containing $\@C$ and closed under Borel reducibility, namely $\@C^r:= \{F\in\@E\mid \exists E \in \@C(F\leq_B E)\}$.
\end{theorem}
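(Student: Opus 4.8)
The plan is to verify the three defining features of $\@C^r$ and then establish elementariness through \cref{thm:12}. That $\@C\subseteq\@C^r$ is immediate from $E\leq_B E$, and $\@C^r$ is closed under $\leq_B$ by transitivity of Borel reducibility. For minimality, if $\@D$ is any elementary class with $\@C\subseteq\@D$ that is closed under $\leq_B$, then every $F\in\@C^r$ satisfies $F\leq_B E$ for some $E\in\@C\subseteq\@D$, whence $F\in\@D$; so once $\@C^r$ is known to be elementary it is automatically the smallest such class. Everything therefore reduces to showing that $\@C^r$ is elementary. Fixing a theory $\sigma$ axiomatizing $\@C$ and the invariantly universal $E_{\infty\sigma}$ of \cref{thm:11}, I would first record the simplification
\[
\@C^r=\{F\in\@E\mid F\leq_B E_{\infty\sigma}\},
\]
which holds because every $E\in\@C$ satisfies $E\sqsubseteq^i_B E_{\infty\sigma}$ and hence $E\leq_B E_{\infty\sigma}$. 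By \cref{thm:12} it then suffices to check that this class is closed under class-bijective Borel homomorphisms and contains an invariantly universal element.

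For closure under $\to^{cb}_B$, suppose $h\colon F\to^{cb}_B G$ with $G\leq_B E_{\infty\sigma}$. Set $\tilde G:=h^{-1}(G)=\{(x,y)\mid h(x)\mathrel{G}h(y)\}$, a countable Borel equivalence relation with $F\subseteq\tilde G$; by construction $h$ itself witnesses $\tilde G\leq_B G$, so $\tilde G\leq_B E_{\infty\sigma}$ via some reduction $r$. Each $\tilde G$-class is a disjoint union of the (countably many) $F$-classes it contains, so there is a Borel map $t$, constant on $F$-classes and injective on the $F$-classes lying within a single $\tilde G$-class, coding them by naturals. Then $x\mapsto(r(x),t(x))$ is a Borel reduction of $F$ into $E_{\infty\sigma}\times\Delta_{\#N}$, the disjoint union of countably many copies of $E_{\infty\sigma}$: the forward implication uses $F\subseteq\tilde G$, and the reverse uses that $r$ reduces $\tilde G$ together with injectivity of $t$ on $F$-classes inside a $\tilde G$-class. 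Finally, the classes of $E_{\infty\sigma}\times\Delta_{\#N}$ are just copies of $E_{\infty\sigma}$-classes and so again carry a $\sigma$-structure, giving $E_{\infty\sigma}\times\Delta_{\#N}\models\sigma$, hence $E_{\infty\sigma}\times\Delta_{\#N}\sqsubseteq^i_B E_{\infty\sigma}$ and therefore $F\leq_B E_{\infty\sigma}$, i.e.\ $F\in\@C^r$.

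The remaining and main point is to produce an invariantly universal element of $\@C^r$, i.e.\ a single $U\leq_B E_{\infty\sigma}$ with $F\sqsubseteq^i_B U$ for every $F\leq_B E_{\infty\sigma}$. This is the genuine obstacle, since a Borel reduction is a global, class-level notion: it may be badly non-injective on classes and need not be surjective onto target classes, so it cannot be recorded class-by-class in the way structurability demands, and the naive ``pullback'' theory (structures of the form $\phi^{*}\#A$ with $\#A\models\sigma$) is only analytic, not visibly Borel. To circumvent this I would invoke the relative form of the universality theorem, \cref{thm:esigma}: applied to reductions into $E_{\infty\sigma}$ it yields a Borel invariantly universal target that absorbs every reduction $F\leq_B E_{\infty\sigma}$ as an invariant embedding while remaining itself reducible to $E_{\infty\sigma}$, the absorption $E_{\infty\sigma}\times\Delta_{\#N}\sqsubseteq^i_B E_{\infty\sigma}$ from the previous step handling the class-size and non-surjectivity mismatches. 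With such a $U$ in hand, both hypotheses of \cref{thm:12} are satisfied, so $\@C^r$ is elementary, completing the proof.
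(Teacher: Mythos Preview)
Your closure argument contains a genuine error. Given a class-bijective $h\colon F\to_B^{cb} G$, the pullback $\tilde G := h^{-1}(G)$ need \emph{not} be a countable Borel equivalence relation: a single $G$-class can be hit by uncountably many $F$-classes (e.g., $h=\pi_2\colon\Delta_{\#R}\times E_0\to_B^{cb} E_0$ gives $\tilde G=I_{\#R}\times E_0$). Your claim that each $\tilde G$-class contains only countably many $F$-classes, and hence your coding map $t$ into $\#N$, both fail in general.

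Your construction of the invariantly universal element is also not a proof. \Cref{thm:esigma} produces, for a theory $\tau$, a universal $\tau$-structured equivalence relation via class-bijective maps; it says nothing about absorbing \emph{reductions}. You correctly identify the obstacle---a reduction cannot be recorded class-by-class---and then assert that \cref{thm:esigma} circumvents it, without saying how.

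The paper's route supplies the missing idea on both fronts simultaneously. It introduces \emph{smooth homomorphisms} ($f\colon E\to_B^{sm} F$ meaning $E\cap\ker f$ is smooth) as the ``local'' analog of reductions, and proves a factorization lemma (\cref{thm:smh-factor}): every smooth homomorphism factors as a reduction followed by a class-bijective map. This immediately gives $\@C^r=\@C^{smh}$ for elementary $\@C$, hence closure under $\to_B^{cb}$ (since class-bijective maps are smooth and smooth maps compose). Crucially, a smooth homomorphism to $E$ \emph{can} be encoded locally: the sentence $\sigma^{smh}_E$ records a map $f$ to $E$ together with a transversal of $F\cap\ker f$ (\cref{thm:smh-elem}). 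Then $\@C^r=\@C^{smh}=\{F\mid F\to_B^{sm} E_{\infty\sigma}\}=\@E_{\sigma^{smh}_{E_{\infty\sigma}}}$ is elementary, and \cref{thm:einftysigma} supplies the invariantly universal element.
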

We call an elementary class closed under reduction an \defn{elementary reducibility class}. In analogy with \cref{thm:12}, we have the following characterization of elementary reducibility classes (see \cref{thm:elemred-char}). Below by a \defn{smooth Borel homomorphism} of $E\in\@E$ into $F\in \@E$ we mean a Borel homomorphism for which the preimage of any point is smooth for $E$.

\begin{theorem}\label{thm:15}
A class $\@C \subseteq \@E$ is an elementary reducibility class iff it is closed (downward) under smooth Borel homomorphisms and contains an invariantly universal element $E \in \@C$.
\end{theorem}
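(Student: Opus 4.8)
The plan is to prove Theorem \ref{thm:15} by analogy with the characterization of elementary classes in Theorem \ref{thm:12}, replacing class-bijective homomorphisms with smooth Borel homomorphisms throughout. One direction is the easy one: I would first show that every elementary reducibility class $\@C$ is closed under smooth Borel homomorphisms and contains an invariantly universal element. The invariantly universal element is immediate, since $\@C = \@D^r$ for some elementary class $\@D$, and the invariantly universal element $E_{\infty\sigma}$ of $\@D$ (guaranteed by \cref{thm:11}) lies in $\@C \supseteq \@D$ and remains invariantly universal for $\@C$: any $F \in \@C$ satisfies $F \leq_B E$ for some $E \in \@D$, hence $F \leq_B E \sqsubseteq_B^i E_{\infty\sigma}$, and after absorbing the reduction I expect $F \sqsubseteq_B^i E_{\infty\sigma}$. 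For closure under smooth homomorphisms, I would decompose a smooth Borel homomorphism: if $f \colon F \to_B E$ has all point-preimages smooth for $F$, the idea is to combine the $\@C$-structure on $E$ with the smoothness data on the fibers to manufacture a reduction (or a class-bijective homomorphism into some reduct) witnessing $F \in \@C$. The key observation here should be that a smooth homomorphism factors, up to the relevant equivalences, through a reduction, so closure under reduction plus closure under the ``smooth part'' suffices.

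For the reverse direction, suppose $\@C$ is closed under smooth Borel homomorphisms and contains an invariantly universal $E \in \@C$. I would first argue that $\@C$ is in particular closed under class-bijective homomorphisms, since every class-bijective homomorphism is a fortiori smooth (each point-preimage is a single point, hence trivially smooth for $F$). Combined with the presence of an invariantly universal element, \cref{thm:12} then yields that $\@C$ is elementary, say $\@C = \@E_\sigma$. It remains to show $\@C$ is closed under Borel reducibility, i.e.\ $\@C = \@C^r$. Since $\@C \subseteq \@C^r$ always holds and $\@C^r$ is by \cref{thm:14} the smallest elementary class containing $\@C$ that is reduction-closed, I instead need the opposite inclusion: that closure under smooth homomorphisms already forces closure under $\leq_B$.

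The crux, then, is to show that a Borel reduction $F \leq_B E$ with $E \in \@C$ can be upgraded to (or replaced by) a smooth Borel homomorphism into some member of $\@C$, so that the smooth-homomorphism closure hypothesis delivers $F \in \@C$. The natural strategy is: given a Borel reduction $g \colon F \leq_B E$, the map $g$ is itself a Borel homomorphism $F \to_B E$; I must check its point-preimages are smooth for $F$. Because $g$ is a reduction, $g^{-1}([y]_E)$ is a single $F$-class when it is nonempty, so $g^{-1}(\{y\})$ is a subset of one $F$-class — a countable set — and the restriction of $F$ to any single class is smooth. Hence $g$ is automatically a smooth Borel homomorphism from $F$ into $E \in \@C$, and closure under smooth homomorphisms gives $F \in \@C$ directly. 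I expect the main obstacle to be the careful verification that ``point-preimage smooth'' is exactly the right fiber condition to make this argument go through uniformly in a Borel manner, and in matching the definitions so that the one-point-fiber observation legitimately certifies smoothness of the preimage restricted to $F$; the bookkeeping around invariant embeddings versus reductions in the universality step is where I would be most careful.
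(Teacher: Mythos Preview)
Your overall strategy matches the paper's: use that class-bijective maps and reductions are both smooth homomorphisms, and that smooth homomorphisms factor as a reduction followed by a class-bijective map (the paper's \cref{thm:smh-factor}). The reverse direction is fine as you've written it.

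There is, however, a genuine error in your forward direction. You claim that if $\@C = \@D^r$ with $\@D$ elementary, then the invariantly universal element $E_{\infty\sigma}$ of $\@D$ is also invariantly universal in $\@C$, because from $F \le_B E \sqle_B^i E_{\infty\sigma}$ you ``expect'' $F \sqle_B^i E_{\infty\sigma}$. This is false in general: $\le_B$ does not upgrade to $\sqle_B^i$. Concretely, take $\@D = \@E_{E_0}$, the aperiodic hyperfinite equivalence relations, whose invariantly universal element is $\Delta_\#R \times E_0$. Then $\@C = \@D^r$ is the class of all hyperfinite equivalence relations, which contains $I_2$; but $I_2 \not\sqle_B^i \Delta_\#R \times E_0$ since every class of the latter is infinite. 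The invariantly universal element of $\@C$ is strictly larger (it must include a piece with classes of every finite size).

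The fix is much simpler than your detour: by definition an elementary reducibility class is in particular an \emph{elementary} class, so it already has an invariantly universal element by \cref{thm:11}. You never need to identify it with anything coming from $\@D$. Closure under smooth homomorphisms then follows from $\@C = \@C^r = \@C^{smh}$ (\cref{thm:red-elem}), or equivalently from your factorization idea: a smooth homomorphism factors as a reduction followed by a class-bijective map (\cref{thm:smh-factor}), and $\@C$ is closed under both.
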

We note that as a corollary of the proof of \cref{thm:14} every elementary reducibility class is also closed downward under class-injective Borel homomorphisms. Hjorth-Kechris \cite[D.3]{HK} proved (in our terminology and notation) that every $\@C^r$ ($\@C$ elementary) is closed under $\subseteq$, i.e., containment of equivalence relations on the same space.  Since containment is a class-injective homomorphism (namely the identity), \cref{thm:14} generalizes this.

We also prove analogous results for Borel embeddability instead of Borel reducibility (see \cref{thm:emb-elem}).

For each countably infinite group $\Gamma$ denote by $\@E_\Gamma$ the elementary class of equivalence relations induced by free Borel actions of $\Gamma$. Its invariantly universal element is the equivalence relation induced by the free part of the shift action of $\Gamma$ on $\#R^\Gamma$. For trivial reasons this is not closed under Borel reducibility, so let $\@E^*_\Gamma$ be the elementary class of all equivalence relations whose aperiodic part is in $\@E_\Gamma$. Then we have the following characterization (see \cref {thm:freeact}).
\begin{theorem}\label{thm:16}
Let $\Gamma$ be a countably infinite group. Then the following are equivalent:
\begin{enumerate}
\item[(i)] $\@E^*_\Gamma$ is an elementary reducibility class.
\item[(ii)] $\Gamma$ is amenable.
\end{enumerate}
\end{theorem}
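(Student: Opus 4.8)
The plan is to first reduce the statement to a single closure property. Since $\@E^*_\Gamma$ is elementary, \cref{thm:14} shows that it is an elementary reducibility class if and only if it coincides with its own reducibility closure, i.e.\ if and only if it is already closed downward under $\leq_B$. So it suffices to prove that $\@E^*_\Gamma$ is closed under Borel reducibility exactly when $\Gamma$ is amenable. Throughout, write $F_\Gamma$ for the invariantly universal element of $\@E_\Gamma$, namely the restriction to its free part of the shift $\Gamma$-action on $\#R^\Gamma$, and recall that $E \in \@E^*_\Gamma$ means precisely that the aperiodic part of $E$ lies in $\@E_\Gamma$.

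For the direction $\lnot(ii)\Rightarrow\lnot(i)$, suppose $\Gamma$ is not amenable. As $\Gamma$ is infinite, $F_\Gamma$ is not smooth, so by the Glimm--Effros dichotomy (Harrington--Kechris--Louveau) we have $E_0 \leq_B F_\Gamma$, where $E_0$ denotes eventual equality on $2^{\#N}$. If $\@E^*_\Gamma$ were closed under $\leq_B$, then from $F_\Gamma \in \@E_\Gamma \subseteq \@E^*_\Gamma$ we would obtain $E_0 \in \@E^*_\Gamma$; since $E_0$ is aperiodic, this gives $E_0 \in \@E_\Gamma$, i.e.\ $E_0$ is generated by a free Borel $\Gamma$-action. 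But $E_0$ carries an invariant Borel probability measure $\mu$, and every generating $\gamma$-map lies in the full group $[E_0]$ and so preserves $\mu$; we would thus have a free, measure-preserving $\Gamma$-action whose orbit equivalence relation $E_0$ is hyperfinite, hence $\mu$-amenable. By Zimmer's theorem this forces $\Gamma$ to be amenable, a contradiction. Hence $\@E^*_\Gamma$ is not closed under $\leq_B$, which proves $(i)\Rightarrow(ii)$.

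For the converse $(ii)\Rightarrow(i)$, assume $\Gamma$ is amenable and take a reduction $f\colon F \leq_B E$ with $E \in \@E^*_\Gamma$; I must show $F \in \@E^*_\Gamma$, i.e.\ that the aperiodic part of $F$ admits a generating free Borel $\Gamma$-action. The key observation is that $\ker f = \{(x,y) : f(x) = f(y)\}$ is a smooth subequivalence relation of $F$ (it is reduced to equality by $f$ itself), and that the induced map $F/\ker f \to^{ci}_B E$ is a class-injective homomorphism. Thus $F$ is a ``smooth-by-class-injective'' extension of a relation sitting class-injectively inside $E$. Transporting the free $\Gamma$-action on the aperiodic part of $E$, each relevant class of $F$ is presented, modulo a smooth fibering, as a subset of a Borel $\Gamma$-torsor. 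The pieces that collapse to smooth or finite parts carry a class-invariant Borel coordinatization outright and so are handled directly for any $\Gamma$; the remaining task is to re-coordinatize each genuinely aperiodic class as a full $\Gamma$-torsor, in a Borel and class-invariant way, using the inherited ambient $\Gamma$-geometry.

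This re-coordinatization is the main obstacle, and it is exactly where amenability is indispensable. One cannot simply Borel-enumerate each aperiodic class as $\Gamma$, for a class-invariant Borel enumeration would produce a Borel transversal and force smoothness. Instead I would invoke the Borel Ornstein--Weiss / F\o lner quasi-tiling machinery for amenable $\Gamma$ to retile each class by $\Gamma$ Borel-uniformly from its inherited torsor structure, yielding the desired generating free action and hence $F \in \@E^*_\Gamma$. The failure of such retiling for non-amenable $\Gamma$ is precisely the paradoxicality underlying the Zimmer obstruction used above, so the two directions meet at the same phenomenon. (Alternatively, once closure under $\leq_B$ is recast via \cref{thm:15} as closure under smooth Borel homomorphisms, the same amenable retiling argument applies verbatim to a smooth homomorphism in place of the reduction $f$.)
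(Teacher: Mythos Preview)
Your direction $(i)\Rightarrow(ii)$ is essentially the paper's argument: Glimm--Effros gives $E_0\sqle_B F_\Gamma$, closure forces $E_0\in\@E_\Gamma$, and a free measure-preserving $\Gamma$-action with hyperfinite orbit relation forces $\Gamma$ amenable.

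The direction $(ii)\Rightarrow(i)$, however, has a real gap. After your (correct) factorization through $\ker f$, each aperiodic class of $F/\ker f$ sits class-injectively inside a $\Gamma$-torsor, and you propose to ``retile each class by $\Gamma$ Borel-uniformly'' via F\o lner quasi-tiling. But the quasi-tiling machinery you invoke is measure-theoretic: it works almost everywhere with respect to a fixed invariant measure, not pointwise in a Borel way. A purely Borel Ornstein--Weiss theorem (that every Borel action of an amenable group has hyperfinite orbit equivalence) is a well-known open problem, and your sketch would essentially need it. There is no known Borel procedure that takes an arbitrary infinite subset of a $\Gamma$-torsor and uniformly produces a free transitive $\Gamma$-action on it, even for amenable $\Gamma$.

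The paper circumvents this by using measure theory indirectly rather than aiming for a direct Borel construction. The key lemma (\cref{thm:amenact-disjsum}) shows that if $E\le_B F$ with $F$ generated by a Borel action of an amenable $\Gamma$, then $E$ splits as a disjoint sum of a hyperfinite part and a compressible part. This is proved by running the \emph{measure-theoretic} Ornstein--Weiss theorem uniformly over the ergodic decomposition of $E$ (\cref{thm:ow-uniform}): one gets a Borel $E$-invariant set of full measure for every ergodic invariant measure on which the relation is hyperfinite, and the complement, having no invariant probability measure, is compressible by Nadkarni. The two pieces are then handled separately: the hyperfinite piece lies in $\@E^*_\Gamma$ because $E_0\sqle_B^i F(\Gamma,\#R)$ (Ornstein--Weiss plus the Dougherty--Jackson--Kechris classification), so $\@E^*_\Gamma$ contains all hyperfinite relations; and the compressible piece, via the compressibility results of \cref{thm:compress} (compressible and $\sqle_B$ implies $\sqle_B^i$), invariantly embeds back into $F$ and hence lies in $\@E_\Gamma$. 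The point is that compressibility absorbs exactly the null set on which the measure-theoretic argument says nothing.
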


We call equivalence relations of the form $E_{\infty \sigma}$ \defn{universally structurable}. Denote by $\@E_\infty\subseteq \@E$ the class of universally structurable equivalence relations. In view of \cref{thm:11}, an elementary class is uniquely determined by its invariantly universal such equivalence relation, and the poset of elementary classes under inclusion is isomorphic to the poset $(\@E_\infty/{\cong_B} , \sqsubseteq^i_B)$ of Borel isomorphism classes of universally structurable equivalence relations under invariant Borel embeddability. It turns out that this poset has desirable algebraic properties (see \cref{thm:lattice}).

\begin{theorem}\label{thm:17}
The poset $(\@E_\infty/{\cong_B}, {\sqle_B^i})$ is an $\omega_1$-complete, distributive lattice. 
Moreover, the inclusion $(\@E_\infty/{\cong_B}, {\sqle_B^i}) \subseteq (\@E/{\cong_B}, {\sqle_B^i})$ preserves (countable) meets and joins.
\end{theorem}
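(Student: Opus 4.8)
The plan is to transport everything to the isomorphic poset of elementary classes ordered by inclusion, where meets and joins have transparent logical descriptions, and only at the end translate back to $(\@E_\infty/{\cong_B},\sqsubseteq^i_B)$ and compare with the ambient poset $(\@E/{\cong_B},\sqsubseteq^i_B)$. First I would record two closure properties that do all the work. (a) Every elementary class $\@E_\sigma$ is closed under invariant restriction and under countable disjoint unions over an invariant Borel partition: if $X=\bigsqcup_n X_n$ with each $X_n$ invariant and each $E|X_n\models\sigma$, the witnessing structures combine into one, so $E\models\sigma$. (b) For theories $\sigma_n$ chosen in pairwise disjoint languages $L_n$ (always arrangeable by renaming), a single relation carries all the structures simultaneously iff it carries the combined structure over $\bigsqcup_n L_n$; hence $\bigcap_n\@E_{\sigma_n}=\@E_{\bigwedge_n\sigma_n}$, so countable intersections are elementary and give the meet.

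For the join I would show that the smallest elementary class containing $\bigcup_n\@E_{\sigma_n}$ is $\@E_{\bigvee_n\sigma_n}$: the inclusion $\supseteq$ holds since any model of some $\sigma_n$ extends trivially to a model of $\bigvee_n\sigma_n$; conversely, if $E\models\bigvee_n\sigma_n$, then partitioning $X$ into the invariant Borel pieces on which the $n$-th disjunct first holds expresses $E$ as a disjoint union of members of the $\@E_{\sigma_n}$, so $E$ lies in every elementary class containing them, by (a). This join argument tolerates overlapping languages, which matters for distributivity. With meets and joins identified as $\bigwedge$ and $\bigvee$ of theories, $\omega_1$-completeness (existence of countable meets and joins) is immediate, and distributivity becomes formal: choosing $\sigma,\tau,\rho$ in pairwise disjoint languages, the left side is $\@E_\sigma\wedge(\@E_\tau\vee\@E_\rho)=\@E_{\sigma\wedge(\tau\vee\rho)}$ and the right side is $(\@E_\sigma\wedge\@E_\tau)\vee(\@E_\sigma\wedge\@E_\rho)=\@E_{(\sigma\wedge\tau)\vee(\sigma\wedge\rho)}$, and the two sentences are logically equivalent in $L_{\omega_1\omega}$, hence have the same models on every universe and define the same class. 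Translating through the order isomorphism with $(\@E_\infty/{\cong_B},\sqsubseteq^i_B)$ (larger elementary class $\leftrightarrow$ larger invariantly universal element, via \cref{thm:11} and downward closure under class-bijective homomorphisms) yields that $(\@E_\infty/{\cong_B},\sqsubseteq^i_B)$ is an $\omega_1$-complete distributive lattice, with meet $E_{\infty(\bigwedge_n\sigma_n)}$ and join $E_{\infty(\bigvee_n\sigma_n)}$.

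For the ``moreover'' I would use the dictionary $F\sqsubseteq^i_B E_{\infty\sigma}\iff F\in\@E_\sigma$ (invariant universality of $E_{\infty\sigma}$ together with the fact that an invariant Borel embedding is a class-bijective homomorphism, under which $\@E_\sigma$ is downward closed). Meet preservation is then immediate: the lower bounds in $\@E$ of $\{E_{\infty\sigma_n}\}$ are exactly the members of $\bigcap_n\@E_{\sigma_n}$, whose $\sqsubseteq^i_B$-largest element is $E_{\infty(\bigwedge_n\sigma_n)}$ by universality, so the ambient meet agrees with the one computed in $\@E_\infty$. For joins I would first check $E_{\infty(\bigvee_n\sigma_n)}\cong_B\bigsqcup_n E_{\infty\sigma_n}$ (each embeds invariantly into the other by (a) and the disjunct-partition argument, then Schroeder--Bernstein), so that it suffices to prove the disjoint union is the least upper bound in all of $\@E$.

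The hard part is this last step, an absorption argument. The key sublemma is: if $E=E_{\infty\sigma}\sqsubseteq^i_B F$ then there is an invariant Borel $C$ with $F|C\cong_B E$ and $F|(Y\setminus C)\cong_B F$; this follows from self-absorption $E\cong_B E\sqcup E$ (itself a consequence of (a) plus universality) by splitting the given invariant copy of $E$ into two invariant halves each $\cong_B E$ and observing that one half glued to the complement reconstitutes $F$. Given an upper bound $F$ of $\{E_{\infty\sigma_n}\}$, I would then recursively carve pairwise disjoint invariant copies $C_n$ with $F|C_n\cong_B E_{\infty\sigma_n}$, at each stage applying the sublemma inside the fresh copy of $F$ that remains, so that $\bigsqcup_n E_{\infty\sigma_n}\cong_B F|\bigcup_n C_n\sqsubseteq^i_B F$. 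Verifying that this countable recursion can be carried out uniformly in a Borel fashion is where the real care is needed: the finite case is a one-line absorption from the sublemma, but the passage to countably many copies is what forces the recursive ``Hilbert-hotel'' construction rather than a single application.
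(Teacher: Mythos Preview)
Your approach is essentially the paper's: identify elementary classes with theories, realize meets as conjunctions over disjoint languages and joins as disjunctions, and prove join-preservation in the ambient poset via the idempotence/absorption (``Hilbert hotel'') argument. Your treatment of meet-preservation and of the identification $E_{\infty(\bigvee_n\sigma_n)}\cong_B\bigoplus_n E_{\infty\sigma_n}$ matches the paper's.

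There is, however, a real gap in your distributivity step. The statement requires both $\omega_1$-distributive laws over countable index sets, and you only verify $\wedge$ distributing over binary $\vee$. Your logical-equivalence trick does extend to $\sigma\wedge\bigvee_i\tau_i\equiv\bigvee_i(\sigma\wedge\tau_i)$, since as you note your join construction tolerates overlapping languages. But it does \emph{not} extend to the dual law. To compute $\bigwedge_i(\@E_\sigma\vee\@E_{\tau_i})=\bigwedge_i\@E_{\sigma\vee\tau_i}$ via your meet construction you must pass to disjoint languages, replacing $\sigma$ by fresh copies $\sigma^{(i)}$; and then $\bigwedge_i(\sigma^{(i)}\vee\tau_i)$ is genuinely \emph{not} logically equivalent to $\sigma\vee\bigwedge_i\tau_i$: a structure may satisfy $\sigma^{(1)}$ on one class, $\tau_2$ on another, $\sigma^{(3)}$ on a third, and so on. The two elementary classes do coincide, but proving this requires manipulating the structures, not just the syntax: given $\#A_i:E\models\sigma\vee\tau_i$ with invariant partitions $X=A_i\cup B_i$, one must assemble a single $\sigma$-structure on $\bigcup_i A_i$ (by refining to the disjoint invariant pieces $A_0,\,A_1\setminus A_0,\,\ldots$ and invoking your closure property (a)) together with a $\bigwedge_i\tau_i$-structure on $\bigcap_i B_i$. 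This is precisely the argument the paper writes out, and it is not a consequence of any sentence-level equivalence.
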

This has implications concerning the structure of the class of universally structurable equivalence relations under Borel reducibility. The order-theoretic structure of the poset $(\@E/{\sim_B}, {\le_B})$ of \emph{all} bireducibility classes under $\le_B$ is not well-understood, apart from that it is very complicated (by \cite{AK}).  The first general study of this structure was made only recently by Kechris-Macdonald in \cite{KMd}.  In particular, they pointed out that it was even unknown whether there exists \emph{any} pair of $\le_B$-incomparable $E, F \in \@E$ for which a $\le_B$-meet exists. However it turns out that the subposet
$(\@E_\infty/{\sim_B}, {\le_B})$ behaves quite well (see \cref{thm:lattice-red}).

\begin{theorem}\label{thm:18}
The poset of universally structurable bireducibility classes, under $\le_B$, $(\@E_\infty/{\sim_B}, {\le_B})$ is an $\omega_1$-complete, distributive lattice.  Moreover, the inclusion into the poset $(\@E/{\sim_B}, {\le_B})$ of all bireducibility classes, under $\le_B$, preserves (countable) meets and joins.
\end{theorem}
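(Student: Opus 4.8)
The plan is to transfer the lattice structure from \cref{thm:17} through the correspondence between universally structurable relations and elementary reducibility classes. First I would set up an order isomorphism between $(\@E_\infty/{\sim_B}, {\le_B})$ and the poset of elementary reducibility classes under inclusion. For universally structurable $E$ the class $\@E_E^r$ equals $\{F \in \@E \mid F \le_B E\}$: by \cref{thm:13} the invariantly universal element of $\@E_E$ is $E$ itself, so \cref{thm:14} gives $\@E_E^r = \{F \mid \exists G \in \@E_E,\ F \le_B G\} = \{F \mid F \le_B E\}$. Hence $E \mapsto \@E_E^r$ is well defined on $\sim_B$-classes and satisfies $E \le_B F \iff \@E_E^r \subseteq \@E_F^r$; conversely every elementary reducibility class $\@D$ equals $\@E_E^r$ for its invariantly universal (hence universally structurable) element $E$. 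So it suffices to prove that the elementary reducibility classes, as a sub-poset of the lattice $\@L$ of all elementary classes, form an $\omega_1$-complete distributive lattice, and then to analyze how the resulting meets and joins sit inside $(\@E/{\sim_B}, {\le_B})$.

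For the lattice structure I would note that $\@C \mapsto \@C^r$ is a closure operator on $\@L$: it is extensive, monotone, and idempotent by \cref{thm:14}, and stays in $\@L$ because $\@C^r$ is again elementary. Its fixed points are exactly the elementary reducibility classes. An $\@L$-meet (intersection) of reducibility classes is again a reducibility class, so meets are inherited; joins are computed as $r$ applied to the $\@L$-join. Since $\@L$ is, by the discussion preceding \cref{thm:17}, isomorphic to $(\@E_\infty/{\cong_B}, {\sqle_B^i})$ and hence an $\omega_1$-complete distributive lattice, countable meets and joins of fixed points exist, and the fixed-point poset is an $\omega_1$-complete lattice.

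Distributivity does not come for free, since the fixed points are not a sublattice of $\@L$ (the join is the $r$-closure of the $\@L$-join). Writing $\wedge$ for the inherited meet and $\vee^r = r \circ \vee$ for the join, the distributive law for fixed points reduces, using distributivity of $\@L$ (\cref{thm:17}), to the single identity $x \wedge r(w) = r(x \wedge w)$ for a reducibility class $x$ and $w \in \@L$. The inclusion $r(x \wedge w) \subseteq x \wedge r(w)$ is automatic; the reverse is the key step, and at the level of relations it reads: if $F \le_B E_{\infty\alpha}$ (with $\@E_\alpha^r = x$) and $F \le_B E_{\infty\delta}$ (with $\@E_\delta = w$) via a reduction $f$, then $F \le_B G$ for some $\delta$-structurable $G$ with $G \le_B E_{\infty\alpha}$. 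I would take $G := E_{\infty\delta}\restriction[\mathrm{im}\,f]$, the restriction to the $E_{\infty\delta}$-saturation of $\mathrm{im}\,f$ (a Borel set, as $f$ has countable fibers). Then $G \models \delta$ (restriction to an invariant Borel set) and $F \le_B G$; moreover $\mathrm{im}\,f$ is a complete section of $G$, so $G \sim_B E_{\infty\delta}\restriction\mathrm{im}\,f$, and inverting the surjective reduction $f$ by a Borel section (Luzin–Novikov) gives $E_{\infty\delta}\restriction\mathrm{im}\,f \le_B F \le_B E_{\infty\alpha}$, whence $G \le_B E_{\infty\alpha}$. The only ingredients are that $H \sim_B H\restriction S$ for every complete Borel section $S$, and that a surjective Borel reduction inverts.

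Finally I would treat the two preservation statements separately. Meets are easy: the meet $M$ of a countable family $\{E_i\}$ corresponds to $\bigcap_i \@E_{E_i}^r = \{H \in \@E \mid \forall i\,(H \le_B E_i)\}$, which is literally the set of all common $\le_B$-lower bounds of $\{E_i\}$ in $\@E$; since $\@E_M^r = \{H \mid H \le_B M\}$, $M$ is a genuine greatest lower bound in $(\@E/{\sim_B}, {\le_B})$. The join is where the real work lies. One first checks $E_{\infty(\bigvee_i \sigma_i)} \sim_B \bigoplus_i E_{\infty\sigma_i}$ (split a $\bigvee_i\sigma_i$-structurable relation into invariant pieces, each reducing to the corresponding $E_{\infty\sigma_i}$), so the join $J$ satisfies $J \sim_B \bigoplus_i E_i$, and preservation becomes the absorption lemma: if each universally structurable $E_i$ satisfies $E_i \le_B F$, then $\bigoplus_i E_i \le_B F$. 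This is the main obstacle: the content is the purely combinatorial task of realizing the countably many reductions $E_i \le_B F$ with pairwise $F$-class-disjoint images. I expect to handle it using the self-similarity $E_i \cong_B \#N \cdot E_i$ of universally structurable relations to free up room inside $F$ (reducing it, on the relevant aperiodic part of $F$, to $\#N \cdot F \le_B F$ together with a disjointification). Granting this absorption step, $J$ is a least upper bound in all of $\@E$, and the inclusion preserves countable joins.
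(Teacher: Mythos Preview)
Your route via the closure operator $\@C \mapsto \@C^r$ on the lattice of elementary classes is genuinely different from the paper's. The paper does not re-prove distributivity or the join-preservation for $\le_B$ from scratch; instead it restricts \cref{thm:17} to the compressible equivalence relations (immediate, since $\@E_c$ is $\sqle_B^i$-downward-closed and closed under $\bigoplus$) and then transports everything through the Dougherty--Jackson--Kechris order-isomorphism $(\@E_c/{\cong_B}, \sqle_B^i) \cong (\@E/{\sim_B}, \le_B)$ given by $E \mapsto E \times I_\#N$. All of the work is thus already contained in \cref{thm:17}.

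Your ``absorption lemma'' sketch is where the gap lies. You propose to reduce ``$E_i \le_B F$ for all $i$ implies $\bigoplus_i E_i \le_B F$'' to the statement $\#N \cdot F \le_B F$ on the relevant part of $F$. But $\#N \cdot F \le_B F$ is false in general for an arbitrary upper bound $F$ (already $\aleph_0 \cdot I_\#N \not\le_B I_\#N$; more substantively, the paper notes Thomas's examples of aperiodic $E$ with $E \times I_2 \not\sqle_B E$, see \cref{thm:ein-not-clop-intop}). The self-similarity $E_i \cong_B \aleph_0 \cdot E_i$ is on the wrong side: it lets you split the \emph{domain} of the reduction, whereas disjointification is a constraint on the \emph{target} $F$, over which you have no control. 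The paper's compressibility trick is exactly what makes this work: multiplying by $I_\#N$ turns each $E_i \le_B F$ into an invariant embedding $E_i \times I_\#N \sqle_B^i F \times I_\#N$ (\cref{thm:compress}), and then idempotence of the $E_i \times I_\#N$ feeds into the ``make room'' argument of \cref{thm:idemp-sum-join} inside $F \times I_\#N$. Without passing to compressibles (or something equivalent), your sketch does not supply a mechanism for disjointification in $F$.

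A secondary point: your identity $x \wedge r(w) = r(x \wedge w)$ yields only the law $x \wedge \bigvee^r_i y_i = \bigvee^r_i (x \wedge y_i)$. The dual $\omega_1$-distributive law $x \vee^r \bigwedge_i y_i = \bigwedge_i (x \vee^r y_i)$ unwinds to $r(\bigwedge_i w_i) = \bigwedge_i r(w_i)$ for $w_i = x \vee y_i$, which is a separate claim about $\@C \mapsto \@C^r$ that you have not addressed. The paper's transport-through-compressibles gets both laws for free from \cref{thm:17}.
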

Adapting the method of Adams-Kechris \cite{AK}, we also show that this poset is quite rich (see \cref{thm:pbr}).

\begin{theorem}\label{thm:19}
There is an order-embedding from the poset of Borel subsets of $\#R$ under inclusion into the poset $(\@E_\infty/{\sim_B}, {\le_B})$.
\end{theorem}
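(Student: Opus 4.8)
The plan is to adapt the Adams--Kechris construction \cite{AK} and then layer universal structurability on top of it, using the elementary-class machinery of \cref{thm:13} and \cref{thm:14}. First I would recall the output of \cite{AK} in the form we need: a Borel family $(E_s)_{s \in \#R}$ of countable Borel equivalence relations, realized as a single $E$ on a standard Borel space $X$ together with an $E$-invariant Borel map $\pi\colon X \to \#R$, where $E_s := E\restriction \pi^{-1}(s)$ is \emph{ergodic} and \emph{non-smooth} with respect to a Borel-assigned measure $\mu_s$, lies in a fixed Borel (e.g.\ treeable) class, and satisfies the rigidity that cocycle superrigidity provides: for $s \ne t$, every Borel homomorphism from $(E_s,\mu_s)$ into $E_t$ is \emph{trivial}, i.e.\ carries $\mu_s$-a.e.\ point into a single $E_t$-class. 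For a Borel set $A \subseteq \#R$, write $D_A := E\restriction \pi^{-1}(A)$ for the Borel disjoint union of the $E_s$, $s \in A$, and let $E_A$ be the invariantly universal element of the smallest elementary class $\@E_{D_A}$ containing $D_A$ (\cref{thm:13}); by construction $E_A \in \@E_\infty$. I claim $A \mapsto [E_A]_{\cong_B}$ is the desired order-embedding.

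For the easy direction, if $A \subseteq B$ then the inclusion $\pi^{-1}(A) \hookrightarrow \pi^{-1}(B)$ is an invariant embedding, so $D_A \sqsubseteq^i_B D_B$; since an invariant embedding is in particular class-bijective, $\@E_{D_A} \subseteq \@E_{D_B}$, and as $E_A \in \@E_{D_A} \subseteq \@E_{D_B}$ while $E_B$ is invariantly universal for $\@E_{D_B}$, we get $E_A \sqsubseteq^i_B E_B$, hence $E_A \le_B E_B$.

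For the converse, suppose $E_A \le_B E_B$ and fix $s \in A$. Since $E_s \sqsubseteq^i_B D_A \sqsubseteq^i_B E_A \le_B E_B$ and $E_B \in \@E_{D_B}$, \cref{thm:14} gives $E_s \in (\@E_{D_B})^r$, i.e.\ there is an $H$ with a reduction $f\colon E_s \le_B H$ and a class-bijective homomorphism $h\colon H \to^{cb}_B D_B$. Pushing $\mu_s$ forward yields $\nu := f_*\mu_s$ and $\lambda := h_*\nu$; because $f$ and $h$ are homomorphisms, preimages of invariant sets are invariant, so $(H,\nu)$ and $(D_B,\lambda)$ are ergodic. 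As $\pi$ is $D_B$-invariant, it is $\lambda$-a.e.\ equal to a fixed $t_0 \in B$, so $\lambda$ concentrates on the single ergodic relation $E_{t_0}$ and $g := h \circ f$ is a Borel homomorphism of $(E_s,\mu_s)$ into $E_{t_0}$. To conclude $s = t_0$, I rule out $g$ being trivial: were $g$ to map $\mu_s$-a.e.\ into one class $[z_0] \subseteq \pi^{-1}(t_0)$, then $h^{-1}([z_0])$ would be $H$-invariant and $\nu$-conull, and since $h$ is class-bijective onto the fixed countable set $[z_0]$, fixing $w_0 \in [z_0]$ makes $y \mapsto (h\restriction [y]_H)^{-1}(w_0)$ a Borel transversal of $H\restriction h^{-1}([z_0])$; thus $E_s$ would reduce to a smooth relation on a $\mu_s$-conull set, contradicting ergodicity and non-smoothness. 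Hence $g$ is nontrivial, the rigidity of the family forces $s = t_0 \in B$, and, $s \in A$ being arbitrary, $A \subseteq B$.

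The main obstacle is precisely this last step. Because an invariantly universal, hence universally structurable, $E_B$ is never ergodic, the classical Adams--Kechris argument (in which an invariant coordinate map is a.e.\ constant) cannot be applied to $E_B$ directly. The workaround above---propagating ergodicity of the single measure $\mu_s$ through the reduction-then-class-bijective factorization furnished by \cref{thm:14} so as to localize onto one superrigid component $E_{t_0}$, and then invoking non-smoothness to guarantee the localized homomorphism is nontrivial so that superrigidity applies---is where the genuine work lies. The points needing careful verification are that the factorization witnessing $E_s \in (\@E_{D_B})^r$ interacts correctly with the measures $\mu_s$, and that triviality of the composite really collapses $E_s$ to a smooth relation; granting the superrigid family from \cite{AK}, everything else is bookkeeping with \cref{thm:13} and \cref{thm:14}.
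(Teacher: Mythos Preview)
Your proposal is correct and follows essentially the same route as the paper's proof. The only cosmetic difference is that where you invoke \cref{thm:14} abstractly to produce a factorization $E_s \le_B H \to_B^{cb} D_B$, the paper simply takes $H = E_\infty \otimes D_B$ (your $E_B$) and uses its canonical projection $\pi_2$ to $D_B$; the ensuing ergodicity--localization--rigidity--smoothness-contradiction argument is identical (the paper phrases it as a proof by contradiction starting from $s \in A \setminus B$, whereas you argue the contrapositive, but the content is the same).
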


The combination of \cref{thm:18} and \cref{thm:19} answers the question mentioned in the paragraph following \cref{thm:17} by providing a large class of $\leq_B$-incomparable countable Borel equivalence relations for which $\le_B$-meets exist.

An important question concerning structurability is which properties of a theory $ \sigma$ yield properties of the corresponding elementary class $\@E_\sigma$. The next theorem provides the first instance of such a result. Marks \cite[end of Section~4.3]{M} asked (in our terminology) for a characterization of when the elementary class $\@E_{\sigma_\#A}$, where $\sigma_\#A$ is a Scott sentence of a countable structure, consists of smooth equivalence relations, or equivalently, when $E_{\infty\sigma_\#A}$ is smooth. We answer this question in full generality, i.e., for an arbitrary theory $\sigma$. Although this result belongs purely in the category of Borel equivalence relations, our proof uses ideas and results from topological dynamics and ergodic theory (see \cref{thm:einftys-smooth}).
\begin{theorem}\label{thm:110}
Let $\sigma$ be a theory.  The following are equivalent:
\begin{enumerate}
\item[(i)] $\@E_\sigma$ contains only smooth equivalence relations, i.e., $E_{\infty\sigma}$ is smooth.
\item[(ii)]  There is an $L_{\omega_1\omega}$-formula $\phi(x)$ which defines a finite nonempty subset in any countable model of $\sigma$.
\end{enumerate}
\end{theorem}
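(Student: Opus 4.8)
The plan is to prove the two implications separately. The implication (ii)$\Rightarrow$(i) is elementary, while (i)$\Rightarrow$(ii) is the substantive direction, and it is there that the input from topological dynamics and ergodic theory is needed. First I record that the two phrasings of (i) coincide: since $E_{\infty\sigma}$ is invariantly universal for $\sigma$ by \cref{thm:11}, every $F \models \sigma$ satisfies $F \sqsubseteq^i_B E_{\infty\sigma}$, hence $F \le_B E_{\infty\sigma}$, and smoothness is downward closed under $\le_B$; since also $E_{\infty\sigma} \models \sigma$, the class $\@E_\sigma$ consists of smooth relations exactly when its largest element $E_{\infty\sigma}$ is smooth.

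For (ii)$\Rightarrow$(i), let $\phi(x)$ be as in (ii) and let $E$ on $X$ carry a Borel $\sigma$-structure $\#A$. Since $L_{\omega_1\omega}$-satisfaction in a Borel structure is Borel (by induction on $\phi$, using a Lusin--Novikov enumeration of the countable $E$-classes to treat the countable conjunctions and the quantifiers), the set $P := \{x \in X \mid \#A|[x]_E \models \phi[x]\}$ is Borel, and by hypothesis $P$ meets every $E$-class in a finite nonempty set. Thus $E|P$ is a finite Borel equivalence relation, so it is smooth and admits a Borel transversal $T \subseteq P$. Because $P$ meets every $E$-class and $P \cap C$ is a single $(E|P)$-class for each $E$-class $C$, the set $T$ selects exactly one point from each $E$-class; hence $T$ is a Borel transversal for $E$ and $E$ is smooth. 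Applying this to $E = E_{\infty\sigma}$ yields (i).

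For (i)$\Rightarrow$(ii) I argue the contrapositive: assuming no formula as in (ii) exists, I produce a non-smooth $\sigma$-structurable relation, i.e.\ I show $E_{\infty\sigma}$ is non-smooth. It is convenient first to recast (ii) dynamically. On the standard Borel space of models of $\sigma$ with universe $\#N$ under the logic action of $S_\infty = \mathrm{Sym}(\#N)$, Scott analysis together with the Lopez--Escobar theorem \cite{LE} identifies the $L_{\omega_1\omega}$-definable subsets of a countable model with the $\mathrm{Aut}$-invariant ones, so that a formula $\phi$ as in (ii) is exactly the data of a Borel $S_\infty$-equivariant map $\#M \mapsto \phi^{\#M} \in [\#N]^{<\omega} \setminus \{\emptyset\}$ assigning to each model a finite nonempty subset of its universe. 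Hence the failure of (ii) says precisely that the logic action admits no Borel equivariant selection of a finite nonempty set of elements; a clean sufficient scenario to aim for is a single countable model $\#M \models \sigma$ all of whose $\mathrm{Aut}(\#M)$-orbits on the universe are infinite, though in general one must contend with the non-uniformity rather than the mere pointwise existence of finite orbits.

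The heart of the argument, and the step I expect to be the main obstacle, is to manufacture non-smoothness from this failure. The plan is to combine the explicit realization of $E_{\infty\sigma}$ underlying \cref{thm:esigma} with the descriptive theory of the logic action, and to exhibit inside $E_{\infty\sigma}$ a copy of the canonical non-smooth relation $E_0$. Concretely, I would use the infinite automorphism-orbits via an ergodic-theoretic input: either construct an invariant (or non-atomic ergodic quasi-invariant) probability measure on a suitable space of $\sigma$-structurings concentrated on models with infinite element-orbits---so that the resulting ``same-structure'' relation carries a non-atomic invariant measure and is therefore non-smooth---or build directly a continuous embedding of $E_0$ along a generic sequence within a single infinite orbit and invoke the Glimm--Effros dichotomy. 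Transporting any such non-smooth $\sigma$-structurable relation into $E_{\infty\sigma}$ via its invariant universality then shows $E_{\infty\sigma}$ is non-smooth. The delicate points are precisely the uniformity needed to pass from the combinatorial failure of (ii) to an honest invariant measure or $E_0$-embedding, and ensuring that the relation produced is genuinely a countable Borel equivalence relation whose classes are honest models of $\sigma$; this is exactly where the dynamical and measure-theoretic machinery is indispensable.
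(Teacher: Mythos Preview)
Your treatment of (ii)$\Rightarrow$(i) is correct and essentially the same as the paper's. The hard direction (i)$\Rightarrow$(ii), however, remains only a sketch, and is missing the two concrete ideas that actually make the argument go.

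First, you work with the $S_\infty$ logic action, but the orbit equivalence of $S_\infty$ is not countable, so this does not directly produce a countable Borel equivalence relation witnessing non-smoothness of $E_{\infty\sigma}$. The paper instead passes through an auxiliary countably infinite group $\Gamma$: it shows that $E_{\infty\sigma}$ is smooth iff for some (equivalently, every) such $\Gamma$, every free Borel $\Gamma$-action that is $\sigma$-structurable is smooth, and this in turn holds iff the $\Gamma$-action on $\Mod_\Gamma(\sigma)$ (by left translation inside $S_\Gamma$) is smooth with finite stabilizers. The point is that a $\sigma$-structured free $\Gamma$-action yields an equivariant map into $\Mod_\Gamma(\sigma)$, and conversely a skew product of a free Bernoulli $\Gamma$-action with $\Mod_\Gamma(\sigma)$ is $\sigma$-structurable; ergodicity/mixing of the Bernoulli factor then transfers non-smoothness.

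Second, and more seriously, your passage from ``no uniform formula picks a finite nonempty set'' to ``some concrete dynamical non-smoothness'' is left entirely open, and the suggestion to look for a model with all $\mathrm{Aut}$-orbits infinite is not the right invariant. The paper's bridge is a combinatorial property of a single model, the \emph{weak duplication property} (WDP): every finite substructure in every finite sublanguage can be duplicated disjointly. One proves two things. If some countable model of $\sigma$ has WDP (and, after Morleyization, $\Mod_\Gamma(\sigma)$ is $G_\delta$), then the $\Gamma$-action on $\Mod_\Gamma(\sigma)$ has a recurrent point, hence is non-smooth --- this is a Baire-category argument producing arbitrarily close distinct translates. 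If \emph{no} countable model of $\sigma$ has WDP, then the witnessing non-duplicable finite configurations form, in each model, an $n$-ary intersecting family for some $n$; a combinatorial lemma of Clemens--Conley--Miller then extracts from any such family a uniformly definable finite nonempty set, yielding the formula $\phi(x)$ required by (ii). Neither the WDP pivot nor the intersecting-family extraction appears in your plan, and without them the contrapositive does not close.
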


Along these lines an interesting question is to find out what theories $\sigma$ have the property that every aperiodic countable Borel equivalence relation is $\sigma$-structurable. A result that some particular $\sigma$ axiomatizes all aperiodic $E$ shows that every such $E \in \@E$ carries a certain type of structure, which can be useful in applications.  A typical example is the very useful Marker Lemma (see \cite[4.5.3]{BK}), which shows that every aperiodic $E$ admits a decreasing sequence of Borel complete sections $A_0 \supseteq A_1 \supseteq \dotsb$ with empty intersection.  This can be phrased as: every aperiodic countable Borel equivalence relation $E$ is $\sigma$-structurable, where $\sigma$ in the language $L = \{P_0, P_1, \dotsc\}$ asserts that each (unary) $P_i$ defines a nonempty subset, $P_0 \supseteq P_1 \supseteq \dotsb$, and $\bigcap_i P_i = \emptyset$.
 
A particular case is when $\sigma = \sigma_\#A$ is a Scott sentence of a countable structure. For convenience we say that $E$ is \defn{$\#A$-structurable} if $E$ is $\sigma_\#A$-structurable. Marks recently pointed out that the work of \cite{AFP} implies a very general condition under which this happens (see \cref{thm:einftya-univ}).
 
\begin{theorem}[Marks]\label{thm:111}
Let $\#A$ be a countable structure with trivial definable closure.  Then every aperiodic countable Borel equivalence relation is $\#A$-structurable.
\end{theorem}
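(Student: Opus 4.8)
My plan is to deduce this from the theorem of Ackerman--Freer--Patel \cite{AFP}, which is the engine of the argument: a countable structure $\#A$ in a countable language $L$ admits a $\mathrm{Sym}(\#N)$-invariant Borel probability measure $\mu$ on $\Mod_{\#N}(L)$ concentrated on the isomorphism class of $\#A$ exactly when $\#A$ has trivial definable closure. Thus trivial $\mathrm{dcl}$ hands us an \emph{invariant random structure} isomorphic to $\#A$; equivalently it encodes the condition that no element of $\#A$ is $L_{\omega_1\omega}$-definable over a finite tuple not containing it, so every finite partial configuration can be completed in more than one way. Note first that $\#A$ must be infinite, since a nonempty finite structure has nontrivial definable closure; hence its universe may be taken to be $\#N$, matching the countably infinite classes of an aperiodic relation.

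First I would reduce to a single relation. The class of aperiodic countable Borel equivalence relations is itself elementary: it is $\@E_{\sigma_\infty}$, where $\sigma_\infty$ is the $L_{\omega_1\omega}$-sentence in the empty language asserting that the universe is infinite, and by \cref{thm:11} it has an invariantly universal element $E^{ap}:=E_{\infty\sigma_\infty}$. Since $\@E_{\sigma_\#A}$ is elementary it is closed downward under class-bijective Borel homomorphisms (\cref{thm:12}), hence also under $\sqsubseteq^i_B$, because an invariant embedding $f\colon F \to E$ is a class-bijective homomorphism onto its (invariant) image, so that $[f(y)]_E = f([y]_F)$. Every aperiodic $F$ satisfies $F\sqsubseteq^i_B E^{ap}$, so each such $F$ will be $\#A$-structurable as soon as $E^{ap}$ is; it therefore suffices to put an $\#A$-structure on the single relation $E^{ap}$.

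To structure $E^{ap}$ I would realize the invariant random structure class by class. Each class $C$ of $E^{ap}$ is countably infinite, and transporting $\mu$ along any bijection $C\to\#N$ yields a probability measure on the $L$-structures on $C$; because $\mu$ is $\mathrm{Sym}(\#N)$-invariant this measure does not depend on the bijection, so we obtain a canonical Borel family $C\mapsto\mu_C$ of random structures on the classes, each concentrated on copies of $\#A$. The task is to convert this into one honest Borel $L$-structure $\#B$ on the whole space with $\#B|C\cong\#A$ for \emph{every} class. The natural attempt is to run a Borel realization of the \cite{AFP} sampling construction along each class: using a Borel injection $h$ of the space into $[0,1]$ to label each class injectively (this labeling is automatically consistent along classes, being the restriction of the single Borel function $h$), the real labels furnish the canonical choices at each stage, and trivial $\mathrm{dcl}$ is what guarantees that the sampling can always be continued and its output saturated to a copy of $\#A$.

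The main obstacle is exactly this last passage, from the \emph{measure-theoretic} (almost everywhere) structuring provided by \cite{AFP} to an \emph{everywhere-defined} Borel one on every class. The invariant measure only asserts that a $\mu_C$-random labeling of a class produces a copy of $\#A$; but one cannot in general Borel-select a random labeling in each class (doing so is obstructed, e.g.\ by ergodicity of natural invariant measures), so an a.e.\ realization does not descend to a Borel section. Overcoming this requires arranging the equivariant realization to succeed for \emph{every} injective labeling rather than for a random one, and it is precisely here that the full strength of trivial definable closure underlying the \cite{AFP} construction must be used, to drive a uniform construction whose output is isomorphic to $\#A$ regardless of the labeling fed in. Granting such a total equivariant realization, applying it with the labeling induced by $h$ yields the desired Borel $\#A$-structure on $E^{ap}$, and the reduction of the second paragraph then finishes the proof.
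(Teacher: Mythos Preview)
Your diagnosis of the situation is accurate: the Ackerman--Freer--Patel theorem is indeed the engine, and you correctly isolate the crux as passing from an invariant \emph{random} structure to a deterministic Borel one on every class. However, your proposal stops precisely at this crux. You write ``Granting such a total equivariant realization\ldots'' and then finish; but this is exactly the step that needs to be supplied, and nothing in your outline provides it. The invariant measure $\mu$ alone does not give a Borel section, as you yourself note, and the phrase ``the full strength of trivial definable closure\ldots must be used, to drive a uniform construction'' is a statement of hope rather than an argument.

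The paper's proof resolves this by extracting a stronger consequence from the AFP construction than the mere existence of an invariant measure. Namely (\cref{thm:afp-dense}), the Morleyized version of the AFP sampling procedure produces a single Borel $L$-structure $\#A'$ on the reals $\#R$ with the property that its restriction to \emph{any} countable dense subset of $\#R$ is isomorphic to $\#A$. This is the ``everywhere'' realization you were looking for, and it comes out of the explicit form of the AFP construction (each existential witness is realized either inside the given tuple or on an entire open interval), not from the invariant measure as such. With this in hand, the remaining work is to arrange, after discarding two smooth pieces, that every $E$-class maps bijectively to a countable dense subset of $\#R$; the paper does this by exploiting the topology of $2^\#N$ to build a Borel family of homeomorphisms $f_x : 2^\#N \to \overline{[x]_E}$ constant on classes, then composing with a continuous bijection $2^\#N \setminus Z \to \#R$. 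Pulling back $\#A'$ along these bijections gives the structure. Your reduction to $E^{ap}$ is fine but not needed; the argument works directly for arbitrary aperiodic $E$.
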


In particular (see \cref{thm:817}) the following Fraïssé structures can structure every aperiodic countable Borel equivalence relation: $(\#Q, <)$, the random graph, the random $K_n$-free graph (where $K_n$ is the complete graph on $n$ vertices), the random poset, and the rational Urysohn space.

Finally we mention two applications of the above results and ideas.  The first (see \cref{thm:pi1-meastr}) is a corollary of the proof of \cref{thm:110}.

\begin{theorem}
Let $\sigma$ be a consistent theory in a language $L$ such that the models of $\sigma$ form a closed subspace of $\Mod_\#N(L)$.  Then for any countably infinite group $\Gamma$, there is a free Borel action of $\Gamma$ which admits an invariant probability measure and is $\sigma$-structurable.
\end{theorem}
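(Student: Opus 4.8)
The plan is to reduce $\sigma$-structurability of a free action to the existence of an equivariant map into the space of models on $\Gamma$, produce a $\Gamma$-invariant probability measure on that space by exploiting the amenability of the full symmetric group, and then force freeness by taking a product with a Bernoulli action. First I would recall the standard reformulation of structurability for free actions: if $\Gamma$ acts freely and in a Borel way on $X$, then choosing a base point in each orbit identifies each class with $\Gamma$, and under this identification a Borel assignment of a model of $\sigma$ to each class corresponds exactly to a $\Gamma$-equivariant Borel map $f \colon X \to \Mod_\Gamma(\sigma)$, where $\Mod_\Gamma(\sigma) = \{\#A \in \Mod_\Gamma(L) \mid \#A \models \sigma\}$ carries the shift action and the equivariance encodes the change of base point. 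Thus it suffices to exhibit a free Borel action of $\Gamma$ carrying an invariant Borel probability measure together with a $\Gamma$-equivariant Borel map into $\Mod_\Gamma(\sigma)$.

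The heart of the argument is to produce a $\Gamma$-invariant probability measure on $K := \Mod_\Gamma(\sigma)$. Since the models of $\sigma$ form a closed subspace of $\Mod_\#N(L)$, transporting along a bijection $\#N \cong \Gamma$ shows that $K$ is a closed, hence compact, subset of the compact space $\Mod_\Gamma(L) = \prod_i 2^{\Gamma^{n_i}}$, and it is nonempty because $\sigma$ is consistent. The key point is that, as $\sigma$ is invariant under isomorphism, $K$ is invariant not merely under the shift but under the entire relabeling action of $\mathrm{Sym}(\Gamma) \cong S_\infty$, which acts continuously on $K$. Since $S_\infty$ is amenable as a topological group, its continuous action on the nonempty compact space $K$ admits an $S_\infty$-invariant Borel probability measure $\nu$; and since the shift action of $\Gamma$ is the restriction of the $\mathrm{Sym}(\Gamma)$-action along the left-regular embedding $\Gamma \hookrightarrow \mathrm{Sym}(\Gamma)$, the measure $\nu$ is in particular $\Gamma$-invariant.

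Finally I would fix freeness. Let $\Gamma \curvearrowright (Z, \rho)$ be a free, probability-measure-preserving action, for instance the free part of the Bernoulli shift on $[0,1]^\Gamma$, which is conull and invariant. Consider the diagonal action on $(K \times Z, \nu \times \rho)$: it preserves the probability measure $\nu \times \rho$ and is free because its $Z$-coordinate already is, while the projection $K \times Z \to K$ is a $\Gamma$-equivariant Borel map into $\Mod_\Gamma(\sigma)$. By the reformulation of the first paragraph, this free probability-measure-preserving action is $\sigma$-structurable, which is exactly the desired conclusion.

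The main obstacle is precisely the construction of the invariant measure in the second paragraph: for a general, possibly non-amenable, $\Gamma$, a nonempty closed shift-invariant subset of $\Mod_\Gamma(L)$ — that is, a subshift — need not carry any invariant measure at all. What rescues the argument is that $\sigma$ never refers to the group structure of $\Gamma$, so $K$ enjoys the far larger symmetry group $\mathrm{Sym}(\Gamma)$; the amenability of this topological group, together with the compactness of $K$ (which is where closedness of $\sigma$ is essential), yields an invariant measure for every $\Gamma$ simultaneously. I expect the only further points needing care are pinning down the precise reference for amenability of $S_\infty$ and checking that $\nu$ can be taken to be a genuine probability measure, which is guaranteed by the nonemptiness coming from consistency.
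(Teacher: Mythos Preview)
Your proposal is correct and follows essentially the same route as the paper. The paper's proof obtains an $S_\Gamma$-invariant (hence $\Gamma$-invariant) probability measure on the compact space $\Mod_\Gamma(\sigma)$ via amenability of $S_\Gamma$, then invokes an earlier lemma whose proof is precisely the product-with-Bernoulli argument you spell out directly in your third paragraph; so your argument is simply an unwinding of the paper's.
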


The second application is to a model-theoretic question that has nothing to do with equivalence relations. The concept of amenability of a structure in the next result (see \cref{thm:amenable-ntdc}) can be either the one in \cite[2.16(iii)]{JKL} or the one in \cite[3.4]{Kamt}.  This result was earlier proved by the authors by a different method (still using results of \cite{AFP}) but it can also be seen as a corollary of \cref{thm:111}.

\begin{theorem}
Let $\#A$ be a countably infinite amenable structure.  Then $\#A$ has non-trivial definable closure.
\end{theorem}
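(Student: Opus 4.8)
We prove the contrapositive: assuming $\#A$ has \emph{trivial} definable closure, we show $\#A$ is \emph{not} amenable. The plan is to run the model-theoretic hypothesis through \cref{thm:111} to force $\#A$ to structure an equivalence relation that amenability would forbid. The conceptual bridge between the two notions of ``amenable'' is the following, which I would extract from the cited definitions \cite[2.16(iii)]{JKL} and \cite[3.4]{Kamt}: a countably infinite structure $\#A$ is amenable iff the invariantly universal $\#A$-structurable relation $E_{\infty\sigma_\#A}$ is amenable, and hence (since amenability of countable Borel equivalence relations is downward closed under $\le_B$, and $F \le_B E_{\infty\sigma_\#A}$ for every $F \models \sigma_\#A$ by \cref{thm:11}) amenability of $\#A$ entails that \emph{every} $\#A$-structurable countable Borel equivalence relation is amenable. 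This reduces the theorem to exhibiting a single aperiodic, non-amenable, $\#A$-structurable $E$.

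The trivial-definable-closure hypothesis supplies structurable relations in abundance: by \cref{thm:111}, every aperiodic countable Borel equivalence relation is $\#A$-structurable. So it suffices to name one aperiodic countable Borel equivalence relation that is not amenable, and I would take $E$ to be the orbit equivalence relation of the free part of the shift action of the free group $F_2$ on $2^{F_2}$. This $E$ is aperiodic, because the action is free and $F_2$ is infinite, so every class is infinite; and it is non-amenable, because the shift preserves the product measure $\mu$, which is ergodic, and a free $\mu$-preserving action of a non-amenable group cannot have a $\mu$-amenable orbit equivalence relation (Connes--Feldman--Weiss/Zimmer), so $E$ fails to be $\mu$-amenable and a fortiori is not amenable. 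Combining these facts, $E$ is an aperiodic, non-amenable, $\#A$-structurable equivalence relation, which contradicts the amenability of $\#A$ via the bridge of the previous paragraph. Hence $\#A$ is not amenable, as required.

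The only genuinely delicate point is the first step, namely making precise that ``$\#A$ amenable'' forces every $\#A$-structurable relation to be amenable, since this is what transfers a purely model-theoretic hypothesis into the category of equivalence relations; but this is exactly the content of the cited characterizations of an amenable structure, so no new argument is needed there. Everything else is routine: the aperiodicity and non-amenability of the $F_2$-shift example are standard, as is the downward closure of amenability under Borel reducibility, and the main substance of the result is carried by \cref{thm:111}.
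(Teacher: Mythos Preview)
Your proof is correct and essentially identical to the paper's: both use that amenability of $\#A$ forces every $\#A$-structurable equivalence relation to be amenable (the paper cites \cite[2.18]{JKL} or \cite[2.6]{Kamt} directly rather than routing through $E_{\infty\sigma_\#A}$ and downward closure under $\le_B$), and then invoke \cref{thm:111} together with the existence of a non-amenable aperiodic countable Borel equivalence relation. The only cosmetic difference is that you argue the contrapositive and name $F(\#F_2,2)$ explicitly, whereas the paper argues directly and leaves the non-amenable witness implicit.
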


\medskip
\textbf{(B)} This paper is organized as follows: In \cref{sec:prelims} we review some basic background in the theory of Borel equivalence relations and model theory. In \cref{sec:struct-equiv} we introduce the concept of structurability of equivalence relations and discuss various examples.  In \cref{sec:univcons} we study the relationship between structurability and class-bijective homomorphisms, obtaining the tight correspondence given by \crefrange{thm:11}{thm:13}; we also apply structurability to describe a product construction (class-bijective or ``tensor'' product) between countable Borel equivalence relations.  In \cref{sec:reductions} we study the relationship between structurability and other kinds of homomorphisms, such as reductions; we also consider the relationship between reductions and compressible equivalence relations.  In \cref{sec:poset} we introduce some concepts from order theory convenient for describing the various posets of equivalence relations we are considering, and then study the poset $(\@E_\infty/{\cong_B}, {\sqle_B^i})$ of universally structurable equivalence relations (equivalently of elementary classes).  In \cref{sec:freeact} we consider the elementary class $\@E_\Gamma$ of equivalence relations induced by free actions of a countable group $\Gamma$.  In \cref{sec:struct-logic} we consider relationships between model-theoretic properties of a theory $\sigma$ and the corresponding elementary class $\@E_\sigma$.  Finally, in \cref{sec:open-problems} we list several open problems related to structurability.

The appendices contain some generalizations, alternative points of view, and miscellaneous results which are tangential to the main subject of this paper.  In \cref{sec:fiber} we introduce fiber spaces (previously considered in \cite{G} and \cite{HK}), discuss their category-theoretic aspects, and discuss generalizations to that context of several concepts appearing in the body of this paper (including structurability and the various kinds of homomorphisms).  In \cref{sec:interp} we introduce a categorical structure on the class of all theories which interacts well with structurability.  Finally, in \cref{sec:sdlat} we prove a lattice-theoretic result generalizing the well-known Loomis-Sikorski representation theorem for $\sigma$-Boolean algebras, which can be applied in particular to the lattice $(\@E_\infty/{\cong_B}, {\sqle_B^i})$ considered in \cref{sec:lattice}.

\medskip
{\it Acknowledgments.} We would like to thank Andrew Marks for many valuable suggestions and for allowing us to include \cref{thm:111} in this paper.
We are also grateful to  Anush Tserunyan for extensive comments and suggestions, including spotting and correcting an error in the original version of \cref{lm:modgamma-smooth}.

\section{Preliminaries}
\label{sec:prelims}

For general model theory, see \cite{Hod}.  For general classical descriptive set theory, see \cite{Kcdst}.

\subsection{Theories and structures}
\label{sec:theories-struct}

By a \defn{language}, we will always mean a countable first-order relational language, i.e., a countable set $L = \{R_i \mid i \in I\}$ of relation symbols, where each $R_i$ has an associated arity $n_i \ge 1$.  The only logic we will consider is the infinitary logic $L_{\omega_1\omega}$.  We use letters like $\phi, \psi$ for formulas, and $\sigma, \tau$ for sentences.  By a \defn{theory}, we mean a pair $(L, \sigma)$ where $L$ is a language and $\sigma$ is an $L_{\omega_1\omega}$-sentence.  When $L$ is clear from context, we will often write $\sigma$ instead of $(L, \sigma)$.

Let $L$ be a language.  By an \defn{$L$-structure}, we mean in the usual sense of first-order logic, i.e., a tuple $\#A = (X, R^\#A)_{R \in L}$ where $X$ is a set and for each $n$-ary relation symbol $R \in L$, $R^\#A \subseteq X^n$ is an $n$-ary relation on $X$.  Then as usual, for each formula $\phi(x_1, \dotsc, x_n) \in L_{\omega_1\omega}$ with $n$ free variables, we have an interpretation $\phi^\#A \subseteq X^n$ as an $n$-ary relation on $X$.

We write $\Mod_X(L)$ for the set of $L$-structures with universe $X$.  More generally, for a theory $(L, \sigma)$, we write $\Mod_X(\sigma)$ for the set of models of $\sigma$ with universe $X$.  When $X$ is countable, we equip $\Mod_X(\sigma)$ with its usual standard Borel structure (see e.g., \cite[16.C]{Kcdst}).

If $\#A = (X, R^\#A)_{R \in L}$ is an $L$-structure and $f : X -> Y$ is a bijection, then we write $f(\#A)$ for the \defn{pushforward structure}, with universe $Y$ and
\begin{align*}
R^{f(\#A)}(\-y) \iff R^\#A(f^{-1}(\-y))
\end{align*}
for $n$-ary $R$ and $\-y \in Y^n$.  When $X = Y$, this defines the \defn{logic action} of $S_X$ (the group of bijections of $X$) on $\Mod_X(L, \sigma)$.

If $f : Y -> X$ is any function, then $f^{-1}(\#A)$ is the \defn{pullback structure}, with universe $Y$ and
\begin{align*}
R^{f^{-1}(\#A)}(\-y) \iff R^\#A(f(\-y)).
\end{align*}
When $f$ is the inclusion of a subset $Y \subseteq X$, we also write $\#A|Y$ for $f^{-1}(\#A)$.

Every countable $L$-structure $\#A$ has a \defn{Scott sentence} $\sigma_\#A$, which is an $L_{\omega_1\omega}$-sentence whose countable models are exactly the isomorphic copies of $\#A$; e.g., see \cite[\S VII.6]{Bar}.

A \defn{Borel class of $L$-structures} is a class $\@K$ of \emph{countable} $L$-structures which is closed under isomorphism and such that $\@K \cap \Mod_X(L)$ is a Borel subset of $\Mod_X(L)$ for every countable set $X$ (equivalently, for $X \in \{1, 2, \dotsc, \#N\}$).  For example, for any $L_{\omega_1\omega}$-sentence $\sigma$, the class of countable models of $\sigma$ is Borel.  By a classical theorem of Lopez-Escobar \cite{LE}, every Borel class $\@K$ of $L$-structures is of this form, for some $\sigma$.  (While Lopez-Escobar's theorem is usually stated only for $\Mod_\#N(L)$, it is easily seen to hold also for $\Mod_X(L)$ with $X$ finite.)

\subsection{Countable Borel equivalence relations}
\label{sec:equivs}

A \defn{Borel equivalence relation} $E$ on a standard Borel space $X$ is an equivalence relation which is Borel as a subset of $X^2$; the equivalence relation $E$ is \defn{countable} if each of its classes is.  We will also refer to the pair $(X, E)$ as an equivalence relation.

\defn{Throughout this paper, we use $\@E$ to denote the class of countable Borel equivalence relations $(X, E)$.}
\index{countable Borel equivalence relations $\mathcal E$}

If $\Gamma$ is a group acting on a set $X$, then we let $E_\Gamma^X \subseteq X^2$ be the \defn{orbit equivalence relation}\index{orbit equivalence relation $E_\Gamma^X$}:
\begin{align*}
x \mathrel{E_\Gamma^X} y &\iff \exists \gamma \in \Gamma\, (\gamma \cdot x = y).
\end{align*}
If $\Gamma$ is countable, $X$ is standard Borel, and the action is Borel, then $E_\Gamma^X$ is a countable Borel equivalence relation.  Conversely, by the Feldman-Moore Theorem \cite{FM}, every countable Borel equivalence relation on a standard Borel space $X$ is $E_\Gamma^X$ for some countable group $\Gamma$ with some Borel action on $X$.

If $\Gamma$ is a group and $X$ is a set, the \defn{(right) shift action}\index{shift action $E(\Gamma, X)$} of $\Gamma$ on $X^\Gamma$ is given by
\begin{align*}
(\gamma \cdot \-x)(\delta) := \-x(\delta \gamma)
\end{align*}
for $\gamma \in \Gamma$, $\-x \in X^\Gamma$, and $\delta \in \Gamma$.  We let $E(\Gamma, X) := E_\Gamma^{X^\Gamma} \subseteq (X^\Gamma)^2$ denote the orbit equivalence of the shift action.  If $\Gamma$ is countable and $X$ is standard Borel, then $E(\Gamma, X)$ is a countable Borel equivalence relation.  If $\Gamma$ already acts on $X$, then that action embeds into the shift action, via
\begin{align*}
X &--> X^\Gamma \\
x &|--> (\gamma |-> \gamma \cdot x).
\end{align*}
In particular, any action of $\Gamma$ on a standard Borel space embeds into the shift action of $\Gamma$ on $\#R^\Gamma$.

The \defn{free part}\index{shift action $E(\Gamma, X)$!free part of shift action $F(\Gamma, X)$} of a group action of $\Gamma$ on $X$ is
\begin{align*}
\{x \in X \mid \forall 1 \ne \gamma \in \Gamma\, (\gamma \cdot x \ne x)\};
\end{align*}
the action is \defn{free} if the free part is all of $X$.  We let $F(\Gamma, X)$ denote the orbit equivalence of the free part of the shift action of $\Gamma$ on $X^\Gamma$.

An \defn{invariant measure} for a Borel group action of $\Gamma$ on $X$ is a nonzero $\sigma$-finite Borel measure $\mu$ on $X$ such that $\gamma_* \mu = \mu$ for all $\gamma \in \Gamma$ (where $\gamma_* \mu$ is the pushforward).  An \defn{invariant measure} on a countable Borel equivalence relation $(X, E)$ is an invariant measure for some Borel action of a countable group $\Gamma$ on $X$ which generates $E$, or equivalently for any such action (see \cite[2.1]{KM}).  An invariant measure $\mu$ on $(X, E)$ is \defn{ergodic} if for any $E$-invariant Borel set $A \subseteq X$, either $\mu(A) = 0$ or $\mu(X \setminus A) = 0$.

\subsection{Homomorphisms}
\label{sec:homoms}

Let $(X, E), (Y, F) \in \@E$ be countable Borel equivalence relations, and let $f : X -> Y$ be a Borel map (we write $f : X ->_B Y$ to denote that $f$ is Borel).  We say that $f$ is:
\begin{itemize}
\item  a \defn{homomorphism}\index{homomorphism $->_B$}, written $f : (X, E) ->_B (Y, F)$, if
    \begin{align*}
    \forall x, y \in X\, (x \mathrel{E} y \implies f(x) \mathrel{F} f(y)),
    \end{align*}
    i.e., $f$ induces a map on the quotient spaces $X/E -> Y/F$;
\item  a \defn{reduction}\index{homomorphism $->_B$!reduction $\le_B$}, written $f : (X, E) \le_B (Y, F)$, if $f$ is a homomorphism and
    \begin{align*}
    \forall x, y \in X\, (f(x) \mathrel{F} f(y) \implies x \mathrel{E} y),
    \end{align*}
    i.e., $f$ induces an injection on the quotient spaces;
\item  a \defn{class-injective homomorphism}\index{homomorphism $->_B$!class-injective $->_B^{ci}$} (respectively, \defn{class-surjective}\index{homomorphism $->_B$!class-surjective $->_B^{cs}$}, \defn{class-bijective}\index{homomorphism $->_B$!class-bijective $->_B^{cb}$}), written $f : (X, E) ->_B^{ci} (Y, F)$ (respectively $f : (X, E) ->_B^{cs} (Y, F)$, $f : (X, E) ->_B^{cb} (Y, F)$), if $f$ is a homomorphism such that for each $x \in X$, the restriction $f|[x]_E : [x]_E -> [f(x)]_F$ to the equivalence class of $x$ is injective (respectively, surjective, bijective);
\item  an \defn{embedding}\index{homomorphism $->_B$!embedding $\sqle_B$}, written $f : (X, E) \sqle_B (Y, F)$, if $f$ is an injective (or equivalently, class-injective) reduction;
\item  an \defn{invariant embedding}\index{homomorphism $->_B$!invariant embedding $\sqle_B^i$}, written $f : (X, E) \sqle_B^i (Y, F)$, if $f$ is a class-bijective reduction, or equivalently an embedding such that the image $f(X) \subseteq Y$ is $F$-invariant.
\end{itemize}

Among these various kinds of homomorphisms, the reductions have received the most attention in the literature, while the class-bijective ones are most closely related to the notion of structurability.  Here is a picture of the containments between these classes of homomorphisms, with the more restrictive classes at the bottom:
\begin{equation*}
\begin{tikzpicture}[commutative diagrams/every diagram,nodes={execute at begin node=$,execute at end node=$}]
\node(invemb) {\sqle_B^i};
\node(emb) at ([shift={(-1,1)}]invemb) {\sqle_B} edge (invemb);
\node(cb) at ([shift={(1,1)}]invemb) {->_B^{cb}} edge (invemb);
\node(red) at ([shift={(-1,1)}]emb) {\le_B} edge (emb);
\node(ci) at ([shift={(0,2)}]invemb) {->_B^{ci}} edge (emb) edge (cb);
\node(cs) at ([shift={(1,1)}]cb) {->_B^{cs}} edge (cb);
\node(hom) at ([shift={(0,1.5)}]ci) {->_B} edge (red) edge (ci) edge (cs);
\end{tikzpicture}
\end{equation*}

We say that $(X, E)$ \defn{(Borel) reduces} to $(Y, F)$, written $(X, E) \le_B (Y, F)$ (or simply $E \le_B F$), if there is a Borel reduction $f : (X, E) \le_B (Y, F)$.  Similarly for the other kinds of homomorphisms, e.g., $E$ \defn{embeds} into $F$, written $E \sqle_B F$, if there is some $f : E \sqle_B F$, etc.  We also write:
\begin{itemize}
\item  $E \sim_B F$ ($E$ is \defn{bireducible} to $F$) if $E \le_B F$ and $F \le_B E$;
\item  $E <_B F$ if $E \le_B F$ and $F \not\le_B E$, and similarly for $\sqlt_B$ and $\sqlt_B^i$;
\item  $E <->_B^{cb} F$ ($E$ is \defn{class-bijectively equivalent} to $F$) if $E ->_B^{cb} F$ and $F ->_B^{cb} E$;
\item  $E \cong_B F$ if $E$ is Borel isomorphic to $F$, or equivalently (by the Borel Schröder-Bernstein theorem) $E \sqle_B^i F$ and $F \sqle_B^i E$.
\end{itemize}
Clearly $\le_B$, $\sqle_B$, $->_B^{cb}$, etc., are preorders on the class $\@E$, and $\sim_B$, $<->_B^{cb}$, $\cong_B$ are equivalence relations on $\@E$.  The $\sim_B$-equivalence classes are called \defn{bireducibility classes}, etc.

\subsection{Basic operations}

We have the following basic operations on Borel equivalence relations.  Let $(X, E), (Y, F)$ be Borel equivalence relations.

Their \defn{disjoint sum}\index{disjoint sum $E \oplus F$} is $(X, E) \oplus (Y, F) = (X \oplus Y, E \oplus F)$ where $X \oplus Y$ is the disjoint union of $X, Y$, and $E \oplus F$ relates elements of $X$ according to $E$ and elements of $Y$ according to $F$ and does not relate elements of $X$ with elements of $Y$.  The canonical injections $\iota_1 : X ->_B X \oplus Y$ and $\iota_2 : Y ->_B X \oplus Y$ are then invariant embeddings $E, F \sqle_B^i E \oplus F$.  Clearly the disjoint sum of countable equivalence relations is countable.  We have obvious generalizations to disjoint sums of any countable family of equivalence relations.

Their \defn{cross product}\index{cross product $E \times F$} is $(X, E) \times (Y, F) = (X \times Y, E \times F)$, where
\begin{align*}
(x, y) \mathrel{(E \times F)} (x', y') \iff x \mathrel{E} x' \AND y \mathrel{F} y'.
\end{align*}
(The ``cross'' adjective is to disambiguate from the \emph{tensor products} to be introduced in \cref{sec:tensor}.)  The projections $\pi_1 : X \times Y -> X$ and $\pi_2 : X \times Y -> Y$ are class-surjective homomorphisms $E \times F ->_B^{cs} E, F$.  Cross products also generalize to countably many factors; but note that only \emph{finite} cross products of countable equivalence relations are countable.

\subsection{Special equivalence relations}

Recall that an equivalence relation $(X, E)$ is \defn{countable} if each $E$-class is countable; similarly, it is \defn{finite} if each $E$-class is finite, and \defn{aperiodic countable} if each $E$-class is countably infinite.  A countable Borel equivalence relation is always the disjoint sum of a finite Borel equivalence relation and an aperiodic countable Borel equivalence relation.  Since many of our results become trivial when all classes are finite, we will often assume that our equivalence relations are aperiodic.

For any set $X$, the \defn{indiscrete}\index{indiscrete equivalence relation $I_X$} equivalence relation on $X$ is $I_X := X \times X$.

A Borel equivalence relation $(X, E)$ is \defn{smooth} if $E \le_B \Delta_Y$ where $\Delta_Y$ is the equality relation on some standard Borel space $Y$.  When $E$ is countable, this is equivalent to $E$ having a Borel \defn{transversal}, i.e., a Borel set $A \subseteq X$ meeting each $E$-class exactly once, or a Borel \defn{selector}, i.e., a Borel map $f : X ->_B X$ such that $x \mathrel{E} f(x)$ and $x \mathrel{E} y \implies f(x) = f(y)$ for all $x, y \in X$.  Any finite Borel equivalence relation is smooth.  Up to bireducibility, the smooth Borel equivalence relations consist exactly of
\begin{align*}
\Delta_0 <_B \Delta_1 <_B \Delta_2 <_B \dotsb <_B \Delta_\#N <_B \Delta_\#R;
\end{align*}
and these form an initial segment of the preorder $(\@E, {\le_B})$ (Silver's dichotomy; see \cite[9.1.1]{MK}).

We will sometimes use the standard fact that a countable Borel equivalence relation $(X, E)$ is smooth iff every ergodic invariant ($\sigma$-finite Borel) measure on $E$ is atomic.  (For the converse direction, use e.g., that if $E$ is not smooth, then $E_t \sqle_B^i E$ (see \cref{thm:hyperfinite} below), and $E_t$ is isomorphic to the orbit equivalence of the translation action of $\#Q$ on $\#R$, which admits Lebesgue measure as an ergodic invariant nonatomic $\sigma$-finite measure.)

If $f : X -> Y$ is any function between sets, the \defn{kernel} of $f$ is the equivalence relation $\ker f$ on $X$ given by $x \mathrel{(\ker f)} y \iff f(x) = f(y)$.  So a Borel equivalence relation is smooth iff it is the kernel of some Borel map.

A countable Borel equivalence relation $E$ is \defn{universal} if $E$ is $\le_B$-greatest in $\@E$, i.e., for any other countable Borel equivalence relation $F$, we have $F \le_B E$.  An example is $E(\#F_2, 2)$ (where $\#F_2$ is the free group on $2$ generators) \cite[1.8]{DJK}.  Note that by \cite[3.6]{MSS}, $E$ is universal iff it is $\sqle_B$-greatest in $\@E$, i.e., for any other $F \in \@E$, we have $F \sqle_B E$.

A countable Borel equivalence relation $E$ is \defn{invariantly universal}\index{universal equivalence relation $E_\infty$} if $E$ is $\sqle_B^i$-greatest in $\@E$, i.e., for any other countable Borel equivalence relation $F$, we have $F \sqle_B^i E$.  We denote by $E_\infty$ any such $E$; in light of the Borel Schröder-Bernstein theorem, $E_\infty$ is unique up to isomorphism.  Clearly $E_\infty$ is also $\le_B$-universal.  (Note: in the literature, $E_\infty$ is commonly used to denote any $\le_B$-universal countable Borel equivalence relation (which is determined only up to bireducibility).)  One realization of $E_\infty$ is $E(\#F_\omega, \#R)$.  (This follows from the Feldman-Moore Theorem.)

A (countable) Borel equivalence relation $(X, E)$ is \defn{hyperfinite} if $E$ is the increasing union of a sequence of finite Borel equivalence relations on $X$.  We will use the following facts (see \cite[5.1, 7.2, 9.3]{DJK}):
\begin{theorem}
\label{thm:hyperfinite}
Let $(X, E), (Y, F) \in \@E$ be countable Borel equivalence relations.
\begin{enumerate}[label=(\alph*)]
\item  $E$ is hyperfinite iff $E = E_\#Z^X$ for some action of $\#Z$ on $X$.
\item  $E$ is hyperfinite iff there is a Borel binary relation $<$ on $X$ such that on each $E$-class, $<$ is a linear order embeddable in $(\#Z, <)$.
\item  If $E, F$ are both hyperfinite and non-smooth, then $E \sqle_B F$.  Thus there is a unique bireducibility (in fact biembeddability) class of non-smooth hyperfinite Borel equivalence relations.
\item  Let $E_0, E_t$ be the equivalence relations on $2^\#N$ given by
\begin{align*}
x \mathrel{E_0} y &\iff \exists i \in \#N\, \forall j \in \#N\, (x(i+j) = y(i+j)), \\
x \mathrel{E_t} y &\iff \exists i, j \in \#N\, \forall k \in \#N\, (x(i+k) = y(j+k)).
\end{align*}
Up to isomorphism, the non-smooth, aperiodic, hyperfinite Borel equivalence relations are
\begin{align*}
E_t \sqlt_B^i E_0 \sqlt_B^i 2 \cdot E_0 \sqlt_B^i 3 \cdot E_0 \sqlt_B^i \dotsb \sqlt_B^i \aleph_0 \cdot E_0 \sqlt_B^i 2^{\aleph_0} \cdot E_0,
\end{align*}
where $n \cdot E_0 := \Delta_n \times E_0$.  Each $n \cdot E_0$ has exactly $n$ ergodic invariant probability measures.
\item  (Glimm-Effros dichotomy) $E$ is not smooth iff $E_t \sqle_B^i E$.
\end{enumerate}
\end{theorem}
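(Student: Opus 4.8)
The plan is to treat the Glimm--Effros dichotomy (e) as the central input and build the other parts around it, while being candid that several of these are substantial theorems of Dougherty--Jackson--Kechris whose full proofs I would only outline.

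First I would dispose of the combinatorial characterizations (a) and (b) together. The easy direction manufactures a $\#Z$-action from a hyperfinite presentation: given $E = \bigcup_n F_n$ with $F_n$ finite and increasing, I put a Borel linear order on each $F_n$-class (finite Borel equivalence relations are smooth, so orderings are chosen Borel-uniformly by restricting one fixed Borel linear order on $X$), arranged to cohere so that on each $E$-class the union is a linear order embeddable in $(\#Z, <)$; the induced successor map is then a Borel $\#Z$-action generating $E$. This simultaneously supplies the nontrivial direction of (b), and the converse direction of (b) is the same successor construction. The harder direction of (a), that every $\#Z$-action is hyperfinite, I would prove by the marker method: construct a decreasing sequence of Borel complete sections $M_0 \supseteq M_1 \supseteq \dotsb$ with $\bigcap_n M_n = \emptyset$ and with the gaps between consecutive markers in each orbit tending to infinity, then set $x \mathrel{F_n} y$ iff $x, y$ lie in a common orbit with no point of $M_n$ strictly between them. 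Each $F_n$ is finite, the $F_n$ increase, and $\bigcup_n F_n = E$ precisely because $\bigcap_n M_n = \emptyset$ forces every fixed pair eventually to have no marker between them.

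Next I would record (e), the Glimm--Effros dichotomy of Harrington--Kechris--Louveau, essentially as a black box. The easy implication is that $E_t \sqle_B^i E$ forces non-smoothness, since $E_t$ is itself non-smooth (its canonical invariant measure is nonatomic and ergodic, so it has no Borel transversal) and smoothness passes downward along invariant embeddings. The hard implication, that non-smoothness yields an invariant embedding of $E_t$ (equivalently $E_0$), is proved by effective descriptive set theory, working in the Gandy--Harrington topology and building a Cantor scheme; I would import this wholesale. Granting it in the form $E_0 \sqle_B^i E$ for non-smooth $E$, part (c) is then short: every hyperfinite $E$ satisfies $E \sqle_B E_0$ via the $\#Z$-ordering of (b) (upgrading the evident reduction to an embedding), while for non-smooth hyperfinite $E$ the dichotomy gives $E_0 \sqle_B^i E$, hence $E_0 \sqle_B E$. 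Thus any two non-smooth hyperfinite $E, F$ satisfy $E \sqle_B E_0 \sqle_B F$ and symmetrically, so they are biembeddable and there is exactly one such biembeddability class.

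Finally, part (d) is where I expect the real work and where I would lean on ergodic theory. The separating invariant is the number of ergodic invariant probability measures, which is monotone under $\sqle_B^i$ because an invariant embedding pushes such measures forward injectively; since the displayed chain runs through $0, 1, 2, \dotsc, \aleph_0, 2^{\aleph_0}$, all the embeddings are strict and the relations are pairwise non-isomorphic. The counts themselves are a direct computation: $E_t$ admits no invariant probability measure (it is compressible), $E_0$ has exactly one, and $n \cdot E_0 = \Delta_n \times E_0$ has exactly $n$, since any ergodic invariant probability measure must concentrate on a single copy $\{i\} \times 2^{\#N}$. The deep part is completeness---that every non-smooth aperiodic hyperfinite relation is isomorphic to one on the list---for which I would invoke Dye's theorem (any two ergodic measure-preserving hyperfinite relations are orbit equivalent) together with an ergodic-decomposition and gluing argument reconstructing the isomorphism type from the set of ergodic invariant probability measures. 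The main obstacle is thus honestly twofold: the Harrington--Kechris--Louveau proof of (e) and the completeness half of (d) are the two genuinely hard inputs, which I would cite rather than reprove; everything else---the marker and successor constructions, the measure-count computations, and the Schr\"oder--Bernstein bookkeeping for (c)---is routine once these are in hand.
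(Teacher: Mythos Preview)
The paper does not prove this theorem at all: it is stated in the preliminaries section as a list of background facts, prefaced by ``We will use the following facts (see \cite[5.1, 7.2, 9.3]{DJK})'', with no argument given. So there is no proof in the paper to compare against; the authors treat all five parts as imported from Dougherty--Jackson--Kechris.

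Your outline is a reasonable sketch of how the DJK proofs go, and you are appropriately candid about which pieces are genuine black boxes (Harrington--Kechris--Louveau for (e), Dye-type classification plus ergodic decomposition for the completeness half of (d)). A couple of minor points worth tightening if you were to write this up: in (c), the step ``every hyperfinite $E$ satisfies $E \sqle_B E_0$'' is true but not quite as immediate from the $\#Z$-ordering as you suggest---the orbit-coding map into $2^\#Z$ need not be injective, and getting an actual embedding into $E_0$ takes a bit more care (this is where DJK do real work). Also, the standard HKL dichotomy yields $E_0 \sqle_B E$ (a continuous embedding, not a priori invariant); the invariant form $E_t \sqle_B^i E$ stated in (e) requires a further short argument. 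None of this is wrong in spirit, but since the paper itself simply cites the result, the honest move here is to do the same rather than sketch proofs that would themselves need to lean on DJK at the hard steps.
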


A countable Borel equivalence relation $(X, E)$ is \defn{compressible} if there is a $f : E \sqle_B E$ such that $f(C) \subsetneq C$ for every $E$-class $C \in X/E$.  The basic example is $I_\#N$; another example is $E_t$.  A fundamental theorem of Nadkarni \cite{N} asserts that $E$ is compressible iff it does not admit an invariant probability measure.  For more on compressibility, see \cite[Section~2]{DJK}; we will use the results therein extensively in \cref{sec:compress}.

A countable Borel equivalence relation $(X, E)$ is \defn{treeable} if $E$ is generated by an acyclic Borel graph on $X$.  For properties of treeability which we use later on, see \cite[Section~3]{JKL}.

\subsection{Fiber products}
\label{sec:pullback}

Let $(X, E), (Y, F), (Z, G)$ be Borel equivalence relations, and let $f : (Y, F) ->_B (X, E)$ and $g : (Z, G) ->_B (X, E)$ be homomorphisms.  The \defn{fiber product}\index{fiber product $F \times_E G$} of $F$ and $G$ (with respect to $f$ and $g$) is $(Y, F) \times_{(X, E)} (Z, G) = (Y \times_X Z, F \times_E G)$, where
\begin{align*}
Y \times_X Z &:= \{(y, z) \in Y \times Z \mid f(y) = g(z)\}, &
F \times_E G &:= (F \times G)|(Y \times_X Z).
\end{align*}
(Note that the notations $Y \times_X Z$, $F \times_E G$ are slight abuses of notation in that they hide the dependence on the maps $f, g$.)
The projections $\pi_1 : F \times_E G -> F$ and $\pi_2 : F \times_E G -> G$ fit into a commutative diagram:
\begin{equation*}
\begin{tikzcd}
F \times_E G \dar[swap]{\pi_1} \rar{\pi_2} & G \dar{g} \\
F \rar[swap]{f} & E
\end{tikzcd}
\end{equation*}
It is easily verified that if $g$ is class-injective, class-surjective, or a reduction, then so is $\pi_1$.

\subsection{Some categorical remarks}
\label{sec:prelims-cats}

For each of the several kinds of homomorphisms mentioned in \cref{sec:homoms}, we have a corresponding category of countable Borel equivalence relations and homomorphisms of that kind.  We use, e.g., $(\@E, {->_B^{cb}})$ to denote the category of countable Borel equivalence relations and class-bijective homomorphisms, etc.

(Depending on context, we also use $(\@E, {->_B^{cb}})$ to denote the preorder $->_B^{cb}$ on $\@E$, i.e., the preorder gotten by collapsing all morphisms in the category $(\@E, {->_B^{cb}})$ between the same two objects.)

From a categorical standpoint, among these categories, the two most well-behaved ones seem to be $(\@E, ->_B)$ and $(\@E, {->_B^{cb}})$.  The latter will be treated in \cref{sec:tensor,sec:limits}.  As for $(\@E, ->_B)$, we note that (countable) disjoint sums, (finite) cross products, and fiber products give respectively coproducts, products, and pullbacks in that category.  It follows that $(\@E, ->_B)$ is finitely complete, i.e., has all finite categorical limits (see e.g., \cite[V.2, Exercise~III.4.10]{ML}).

\begin{remark}
However, $(\@E, ->_B)$ does not have coequalizers.  Let $E_0$ on $2^\#N$ be generated by a Borel automorphism $T : 2^\#N -> 2^\#N$.  Then it is easy to see that $T : (2^\#N, \Delta_{2^\#N}) ->_B (2^\#N, \Delta_{2^\#N})$ and the identity map do not have a coequalizer.
\end{remark}

For later reference, let us note that the category of (not necessarily countable) Borel equivalence relations and Borel homomorphisms has inverse limits of countable chains.  That is, for each $n \in \#N$, let $(X_n, E_n)$ be a Borel equivalence relation, and $f_n : E_{n+1} ->_B E_n$ be a Borel homomorphism.  Then the \defn{inverse limit}\index{inverse limit $\projlim_n E_n$} of the system is $\projlim_n (X_n, E_n) = (\projlim_n X_n, \projlim_n E_n)$, where
\begingroup\let\displaystyle\textstyle
\begin{align*}
\projlim_n X_n &:= \{\-x = (x_0, x_1, \dotsc) \in \prod_n X_n \mid \forall n\, (x_n = f_n(x_{n+1}))\}, \\
\projlim_n E_n &:= \prod_n E_n | \projlim_n X_n.
\end{align*}
\endgroup
It is easily seen that $\projlim_n E_n$ together with the projections $\pi_m : \projlim_n E_n ->_B E_m$ has the universal property of an inverse limit, i.e., for any other Borel equivalence relation $(Y, F)$ and homomorphisms $g_m : F ->_B E_m$ such that $g_m = f_m \circ g_{m+1}$ for each $m$, there is a unique homomorphism $\~g : F ->_B \projlim_n E_n$ such that $\pi_m \circ \~g = g_m$ for each $m$.  This is depicted in the following commutative diagram:
\begin{equation*}
\begin{tikzcd}
F \rar[dashed]{\~g}
    \ar{dr}[swap,pos=.7,xshift=.5ex]{g_2}
    \ar{drr}[swap,pos=.75,xshift=.5ex]{g_1}
    \ar[bend left=5]{drrr}[swap,pos=.85,xshift=.5ex]{g_0} &
\projlim_n E_n
    \ar[crossing over]{d}[pos=.67,xshift=.3ex,swap]{\pi_2}
    \ar[crossing over]{dr}[pos=.4,xshift=.7ex,swap]{\pi_1}
    \ar[bend left=5]{drr}{\pi_0} \\[2em]
\dotsb \rar[swap]{f_2} & E_2 \rar[swap]{f_1} & E_1 \rar[swap]{f_0} & E_0
\end{tikzcd}
\end{equation*}
It follows that the category of Borel equivalence relations and Borel homomorphisms is countably complete, i.e., has all limits of countable diagrams (again see \cite[V.2, Exercise~III.4.10]{ML}).

\section{Structures on equivalence relations}
\label{sec:struct-equiv}

We now define the central notion of this paper.

Let $L$ be a language and $X$ be a standard Borel space.  We say that an $L$-structure $\#A = (X, R^\#A)_{R \in L}$ with universe $X$ is \defn{Borel} if $R^\#A \subseteq X^n$ is Borel for each $n$-ary $R \in L$.

Now let $(X, E)$ be a countable Borel equivalence relation.  We say that a Borel $L$-structure $\#A = (X, R^\#A)_{R \in L}$ is a \defn{Borel $L$-structure on $E$} if for each $n$-ary $R \in L$, $R^\#A$ only relates elements within the same $E$-class, i.e.,
\begin{align*}
R^\#A(x_1, \dotsc, x_n) \implies x_1 \mathrel{E} x_2 \mathrel{E} \dotsb \mathrel{E} x_n.
\end{align*}
For an $L_{\omega_1\omega}$-sentence $\sigma$, we say that $\#A$ is a \defn{Borel $\sigma$-structure on $E$}\index{Borel $\sigma$-structure $\mathbb A : E \models \sigma$}, written
\begin{align*}
\#A : E &|= \sigma,
\end{align*}
if for each $E$-class $C \in X/E$, the structure $\#A|C$ satisfies $\sigma$.  We say that $E$ is \defn{$\sigma$-structurable}, written
\begin{align*}
E &|= \sigma,
\end{align*}
if there is some Borel $\sigma$-structure on $E$.  Similarly, if $\@K$ is a Borel class of $L$-structures, we say that $\#A$ is a \defn{Borel $\@K$-structure on $E$} if $\#A|C \in \@K$ for each $C \in X/E$, and that $E$ is \defn{$\@K$-structurable} if there is some Borel $\@K$-structure on $E$.  Note that $E$ is $\@K$-structurable iff it is $\sigma$-structurable, for any $L_{\omega_1\omega}$-sentence $\sigma$ axiomatizing $\@K$.

We let\index{elementary class $\mathcal E_\sigma$, $\mathcal E_\mathcal K$}
\begin{align*}
\@E_\sigma \subseteq \@E, \qquad\qquad \@E_\@K \subseteq \@E
\end{align*}
denote respectively the classes of $\sigma$-structurable and $\@K$-structurable countable Borel equivalence relations.  For any class $\@C \subseteq \@E$ of countable Borel equivalence relations, we say that $\@C$ is \defn{elementary} if $\@C = \@E_\sigma$ for some theory $(L, \sigma)$, in which case we say that $(L, \sigma)$ \defn{axiomatizes} $\@C$.

\subsection{Examples of elementary classes}
\label{sec:elem-examples}

Several notions of ``sufficiently simple'' countable Borel equivalence relations which have been considered in the literature are given by an elementary class.

For example, a countable Borel equivalence relation $E$ is smooth iff $E$ is structurable by pointed sets (i.e., sets with a distinguished element).  By \cref{thm:hyperfinite}, $E$ is hyperfinite iff $E$ is structurable by linear orders that embed in $\#Z$.  Hyperfiniteness can also be axiomatized by the sentence in the language $L = \{R_0, R_1, R_2, \dotsc\}$ which asserts that each $R_i$ is a finite equivalence relation and $R_0 \subseteq R_1 \subseteq \dotsb$ with union the indiscrete equivalence relation.  Similarly, it is straightforward to verify that for each $\alpha < \omega_1$, $\alpha$-Fréchet-amenability (see \cite[2.11--12]{JKL}) is axiomatizable.  Also, $E$ is compressible iff it is structurable via structures in the language $L = \{R\}$ where $R$ is the graph of a non-surjective injection.

For some trivial examples: every $E$ is $\sigma$-structurable for logically valid $\sigma$, or for the (non-valid) sentence $\sigma$ in the language $L = \{R_0, R_1, \dotsc\}$ asserting that the $R_i$'s form a separating family of unary predicates (i.e., $\forall x, y\, (\bigwedge_i (R_i(x) <-> R_i(y)) <-> x = y)$); thus $\@E$ is elementary.  The class of aperiodic countable Borel equivalence is axiomatized by the theory of infinite sets, etc.

Let $\@T_1$ denote the class of trees (i.e., acyclic connected graphs), and more generally, $\@T_n$ denote the class of contractible $n$-dimensional (abstract) simplicial complexes.  Then $E$ is $\@T_1$-structurable iff $E$ is treeable.  Gaboriau \cite{G} has shown that $\@E_{\@T_1} \subsetneq \@E_{\@T_2} \subsetneq \dotsb$.

For any language $L$ and countable $L$-structure $\#A$, if $\sigma_\#A$ denotes the Scott sentence of $\#A$, then $E$ is $\sigma_\#A$-structurable iff it is structurable via isomorphic copies of $\#A$.  For example, if $L = \{<\}$ and $(X, \#A) = (\#Z, <)$, then $E$ is $\sigma_\#A$-structurable iff it is aperiodic hyperfinite.  We write
\begin{align*}
\@E_{\#A} := \@E_{\sigma_\#A}
\end{align*}
for the class of \defn{$\#A$-structurable} countable Borel equivalence relations.

Let $\Gamma$ be a countable group, and regard $\Gamma$ as a structure in the language $L_\Gamma = \{R_\gamma \mid \gamma \in \Gamma\}$, where $R_\gamma^\Gamma$ is the graph of the map $\delta |-> \gamma \cdot \delta$.  Then a model of $\sigma_\Gamma$ is a $\Gamma$-action isomorphic to $\Gamma$, i.e., a free transitive $\Gamma$-action.  Thus a countable Borel equivalence relation $E$ is $\Gamma$-structurable (i.e., $\sigma_\Gamma$-structurable) iff it is generated by a free Borel action of $\Gamma$.

Finally, we note that several important classes of countable Borel equivalence relations are \emph{not} elementary.  This includes all classes of ``sufficiently complex'' equivalence relations, such as (invariantly) universal equivalence relations, non-smooth equivalence relations, and equivalence relations admitting an invariant probability measure; these classes are not elementary by \cref{thm:elem-classbij}.  Another example of a different flavor is the class of equivalence relations generated by a free action of \emph{some} countable group; more generally, nontrivial unions of elementary classes are never elementary (see \cref{thm:union-nonelem}).

\subsection{Classwise pullback structures}
\label{sec:pullback-struct}

Let $(X, E), (Y, F)$ be countable Borel equivalence relations and $f : E ->_B^{cb} F$ be a class-bijective homomorphism.  For an $L$-structure $\#A$ on $F$, recall that the pullback structure of $\#A$ along $f$, denoted $f^{-1}(\#A)$, is the $L$-structure with universe $X$ given by
\begin{align*}
R^{f^{-1}(\#A)}(\-x) &\iff R^\#A(f(\-x))
\end{align*}
for each $n$-ary $R \in L$ and $\-x \in X^n$.  Let $f^{-1}_E(\#A)$ denote the \defn{classwise pullback structure}\index{classwise pullback structure $f^{-1}_E(\mathbb A)$}, given by
\begin{align*}
R^{f^{-1}_E(\#A)}(\-x) &\iff R^\#A(f(\-x)) \AND x_1 \mathrel{E} \dotsb \mathrel{E} x_n.
\end{align*}
Then $f^{-1}_E(\#A)$ is a Borel $L$-structure on $E$, such that for each $E$-class $C \in X/E$, the restriction $f|C : C -> f(C)$ is an isomorphism between $f^{-1}_E(\#A)|C$ and $\#A|f(C)$.  In particular, if $\#A$ is a $\sigma$-structure for some $L_{\omega_1\omega}$-sentence $\sigma$, then so is $f^{-1}_E(\#A)$.  We record the consequence of this simple observation for structurability:

\begin{proposition}
\label{thm:elem-classbij}
Every elementary class $\@E_\sigma \subseteq \@E$ is (downwards-)closed under class-bijective homomorphisms, i.e., if $E ->_B^{cb} F$ and $F \in \@E_\sigma$, then $E \in \@E_\sigma$.
\end{proposition}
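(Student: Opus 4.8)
The plan is to read off the proof directly from the construction of the classwise pullback structure $f^{-1}_E(\#A)$ that immediately precedes the statement, so that \cref{thm:elem-classbij} becomes essentially a one-line corollary. Concretely, suppose $E \in \@E$ and $F \in \@E_\sigma$ with a class-bijective homomorphism $f : E ->_B^{cb} F$. Since $F$ is $\sigma$-structurable, fix a Borel $\sigma$-structure $\#A$ on $F$, i.e.\ $\#A : F |= \sigma$. I would then apply the classwise pullback construction to produce $f^{-1}_E(\#A)$, a Borel $L$-structure on $E$, and show it witnesses $E |= \sigma$.

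The key step is the observation already established just before the proposition: because $f$ is class-bijective, for every $E$-class $C \in X/E$ the restriction $f|C : C -> f(C)$ is a \emph{bijection} onto the single $F$-class $f(C)$, and moreover it is an isomorphism of $L$-structures $f^{-1}_E(\#A)|C \cong \#A|f(C)$. I would spell out why the definition
\begin{align*}
R^{f^{-1}_E(\#A)}(\-x) \iff R^\#A(f(\-x)) \AND x_1 \mathrel{E} \dotsb \mathrel{E} x_n
\end{align*}
yields an isomorphism: the added clause $x_1 \mathrel{E} \dotsb \mathrel{E} x_n$ guarantees $f^{-1}_E(\#A)$ is a genuine structure \emph{on} $E$ (relations only hold within classes), while injectivity of $f|C$ ensures no spurious tuples are introduced and surjectivity of $f|C$ ensures every tuple of $f(C)$ is hit, so the two restricted structures match under the bijection $f|C$.

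From the isomorphism $f^{-1}_E(\#A)|C \cong \#A|f(C)$ the conclusion follows: since $\sigma$ is an $L_{\omega_1\omega}$-sentence and satisfaction of such sentences is invariant under isomorphism of structures, $\#A|f(C) \models \sigma$ (which holds because $\#A : F |= \sigma$) gives $f^{-1}_E(\#A)|C \models \sigma$. As $C$ was an arbitrary $E$-class, $f^{-1}_E(\#A) : E |= \sigma$, so $E$ is $\sigma$-structurable and $E \in \@E_\sigma$, as desired.

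I do not expect any real obstacle here, since the substantive content has been front-loaded into the definition of $f^{-1}_E(\#A)$ and the accompanying remark. The only point requiring mild care is Borelness: I would note that $f^{-1}_E(\#A)$ is Borel because $f$ is Borel, $\#A$ is a Borel structure, and $E$ is a Borel equivalence relation, so each relation $R^{f^{-1}_E(\#A)}$ is defined by a Borel condition. Beyond that, the argument is purely a matter of invoking isomorphism-invariance of $L_{\omega_1\omega}$-satisfaction classwise.
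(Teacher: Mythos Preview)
Your proposal is correct and follows exactly the approach the paper takes: the proposition is stated immediately after the construction of the classwise pullback structure $f^{-1}_E(\#A)$ and the remark that each $f|C$ is an isomorphism $f^{-1}_E(\#A)|C \cong \#A|f(C)$, and the paper simply records the proposition as ``the consequence of this simple observation'' with no separate proof. Your write-up spells out precisely that observation.
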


This connection between structurability and class-bijective homomorphisms will be significantly strengthened in the next section.

\section{Basic universal constructions}
\label{sec:univcons}

In this section we present the two main constructions relating structures on equivalence relations to class-bijective homomorphisms.  Both are ``universal'' constructions: the first turns any theory $(L, \sigma)$ into a universal equivalence relation with a $\sigma$-structure, while the second turns any equivalence relation into a universal theory.

\subsection{The universal $\sigma$-structured equivalence relation}
\label{sec:esigma}

Kechris-Solecki-Todorcevic \cite[7.1]{KST} proved a universality result for graphs, which was then extended to arbitrary Borel classes of structures by Miller.  Here, we formulate a version of this result and its proof that allows us to capture more information.

\begin{theorem}
\label{thm:esigma}
Let $(X, E) \in \@E$ be a countable Borel equivalence relation and $(L, \sigma)$ be a theory.  Then there is a ``universal $\sigma$-structured equivalence relation lying over $E$'', i.e., a triple $(E \ltimes \sigma, \pi, \#E)$ where
\index{universal $\sigma$-structured $E \ltimes \sigma$}
\begin{align*}
E \ltimes \sigma &\in \@E, &
\pi : E \ltimes \sigma &->_B^{cb} E, &
\#E : E \ltimes \sigma |= \sigma,
\end{align*}
such that for any other $F \in \@E$ with $f : F ->_B^{cb} E$ and $\#A : F |= \sigma$, there is a \emph{unique} class-bijective homomorphism $\~f : F ->_B^{cb} E \ltimes \sigma$ such that $f = \pi \circ \~f$ and $\#A = \~f^{-1}_F(\#E)$.  This is illustrated by the following ``commutative'' diagram:
\begin{equation*}
\begin{tikzcd}
F \ar[bend right=10]{ddr}[swap]{f} \drar[dashed][pos=.7,swap]{\~f} \ar[models]{drr}{\#A} & &[-.5em] \\
& E \ltimes \sigma \dar{\pi} \rar[models][swap]{\#E} & \sigma \\
& E
\end{tikzcd}
\end{equation*}
\end{theorem}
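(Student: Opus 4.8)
The plan is to construct $E \ltimes \sigma$ explicitly as a space of ``$E$-classes equipped with a $\sigma$-structure and a labeling into the $E$-class,'' exploiting the fact that every $E$-class is countable. Concretely, by the Feldman-Moore theorem write $E = E_\Gamma^X$ for a countable group $\Gamma$ acting Borel on $X$. A class-bijective homomorphism $f \colon F \to_B^{cb} E$ together with a $\sigma$-structure $\#A$ on $F$ amounts, fiberwise, to the following data over each point $x \in X$: the $E$-class $[x]_E$, a bijection of $[x]_E$ with the fiber $[f^{-1}(x)]_F \cap f^{-1}([x]_E)$ realizing class-bijectivity, and a $\sigma$-structure on that fiber. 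The universal object should therefore parametrize, over each $x$, \emph{all possible} ways of putting a $\sigma$-structure on a copy of $[x]_E$ indexed by itself.

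**First I would** fix a Borel enumeration of each $E$-class: since $E$ is countable Borel, there is a Borel assignment $x \mapsto \{x_0, x_1, \dotsc\}$ enumerating $[x]_E$ (e.g.\ via the Lusin-Novikov uniformization applied to the $\Gamma$-action). Identifying $[x]_E$ with a subset $X_x \subseteq \#N$ this way, I would set
\begin{align*}
E \ltimes \sigma := \{(x, \#B) \mid x \in X,\ \#B \in \Mod_{X_x}(\sigma)\},
\end{align*}
where $\Mod_{X_x}(\sigma)$ is the Borel space of $\sigma$-models with universe the (countable) set $X_x = [x]_E$. I would equip this with the equivalence relation declaring $(x, \#B) \mathrel{(E \ltimes \sigma)} (y, \#C)$ iff $x \mathrel{E} y$ and $\#B = \#C$ (so that points in the same fiber sharing a structure are identified along the whole class), and let $\pi(x, \#B) := x$. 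The canonical $\sigma$-structure $\#E$ on $E \ltimes \sigma$ is the tautological one: on the class over $[x]_E$, interpret each relation symbol $R$ so that $R^{\#E}$ holds of a tuple of points $(y_1, \#B), \dotsc, (y_n, \#B)$ exactly when $R^{\#B}(y_1, \dotsc, y_n)$ holds in $\#B$. One checks that $\pi$ is class-bijective (each fiber is in bijection with $[x]_E$ because the class of $(x,\#B)$ is $\{(y,\#B) \mid y \mathrel E x\}$) and that $\#E$ is a Borel $\sigma$-structure on $E \ltimes \sigma$ by construction.

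**The main verification** is the universal property. Given $f \colon F \to_B^{cb} E$ and $\#A \colon F |= \sigma$, I would define $\~f(w) := (f(w), \#B_w)$, where $\#B_w$ is the transport of the structure $\#A|[w]_F$ along the bijection $f|[w]_F \colon [w]_F \to [f(w)]_E = X_{f(w)}$; that is, $\#B_w$ is the pushforward of $\#A|[w]_F$ under $f$, living in $\Mod_{X_{f(w)}}(\sigma)$ since $f$ restricted to the class is an isomorphism of sets and $\#A|[w]_F |= \sigma$. Borelness of $\~f$ follows from Borelness of $f$, $\#A$, and the chosen enumeration. Then $\pi \circ \~f = f$ is immediate, class-bijectivity of $\~f$ follows from that of $f$ together with the fact that $w, w'$ in the same $F$-class map to the same second coordinate $\#B_w = \#B_{w'}$, and the pullback identity $\#A = \~f_F^{-1}(\#E)$ is exactly the statement that $\#B_{f(w)}$ was defined by transporting $\#A$ along $f$. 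For \emph{uniqueness}, any $\~g$ with $\pi \circ \~g = f$ has first coordinate forced to be $f$, and the condition $\#A = \~g_F^{-1}(\#E)$ forces the second coordinate to be the pushforward of $\#A|[w]_F$, which is $\#B_w$; hence $\~g = \~f$.

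**The hard part will be** the measurability bookkeeping rather than any conceptual difficulty: assembling the fiberwise spaces $\Mod_{X_x}(\sigma)$ into a single standard Borel space over $X$ and checking that $E \ltimes \sigma$ is genuinely a \emph{countable} Borel equivalence relation on a standard Borel space, with $\pi$, $\#E$, and $\~f$ all Borel. The cleanest route is to avoid the varying universe $X_x$ altogether by transporting everything to a fixed countable universe via the Borel enumeration: using the enumeration $x \mapsto (x_0, x_1, \dotsc)$, identify $\Mod_{X_x}(\sigma)$ with a Borel subset of the fixed space $\Mod_\#N(\sigma)$, so that $E \ltimes \sigma$ embeds into $X \times \Mod_\#N(\sigma)$ and inherits a standard Borel structure. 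Countability of classes then reduces to countability of $E$-classes (the second coordinate is constant on each class), and all the relevant maps become Borel because coordinate-extraction and the logic action are Borel. I would also note that this same construction, being a fiberwise parametrization, is precisely the ``relative'' refinement promised in the introduction, and that taking $E = E_\infty$ recovers $E_{\infty\sigma}$ of \cref{thm:11} as $E_\infty \ltimes \sigma$.
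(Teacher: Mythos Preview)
Your proposal is correct and follows essentially the same approach as the paper: define $E \ltimes \sigma$ as pairs $(x, \#B)$ with $\#B$ a $\sigma$-model on $[x]_E$, set $\~f(w) = (f(w), f_*(\#A|[w]_F))$, and make everything Borel by transporting $\Mod_{[x]_E}(\sigma)$ to $\Mod_\#N(\sigma)$ via a Lusin--Novikov enumeration. The only cosmetic difference is that the paper explicitly restricts to the aperiodic case (where the enumeration is a bijection $\#N \to [x]_E$) and remarks that the finite case is handled similarly, rather than working with varying $X_x \subseteq \#N$ as you do; also, the Feldman--Moore reference is not needed, since Lusin--Novikov alone gives the enumeration.
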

\begin{proof}
First we describe $E \ltimes \sigma$ while ignoring all questions of Borelness, then we verify that the construction can be made Borel.

Ignoring Borelness, $E \ltimes \sigma$ will live on a set $Z$ and will have the following form: for each $E$-class $C \in X/E$, and each $\sigma$-structure $\#B$ on the universe $C$, there will be one $(E \ltimes \sigma)$-class lying over $C$ (i.e., projecting to $C$ via $\pi$), which will have the $\sigma$-structure given by pulling back $\#B$.  Thus we put
\begin{gather*}
Z := \{(x, \#B) \mid x \in X,\, \#B \in \Mod_{[x]_E}(\sigma)\}, \\
(x, \#B) \mathrel{(E \ltimes \sigma)} (x', \#B') \iff x \mathrel{E} x' \AND \#B = \#B', \\
\pi(x, \#B) := x,
\end{gather*}
with the $\sigma$-structure $\#E$ on $E \ltimes \sigma$ given by
\begin{align*}
R^\#E((x_1, \#B), \dotsc, (x_n, \#B)) \iff R^\#B(x_1, \dotsc, x_n)
\end{align*}
for $n$-ary $R \in L$, $x_1 \mathrel{E} \dotsb \mathrel{E} x_n$, and $\#B \in \Mod_{[x_1]_E}(\sigma)$.  It is immediate that $\pi$ is class-bijective and that $\#E$ satisfies $\sigma$.  The universal property is also straightforward: given $(Y, F), f, \#A$ as above, the map $\~f$ is given by
\begin{align*}
\~f(y) := (f(y), f(\#A|[y]_F)) \in Z,
\end{align*}
and this choice is easily seen to be unique by the requirements $f = \pi \circ \~f$ and $\#A = \~f^{-1}_F(\#E)$.

Now we indicate how to make this construction Borel.  The only obstruction is the use of $\Mod_{[x]_E}(\sigma)$ which depends on $x$ in the definition of $Z$ above.  We restrict to the case where $E$ is aperiodic; in general, we may split $E$ into its finite part and aperiodic part, and it will be clear that the finite case can be handled similarly.  In the aperiodic case, the idea is to replace $\Mod_{[x]_E}(\sigma)$ with $\Mod_\#N(\sigma)$, where $[x]_E$ is identified with $\#N$ but in a manner which varies depending on $x$.

Let $T : X -> X^\#N$ be a Borel map such that each $T(x)$ is a bijection $\#N -> [x]_E$ (the existence of such $T$ is easily seen from Lusin-Novikov uniformization), and replace $\Mod_{[x]_E}(\sigma)$ with $\Mod_\#N(\sigma)$ while inserting $T(x)$ into the appropriate places in the above definitions:
\begin{gather*}
Z := \{(x, \#B) \mid x \in X,\, \#B \in \Mod_\#N(\sigma)\} = X \times \Mod_\#N(\sigma), \\
(x, \#B) \mathrel{(E \ltimes \sigma)} (x', \#B') \iff x \mathrel{E} x' \AND T(x)(\#B) = T(x')(\#B'), \\
R^\#E((x_1, \#B_1), \dotsc, (x_n, \#B_n)) \iff R^{T(x_1)(\#B_1)}(x_1, \dotsc, x_n), \\
\~f(y) := (f(y), T(f(y))^{-1}(f(\#A|[y]_F))).
\end{gather*}
These are easily seen to be Borel and still satisfy the requirements of the theorem.
\end{proof}

\begin{remark}
It is clear that $E \ltimes \sigma$ satisfies a universal property in the formal sense of category theory.  This in particular means that $(E \ltimes \sigma, \pi, \#E)$ is unique up to \emph{unique} (Borel) isomorphism.
\end{remark}

\begin{remark}
\label{rmk:skew-product}
The construction of $E \ltimes \sigma$ for aperiodic $E$ in the proof of \cref{thm:esigma} can be seen as an instance of the following general notion (see e.g., \cite[10.E]{Kgaega}):

Let $(X, E)$ be a Borel equivalence relation, and let $\Gamma$ be a (Borel) group.  Recall that a \defn{Borel cocycle} $\alpha : E -> \Gamma$ is a Borel map satisfying $\alpha(y, z) \alpha(x, y) = \alpha(x, z)$ for all $x, y, z \in X$, $x \mathrel{E} y \mathrel{E} z$.  Given a cocycle $\alpha$ and a Borel action of $\Gamma$ on a standard Borel space $Y$, the \defn{skew product}\index{skew product $E \ltimes_\alpha Y$} $E \ltimes_\alpha Y$ is the Borel equivalence relation on $X \times Y$ given by
\begin{align*}
(x, y) \mathrel{(E \ltimes_\alpha Y)} (x', y') &\iff x \mathrel{E} x' \AND \alpha(x, x') \cdot y = y'.
\end{align*}
Note that for such a skew product, the first projection $\pi_1 : X \times Y -> X$ is always a class-bijective homomorphism $E \ltimes_\alpha Y ->_B^{cb} E$.

Now given a family $T : X -> X^\#N$ of bijections $\#N \cong_B [x]_E$, as in the proof of \cref{thm:esigma}, we call $\alpha_T : E -> S_\infty$ given by $\alpha_T(x, x') := T(x')^{-1} \circ T(x)$ the \defn{cocycle induced by $T$}.  Then the construction of $E \ltimes \sigma$ for aperiodic $E$ can be seen as the skew product $E \ltimes_{\alpha_T} \Mod_\#N(\sigma)$ (with the logic action of $S_\infty$ on $\Mod_\#N(\sigma)$).  (However, the structure $\#E$ on $E \ltimes_{\alpha_T} \Mod_\#N(\sigma)$ depends on $T$, not just on $\alpha_T$.)
\end{remark}

\cref{thm:esigma} has the following consequence:

\begin{corollary}[Kechris-Solecki-Todorcevic, Miller]
\label{thm:einftysigma}
For every theory $(L, \sigma)$, there is an invariantly universal $\sigma$-structurable countable Borel equivalence relation $E_{\infty\sigma}$, i.e., $E_{\infty\sigma} |= \sigma$, and $F \sqle_B^i E_{\infty\sigma}$ for any other $F |= \sigma$.
\index{universal $\sigma$-structured $E_{\infty\sigma}$}
\end{corollary}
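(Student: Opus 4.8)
The plan is to obtain $E_{\infty\sigma}$ as the universal $\sigma$-structured equivalence relation lying over the invariantly universal $E_\infty \in \@E$ (which exists by the Feldman--Moore theorem; e.g.\ $E_\infty = E(\#F_\omega, \#R)$). Concretely, set $E_{\infty\sigma} := E_\infty \ltimes \sigma$ and take $\pi : E_{\infty\sigma} ->_B^{cb} E_\infty$ and $\#E : E_{\infty\sigma} |= \sigma$ as provided by \cref{thm:esigma}. Then $E_{\infty\sigma} \in \@E$ and $\#E$ witnesses $E_{\infty\sigma} |= \sigma$ directly from that theorem, so $E_{\infty\sigma}$ is itself $\sigma$-structurable; it remains only to establish its invariant universality among $\sigma$-structurable relations.

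For this, fix an arbitrary $F \in \@E$ with $F |= \sigma$, witnessed by some Borel $\sigma$-structure $\#A : F |= \sigma$. Since $E_\infty$ is invariantly universal, there is an invariant embedding $g : F \sqle_B^i E_\infty$, and by definition this is a class-bijective reduction, so in particular a class-bijective homomorphism $g : F ->_B^{cb} E_\infty$. Feeding the triple $(F, g, \#A)$ into the universal property of \cref{thm:esigma} then yields a unique class-bijective homomorphism $\~g : F ->_B^{cb} E_{\infty\sigma}$ with $g = \pi \circ \~g$.

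The one substantive point is to upgrade $\~g$ from a class-bijective homomorphism to an invariant embedding $F \sqle_B^i E_{\infty\sigma}$, i.e.\ to a class-bijective \emph{reduction}; since $\~g$ is already class-bijective, only the reduction property is missing. I would obtain it by pulling the reduction $g$ back through $\pi$: if $\~g(y) \mathrel{(E_\infty \ltimes \sigma)} \~g(y')$, then applying the homomorphism $\pi$ gives $g(y) = \pi(\~g(y)) \mathrel{E_\infty} \pi(\~g(y')) = g(y')$, whence $y \mathrel{F} y'$ because $g$ is a reduction. Thus $\~g$ is the desired invariant embedding. This is exactly where I expect the main content to sit: \cref{thm:esigma} alone only manufactures a class-bijective homomorphism over $E_\infty$, and it is the freedom to take the base map $g$ to be an invariant embedding — available precisely because $E_\infty$ is invariantly universal — that lets the extra reduction property propagate up through $\pi$ to $\~g$. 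The facts that $E_{\infty\sigma} \in \@E$ and that $\#E$ satisfies $\sigma$ are already discharged inside \cref{thm:esigma} and need nothing further.
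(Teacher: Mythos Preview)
Your proof is correct and follows essentially the same approach as the paper: set $E_{\infty\sigma} := E_\infty \ltimes \sigma$, take an invariant embedding $g : F \sqle_B^i E_\infty$, apply the universal property of \cref{thm:esigma} to get $\~g$ with $g = \pi \circ \~g$, and then use this factorization to upgrade $\~g$. The only cosmetic difference is that the paper observes that injectivity of $g$ forces injectivity of $\~g$ (which together with class-bijectivity gives an invariant embedding), whereas you verify the reduction property directly; both are immediate from $g = \pi \circ \~g$.
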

\begin{proof}
Put $E_{\infty\sigma} := E_\infty \ltimes \sigma$.  For any $F |= \sigma$, we have an invariant embedding $f : F \sqle_B^i E_\infty$, whence there is $\~f : F ->_B^{cb} E_\infty \ltimes \sigma = E_{\infty\sigma}$ such that $f = \pi \circ \~f$; since $f$ is injective, so is $\~f$.
\end{proof}

In other words, every elementary class $\@E_\sigma$ of countable Borel equivalence relations has an invariantly universal element $E_{\infty\sigma}$ (which is unique up to isomorphism).  For a Borel class of structures $\@K$, we denote the invariantly universal $\@K$-structurable equivalence relation by $E_{\infty\@K}$.  For an $L$-structure $\#A$, we denote the invariantly universal $\#A$-structurable equivalence relation by $E_{\infty\#A}$.

As a basic application, we can now rule out the elementarity of a class of equivalence relations mentioned in \cref{sec:elem-examples}:

\begin{corollary}
\label{thm:union-nonelem}
If $(\@C_i)_{i \in I}$ is a collection of elementary classes of countable Borel equivalence relations, then $\bigcup_i \@C_i$ is \emph{not} elementary, unless there is some $j$ such that $\bigcup_i \@C_i = \@C_j$.

In particular, the class of equivalence relations generated by a free Borel action of some countable group is \emph{not} elementary.
\end{corollary}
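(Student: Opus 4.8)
The statement is equivalent to the implication: if $\@D := \bigcup_i \@C_i$ is elementary, then $\@D = \@C_j$ for some $j$; this is what the plan would establish. The argument rests entirely on two facts proved above: the existence of an invariantly universal element in any elementary class (\cref{thm:einftysigma}), and the downward closure of elementary classes under class-bijective homomorphisms (\cref{thm:elem-classbij}).

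So suppose $\@D = \@E_\tau$ is elementary. First I would use \cref{thm:einftysigma} to produce its invariantly universal element $E_{\infty\tau} \in \@D$. The crucial (and essentially trivial) observation is that $E_{\infty\tau}$ is a \emph{single} equivalence relation, so even though it is universal for the whole union it must already lie in one of the pieces: there is an index $j$ with $E_{\infty\tau} \in \@C_j$.

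It then remains to propagate this into a statement about all of $\@D$. Given an arbitrary $F \in \@D = \@E_\tau$, invariant universality yields $F \sqle_B^i E_{\infty\tau}$. An invariant embedding is by definition a class-bijective reduction, hence in particular a class-bijective homomorphism $F ->_B^{cb} E_{\infty\tau}$ (the relevant containment in the hierarchy of \cref{sec:homoms}); since $\@C_j$ is elementary and $E_{\infty\tau} \in \@C_j$, \cref{thm:elem-classbij} gives $F \in \@C_j$. Thus $\@D \subseteq \@C_j$, and the reverse inclusion being immediate, $\@D = \@C_j$. There is no real obstacle here; the only point demanding care is checking that $\sqle_B^i$ does feed into the closure property of \cref{thm:elem-classbij}.

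For the final assertion I would apply this to $\@D := \bigcup_\Gamma \@E_\Gamma$, the union over all countable groups $\Gamma$ of the (elementary) classes $\@E_\Gamma$ axiomatized by the Scott sentences $\sigma_\Gamma$. By the first part, were $\@D$ elementary it would equal a single $\@E_{\Gamma_0}$, so it suffices to exhibit a free-action-generated equivalence relation outside any one $\@E_{\Gamma_0}$. Here I would use the coarsest possible invariant: a free action of $\Gamma$ has all classes of cardinality $|\Gamma|$, so $\@E_{\Gamma_0}$ consists only of equivalence relations whose classes have size $|\Gamma_0|$. Picking any countable group $\Delta$ with $|\Delta| \ne |\Gamma_0|$ — a finite cyclic group of the right order works — a free Borel $\Delta$-action generates an $E \in \@E_\Delta \subseteq \@D$ with $E \notin \@E_{\Gamma_0}$, contradicting $\@D = \@E_{\Gamma_0}$.
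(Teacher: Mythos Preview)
Your argument for the first part is correct and essentially identical to the paper's: pick the invariantly universal element of the union, locate it in some $\@C_j$, and use downward closure of $\@C_j$ under $\sqle_B^i$ (hence $->_B^{cb}$) to absorb the whole union into $\@C_j$.

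For the second part your approach diverges from the paper's, and there is a genuine gap. The paper rules out $\bigcup_\Gamma \@E_\Gamma = \@E_{\Gamma_0}$ by a dichotomy on amenability: if $\Gamma_0$ is amenable then $\@E_{\Gamma_0}$ cannot contain $F(\#F_2,2)$, while if $\Gamma_0$ is non-amenable then $\@E_{\Gamma_0}$ cannot contain $E_0$. Your cardinality argument instead relies on producing a group $\Delta$ with $|\Delta| \ne |\Gamma_0|$. But in this paper (and in the subject generally) the groups $\Gamma$ under consideration are countably infinite --- see for instance the standing hypothesis in \cref{sec:freeact} and the definition of $E_{\infty\Gamma}$ --- so every $\Gamma$ in the union has $|\Gamma| = \aleph_0$ and no such $\Delta$ is available within the family. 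Even if one reads ``countable'' as allowing finite groups, your argument then only separates finite from infinite and says nothing about the substantive case where all $\Gamma$ are infinite, which is precisely what the amenability dichotomy handles. You need an invariant finer than the cardinality of classes; the paper's choice (amenability versus $F(\#F_2,2)$ and $E_0$) is one clean way to do this.
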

\begin{proof}
If $\bigcup_i \@C_i$ is elementary, then it has an invariantly universal element $E$, which is in some $\@C_j$; then for every $i$ and $F \in \@C_i$, we have $F \sqle_B^i E \in \@C_j$, whence $F \in \@C_j$ since $\@C_j$ is elementary.

For the second statement, the class in question is $\bigcup_\Gamma \@E_\Gamma$ where $\Gamma$ ranges over countable groups (and $\@E_\Gamma$ is the class of equivalence relations generated by a free Borel action of $\Gamma$); and there cannot be a single $\@E_\Gamma$ which contains all others, since if $\Gamma$ is amenable then $\@E_\Gamma$ does not contain $F(\#F_2, 2)$ (see \cite[A4.1]{HK}), while if $\Gamma$ is not amenable then $\@E_\Gamma$ does not contain $E_0$ (see \cite[2.3]{Kamt}).
\end{proof}

We also have

\begin{proposition}
Let $\@C$ denote the class of countable increasing unions of equivalence relations generated by free Borel actions of (possibly different) countable groups.  Then $\@C$ does not have a $\le_B$-universal element, hence is not elementary.
\end{proposition}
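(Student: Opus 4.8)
The plan is to show that $\@C$ cannot have a $\le_B$-universal element by exploiting the fact that membership in $\@C$ is witnessed by a countable increasing union of \emph{free} actions of countable groups, and that free actions of a fixed group impose genuine constraints (amenability versus nonamenability) that cannot all be absorbed by a single equivalence relation. First I would observe that any fixed $E \in \@C$ is, by definition, a countable increasing union $E = \bigcup_n E_n$ where each $E_n = E_{\Gamma_n}^{X}$ is generated by a free Borel action of some countable group $\Gamma_n$. The key point is that an increasing union of finitely many (or even countably many specified) free actions is itself $\@K$-structurable for an appropriate theory, namely the elementary class cut out by allowing, on each class, one of the $\Gamma_n$-action structures together with the coherence data of the union. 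Thus each individual $E \in \@C$ lies in \emph{some} elementary subclass of $\@C$, and by \cref{thm:einftysigma} each such elementary subclass has its own invariantly universal element.

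Next I would argue by contradiction: suppose $U \in \@C$ is $\le_B$-universal for $\@C$. Since $U \in \@C$, the groups appearing in the defining union for $U$ form a fixed countable family $(\Gamma_n)_n$. The strategy is then to produce an equivalence relation $F \in \@C$ that cannot reduce to $U$, by choosing $F$ to be generated by a free action of a group $\Gamma$ whose amenability type is incompatible with everything reducing to $U$. Concretely, I would invoke the two obstructions already used in the proof of \cref{thm:union-nonelem}: if all the $\Gamma_n$ are amenable, then $U$ is hyperfinite-like in the measure-theoretic sense and cannot have $F(\#F_2, 2)$ reducing into it (via the cocycle/measure rigidity of \cite[A4.1]{HK}); whereas if some $\Gamma_n$ is nonamenable, one still constructs an $F$ (e.g.\ built from $E_0$ or from a free amenable action) that fails to reduce to $U$ by the orbit-equivalence-invariant obstructions of \cite[2.3]{Kamt}. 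The crucial feature is that $\le_B$-reducibility, when combined with the structurability constraints, behaves enough like the finer $\sqle_B^i$ to transport these amenability obstructions.

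The main obstacle, and the step requiring the most care, is exactly this transfer: a $\le_B$-reduction is much weaker than a class-bijective homomorphism or an invariant embedding, so the clean structurability machinery of \cref{thm:esigma} and \cref{thm:einftysigma} does not apply directly. I expect the real work to lie in showing that if $F \le_B U$ and $U$ is a countable increasing union of free actions of a \emph{fixed} countable collection of groups, then $F$ is forced to carry a measure-theoretic invariant (such as amenability of its orbit equivalence relation, or the nonexistence of a free $\#F_2$-action on a positive-measure piece) inherited from $U$. Once that transfer is established, the contradiction is immediate: one simply picks $F$ violating the inherited invariant, exactly as in \cref{thm:union-nonelem}, and concludes that no single $U \in \@C$ can be $\le_B$-universal. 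Finally, since an elementary class always possesses an invariantly universal (hence $\le_B$-greatest) element by \cref{thm:einftysigma}, the failure of a $\le_B$-universal element immediately yields that $\@C$ is not elementary.
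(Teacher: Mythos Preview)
Your dichotomy on the amenability of the groups $\Gamma_n$ witnessing $U \in \@C$ does not work, and the gap is precisely in the branch you flag as ``requiring the most care''. The obstructions you borrow from \cref{thm:union-nonelem} are obstructions to \emph{structurability} (i.e., to $\to_B^{cb}$), not to $\le_B$: the statement that $E_0 \notin \@E_\Gamma$ for nonamenable $\Gamma$ says that $E_0$ is not generated by a free action of $\Gamma$, but it says nothing about whether $E_0 \le_B F(\Gamma,\#R)$. In fact $E_0 \le_B E$ for \emph{every} non-smooth $E$ by the Glimm--Effros dichotomy, so your proposed witness $F = E_0$ (or any hyperfinite $F$) always reduces to $U$. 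Worse, since $F(\#F_2,2) \in \@C$, any putative $\le_B$-universal $U$ must satisfy $F(\#F_2,2) \le_B U$; if your first branch is correct this already forces some $\Gamma_n$ to be nonamenable, so you are necessarily in the second branch, where you have no argument.

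The paper's proof takes a completely different route, and the tools are substantially heavier. The idea is not to exploit amenability but rather the cardinality of finitely generated groups: given the countable list $(\Gamma_n)_n$ witnessing $U \in \@C$, one picks a finitely generated group $L$ that embeds into none of the $\Gamma_n$, sets $\Delta := SL_3(\#Z) \times (L * \#Z)$, and takes $F = F(\Delta,2)$ with its product measure $\mu$. A cocycle superrigidity result of Thomas then gives $F(\Delta,2)|Z \not\le_B E_n$ for each fixed $n$, on a conull set $Z$. The remaining (and genuinely nontrivial) step is to pass from non-reducibility to each $E_n$ to non-reducibility to the union: this uses a recent theorem of Gaboriau--Tucker-Drob, which, via strong ergodicity of the $SL_3(\#Z)$ factor, shows that any reduction $F(\Delta,2) \le_B \bigcup_n E_n$ must agree with a reduction into a single $E_n$ on a set of positive measure. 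That yields the contradiction. Nothing in this chain is accessible via the amenable/nonamenable dichotomy you propose.
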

\begin{proof}
Let $E = \bigcup_n E_n \in \@C$ be the countable increasing union of countable Borel equivalence relations $E_0 \subseteq E_1 \subseteq \dotsb$ on $X$, where each $E_n$ is generated by a free Borel action of a countable group $\Gamma_n$.  Since there are uncountably many finitely generated groups, there is a finitely generated group $L$ such that $L$ does not embed in any $\Gamma_n$.  Put $\Delta := SL_3(\#Z) \times (L * \#Z)$, and let $F(\Delta, 2)$ live on $Y \subseteq 2^\Delta$ (the free part of the shift action), with its usual product probability measure $\mu$.  By \cite[3.6]{T2} (see also 3.7--9 of that paper), $F(\Delta, 2)|Z \not\le_B E_n$ for each $n$ and $Z \subseteq Y$ of $\mu$-measure $1$.

If $E$ were $\le_B$-universal in $\@C$, then we would have some $f : F(\Delta, 2) \le_B E$.  Let $F_n := f^{-1}(E_n)$, so that $F(\Delta, 2) = \bigcup_n F_n$.  By \cite[1.1]{GT} (using that $SL_3(\#Z)$ acts strongly ergodically \cite[A4.1]{HK}), there is an $n$ and a Borel $A \subseteq Y$ with $\mu(A) > 0$ such that $F(\Delta, 2)|A = F_n|A$.  By ergodicity of $\mu$, $Z := [A]_{F(\Delta, 2)}$ has $\mu$-measure $1$; but $F(\Delta, 2)|Z \sim_B F(\Delta, 2)|A = F_n|A \le_B E_n$, a contradiction.
\end{proof}

We conclude this section by explicitly describing the invariantly universal equivalence relation in several elementary classes:
\begin{itemize}
\item  The $\sqle_B^i$-universal finite Borel equivalence relation is $\bigoplus_{1 \le n \in \#N} (\Delta_\#R \times I_n)$.
\item  The $\sqle_B^i$-universal aperiodic smooth countable Borel equivalence relation is $\Delta_\#R \times I_\#N$.
\item  The $\sqle_B^i$-universal aperiodic hyperfinite Borel equivalence relation is $2^{\aleph_0} \cdot E_0 = \Delta_\#R \times E_0$, and the $\sqle_B^i$-universal compressible hyperfinite Borel equivalence relation is $E_t$ (see \cref{thm:hyperfinite}).
\item  The $\sqle_B^i$-universal countable Borel equivalence relation is $E_\infty$, and the $\sqle_B^i$-universal compressible Borel equivalence relation is $E_\infty \times I_\#N$ (see \cref{sec:compress}).
\item  For a countable group $\Gamma$, the $\sqle_B^i$-universal equivalence relation $E_{\infty\Gamma}$ generated by a free Borel action of $\Gamma$ is $F(\Gamma, \#R)$.
\end{itemize}

\subsection{The ``Scott sentence'' of an equivalence relation}
\label{sec:sigmae}

We now associate to every $E \in \@E$ a ``Scott sentence'' $\sigma_E$.  Just as the Scott sentence $\sigma_\#A$ of an ordinary first-order structure $\#A$ axiomatizes structures isomorphic to $\#A$, the ``Scott sentence'' $\sigma_E$ will axiomatize equivalence relations class-bijectively mapping to $E$.

\begin{theorem}
\label{thm:sigmae}
Let $(X, E) \in \@E$ be a countable Borel equivalence relation.  Then there is a sentence $\sigma_E$ (in some fixed language not depending on $E$) and a $\sigma_E$-structure $\#H : E |= \sigma_E$, such that for any $F \in \@E$ and $\#A : F |= \sigma_E$, there is a unique class-bijective homomorphism $f : F ->_B^{cb} E$ such that $\#A = f^{-1}_F(\#H)$.  This is illustrated by the following diagram:
\begin{equation*}
\begin{tikzcd}
F \dar[dashed][swap]{f} \drar[models]{\#A} \\
E \rar[models][swap]{\#H} & \sigma_E
\end{tikzcd}
\end{equation*}
\end{theorem}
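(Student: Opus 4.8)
The plan is to build $\sigma_E$ so that a $\sigma_E$-structure on $F$ literally encodes a class-bijective homomorphism $F ->_B^{cb} E$, exactly as the ordinary Scott sentence encodes an isomorphism. Fix once and for all a Borel injection $\iota \colon X \hookrightarrow 2^\#N$ (available since $X$ is standard Borel) and, via the Feldman--Moore theorem \cite{FM}, a Borel action of $\#F_\omega$ on $X$ generating $E$. I take the fixed language $L := \{P_n \mid n \in \#N\}$ of countably many \emph{unary} predicates, and define $\#H$ on $E$ by
\begin{align*}
P_n^{\#H}(x) &\iff \iota(x)(n) = 1.
\end{align*}
The point of using unary predicates is that any $L$-structure $\#A$ on an equivalence relation $F$ on $Y$ encodes a Borel map $f_{\#A} \colon Y \to 2^\#N$, $f_{\#A}(y)(n) = 1 \iff P_n^{\#A}(y)$, and conversely $\#A$ is recovered from $f_{\#A}$. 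The sentence $\sigma_E$ will assert precisely that $f_{\#A}$, restricted to the single class forming the universe of a model, is the $\iota$-image of a bijection onto one $E$-class respecting $E$.

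Concretely, $\sigma_E$ is the conjunction of the following clauses, read on a universe $U$: (i) $f_{\#A}|U$ is injective, i.e. $\forall u\, \forall v\, (\bigwedge_n (P_n(u) \leftrightarrow P_n(v)) \to u = v)$; (ii) every value lies in $\iota(X)$; (iii) any two values are $E$-related; and (iv) for each $\gamma \in \#F_\omega$ and each $u$ there is $v$ with $f_{\#A}(v) = \gamma \cdot f_{\#A}(u)$ in the action transported along $\iota$. Clauses (ii)--(iv) refer to the Borel sets $\iota(X) \subseteq 2^\#N$, $(\iota \times \iota)(E) \subseteq (2^\#N)^2$, and the graphs $\{(a, \gamma \cdot a)\} \subseteq (2^\#N)^2$. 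The one technical point, and the main thing to get right, is that each of these is a Borel condition on bit-sequences and is therefore expressible by an $L_{\omega_1\omega}$-formula in the predicates $P_n(u)$ (and $P_n(v)$): this is the routine direction of the Lopez--Escobar correspondence \cite{LE}, obtained by writing the Borel set as a transfinite combination of basic clopen sets and translating $\bigcup, \bigcap, {}^c$ into $\bigvee, \bigwedge, \neg$. Granting this, clause (iv) expresses surjectivity of $f_{\#A}|U$ onto the $E$-class $[f_{\#A}(u)]_E$ --- the group action is what lets me quantify over the target class using only the internal quantifier $\exists v$ --- while (i)--(iii) make $f_{\#A}|U$ an injection onto a subset of a single class.

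It then remains to check the two assertions. First, $\#H \models \sigma_E$: on an $E$-class $C$ the induced map is $\iota|C$, which is injective, lands in $\iota(X)$, has all values $E$-equivalent, and is surjective onto $C$ since $C$ is $\#F_\omega$-invariant. Second, given $\#A \colon F \models \sigma_E$ on $Y$, set $f(y) := \iota^{-1}(f_{\#A}(y)) \in X$, which is well-defined and Borel by (ii) (using that a Borel injection has Borel inverse on its image); clauses (iii), (i), (iv) make $f$ a homomorphism, class-injective, and class-surjective respectively, so $f \colon F ->_B^{cb} E$, and unwinding definitions gives $\#A = f^{-1}_F(\#H)$ (classwise pullback agrees with pullback for unary predicates, the side condition $x_1 \mathrel{E} \dotsb \mathrel{E} x_n$ being vacuous when $n = 1$). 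Uniqueness is immediate: any $g$ with $\#A = g^{-1}_F(\#H)$ satisfies $\iota(g(y))(n) = P_n^{\#A}(y) = \iota(f(y))(n)$ for all $n$, so $g = f$ as $\iota$ is injective. Unlike in \cref{thm:esigma}, there is no need to treat the finite part of $E$ separately: the construction is uniform in $x$, clause (iv) being correct for finite classes as well.
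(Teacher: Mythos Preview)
Your proof is correct and follows essentially the same approach as the paper: encode points of $X\subseteq 2^{\#N}$ via unary predicates, so that an $L$-structure on $F$ is exactly a Borel map $Y\to 2^{\#N}$, and then let $\sigma_E$ assert that this map is a class-bijection onto an $E$-class. The only cosmetic difference is that the paper's explicit version uses Lusin--Novikov (covering $E$ by graphs of Borel functions $G_i$) where you use Feldman--Moore (covering $E$ by orbits of an $\#F_\omega$-action); both serve the same purpose of making class-surjectivity first-order expressible, and the paper's abstract version bypasses even this by directly invoking Lopez--Escobar on the Borel class of structures whose encoded map is a bijection onto an $E$-class.
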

\begin{proof}
We may assume that $X$ is a Borel subspace of $2^\#N$.  Let $L = \{R_0, R_1, \dotsc\}$ where each $R_i$ is unary.  The idea is that a Borel $L$-structure will code a Borel map to $X \subseteq 2^\#N$.  Note that since $L$ is unary, there is no distinction between Borel $L$-structures on $X$ and Borel $L$-structures on $E$, or between pullback $L$-structures and classwise pullback $L$-structures.

Let $\#H'$ be the Borel $L$-structure on $2^\#N$ given by
\begin{align*}
R_i^{\#H'}(x) \iff x(i) = 1.
\end{align*}
It is clear that for any standard Borel space $Y$, we have a bijection
\begin{equation*}
\begin{aligned}
\{\text{Borel maps $Y ->_B 2^\#N$}\} &<--> \{\text{Borel $L$-structures on $Y$}\} \\
f &|--> f^{-1}(\#H') \\
(y |-> (i |-> R_i^\#A(y))) &<--| \#A.
\end{aligned}
\tag{$*$}
\end{equation*}
It will suffice to find an $L_{\omega_1\omega}$-sentence $\sigma_E$ such that for all $(Y, F) \in \@E$ and $f : Y ->_B 2^\#N$,
\begin{equation*}
f^{-1}(\#H') : F |= \sigma_E \iff f(Y) \subseteq X \AND f : F ->_B^{cb} E.
\tag{$**$}
\end{equation*}
Indeed, we may then put $\#H := \#H'|X$, and ($*$) will restrict to a bijection between class-bijective homomorphisms $F ->_B^{cb} E$ and $\sigma_E$-structures on $F$, as claimed in the theorem.

Now we find $\sigma_E$ satisfying ($**$).  The conditions $f(Y) \subseteq X$ and $f : F ->_B^{cb} E$ can be rephrased as: for each $F$-class $D \in Y/F$, the restriction $f|D : D -> 2^\#N$ is a bijection between $D$ and some $E$-class.  Using ($*$), this is equivalent to: for each $F$-class $D \in Y/F$, the structure $\#B := f^{-1}(\#H')|D$ on $D$ is such that
\begin{equation*}
y |-> (i |-> R_i^\#B(y)) \text{ is a bijection from the universe of $\#B$ to some $E$-class}.
\tag{$*{*}*$}
\end{equation*}
So it suffices to show that the class $\@K$ of $L$-structures $\#B$ satisfying ($*{*}*$) is Borel (so we may let $\sigma_E$ be any $L_{\omega_1\omega}$-sentence axiomatizing $\@K$), i.e., that for any $I = 1, 2, \dotsc, \#N$, $\@K \cap \Mod_I(L) \subseteq \Mod_I(L)$ is Borel.  Using ($*$) again, $\@K \cap \Mod_I(L)$ is the image of the Borel injection
\begin{align*}
\{\text{bijections $I ->{}$(some $E$-class)}\} &--> \Mod_I(L) \hspace{1in} \\
f &|--> f^{-1}(\#H').
\end{align*}
The domain of this injection is clearly a Borel subset of $X^I$, whence its image is Borel.
\end{proof}

In the rest of this section, we give an alternative, more ``explicit'' construction of $\sigma_E$ (rather than obtaining it from Lopez-Escobar's definability theorem as in the above proof).  Using the same notations as in the proof, we want to find $\sigma_E$ satisfying ($**$).

By Lusin-Novikov uniformization, write $E = \bigcup_i G_i$ where $G_0, G_1, \dotsc \subseteq X^2$ are graphs of (total) Borel functions.  For each $i$, let $\phi_i(x, y)$ be a quantifier-free $L_{\omega_1\omega}$-formula whose interpretation in the structure $\#H'$ is $\phi_i^{\#H'} = G_i \subseteq (2^\#N)^2$.  (Such a formula can be obtained from a Borel definition of $G_i \subseteq (2^\#N)^2$ in terms of the basic rectangles $R_j^{\#H'} \times R_k^{\#H'}$, by replacing each $R_j^{\#H'} \times R_k^{\#H'}$ with $R_j(x) \wedge R_k(y)$.)  Define the $L_{\omega_1\omega}$-sentences:
\begin{align*}
\sigma^h_E &:= \forall x\, \forall y\, \bigvee_i \phi_i(x, y), \\
\sigma^{ci}_E &:= \forall x\, \forall y\, (\bigwedge_i (R_i(x) <-> R_i(y)) -> x = y), \\
\sigma^{cs}_E &:= \forall x\, \bigwedge_i \exists y\, \phi_i(x, y).
\end{align*}

\begin{lemma}
\label{lm:sigmae}
In the notation of the proof of \cref{thm:sigmae},
\begin{equation*}
\begin{aligned}
f^{-1}(\#H') : F &|= \sigma^h_E &&\iff && f(Y) \subseteq X \AND f : F ->_B E, \\
f^{-1}(\#H') : F &|= \sigma^{ci}_E &&\iff && f|D : D -> 2^\#N \text{ is injective } \forall D \in Y/F, \\
f^{-1}(\#H') : F &|= \sigma^{cs}_E &&\iff && f(Y) \subseteq X \AND f(D) \text{ is $E$-invariant } \forall D \in Y/F.
\end{aligned}
\end{equation*}
\end{lemma}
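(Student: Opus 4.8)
The three equivalences are entirely parallel, so my plan is to first isolate two translation facts about the pullback structure $f^{-1}(\#H')$ and then read off each case by unwinding satisfaction one $F$-class at a time. Since $f^{-1}(\#H') : F |= \sigma$ means exactly that $(f^{-1}(\#H'))|D |= \sigma$ for every $F$-class $D \in Y/F$, I will fix such a $D$ throughout and let all quantifiers range over $D$. The first fact I would record is that, because each $\phi_i$ is \emph{quantifier-free} (a Boolean combination of the atoms $R_j$), interpretation commutes with pullback: for $y, y' \in Y$ we have $\phi_i^{f^{-1}(\#H')}(y, y') \iff \phi_i^{\#H'}(f(y), f(y'))$, which by the defining property $\phi_i^{\#H'} = G_i$ is simply $(f(y), f(y')) \in G_i$. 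The second is that $R_i^{f^{-1}(\#H')}(y) \iff f(y)(i) = 1$, so that $\bigwedge_i (R_i(y) \leftrightarrow R_i(y'))$ holds iff $f(y)$ and $f(y')$ agree in every coordinate, i.e.\ $f(y) = f(y')$ as elements of $2^\#N$.

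With these in hand, each case becomes a matter of unwinding the quantifiers over $D$. For $\sigma^h_E = \forall x\, \forall y\, \bigvee_i \phi_i(x,y)$, satisfaction on $D$ says that for all $y, y' \in D$ there is some $i$ with $(f(y), f(y')) \in G_i$, i.e.\ $(f(y), f(y')) \in \bigcup_i G_i = E$; specializing to $y = y'$ and using $G_i \subseteq X^2$ forces $f(y) \in X$, while the general instance says precisely that $f$ is a homomorphism $F ->_B E$. For $\sigma^{ci}_E$, the second translation fact converts the hypothesis $\bigwedge_i(R_i(x) \leftrightarrow R_i(y))$ into $f(x) = f(y)$, so satisfaction on $D$ reads ``$f(y) = f(y') \implies y = y'$ for all $y, y' \in D$'', which is injectivity of $f|D$. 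For $\sigma^{cs}_E = \forall x\, \bigwedge_i \exists y\, \phi_i(x,y)$, I would use that each $G_i$ is the graph of a \emph{total} Borel function $g_i$ on $X$ and that, as $E = \bigcup_i \mathrm{graph}(g_i)$, the values $\{g_i(a) \mid i\}$ enumerate $[a]_E$ for each $a \in X$; unwinding then yields, for every $y \in D$ and every $i$, some $y' \in D$ with $f(y') = g_i(f(y))$, which again forces $f(y) \in X$ and is equivalent to $[f(y)]_E \subseteq f(D)$, i.e.\ to $f(D)$ being $E$-invariant and contained in $X$.

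The converse directions simply reverse these computations, so I do not anticipate real difficulty there. The one place to be careful is the bookkeeping around $X$: membership in any $G_i \subseteq X^2$ automatically places both coordinates in $X$, and it is exactly the reflexive instance for $\sigma^h_E$ and the existential witness for $\sigma^{cs}_E$ that pin down $f(y) \in X$, whereas $\sigma^{ci}_E$ imposes no such condition, consistent with the middle equivalence making no reference to $X$. The mildest but genuine subtlety, which I expect to be the main obstacle, is in the $\sigma^{cs}_E$ case: the argument uses totality of the $g_i$ in an essential way, since a partial $g_i$ with $f(y) \in X \setminus \mathrm{dom}(g_i)$ would defeat the $\exists y'$ clause even when $f(D)$ is $E$-invariant. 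Thus the clean statement relies on having chosen the $G_i$ to be graphs of total functions whose values collectively sweep out each $E$-class.
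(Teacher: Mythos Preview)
Your proposal is correct and follows essentially the same approach as the paper's proof: both unwind satisfaction classwise using the pullback translations $\phi_i^{f^{-1}(\#H')}(y,y') \iff (f(y),f(y')) \in G_i$ and $R_i^{f^{-1}(\#H')}(y) \iff f(y)(i)=1$, then read off each case, including the use of $y=y'$ to extract $f(y)\in X$ and the totality of the $g_i$ to handle $\sigma^{cs}_E$. Your explicit isolation of the two translation facts and your remark on why totality is needed are organizational improvements, but the argument is the same.
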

\begin{proof}
$f^{-1}(\#H') : F |= \sigma^h_E$ iff for all $(y, y') \in F$, there is some $i$ such that $\phi_i^{f^{-1}(\#H')}(y, y')$; $\phi_i^{f^{-1}(\#H')}(y, y')$ is equivalent to $\phi_i^{\#H'}(f(y), f(y'))$, i.e., $f(y) \mathrel{G_i} f(y')$, so we get that $f^{-1}(\#H') : F |= \sigma^h_E$ iff for all $(y, y') \in F$, we have $f(y) \mathrel{E} f(y')$.  (Taking $y = y'$ yields $f(y) \in X$.)

$f^{-1}(\#H') : F |= \sigma^{ci}_E$ iff for all $(y, y') \in F$ with $y \ne y'$, there is some $i$ such that $R_i^{f^{-1}(\#H')}(y) \mathrel{\mkern7mu\Longarrownot\mkern-7mu}\iff R_i^{f^{-1}(\#H')}(y)$, i.e., $R_i^{\#H'}(f(y)) \mathrel{\mkern7mu\Longarrownot\mkern-7mu}\iff R_i^{\#H'}(f(y'))$, i.e., $f(y) \ne f(y')$.

$f^{-1}(\#H') : F |= \sigma^{cs}_E$ iff for all $y \in Y$ and all $i \in \#N$, there is some $y' \mathrel{F} y$ such that $\phi_i^{f^{-1}(\#H')}(y, y')$, i.e., $\phi_i^{\#H'}(f(y), f(y'))$, i.e., $f(y) \mathrel{G_i} f(y')$; from the definition of the $G_i$, this is equivalent to: for all $y \in Y$, we have $f(y) \in X$, and for every $x' \mathrel{E} f(y)$ there is some $y' \mathrel{F} y$ such that $f(y') = x'$.
\end{proof}

So defining $\sigma_E := \sigma^h_E \wedge \sigma^{ci}_E \wedge \sigma^{cs}_E$, we have that $f^{-1}(\#H') : F |= \sigma_E$ iff $f : F ->_B^{cb} E$, as desired.  Moreover, by modifying these sentences, we may obtain theories for which structures on $F$ correspond to other kinds of homomorphisms $F -> E$.  We will take advantage of this later, in \cref{sec:classinj,sec:smh}.

\subsection{Structurability and class-bijective homomorphisms}
\label{sec:univstr}

The combination of \cref{thm:esigma,thm:sigmae} gives the following (closely related) corollaries, which imply a tight connection between structurability and class-bijective homomorphisms.

\begin{corollary}
\label{thm:classbij-elem}
For $E, F \in \@E$, we have $F |= \sigma_E$ iff $F ->_B^{cb} E$.
\end{corollary}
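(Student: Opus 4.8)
The statement to prove is Corollary~\ref{thm:classbij-elem}: for $E, F \in \@E$, we have $F \models \sigma_E$ iff $F \to_B^{cb} E$. The plan is to extract this directly as a logical equivalence from the two universal constructions already established, namely \cref{thm:sigmae} (the ``Scott sentence'' $\sigma_E$ of an equivalence relation) and the definitional setup preceding it. Since $\sigma_E$ was constructed precisely to axiomatize the equivalence relations admitting a class-bijective homomorphism into $E$, both directions should follow almost immediately, and I expect no serious obstacle.

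First I would prove the backward direction ($F \to_B^{cb} E \implies F \models \sigma_E$). Suppose $f : F \to_B^{cb} E$ is a class-bijective homomorphism. By \cref{thm:sigmae}, there is a distinguished $\sigma_E$-structure $\#H : E \models \sigma_E$ on $E$ itself. I would then form the classwise pullback $f_F^{-1}(\#H)$, which by the general discussion in \cref{sec:pullback-struct} is a Borel $L$-structure on $F$ with the property that $f$ restricts to an isomorphism $f_F^{-1}(\#H)|D \cong \#H|f(D)$ on each $F$-class $D$. Since each $\#H|f(D)$ satisfies $\sigma_E$ and satisfaction of an $L_{\omega_1\omega}$-sentence is isomorphism-invariant, every $f_F^{-1}(\#H)|D$ satisfies $\sigma_E$ as well; hence $f_F^{-1}(\#H) : F \models \sigma_E$, so $F \models \sigma_E$.

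Next I would prove the forward direction ($F \models \sigma_E \implies F \to_B^{cb} E$). Suppose $\#A : F \models \sigma_E$ is a Borel $\sigma_E$-structure on $F$. This is exactly the hypothesis of the universal property in \cref{thm:sigmae}, which then supplies a (unique) class-bijective homomorphism $f : F \to_B^{cb} E$ with $\#A = f_F^{-1}(\#H)$. In particular such an $f$ exists, so $F \to_B^{cb} E$. This direction is essentially a restatement of the existence clause of \cref{thm:sigmae}.

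The two directions together yield the biconditional. The only point requiring any care is to confirm that the backward direction genuinely uses only the existence of \emph{some} $\sigma_E$-structure on $E$ (the structure $\#H$ furnished by \cref{thm:sigmae}) together with isomorphism-invariance of $\sigma_E$-satisfaction across classes, rather than re-deriving the construction of $\sigma_E$; and in the forward direction to note that we need only the existence, not the uniqueness, of the induced homomorphism. Since both facts are already packaged in \cref{thm:sigmae}, I anticipate the proof to be short, with no genuine obstacle—the real work was done in establishing \cref{thm:esigma,thm:sigmae}, and this corollary is the expected payoff.
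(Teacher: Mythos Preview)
Your proof is correct and follows the same approach as the paper, which simply cites \cref{thm:sigmae,thm:elem-classbij}: the forward direction is the existence clause of \cref{thm:sigmae}, and the backward direction is \cref{thm:elem-classbij} applied to the elementary class $\@E_{\sigma_E}$ (using that $E \in \@E_{\sigma_E}$ via the structure $\#H$). You have just unpacked these citations explicitly.
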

\begin{proof}
By \cref{thm:sigmae,thm:elem-classbij}.
\end{proof}

\begin{corollary}
\label{thm:elem-cone}
For every $E \in \@E$, there is a smallest elementary class containing $E$, namely $\@E_{\sigma_E} = \{F \in \@E \mid F ->_B^{cb} E\}$.
\end{corollary}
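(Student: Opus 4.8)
The plan is to deduce this corollary directly from the two universal constructions already in hand, since essentially all the real work has been done. First I would record that the set-theoretic description of $\@E_{\sigma_E}$ is nothing more than a restatement of \cref{thm:classbij-elem}: that corollary asserts precisely that for $F \in \@E$ one has $F |= \sigma_E$ iff $F ->_B^{cb} E$, and by definition $\@E_{\sigma_E}$ is the class of $\sigma_E$-structurable equivalence relations, so $\@E_{\sigma_E} = \{F \in \@E \mid F ->_B^{cb} E\}$.

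Next I would verify that $\@E_{\sigma_E}$ is an elementary class containing $E$. It is elementary essentially by fiat, being by definition $\@E_\sigma$ for the sentence $\sigma = \sigma_E$ produced in \cref{thm:sigmae}. That it contains $E$ follows either from the structure $\#H : E |= \sigma_E$ exhibited in \cref{thm:sigmae}, or equivalently by applying \cref{thm:classbij-elem} to the identity class-bijective homomorphism $E ->_B^{cb} E$.

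The only remaining point is minimality, which is where downward closure enters. Let $\@C$ be any elementary class with $E \in \@C$, say $\@C = \@E_\tau$. Given $F \in \@E_{\sigma_E}$, the description above supplies a class-bijective homomorphism $F ->_B^{cb} E$; since $E \in \@E_\tau$ and every elementary class is downwards-closed under class-bijective homomorphisms by \cref{thm:elem-classbij}, we conclude $F \in \@E_\tau = \@C$. Hence $\@E_{\sigma_E} \subseteq \@C$, which together with the previous paragraph shows that $\@E_{\sigma_E}$ is the smallest elementary class containing $E$.

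I do not expect a genuine obstacle: the substance lives entirely in \cref{thm:sigmae} (the construction of the ``Scott sentence'' $\sigma_E$ and its universal property) and in its packaging as \cref{thm:classbij-elem}. The corollary is then a short combination of that universal property with the closure property \cref{thm:elem-classbij}, and the proof requires no further calculation.
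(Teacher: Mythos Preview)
Your proposal is correct and follows essentially the same approach as the paper's proof, which is a one-liner citing \cref{thm:elem-classbij} for the minimality (the equality $\@E_{\sigma_E} = \{F \in \@E \mid F ->_B^{cb} E\}$ being already handled by \cref{thm:classbij-elem}). You have simply spelled out each step more explicitly.
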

\begin{proof}
By \cref{thm:elem-classbij}, $\@E_{\sigma_E}$ is contained in every elementary class containing $E$.
\end{proof}

We define $\@E_E := \@E_{\sigma_E} = \{F \in \@E \mid F ->_B^{cb} E\}$, and call it the \defn{elementary class of $E$}\index{elementary class $\mathcal E_E$}.

\begin{remark}
$E$ is not necessarily $\sqle_B^i$-universal in $\@E_E$: for example, $E_0$ is not invariantly universal in $\@E_{E_0} = \{\text{aperiodic hyperfinite}\}$ (see \cref{thm:hyperfinite}).
\end{remark}

\begin{corollary}
\label{thm:elem-char}
A class $\@C \subseteq \@E$ of countable Borel equivalence relations is elementary iff it is (downwards-)closed under class-bijective homomorphisms and contains an invariantly universal element $E \in \@C$, in which case $\@C = \@E_E$.
\end{corollary}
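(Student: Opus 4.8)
The plan is to prove both directions by assembling results already in hand, since essentially all of the real work was done in \cref{thm:esigma,thm:sigmae} and their corollaries; what remains is bookkeeping.

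For the forward direction, suppose $\@C = \@E_\sigma$ is elementary. Closure under class-bijective homomorphisms is precisely \cref{thm:elem-classbij}. For the existence of an invariantly universal element, I would invoke \cref{thm:einftysigma}: the relation $E_{\infty\sigma}$ lies in $\@E_\sigma = \@C$ and satisfies $F \sqle_B^i E_{\infty\sigma}$ for every $F$ with $F |= \sigma$, i.e., for every $F \in \@C$. Thus $E := E_{\infty\sigma}$ serves as the required invariantly universal element, where ``invariantly universal in $\@C$'' means $\sqle_B^i$-greatest in $\@C$.

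For the backward direction, suppose $\@C$ is closed under class-bijective homomorphisms and contains an element $E$ with $F \sqle_B^i E$ for all $F \in \@C$. I claim $\@C = \@E_E$, which is elementary since $\@E_E = \@E_{\sigma_E}$ by \cref{thm:elem-cone}; this simultaneously establishes the main equivalence and the final clause. Recall from that corollary that $\@E_E = \{F \in \@E \mid F ->_B^{cb} E\}$. For the inclusion $\@E_E \subseteq \@C$: any $F \in \@E_E$ satisfies $F ->_B^{cb} E$ with $E \in \@C$, so $F \in \@C$ by downward closure under class-bijective homomorphisms. For the reverse inclusion $\@C \subseteq \@E_E$: given $F \in \@C$, invariant universality of $E$ gives $F \sqle_B^i E$, and since an invariant embedding is in particular a class-bijective homomorphism (as recorded in the containment diagram of \cref{sec:homoms}), we obtain $F ->_B^{cb} E$, i.e., $F \in \@E_E$.

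The two directions together yield the equivalence, and the backward argument supplies the concluding identity $\@C = \@E_E$. I expect no genuine obstacle beyond one point of care: the hypothesis furnishes universality with respect to $\sqle_B^i$, and the only substantive observation is that $\sqle_B^i$ refines $->_B^{cb}$, so this universality is exactly strong enough to place every member of $\@C$ into the class-bijective cone $\@E_E$ of $E$, while downward closure supplies the opposite inclusion.
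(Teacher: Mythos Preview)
Your proof is correct and follows essentially the same approach as the paper's. The paper's argument is identical in structure but more terse: it cites \cref{thm:elem-classbij,thm:einftysigma} for the forward direction and then simply asserts that the converse hypotheses yield $\@C = \{F \mid F ->_B^{cb} E\} = \@E_E$, whereas you spell out the double inclusion and explicitly note that $\sqle_B^i$ refines $->_B^{cb}$.
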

\begin{proof}
One implication is \cref{thm:elem-classbij,thm:einftysigma}.  Conversely, if $\@C$ is closed under $->_B^{cb}$ and $E \in \@C$ is invariantly universal, then clearly $\@C = \{F \mid F ->_B^{cb} E\} = \@E_E$.
\end{proof}

So every elementary class $\@C$ is determined by a canonical isomorphism class contained in $\@C$, namely the invariantly universal elements of $\@C$.  We now characterize the class of equivalence relations which are invariantly universal in some elementary class.

\begin{corollary}
\label{thm:univstr}
Let $E \in \@E$.  The following are equivalent:
\begin{enumerate}
\item[(i)]  $E \cong_B E_{\infty\sigma_E}$, i.e., $E$ is invariantly universal in $\@E_E$.
\item[(ii)]  $E \cong_B E_{\infty\sigma}$ for some $\sigma$, i.e., $E$ is invariantly universal in some elementary class.
\item[(iii)]  For every $F \in \@E$, $F ->_B^{cb} E$ iff $F \sqle_B^i E$.
\end{enumerate}
\end{corollary}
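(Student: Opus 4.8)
The plan is to prove the three conditions equivalent by establishing the cycle (i) $\Rightarrow$ (ii) $\Rightarrow$ (iii) $\Rightarrow$ (i). The implication (i) $\Rightarrow$ (ii) requires nothing: one simply takes $\sigma := \sigma_E$. All the substance will lie in the other two implications, and throughout I will lean on the facts already in hand, namely that $\@E_E = \@E_{\sigma_E} = \{F \in \@E \mid F ->_B^{cb} E\}$ (\cref{thm:elem-cone}), that $E_{\infty\sigma}$ is invariantly universal in $\@E_\sigma$ (\cref{thm:einftysigma}), and that every elementary class is downward-closed under class-bijective homomorphisms (\cref{thm:elem-classbij}).

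For (ii) $\Rightarrow$ (iii) I would fix a $\sigma$ with $E \cong_B E_{\infty\sigma}$ and argue the biconditional of (iii) direction by direction. The implication $F \sqle_B^i E \Rightarrow F ->_B^{cb} E$ is automatic for every $E$, since an invariant embedding is in particular a class-bijective homomorphism (this is the containment $\sqle_B^i$ below $->_B^{cb}$ recorded in \cref{sec:homoms}). For the reverse, suppose $F ->_B^{cb} E$. Composing with the Borel isomorphism $E \cong_B E_{\infty\sigma}$ (which is itself a class-bijective homomorphism) and using transitivity of $->_B^{cb}$ gives $F ->_B^{cb} E_{\infty\sigma}$; since $E_{\infty\sigma} |= \sigma$ and $\@E_\sigma$ is closed under class-bijective homomorphisms (\cref{thm:elem-classbij}), we conclude $F |= \sigma$, i.e.\ $F \in \@E_\sigma$. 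Invariant universality of $E_{\infty\sigma}$ in $\@E_\sigma$ then yields $F \sqle_B^i E_{\infty\sigma}$, and composing once more with $E_{\infty\sigma} \cong_B E$ gives $F \sqle_B^i E$, as required.

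For (iii) $\Rightarrow$ (i) I would use that $E_{\infty\sigma_E}$ is the invariantly universal element of $\@E_{\sigma_E} = \@E_E = \{F \mid F ->_B^{cb} E\}$. The identity map witnesses $E ->_B^{cb} E$, so $E \in \@E_E$ and hence $E \sqle_B^i E_{\infty\sigma_E}$ by invariant universality. In the other direction, $E_{\infty\sigma_E} \in \@E_E$ says exactly that $E_{\infty\sigma_E} ->_B^{cb} E$, so applying hypothesis (iii) with $F := E_{\infty\sigma_E}$ gives $E_{\infty\sigma_E} \sqle_B^i E$. Having invariant embeddings in both directions, the Borel Schröder--Bernstein theorem yields $E \cong_B E_{\infty\sigma_E}$, which is (i).

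The entire content is concentrated in the forward direction of (iii), that $F ->_B^{cb} E$ forces $F \sqle_B^i E$; everything else is bookkeeping with the preorders $->_B^{cb}$ and $\sqle_B^i$ together with two invocations of Borel Schröder--Bernstein. I anticipate no real obstacle, since the genuine work has been front-loaded into \cref{thm:esigma,thm:sigmae} and their corollaries. The one point deserving a moment's care is to verify that $->_B^{cb}$, $\sqle_B^i$, and $\cong_B$ compose correctly, so that conclusions may be transported freely back and forth along the isomorphism $E \cong_B E_{\infty\sigma}$; this is precisely the transitivity of these preorders noted in \cref{sec:homoms}.
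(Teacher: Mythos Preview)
Your proof is correct and follows the same route as the paper's, which simply notes that (i)$\implies$(ii)$\implies$(iii) are clear and that under (iii) one has $\@E_E = \{F \mid F ->_B^{cb} E\} = \{F \mid F \sqle_B^i E\}$, making $E$ invariantly universal in $\@E_E$. You have unpacked each implication in more detail than the paper bothers with, but the underlying argument is identical.
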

\begin{proof}
Clearly (i)$\implies$(ii)$\implies$(iii), and if (iii) holds, then $\@E_E = \{F \mid F ->_B^{cb} E\} = \{F \mid F \sqle_B^i E\}$ so $E$ is invariantly universal in $\@E_E$.
\end{proof}

\begin{remark}
The awkward notation $E_{\infty\sigma_E}$ will be replaced in the next section (with $E_\infty \otimes E$).
\end{remark}

We say that $E \in \@E$ is \defn{universally structurable}\index{universally structurable $\mathcal E_\infty$} if the equivalent conditions in \cref{thm:univstr} hold.  We let $\@E_\infty \subseteq \@E$ denote the class of universally structurable countable Borel equivalence relations.  The following summarizes the relationship between $\@E_\infty$ and elementary classes:

\begin{corollary}
\label{thm:univstr-elem}
We have an order-isomorphism of posets
\begin{align*}
(\{\text{elementary classes}\}, {\subseteq}) &<--> (\@E_\infty/{\cong_B}, {\sqle_B^i}) = (\@E_\infty/{<->_B^{cb}}, {->_B^{cb}}) \\
\@C &|--> \{\text{$\sqle_B^i$-universal elements of } \@C\} \\
\@E_E &<--| E.
\end{align*}
\end{corollary}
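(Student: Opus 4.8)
The plan is to read this off from the preceding corollaries; almost all of it is bookkeeping built on \cref{thm:einftysigma,thm:elem-char,thm:univstr}, with one genuinely content-bearing step at the end. First I would check that both maps are well defined. Given an elementary class $\@C = \@E_\sigma$, its $\sqle_B^i$-universal element exists and, by the Borel Schröder-Bernstein theorem, is unique up to $\cong_B$ (\cref{thm:einftysigma}); for a representative $E$ we have $E \cong_B E_{\infty\sigma}$, so condition (ii) of \cref{thm:univstr} places $E \in \@E_\infty$, and thus $\@C$ is sent to a well-defined class in $\@E_\infty/{\cong_B}$. Conversely, for $E \in \@E_\infty$ the class $\@E_E$ is elementary by \cref{thm:elem-cone}, and the assignment factors through $\cong_B$, since a Borel isomorphism is a class-bijective homomorphism both ways and these compose, so that $F ->_B^{cb} E \iff F ->_B^{cb} E'$ whenever $E \cong_B E'$, giving $\@E_E = \@E_{E'}$.

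Next I would verify that the two maps are mutually inverse and order-preserving. Starting from $\@C$ with $\sqle_B^i$-universal element $E$, \cref{thm:elem-char} gives $\@C = \@E_E$, so the round trip returns $\@C$; starting from $E \in \@E_\infty$, condition (i) of \cref{thm:univstr} says $E$ is itself $\sqle_B^i$-universal in $\@E_E$, so the round trip returns the $\cong_B$-class of $E$. For the order, if $\@C_1 \subseteq \@C_2$ with $\sqle_B^i$-universal elements $E_1, E_2$, then $E_1 \in \@C_1 \subseteq \@C_2$ forces $E_1 \sqle_B^i E_2$ by universality of $E_2$; conversely, if $E_1 \sqle_B^i E_2$ then $E_1 ->_B^{cb} E_2$ (an invariant embedding is a class-bijective homomorphism), so for any $F \in \@E_{E_1}$ we have $F ->_B^{cb} E_1 ->_B^{cb} E_2$, whence $F \in \@E_{E_2}$ by \cref{thm:classbij-elem}, giving $\@C_1 \subseteq \@C_2$. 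Thus the bijection is an order-isomorphism onto $(\@E_\infty/{\cong_B}, {\sqle_B^i})$.

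Finally I would identify $(\@E_\infty/{\cong_B}, {\sqle_B^i})$ with $(\@E_\infty/{<->_B^{cb}}, {->_B^{cb}})$, which is the one step with real content and the only place where the special nature of universally structurable relations is actually used. The key input is condition (iii) of \cref{thm:univstr}: for $E_1, E_2 \in \@E_\infty$, $\sqle_B^i$ always implies $->_B^{cb}$, while $E_1 ->_B^{cb} E_2 \implies E_1 \sqle_B^i E_2$ precisely because $E_2 \in \@E_\infty$. Hence the preorders $->_B^{cb}$ and $\sqle_B^i$ agree on $\@E_\infty$, and therefore $<->_B^{cb}$ agrees with $\sqle_B^i$-biembeddability, which by Borel Schröder-Bernstein is exactly $\cong_B$. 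So the two posets have the same underlying quotient and the same order, completing the proof. I expect this collapse of $->_B^{cb}$ to $\sqle_B^i$ on $\@E_\infty$ to be the heart of the argument, even though it reduces immediately to \cref{thm:univstr}(iii).
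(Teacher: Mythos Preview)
Your proposal is correct and is exactly the intended unpacking: the paper states this corollary as a summary of the preceding results (\cref{thm:einftysigma,thm:elem-cone,thm:elem-char,thm:univstr}) without giving a separate proof, and your argument fills in precisely those details in the natural way.
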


We will study the purely order-theoretical aspects of the poset $(\@E_\infty/{\cong_B}, {\sqle_B^i})$ (equivalently, the poset of elementary classes) in \cref{sec:poset}.

We conclude this section by pointing out the following consequence of universal structurability:

\begin{corollary}
\label{thm:univstr-idemp}
If $E \in \@E$ is universally structurable, then $E \cong_B \Delta_\#R \times E$.  In particular, $E$ has either none or continuum many ergodic invariant probability measures.
\end{corollary}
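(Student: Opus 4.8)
The plan is to first upgrade universal structurability into the Borel isomorphism $E \cong_B \Delta_\#R \times E$ using characterization (iii) of \cref{thm:univstr}, and then to count ergodic invariant probability measures on the concrete product $\Delta_\#R \times E$.

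For the isomorphism, the key observation is that the second projection $\pi_2 \colon \Delta_\#R \times E ->_B E$ is not merely class-surjective but class-bijective. Indeed, each class of $\Delta_\#R \times E$ on $\#R \times X$ has the form $\{r\} \times C$ for some $r \in \#R$ and $C \in X/E$, and $\pi_2$ restricts on it to the bijection $(r,x) |-> x$ onto $C$; so $\Delta_\#R \times E ->_B^{cb} E$. Since $E$ is universally structurable and $\Delta_\#R \times E \in \@E$, \cref{thm:univstr}(iii) upgrades this to $\Delta_\#R \times E \sqle_B^i E$. In the other direction, fixing any $r_0 \in \#R$, the map $x |-> (r_0, x)$ is an invariant embedding $E \sqle_B^i \Delta_\#R \times E$, since its image $\{r_0\} \times X$ is $(\Delta_\#R \times E)$-invariant. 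The Borel Schröder-Bernstein theorem then gives $E \cong_B \Delta_\#R \times E$.

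For the measure count, I would fix a countable group $\Gamma$ acting on $X$ and generating $E$, and let it act on $\#R \times X$ by $\gamma \cdot (r, x) := (r, \gamma \cdot x)$, which generates $\Delta_\#R \times E$. If $\mu$ is an ergodic invariant probability measure for $E$, then for each $r \in \#R$ the product $\delta_r \times \mu$ (with $\delta_r$ the point mass at $r$) is an invariant probability measure for this action, and it is ergodic because it concentrates on the invariant slice $\{r\} \times X$, on which the action is isomorphic to the original one. Distinct values of $r$ yield measures with disjoint supports, hence distinct measures, so $\{\delta_r \times \mu \mid r \in \#R\}$ is a family of $2^{\aleph_0}$ distinct ergodic invariant probability measures on $\Delta_\#R \times E$. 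Since the number of ergodic invariant probability measures is a $\cong_B$-invariant and is always at most $2^{\aleph_0}$ (they form a subset of the standard Borel space of probability measures on the underlying space), transferring back along $E \cong_B \Delta_\#R \times E$ forces $E$ to have either $0$ or exactly $2^{\aleph_0}$ of them.

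Both halves of the argument are short, so the only points demanding genuine care are the two indicated verifications: that $\pi_2$ is class-bijective (for which one really uses that the classes of $\Delta_\#R \times E$ are the horizontal sets $\{r\} \times C$), and the ergodicity of $\delta_r \times \mu$. I expect the latter to be the step most easily mishandled, since ergodicity is destroyed by averaging over $r$ — one must use the \emph{point masses} $\delta_r$ rather than a general measure on $\#R$ — and it is precisely the combination of the resulting lower bound $2^{\aleph_0}$ with the trivial upper bound $2^{\aleph_0}$ that yields the exact dichotomy ``none or continuum.''
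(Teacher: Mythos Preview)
Your proof is correct and follows the same route as the paper: the paper simply notes $E \sqle_B^i \Delta_\#R \times E$ and $\Delta_\#R \times E ->_B^{cb} E$, hence $\Delta_\#R \times E \sqle_B^i E$ by universal structurability, and leaves the measure dichotomy implicit. Your version is more explicit on both points, in particular supplying the $\delta_r \times \mu$ argument for the ``in particular'' clause that the paper omits entirely.
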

\begin{proof}
Clearly $E \sqle_B^i \Delta_\#R \times E$, and $\Delta_\#R \times E ->_B^{cb} E$, so $\Delta_\#R \times E \sqle_B^i E$.
\end{proof}

\subsection{Class-bijective products}
\label{sec:tensor}

In this section and the next, we use the theory of the preceding sections to obtain some structural results about the category $(\@E, {->_B^{cb}})$ of countable Borel equivalence relations and class-bijective homomorphisms.  For categorical background, see \cite{ML}.

This section concerns a certain product construction between countable Borel equivalence relations, which, unlike the cross product $E \times F$, is well-behaved with respect to class-bijective homomorphisms.

\begin{proposition}
\label{thm:tensor}
Let $E, F \in \@E$ be countable Borel equivalence relations.  There is a countable Borel equivalence relation, which we denote by $E \otimes F$ and call the \defn{class-bijective product}\index{tensor product $E \otimes F$} (or \defn{tensor product}) of $E$ and $F$, which is the categorical product of $E$ and $F$ in the category $(\@E, {->_B^{cb}})$.  In other words, there are canonical class-bijective projections
\begin{align*}
\pi_1 : E \otimes F &->_B^{cb} E, & \pi_2 : E \otimes F &->_B^{cb} F,
\end{align*}
such that the triple $(E \otimes F, \pi_1, \pi_2)$ is universal in the following sense: for any other $G \in \@E$ with $f : G ->_B^{cb} E$ and $g : G ->_B^{cb} F$, there is a \emph{unique} class-bijective homomorphism $\ang{f, g} : G ->_B^{cb} E \otimes F$ such that $f = \pi_1 \circ \ang{f, g}$ and $g = \pi_2 \circ \ang{f, g}$.  This is illustrated by the following commutative diagram:
\begin{equation*}
\begin{tikzcd}
~ & G \dlar[swap]{f} \drar{g} \dar[dashed][pos=.7]{\ang{f, g}} \\
E & E \otimes F \lar{\pi_1} \rar[swap]{\pi_2} & F
\end{tikzcd}
\end{equation*}
\end{proposition}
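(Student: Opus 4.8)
The plan is to \emph{define} the class-bijective product by feeding the two universal constructions of \cref{sec:esigma,sec:sigmae} into each other, rather than building it by hand. Concretely, I would set
\[ E \otimes F := E \ltimes \sigma_F, \]
where $\sigma_F$ is the ``Scott sentence'' of $F$ supplied by \cref{thm:sigmae}, and take $\pi_1 := \pi : E \ltimes \sigma_F ->_B^{cb} E$ to be the projection from \cref{thm:esigma}. This at once makes $E \otimes F$ a countable Borel equivalence relation equipped with a class-bijective map to $E$; and, crucially, all Borelness issues have already been dispatched inside the proof of \cref{thm:esigma}, so none of them need to be revisited here.

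The second projection is then obtained from the interplay of the two universal properties. By \cref{thm:esigma}, $E \ltimes \sigma_F$ carries a canonical structure $\#E : E \ltimes \sigma_F |= \sigma_F$; applying the universal property of $\sigma_F$ from \cref{thm:sigmae} to $\#E$ yields a \emph{unique} class-bijective homomorphism $\pi_2 : E \ltimes \sigma_F ->_B^{cb} F$ with $\#E = (\pi_2)^{-1}_{E \otimes F}(\#H)$, where $\#H : F |= \sigma_F$ is the canonical structure on $F$. With $\pi_1, \pi_2$ in hand, verifying the product's universal property amounts to translating back and forth between maps into $F$ and $\sigma_F$-structures. Given $G$ with $f : G ->_B^{cb} E$ and $g : G ->_B^{cb} F$, I would form the $\sigma_F$-structure $\#A := g^{-1}_G(\#H) : G |= \sigma_F$ (via \cref{thm:sigmae}) and then invoke the universal property of $E \ltimes \sigma_F$ (\cref{thm:esigma}) on the pair $(f, \#A)$ to obtain the unique $\ang{f, g} : G ->_B^{cb} E \otimes F$ with $f = \pi_1 \circ \ang{f, g}$ and $\#A = \ang{f, g}^{-1}_G(\#E)$. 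The remaining identity $g = \pi_2 \circ \ang{f, g}$ follows by computing $(\pi_2 \circ \ang{f, g})^{-1}_G(\#H) = \ang{f, g}^{-1}_G((\pi_2)^{-1}_{E \otimes F}(\#H)) = \ang{f, g}^{-1}_G(\#E) = \#A = g^{-1}_G(\#H)$ and appealing to the uniqueness clause of \cref{thm:sigmae}. Uniqueness of $\ang{f, g}$ itself falls out the same way: any competitor $h$ with $\pi_1 \circ h = f$ and $\pi_2 \circ h = g$ satisfies $h^{-1}_G(\#E) = (\pi_2 \circ h)^{-1}_G(\#H) = g^{-1}_G(\#H) = \#A$, hence equals $\ang{f, g}$ by the uniqueness in \cref{thm:esigma}.

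The one genuinely technical point — and the step I expect to be the main obstacle — is the functoriality of the classwise pullback that silently underlies these computations: I need that for class-bijective homomorphisms $G \to H \to F$ and a structure $\#C$ on $F$ one has $(b \circ a)^{-1}_G(\#C) = a^{-1}_G(b^{-1}_H(\#C))$, i.e.\ that classwise pullbacks compose. This is not purely formal, because the classwise pullback carries an extra ``same class'' side condition in its definition; one must check that this condition propagates correctly, which uses that the intermediate map is a homomorphism, so that points lying in a common class downstairs are forced into a common class upstairs. Once this compatibility lemma is recorded, the entire argument reduces to bookkeeping with the two universal properties.

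Finally, for intuition I would remark that $E \otimes F$ admits an explicit description: its classes are indexed by bijections $\phi$ between an $E$-class and an $F$-class, a point being a pair $(x, \phi)$ with $x$ ranging over the domain of $\phi$, and with $\pi_1(x, \phi) = x$, $\pi_2(x, \phi) = \phi(x)$; the universal map sends a point $w$ in a $G$-class $D$ to $(f(w), (g|D) \circ (f|D)^{-1})$. This makes transparent why $E \otimes F$ deserves to be called the product in $(\@E, {->_B^{cb}})$, but turning it into a proof would require redoing by hand precisely the Borelness argument already carried out for \cref{thm:esigma}, which is why I prefer the packaged construction above.
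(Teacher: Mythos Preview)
Your proposal is correct and follows essentially the same approach as the paper: define $E \otimes F := E \ltimes \sigma_F$, extract $\pi_2$ from the canonical $\sigma_F$-structure via \cref{thm:sigmae}, and verify the universal property by shuttling between class-bijective maps into $F$ and $\sigma_F$-structures using the uniqueness clauses of \cref{thm:esigma,thm:sigmae}. Your explicit description at the end also matches the paper's \cref{thm:tensor-alt}, and your flagged technical point about functoriality of classwise pullback is exactly the step the paper uses implicitly in its computation $\ang{f,g}^{-1}_G((\pi_2)^{-1}_{E\otimes F}(\#H)) = (\pi_2 \circ \ang{f,g})^{-1}_G(\#H)$.
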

\begin{proof}
Put $E \otimes F := E \ltimes \sigma_F$.  The rest follows from chasing through the universal properties in \cref{thm:esigma} and \cref{thm:sigmae} (or equivalently, the Yoneda lemma).  For the sake of completeness, we give the details.

From \cref{thm:esigma}, we have a canonical projection $\pi_1 : E \otimes F ->_B^{cb} E$.  We also have a canonical $\sigma_F$-structure on $E \otimes F$, namely $\#E : E \otimes F = E \ltimes \sigma_F |= \sigma_F$.  This structure corresponds to a unique class-bijective map $\pi_2 : E \otimes F ->_B^{cb} F$ such that $\#E = (\pi_2)^{-1}_{E \otimes F}(\#H)$, where $\#H : F |= \sigma_F$ is the canonical structure from \cref{thm:sigmae}.

Now given $G, f, g$ as above, the map $\ang{f, g}$ is produced as follows.  We have the classwise pullback structure $g^{-1}_G(\#H) : G |= \sigma_F$, which, together with $f : G ->_B^{cb} E$, yields (by \cref{thm:esigma}) a unique map $\ang{f, g} : G ->_B^{cb} E \ltimes \sigma_F = E \otimes F$ such that $f = \pi_1 \circ \ang{f, g}$ and $g^{-1}_G(\#H) = \ang{f, g}^{-1}_G(\#E)$.  Since $\#E = (\pi_2)^{-1}_{E \otimes F}(\#H)$, we get $g^{-1}_G(\#H) = \ang{f, g}_G^{-1}((\pi_2)^{-1}_{E \otimes F}(\#H)) = (\pi_2 \circ \ang{f, g})^{-1}_G(\#H)$; since (by \cref{thm:sigmae}) $g$ is the \emph{unique} map $h : G ->_B^{cb} F$ such that $g^{-1}_G(\#H) = h^{-1}_G(\#H)$, we get $g = \pi_2 \circ \ang{f, g}$, as desired.  It remains to check uniqueness of $\ang{f, g}$.  If $h : G ->_B^{cb} E \otimes F$ is such that $f = \pi_1 \circ h$ and $g = \pi_2 \circ h$, then (reversing the above steps) we have $g^{-1}_G(\#H) = h^{-1}_G(\#E)$; since $\ang{f, g}$ was unique with these properties, we get $h = \ang{f, g}$, as desired.
\end{proof}

\begin{remark}
It follows immediately from the definitions that $\@E_{E \otimes F} = \@E_E \cap \@E_F$.
\end{remark}

\begin{remark}
As with all categorical products, $\otimes$ is unique up to unique (Borel) isomorphism, as well as associative and commutative up to (Borel) isomorphism.  Note that the two latter properties are not immediately obvious from the definition $E \otimes F := E \ltimes \sigma_F$.
\end{remark}

\begin{remark}
\label{thm:tensor-alt}
However, by unravelling the proofs of \cref{thm:esigma,thm:sigmae}, we may explicitly describe $E \otimes F$ in a way that makes associativity and commutativity more obvious.  Since this explicit description also sheds some light on the structure of $E \otimes F$, we briefly give it here.

Let $E$ live on $X$, $F$ live on $Y$, and $E \otimes F$ live on $Z$.  We have one $(E \otimes F)$-class for each $E$-class $C$, $F$-class $D$, and bijection $b : C \cong D$; the elements of the $(E \otimes F)$-class corresponding to $(C, D, b)$ are the elements of $C$, or equivalently via the bijection $b$, the elements of $D$.  Thus, ignoring Borelness, we put
\begin{gather*}
Z := \{(x, y, b) \mid x \in X,\, y \in Y,\, b : [x]_E \cong [y]_F,\, b(x) = y\}, \\
(x, y, b) \mathrel{(E \otimes F)} (x', y', b') \iff x \mathrel{E} x' \AND y \mathrel{F} y' \AND b = b', \\
\pi_1(x, y, b) := x, \hspace{1in} \pi_2(x, y, b) := y.
\end{gather*}
Given $G, f, g$ as in \cref{thm:tensor} ($G$ living on $W$, say), the map $\ang{f, g} : G ->_B^{cb} E \otimes F$ is given by $\ang{f, g}(w) = (f(w), g(w), (g|[w]_G) \circ (f|[w]_G)^{-1})$, where $(g|[w]_G) \circ (f|[w]_G)^{-1}) : [f(w)]_E \cong [g(w)]_F$ since $f, g$ are class-bijective.

To make this construction Borel, we assume that $E, F$ are aperiodic, and replace $Z$ in the above with a subspace of $X \times Y \times S_\infty$, where bijections $b : [x]_E \cong [y]_F$ are transported to bijections $\#N \cong \#N$ via Borel enumerations of the $E$-classes and $F$-classes, as with the map $T : X -> X^\#N$ in the proof of \cref{thm:esigma}.  (If $E, F$ are not aperiodic, then we split them into the parts consisting of classes with each cardinality $n \in \{1, 2, \dotsc, \aleph_0\}$; then there will be no $(E \otimes F)$-classes lying over an $E$-class and an $F$-class with different cardinalities.)
\end{remark}

The tensor product $\otimes$ and the cross product $\times$ are related as follows: we have a canonical homomorphism $(\pi_1, \pi_2) : E \otimes F ->_B^{ci} E \times F$, where $(\pi_1, \pi_2)(z) = (\pi_1(z), \pi_2(z))$ (where $\pi_1 : E \otimes F ->_B^{cb} E$ and $\pi_2 : E \otimes F ->_B^{cb} F$ are the projections from the tensor product), which is class-injective because $\pi_1' \circ (\pi_1, \pi_2) = \pi_1$ is class-injective (where $\pi_1' : E \times F ->_B E$ is the projection from the \emph{cross} product).

When we regard $E \otimes F$ as in \cref{thm:tensor-alt}, $(\pi_1, \pi_2)$ is the obvious projection from $Z$ to $X \times Y$.  This in particular shows that
\begin{proposition}
\label{thm:tensor-cross}
\begin{enumerate}
\item[(a)]  $(\pi_1, \pi_2) : E \otimes F ->_B^{ci} E \times F$ is surjective iff $E$ and $F$ have all classes of the same cardinality (in particular, if both are aperiodic);
\item[(b)]  $(\pi_1, \pi_2) : E \otimes F ->_B^{ci} E \times F$ is an isomorphism if $E = \Delta_X$ and $F = \Delta_Y$.
\end{enumerate}
\end{proposition}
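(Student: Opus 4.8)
The plan is to carry out the argument using the explicit description of $E \otimes F$ from \cref{thm:tensor-alt}, under which $E \otimes F$ lives on
\[
Z = \{(x, y, b) \mid x \in X,\, y \in Y,\, b : [x]_E \cong [y]_F,\, b(x) = y\},
\]
and the map $(\pi_1, \pi_2) : E \otimes F ->_B^{ci} E \times F$ is simply the projection $(x, y, b) \mapsto (x, y) \in X \times Y$. The heart of the matter is the elementary combinatorial observation that, for a fixed pair $x \in X$, $y \in Y$, a bijection $b : [x]_E \cong [y]_F$ with $b(x) = y$ exists if and only if $|[x]_E| = |[y]_F|$. The forward direction is immediate since $b$ is a bijection of the two classes; for the converse, I would start from any bijection $b_0 : [x]_E \cong [y]_F$ and precompose with the transposition swapping $x$ and $b_0^{-1}(y)$ (when these are distinct) to arrange $b(x) = y$. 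It follows that the image of $(\pi_1, \pi_2)$ is exactly $\{(x, y) \in X \times Y \mid |[x]_E| = |[y]_F|\}$.

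For (a), it then suffices to note that this image equals all of $X \times Y$ precisely when $|[x]_E| = |[y]_F|$ for every $x \in X$ and $y \in Y$. Fixing one class on each side shows this forces all $E$-classes and all $F$-classes to share a single common cardinality $\kappa \in \{1, 2, \dotsc, \aleph_0\}$; conversely, the existence of such a common $\kappa$ puts every $(x, y)$ into the image. In particular, if $E$ and $F$ are both aperiodic, then $\kappa = \aleph_0$ works, so $(\pi_1, \pi_2)$ is surjective.

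For (b), when $E = \Delta_X$ and $F = \Delta_Y$ every class on both sides is a singleton, so part (a) already gives surjectivity. Injectivity is now immediate as well: if $(\pi_1, \pi_2)(x, y, b) = (\pi_1, \pi_2)(x', y', b')$, then $(x, y) = (x', y')$, and since the only bijection $\{x\} \cong \{y\}$ is the one sending $x \mapsto y$, we get $b = b'$. Hence $(\pi_1, \pi_2)$ is a Borel bijection $Z \to X \times Y$ between standard Borel spaces, and so a Borel isomorphism. Finally I would check that it matches up the equivalence relations: under diagonal $E, F$ the relation $E \otimes F$ is equality on $Z$ (as $b$ is determined by $(x,y)$) while $E \times F = \Delta_{X \times Y}$ is equality on $X \times Y$, so $(\pi_1, \pi_2) : E \otimes F \cong_B E \times F$, as claimed.

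I expect the only genuine content to be the combinatorial lemma on the existence of $b$ together with the bookkeeping identifying the image correctly; the Borelness claims in (b) reduce to the standard fact that a Borel bijection of standard Borel spaces is a Borel isomorphism, and so are routine.
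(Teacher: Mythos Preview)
Your proposal is correct and follows exactly the approach the paper takes: the paper simply states that, regarding $E \otimes F$ via the explicit description in \cref{thm:tensor-alt}, the map $(\pi_1, \pi_2)$ is the obvious projection $Z \to X \times Y$, and that this ``in particular shows'' the proposition, without spelling out any further details. You have supplied precisely those details (the combinatorial observation about when a bijection $b$ with $b(x)=y$ exists, and the uniqueness of $b$ in the diagonal case), and they are all correct.
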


We now list some formal properties of $\otimes$:

\begin{proposition}
\label{thm:tensor-props}
Let $E, E_i, F, G \in \@E$ for $i < n \le \#N$ and let $(L, \sigma)$ be a theory.
\begin{enumerate}
\item[(a)]  If $E |= \sigma$ then $E \otimes F |= \sigma$.
\item[(b)]  If $f : E \sqle_B^i F$ and $g : E ->_B^{cb} G$, then $\ang{f, g} : E \sqle_B^i F \otimes G$.
\item[(c)]  If $E$ is universally structurable, then so is $E \otimes F$.
\item[(d)]  $(E \otimes F) \ltimes \sigma \cong_B E \otimes (F \ltimes \sigma)$ (and the isomorphism is natural in $E, F$).
\item[(e)]  $\bigoplus_i (E_i \otimes F) \cong_B (\bigoplus_i E_i) \otimes F$ (and the isomorphism is natural in $E_i, F$).
\end{enumerate}
\end{proposition}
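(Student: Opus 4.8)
The plan is to derive parts (a)--(c) formally from the universal properties in \cref{thm:esigma,thm:sigmae,thm:tensor}, to handle (d) by a Yoneda-style bookkeeping of those same universal properties, and to prove (e) --- the one genuinely non-formal identity --- from the explicit description in \cref{thm:tensor-alt}. For (a), the first projection gives $\pi_1 : E \otimes F ->_B^{cb} E$, so if $E |= \sigma$ then $E \otimes F |= \sigma$ by downward closure of elementary classes under class-bijective homomorphisms (\cref{thm:elem-classbij}). For (b), $\ang{f, g}$ is the map furnished by the universal property of \cref{thm:tensor} (the invariant embedding $f$ being in particular class-bijective), and it satisfies $\pi_1 \circ \ang{f, g} = f$; it is automatically class-bijective, so it suffices to check it is a reduction, which is immediate: if $\ang{f,g}(x) \mathrel{(F \otimes G)} \ang{f,g}(y)$ then applying the homomorphism $\pi_1$ gives $f(x) \mathrel{F} f(y)$, whence $x \mathrel{E} y$ since $f$ is a reduction. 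A class-bijective reduction is exactly an invariant embedding.

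For (c), I would invoke the characterization of universal structurability in \cref{thm:univstr}(iii), so that it suffices to upgrade an arbitrary $\phi : H ->_B^{cb} E \otimes F$ to an invariant embedding $H \sqle_B^i E \otimes F$. Composing with the two projections yields $\pi_1 \circ \phi : H ->_B^{cb} E$ and $g := \pi_2 \circ \phi : H ->_B^{cb} F$; since $E$ is universally structurable, $\pi_1 \circ \phi$ upgrades to an invariant embedding $f : H \sqle_B^i E$, and then part (b) produces $\ang{f, g} : H \sqle_B^i E \otimes F$, as desired.

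For (d), I would show that both $(E \otimes F) \ltimes \sigma$ and $E \otimes (F \ltimes \sigma)$ represent the same functor of $G \in (\@E, {->_B^{cb}})$, namely the set of triples $(f, g, \#A)$ with $f : G ->_B^{cb} E$, $g : G ->_B^{cb} F$, and $\#A : G |= \sigma$. Indeed, unwinding \cref{thm:esigma,thm:tensor} identifies a class-bijective homomorphism into either object with exactly such a triple, and since both constructions recover structures by the same classwise pullback these identifications are natural in $G$ (precomposition by $h : G' ->_B^{cb} G$ acting on triples by $(f, g, \#A) \mapsto (f \circ h, g \circ h, h^{-1}_{G'}(\#A))$). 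The Yoneda lemma then gives a canonical $(E \otimes F) \ltimes \sigma \cong_B E \otimes (F \ltimes \sigma)$, and as the triple functor is covariantly functorial in $E$ and $F$ the isomorphism is natural in $E, F$.

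Finally, (e) is the step I expect to be the main obstacle, since it mixes a categorical product ($\otimes$) with a coproduct ($\oplus$) and so does not follow from the representable-functor arguments above. My plan is first to establish the auxiliary distributivity $(\bigoplus_i E_i) \ltimes \sigma \cong_B \bigoplus_i (E_i \ltimes \sigma)$ directly from the construction in the proof of \cref{thm:esigma}, using the key fact that no class of $\bigoplus_i E_i$ meets more than one summand, so the construction localizes to each $E_i$; choosing the enumerations $T$ of the classes of $\bigoplus_i E_i$ to restrict to enumerations of the classes of each $E_i$ makes the resulting bijection the identity on $(\bigoplus_i X_i) \times \Mod_\#N(\sigma)$, hence Borel. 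Specializing $\sigma := \sigma_F$ and using $E \otimes F = E \ltimes \sigma_F$ then yields $(\bigoplus_i E_i) \otimes F \cong_B \bigoplus_i (E_i \otimes F)$, with naturality in the $E_i$ and $F$ routine from the explicit description and the finite/aperiodic split handled as in \cref{thm:esigma}. The real content of (e) is thus the bookkeeping that this localized identification is genuinely Borel and natural, rather than any conceptual difficulty.
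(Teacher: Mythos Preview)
Your proposal is correct and matches the paper's proof essentially line-for-line: (a)--(c) via the projections and the universal property, (d) via Yoneda on the triple functor, and (e) by first proving the auxiliary distributivity $(\bigoplus_i E_i) \ltimes \sigma \cong_B \bigoplus_i (E_i \ltimes \sigma)$ from the explicit construction and then specializing $\sigma := \sigma_F$. The paper even records that auxiliary fact as a separate proposition, exactly as you anticipated.
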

\begin{proof}
(a): follows from $\pi_1 : E \otimes F ->_B^{cb} E$.

(b): since $f = \pi_1 \circ \ang{f, g}$ is class-injective, so is $\ang{f, g}$.

(c): if $f : G ->_B^{cb} E \otimes F$, then $\pi_1 \circ f : G ->_B^{cb} E$, whence there is some $g : G \sqle_B^i E$ since $E$ is universally structurable, whence $\ang{g, \pi_2 \circ f} : G \sqle_B^i E \otimes F$ (by (b)).

(d): follows from a chase through the universal properties of $\otimes$ and $\ltimes$ (or the Yoneda lemma).  (A class-bijective homomorphism $G ->_B^{cb} (E \otimes F) \ltimes \sigma$ is the same thing as pair of class-bijective homomorphisms $G ->_B^{cb} E$ and $G ->_B^{cb} F$ together with a $\sigma$-structure on $G$, which is the same thing as a class-bijective homomorphism $G ->_B^{cb} E \otimes (F \ltimes \sigma)$.)

(e): this is an instance of the following more general fact, which follows easily from the construction of $E \ltimes \sigma$ in \cref{thm:esigma} (and which could have been noted earlier, in \cref{sec:esigma}):
\begin{proposition}
\label{thm:esigma-dist}
$\bigoplus_i (E_i \ltimes \sigma) \cong_B (\bigoplus_i E_i) \ltimes \sigma$.

Moreover, the isomorphism can be taken to be the map $d : \bigoplus_i (E_i \ltimes \sigma) ->_B^{cb} (\bigoplus_i E_i) \ltimes \sigma$ such that for each $i$, the restriction $d|(E_i \ltimes \sigma) : E_i \ltimes \sigma ->_B^{cb} (\bigoplus_i E_i) \ltimes \sigma$ is the canonical such map induced by the inclusion $E_i \sqle_B^i \bigoplus_i E_i$.

(In other words, the functor $E |-> E \ltimes \sigma$ preserves countable coproducts.)
\end{proposition}
To get from this to (e), simply put $\sigma := \sigma_F$.  Naturality is straightforward.
\end{proof}

\begin{remark}
The analog of \cref{thm:tensor-props}(a) for cross products is false: the class of treeable countable Borel equivalence relations is \emph{not} closed under cross products (see e.g., \cite[3.28]{JKL}).
\end{remark}

We note that for any $E \in \@E$, the equivalence relation $E_{\infty\sigma_E}$ (i.e., the invariantly universal element of $\@E_E$) can also be written as the less awkward $E_\infty \otimes E$, which is therefore how we will write it from now on.

Here are some sample computations of class-bijective products:
\begin{itemize}
\item  $\Delta_m \otimes \Delta_n \cong_B \Delta_m \times \Delta_n = \Delta_{m \times n}$ for $m, n \in \#N \cup \{\aleph_0, 2^{\aleph_0}\}$ (by \cref{thm:tensor-cross}(b)).
\item  $I_\#N \otimes I_\#N \cong_B \Delta_\#R \times I_\#N$, since $I_\#N \otimes I_\#N$ is aperiodic smooth (\cref{thm:tensor-props}(a)) and there are continuum many bijections $\#N \cong \#N$.
\item  $E_\infty \otimes E_\infty \cong_B E_\infty$, since $E_\infty \sqle_B^i E_\infty \otimes E_\infty$ (\cref{thm:tensor-props}(b)).
\item  If $E$ is universally structurable and $E ->_B^{cb} F$, then $E \otimes F \cong_B E$, since $E \sqle_B^i E \otimes F$ (\cref{thm:tensor-props}(b)), and $\pi_1 : E \otimes F ->_B^{cb} E$ so $E \otimes F \sqle_B^i E$.
\item  $E_0 \otimes E_0 \cong_B \Delta_\#R \times E_0$, since $E_0 \otimes E_0$ is aperiodic hyperfinite (\cref{thm:tensor-props}(a)), and there are $2^{\aleph_0}$ pairwise disjoint copies of $E_0$ in $E_0 \otimes E_0$.  This last fact can be seen by taking a family $(f_r)_{r \in \#R}$ of Borel automorphisms $f_r : E_0 \cong_B E_0$ such that $f_r, f_s$ disagree on every $E_0$-class whenever $r \ne s$, and then considering the embeddings $\ang{1_{E_0}, f_r} : E_0 \sqle_B^i E_0 \otimes E_0$, which will have pairwise disjoint images (by \cref{thm:tensor-alt}).

(The existence of the family $(f_r)_r$ is standard; one construction is by regarding $E_0$ as the orbit equivalence of the translation action of $\#Z$ on $\#Z_2$, the $2$-adic integers, then taking $f_r : \#Z_2 -> \#Z_2$ for $r \in \#Z_2$ to be translation by $r$.)
\end{itemize}
Note that the last example can be used to compute $\otimes$ of all hyperfinite equivalence relations.

We conclude by noting that we do not currently have a clear picture of tensor products of general countable Borel equivalence relations.  For instance, the examples above suggest that perhaps $E \otimes E$ is universally structurable (equivalently, $E \otimes E \cong_B E_\infty \otimes E$, since $E_\infty \otimes E \otimes E \cong_B E_\infty \otimes E$) for all aperiodic $E$; but we do not know if this is true.

\subsection{Categorical limits in $(\@E, {->_B^{cb}})$}
\label{sec:limits}

This short section concerns general categorical limits in the category $(\@E, {->_B^{cb}})$ of countable Borel equivalence relations and class-bijective homomorphisms.  Throughout this section, we use categorical terminology, e.g., ``product'' means categorical product (i.e., class-bijective product), ``pullback'' means categorical pullback (i.e., fiber product), etc.  For definitions, see \cite[III.3--4, V]{ML}.

We have shown that $(\@E, {->_B^{cb}})$ contains binary products.  By iterating (or by generalizing the construction outlined in \cref{thm:tensor-alt}), we may obtain all finite (nontrivial) products.  The category $(\@E, {->_B^{cb}})$ also contains pullbacks (see \cref{sec:pullback}).  It follows that it contains all finite nonempty limits, i.e., limits of all diagrams $F : J -> (\@E, {->_B^{cb}})$ where the indexing category $J$ is finite and nonempty (see \cite[V.2, Exercise~III.4.9]{ML}).

\begin{remark}
$(\@E, {->_B^{cb}})$ does \emph{not} contain a terminal object, i.e., a limit of the empty diagram.  This would be a countable Borel equivalence relation $E$ such that any other countable Borel equivalence relation $F$ has a \emph{unique} class-bijective map $F ->_B^{cb} E$; clearly such $E$ does not exist.
\end{remark}

We now verify that $(\@E, {->_B^{cb}})$ has inverse limits of countable chains, and that these coincide with the same limits in the category of all Borel equivalence relations and Borel homomorphisms (\cref{sec:prelims-cats}):

\begin{proposition}
Let $(X_n, E_n)_{n \in \#N}$ be countable Borel equivalence relations, and $(f_n : E_{n+1} ->_B^{cb} E_n)_n$ be class-bijective homomorphisms.  Then the inverse limit $\projlim_n (X_n, E_n)$ in the category of Borel equivalence relations and Borel homomorphisms, as defined in \cref{sec:prelims-cats}, is also the inverse limit in $(\@E, {->_B^{cb}})$.  More explicitly,
\begin{enumerate}
\item[(a)]  the projections $\pi_m : \projlim_n E_n ->_B E_m$ are class-bijective (so in particular $\projlim_n E_n$ is countable);
\item[(b)]  if $(Y, F) \in \@E$ is a countable Borel equivalence relation with class-bijective homomorphisms $g_n : F ->_B^{cb} E_n$ such that $g_n = f_n \circ g_{n+1}$, then the unique homomorphism $\~g : F ->_B \projlim_n E_n$ such that $\pi_n \circ \~g = g_n$ for each $n$ (namely $\~g(y) = (g_n(y))_n$) is class-bijective.
\end{enumerate}
\end{proposition}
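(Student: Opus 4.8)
The plan is to deduce (a) directly from the coherence condition defining $\projlim_n X_n$ together with the class-bijectivity of each $f_n$, and then to obtain (b) as a formal consequence of (a) via a cancellation property of class-bijective homomorphisms. Throughout I use that $\-x \mathrel{(\projlim_n E_n)} \-y$ iff $x_n \mathrel{E_n} y_n$ for all $n$, and that $\pi_m(\-y) = y_m$, so that the $(\projlim_n E_n)$-class of $\-x$ consists of exactly the coherent sequences $\-y$ with $y_n \mathrel{E_n} x_n$ for all $n$.

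For (a), fix $\-x \in \projlim_n X_n$ and $m \in \#N$. Since $\pi_j = f_j \circ \pi_{j+1}$, the coherence condition forces $y_j = (f_j \circ \dotsb \circ f_{m-1})(y_m)$ for every $j < m$; thus the coordinates below $m$ of any $\-y$ are determined by $y_m$. For injectivity of $\pi_m|[\-x]_{\projlim_n E_n}$, suppose $\-y, \-y'$ lie in $[\-x]_{\projlim_n E_n}$ with $y_m = y_m'$. The coordinates below $m$ agree by the above, and one proceeds upward by induction: $y_{m+1}, y_{m+1}'$ both lie in $[x_{m+1}]_{E_{m+1}}$ and satisfy $f_m(y_{m+1}) = y_m = y_m' = f_m(y_{m+1}')$, so class-injectivity of $f_m$ gives $y_{m+1} = y_{m+1}'$, and likewise up the tower. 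For surjectivity onto $[x_m]_{E_m}$, take $z \mathrel{E_m} x_m$ and build a coherent $\-y$ with $y_m = z$: the coordinates below $m$ are defined by applying the $f_j$, while the coordinates above $m$ come from repeatedly inverting the bijections $f_j|[x_{j+1}]_{E_{j+1}} : [x_{j+1}]_{E_{j+1}} -> [x_j]_{E_j}$ (bijective because $f_j$ is class-bijective and $f_j(x_{j+1}) = x_j$), starting from the unique $y_{m+1} \in [x_{m+1}]_{E_{m+1}}$ with $f_m(y_{m+1}) = z$. The resulting $\-y$ is coherent and, the $f_j$ being homomorphisms, satisfies $y_n \mathrel{E_n} x_n$ for all $n$; hence it lies in $[\-x]_{\projlim_n E_n}$ and maps to $z$.

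For (b), I would first record a cancellation principle: if $q$ and $p$ are Borel homomorphisms for which both $p$ and $p \circ q$ are class-bijective, then $q$ is class-bijective. Indeed, restricting to any class $[y]_F$ gives $(p \circ q)|[y]_F = (p|[q(y)]_G) \circ (q|[y]_F)$; as the left-hand map and $p|[q(y)]_G$ are bijections, so is $q|[y]_F = (p|[q(y)]_G)^{-1} \circ ((p \circ q)|[y]_F)$. Applying this with $p := \pi_0$ and $q := \~g$, so that $p \circ q = \pi_0 \circ \~g = g_0$, and using that $\pi_0$ is class-bijective by (a) while $g_0$ is class-bijective by hypothesis, shows that $\~g$ is class-bijective. (That $\~g$ is a well-defined Borel homomorphism into $\projlim_n E_n$ with $\pi_n \circ \~g = g_n$ is the inverse-limit universal property recalled in \cref{sec:prelims-cats}, using $g_n = f_n \circ g_{n+1}$.)

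The crux is part (a), and within it the surjectivity half: one must verify that an arbitrary element $z$ of a single $E_m$-class extends to a fully coherent sequence in the inverse limit, which is precisely where the upward lifting through the inverses of the class-bijections $f_j$ is needed — class-injectivity alone would not deliver surjectivity of $\pi_m$. Once (a) holds, everything else is formal: each $(\projlim_n E_n)$-class is in bijection (via $\pi_m$) with a class of $E_m$, hence countable, so $\projlim_n E_n \in \@E$, and (b) follows by cancellation. Finally, all maps in sight are manifestly Borel, $\projlim_n X_n$ being a Borel subset of the standard Borel product $\prod_n X_n$, so no further definability issues arise.
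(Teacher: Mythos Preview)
Your proof is correct and follows essentially the same route as the paper. The only organizational difference is that the paper shortens part (a) by proving directly that $\pi_0$ is class-bijective (via the same inductive lift up the tower you use) and then deduces the other $\pi_m$ from $\pi_m = f_m \circ \pi_{m+1}$ via the very cancellation principle you isolate for part (b); you instead prove each $\pi_m$ class-bijective uniformly, which is just as good.
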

\begin{proof}
For (a), note that since $\pi_m = f_m \circ \pi_{m+1}$ and the $f_m$ are class-bijective, it suffices to check that $\pi_0$ is class-bijective, since we may then inductively get that $\pi_1, \pi_2, \dotsc$ are class-bijective.  Let $\-x = (x_n)_n \in \projlim_n X_n$ and $x_0 = \pi_0(\-x) \mathrel{E_0} x_0'$; we must find a unique $\-x' \mathrel{(\projlim_n E_n)} \-x$ such that $x_0' = \pi_0(\-x')$.  For the coordinate $x_1' = \pi_1(\-x')$, we must have $x_0' = f_0(x_1')$ (in order to have $\-x' \in \projlim_n X_n$) and $x_1' \mathrel{E_1} x_1$ (in order to have $\-x' \mathrel{(\projlim_n E_n)} \-x$); since $x_0' \mathrel{E_0} x_0$, by class-bijectivity of $f_0$, there is a unique such $x_1'$.  Continuing inductively, we see that there is a unique choice of $x_n' = \pi_n(\-x')$ for each $n > 0$.  Then $\-x' := (x_n')_n$ is the desired element.

For (b), simply note that since $\pi_n \circ \~g = g_n$ and $\pi_n, g_n$ are class-bijective, so must be $\~g$.
\end{proof}

\begin{corollary}
$(\@E, {->_B^{cb}})$ has all countable (nontrivial) products.
\end{corollary}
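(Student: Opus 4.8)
The plan is to realize any countable product as an inverse limit of finite products, exploiting the two facts already in hand: that $(\@E, {->_B^{cb}})$ has all finite nontrivial products (by iterating the binary product $\otimes$ of \cref{thm:tensor}), and that it has inverse limits of countable chains (by the preceding proposition). This is the standard categorical route from finite products plus sequential limits to countable products.

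First I would fix a nonempty countable family $(E_i)_{i \in I}$ in $\@E$. If $I$ is finite we are already done, so we may assume $I = \#N$. For each $n$ set $P_n := E_0 \otimes E_1 \otimes \dotsb \otimes E_n$, a nontrivial finite product; for each $n$ there is a canonical class-bijective projection $f_n : P_{n+1} ->_B^{cb} P_n$ forgetting the last factor (this is just the projection $P_n \otimes E_{n+1} ->_B^{cb} P_n$ from \cref{thm:tensor}). I would then form the inverse limit $\projlim_n P_n$ of the chain $(P_n, f_n)_n$; by the preceding proposition this exists in $(\@E, {->_B^{cb}})$, is a countable Borel equivalence relation, and comes with class-bijective projections $\pi_n : \projlim_n P_n ->_B^{cb} P_n$ satisfying $\pi_n = f_n \circ \pi_{n+1}$.

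Finally I would verify that $\projlim_n P_n$, together with the composites $p_i := (\text{projection } P_i ->_B^{cb} E_i) \circ \pi_i$, is the categorical product of the $E_i$ in $(\@E, {->_B^{cb}})$. Given any $G \in \@E$ with class-bijective homomorphisms $g_i : G ->_B^{cb} E_i$, the finite tuples $G ->_B^{cb} P_n$ induced by $g_0, \dotsc, g_n$ via the universal property of the finite products are compatible with the chain maps $f_n$, so by the universal property of the inverse limit in $(\@E, {->_B^{cb}})$ they assemble into a unique $\~g : G ->_B^{cb} \projlim_n P_n$ with $\pi_n \circ \~g$ equal to the $n$th tuple, hence with $p_i \circ \~g = g_i$ for all $i$; uniqueness of $\~g$ likewise follows from uniqueness at each finite stage. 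The argument is entirely formal, so there is no substantive obstacle here; the only point demanding care is the bookkeeping that the finite tuples are genuinely compatible with the $f_n$ (so that the inverse-limit universal property applies) and that the resulting $\~g$ is the unique map with the required property, both of which reduce immediately to the universal properties of $\otimes$ and of $\projlim_n$.
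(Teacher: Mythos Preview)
Your proposal is correct and follows exactly the same approach as the paper: form the inverse limit of the chain of finite products $\dotsb ->_B^{cb} E_0 \otimes E_1 \otimes E_2 ->_B^{cb} E_0 \otimes E_1 ->_B^{cb} E_0$ along the projection maps, then verify the universal property. Your write-up simply spells out the verification in more detail than the paper does.
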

\begin{proof}
To compute the product $\bigotimes_i E_i$ of $E_0, E_1, E_2, \dotsc \in \@E$, take the inverse limit of the chain $\dotsb ->_B^{cb} E_0 \otimes E_1 \otimes E_2 ->_B^{cb} E_0 \otimes E_1 ->_B^{cb} E_0$ (where the maps are the projections).
\end{proof}

\begin{remark}
Countable products can also be obtained by generalizing \cref{thm:tensor-alt}.
\end{remark}

\begin{corollary}
$(\@E, {->_B^{cb}})$ has all countable nonempty limits, i.e., limits of all diagrams $F : J -> (\@E, {->_B^{cb}})$, where the indexing category $J$ is countable and nonempty.
\end{corollary}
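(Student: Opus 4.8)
The plan is to apply the standard categorical principle that limits over a fixed shape can be assembled from products and equalizers, and then to check that every ingredient this requires has already been secured for $(\@E, {->_B^{cb}})$. Recall (see \cite[V.2]{ML}) that for a functor $F : J -> (\@E, {->_B^{cb}})$ the limit may be computed as an equalizer of two maps between products: writing $P := \bigotimes_{j \in \operatorname{Ob}(J)} F(j)$ and $Q := \bigotimes_{u \in \operatorname{Mor}(J)} F(\operatorname{cod} u)$, with canonical projections $\pi_j : P ->_B^{cb} F(j)$, one forms the two class-bijective homomorphisms $s, t : P ->_B^{cb} Q$ whose $u$-component (for $u : j -> k$ in $J$) is $\pi_k$ and $F(u) \circ \pi_j$ respectively; then $\lim F$ is the equalizer of $s$ and $t$. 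Both $s$ and $t$ are automatically class-bijective, since each is induced by a cone of class-bijective maps and $Q$ is the categorical product in $(\@E, {->_B^{cb}})$.

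Carrying this out, the first step is to produce $P$ and $Q$. Since $J$ is countable, both $\operatorname{Ob}(J)$ and $\operatorname{Mor}(J)$ are countable, so $P$ and $Q$ exist by the preceding corollary on countable products---provided they are nonempty, which is exactly where the hypothesis that $J$ is nonempty enters: $\operatorname{Ob}(J) \ne \emptyset$, and $\operatorname{Mor}(J) \ne \emptyset$ as well, since it always contains at least the identity morphisms. The second step is to form the equalizer of $s$ and $t$. An equalizer is precisely a limit over the finite nonempty shape $({\bullet} \rightrightarrows {\bullet})$, and we have already observed that $(\@E, {->_B^{cb}})$ has all finite nonempty limits; so this equalizer exists in the category. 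Its universal property, together with those of $P$ and $Q$, then yields the universal property of $\lim F$ in the usual way.

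The only genuine subtlety---and the reason for the ``nonempty'' qualifier in the statement---is that $(\@E, {->_B^{cb}})$ has no terminal object, so the empty product is unavailable; this is precisely why the index products $P$ and $Q$ must be checked to be nonempty, and they are exactly when $J$ is nonempty. Beyond this bookkeeping there is no real obstacle: once countable nonempty products and (finite nonempty, hence) equalizers are in hand, the product--equalizer construction applies verbatim.
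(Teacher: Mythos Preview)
Your argument is correct and essentially the same as the paper's: both invoke the standard Mac~Lane machinery (\cite[V.2]{ML}) to build arbitrary countable nonempty limits from countable (nonempty) products together with a supply of finite limits. The only cosmetic difference is that the paper phrases the second ingredient as pullbacks while you phrase it as equalizers; since equalizers are themselves finite nonempty limits (and are constructible from binary products and pullbacks, both of which the paper has already established), this comes to the same thing. Your explicit handling of the nonemptiness hypothesis is a welcome addition of detail.
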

\begin{proof}
Follows from countable products and pullbacks; again see \cite[V.2, Exercise~III.4.9]{ML}.
\end{proof}

\section{Structurability and reducibility}
\label{sec:reductions}

This section has two parts: the first part (\crefrange{sec:classinj}{sec:elemred}) relates structurability to various classes of homomorphisms, in the spirit of \crefrange{sec:esigma}{sec:univstr}; while the second part (\cref{sec:compress}) relates reductions to compressibility, using results from the first part and from \cite[Section~2]{DJK}.

We describe here the various classes of homomorphisms that we will be considering.  These fit into the following table:

\begin{table}[H]
\caption{global and local classes of homomorphisms}
\label{tbl:global-local}
\centering
\setlength\extrarowheight{5pt}
\begin{tabular}{cc}
Global & Local \\
\hline
$\sqle_B^i$ & $->_B^{cb}$ \\
$\sqle_B$ & $->_B^{ci}$ \\
$\le_B$ & $->_B^{sm}$
\end{tabular}
\end{table}

The last entry in the table denotes the following notion: we say that a Borel homomorphism $f : (X, E) ->_B (Y, F)$ is \defn{smooth}\index{homomorphism $->_B$!smooth $->_B^{sm}$}, written $f : E ->_B^{sm} F$, if the $f$-preimage of every smooth set is smooth (where by a smooth subset of $Y$ (resp., $X$) we mean a subset to which the restriction of $F$ (resp., $E$) is smooth).  This notion was previously considered by Clemens-Conley-Miller \cite{CCM}, under the name \defn{smooth-to-one homomorphism} (because of \cref{thm:smh-factor}(ii)).  See \cref{thm:smh-factor} for basic properties of smooth homomorphisms.

Let $(\@G, \@L)$ be a row in \cref{tbl:global-local}.  We say that $\@G$ is a ``global'' class of homomorphisms, while $\@L$ is the corresponding ``local'' class.  Note that $\@G \subseteq \@L$.  The idea is that $\@G$ is a condition on homomorphisms requiring injectivity between classes (i.e., $\@G$ consists only of reductions), while $\@L$ is an analogous ``classwise'' condition which can be captured by structurability.

Our main results in this section state the following: for any elementary class $\@C \subseteq \@E$, the downward closure of $\@C$ under $\@G$ is equal to the downward closure under $\@L$, and is elementary.  In particular, when $\@C = \@E_E$, this implies that the downward closure of $\{E\}$ under $\@L$ is elementary.  In the case $(\@G, \@L) = ({\sqle_B^i}, {->_B^{cb}})$, these follow from \cref{sec:univstr}; thus, our results here generalize our results therein to the other classes of homomorphisms appearing in \cref{tbl:global-local}.

\begin{theorem}
\label{thm:emb-elem}
Let $\@C \subseteq \@E$ be an elementary class.  Then the downward closures of $\@C$ under $\sqle_B$ and $->_B^{ci}$, namely
\index{closure of elementary class under!embedding $\mathcal C^e$}
\index{closure of elementary class under!class-injective homomorphism $\mathcal C^{cih}$}
\begin{align*}
\@C^e &:= \{F \in \@E \mid \exists E \in \@C\, (F \sqle_B E)\}, \\
\@C^{cih} &:= \{F \in \@E \mid \exists E \in \@C\, (F ->_B^{ci} E)\},
\end{align*}
are equal and elementary.

In particular, when $\@C = \@E_E$, we get that
\begin{align*}
\@E_E^e = \@E_E^{cih}
&= \{F \in \@E \mid F ->_B^{ci} E\} \\
&\text{\llap{(}} = \{F \in \@E \mid F \sqle_B E\} \text{\rlap{, if $E$ is universally structurable)}}
\end{align*}
is the smallest elementary class containing $E$ and closed under $\sqle_B$.
\end{theorem}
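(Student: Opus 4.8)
The plan is to reduce everything to the single target $U := E_{\infty\sigma}$, the invariantly universal element of $\@C = \@E_\sigma$ (\cref{thm:einftysigma}), which is universally structurable. Since a class-injective homomorphism composed with an invariant embedding is again class-injective, and an embedding composed with an invariant embedding is again an embedding, the invariant universality of $U$ in $\@C$ gives $\@C^{cih} = \{F \in \@E \mid F ->_B^{ci} U\}$ and $\@C^e = \{F \in \@E \mid F \sqle_B U\}$; in particular $\@C^e \subseteq \@C^{cih}$, as every embedding is a class-injective homomorphism. To see that $\@C^{cih}$ is elementary I would mirror \cref{sec:univstr} using \cref{lm:sigmae}: the sentence $\sigma_U^h \wedge \sigma_U^{ci}$ (in the unary ``Scott sentence'' language of $U$) has, under the bijection between Borel structures on $F$ and Borel maps $f$ into $2^\#N$, the property that $f^{-1}(\#H')$ satisfies it exactly when $f : F ->_B^{ci} U$. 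Hence $F \models \sigma_U^h \wedge \sigma_U^{ci}$ iff $F ->_B^{ci} U$, so $\@C^{cih} = \@E_{\sigma_U^h \wedge \sigma_U^{ci}}$ is elementary. This is the ``local'' half, captured by structurability precisely as before.

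The whole content then reduces to the equality $\@C^{cih} = \@C^e$, i.e.\ to the implication $F ->_B^{ci} U \implies F \sqle_B U$ for universally structurable $U$ — the passage from a ``local'' class-injective homomorphism to a ``global'' embedding, generalizing the equivalence $F ->_B^{cb} U \iff F \sqle_B^i U$ of \cref{thm:univstr}. The mechanism I would use: given $f : F ->_B^{ci} U$ and a Borel $\sigma$-structure $\#A$ witnessing $U \models \sigma$, the classwise pullback $f^{-1}_F(\#A)$ presents each $F$-class $D$ as an induced substructure of the $\sigma$-model carried by the $U$-class $[f(x)]_U$, via the injection $f|D$. One then wants to realize $F$ inside $U$ by completing these substructures to full $\sigma$-models, exploiting universal structurability (equivalently $U \cong_B E_\infty \otimes U \cong_B \Delta_\#R \times U$, by \cref{thm:univstr,thm:univstr-idemp}) both to supply room for the completion and to separate distinct $F$-classes that $f$ happens to send into a common $U$-class.

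I expect this completion to be the main obstacle. The naive attempt — fix an invariant embedding $\iota : F \sqle_B^i E_\infty$ and send $x \mapsto (\iota(x), b_{[x]}) \in E_\infty \otimes U$ with $b_D : [\iota(x)]_{E_\infty} \cong [f(x)]_U$ a classwise bijection — fails, since a Borel, $F$-invariant choice of such \emph{full} bijections is the same as a class-bijective homomorphism $F ->_B^{cb} U$ (hence $F \in \@C$), which is strictly stronger than $F ->_B^{ci} U$; equivalently, filling each $U$-class beyond the image $f(D)$ requires a Borel transversal, unavailable when $F$ is non-smooth. (Composing $\iota$ with $f$ does give an honest embedding $F \sqle_B E_\infty \times U$ into the \emph{cross} product, but $E_\infty \times U \notin \@C$ and is too complex to reduce back into $U$.) The correct argument must therefore use the class-injectivity of $f$ together with the ``$\Delta_\#R$ worth'' of absorbing room in $U$ to perform the completion and class-separation Borel-ly — plausibly via a marker/exhaustion argument, or by working directly with the skew-product presentation $U \cong_B E_\infty \ltimes_{\alpha_T} \Mod_\#N(\sigma)$ of \cref{rmk:skew-product}. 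Equivalently, it suffices to show that the invariantly universal element of $\@E_{\sigma_U^h \wedge \sigma_U^{ci}}$ embeds into $U$.

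The remaining clauses then follow formally. Taking $\@C = \@E_E$ and using $\@E_E = \{G \mid G ->_B^{cb} E\}$ (\cref{thm:elem-cone}), composition of a class-injective homomorphism with a class-bijective one gives $\@E_E^{cih} = \{F \mid F ->_B^{ci} E\}$ directly, and by the main equality this coincides with $\@E_E^e$; when $E$ is universally structurable it further equals $\{F \mid F \sqle_B E\}$ by the crux applied with $U = E$. Finally, $\@E_E^e$ is the smallest elementary class containing $E$ and closed under $\sqle_B$: it is elementary (main statement), contains $E$, and is $\sqle_B$-downward closed, while any elementary $\sqle_B$-closed class containing $E$ must contain $\@E_E$ by minimality (\cref{thm:elem-cone}) and hence its $\sqle_B$-closure $\@E_E^e$.
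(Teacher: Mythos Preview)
You have the skeleton right: the elementarity of $\@C^{cih}$ via $\sigma_U^h \wedge \sigma_U^{ci}$ is exactly the paper's argument (\cref{thm:classinj-elem}), and you correctly isolate the crux as the implication $F ->_B^{ci} U \implies F \sqle_B U$ for universally structurable $U$. But your proposal contains a genuine gap at precisely this point: you correctly diagnose why the naive ``complete $f|D$ to a bijection'' approach fails, and then speculate about marker/exhaustion arguments or the skew-product presentation without executing anything. None of those suggestions leads anywhere.

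The missing idea is a \emph{factorization lemma} (\cref{thm:classinj-factor} in the paper): rather than trying to embed $F$ directly into $U$, one constructs an intermediate equivalence relation $G$ through which $f$ factors as $F \sqle_B G ->_B^{cb} U$. Concretely, with $F$ on $X$ and $U$ on $Y$, set $W := \{(x,y) \in X \times Y \mid f(x) \mathrel{U} y\}$ with the equivalence $(x,y) \mathrel{D} (x',y') \iff x \mathrel{F} x' \wedge y = y'$; class-injectivity of $f$ makes $D$ smooth (a selector sends $(x,y)$ to $((f|[x]_F)^{-1}(g_i(y)),y)$ for the least $i$ with $g_i(y) \in f([x]_F)$, where $U = \bigcup_i \mathrm{graph}(g_i)$). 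Then $G$ lives on $Z := W/D$, the map $x \mapsto [(x,f(x))]_D$ is an embedding $F \sqle_B G$, and $[(x,y)]_D \mapsto y$ is class-bijective $G ->_B^{cb} U$. Now universal structurability of $U$ gives $G \sqle_B^i U$, hence $F \sqle_B U$. This is exactly the ``completion'' you were looking for, but performed on the \emph{domain} side by enlarging $F$ to $G$, rather than on the target side by trying to fill in bijections into $U$.
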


\begin{theorem}
\label{thm:red-elem}
Let $\@C \subseteq \@E$ be an elementary class.  Then the downward closures of $\@C$ under $\le_B$ and $->_B^{sm}$, namely
\index{closure of elementary class under!reduction $\mathcal C^r$}
\index{closure of elementary class under!smooth homomorphism $\mathcal C^{smh}$}
\begin{align*}
\@C^r &:= \{F \in \@E \mid \exists E \in \@C\, (F \le_B E)\}, \\
\@C^{smh} &:= \{F \in \@E \mid \exists E \in \@C\, (F ->_B^{sm} E)\},
\end{align*}
are equal and elementary.

In particular, when $\@C = \@E_E$, we get that
\begin{align*}
\@E_E^r = \@E_E^{smh}
&= \{F \in \@E \mid F ->_B^{sm} E\} \\
&\text{\llap{(}} = \{F \in \@E \mid F \le_B E\} \text{\rlap{, if $E$ is universally structurable)}}
\end{align*}
is the smallest elementary class containing $E$ and closed under $\le_B$.
\end{theorem}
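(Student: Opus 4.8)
The plan is to reduce the whole statement to the class‑injective/embedding case \cref{thm:emb-elem}, together with the basic properties of smooth homomorphisms recorded in \cref{thm:smh-factor}. Let $E^* \in \@C$ be the invariantly universal element of $\@C$ (it exists by \cref{thm:einftysigma} and is universally structurable by \cref{thm:univstr}). Every $E \in \@C$ satisfies $E \sqle_B^i E^*$, and an invariant embedding is class-bijective, hence a smooth homomorphism; since smooth homomorphisms are closed under composition (\cref{thm:smh-factor}), I would first collapse both closures onto the single object $E^*$, obtaining $\@C^r = \{F \mid F \le_B E^*\}$ and $\@C^{smh} = \{F \mid F ->_B^{sm} E^*\}$. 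The inclusion $\@C^r \subseteq \@C^{smh}$ is then immediate, as every reduction is a smooth homomorphism. It thus remains to show that $\@C^{smh}$ is elementary and that $\@C^{smh} \subseteq \@C^r$.

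For elementarity the key is a ``blow-up'' reformulation: for any $E$,
\[
F ->_B^{sm} E \iff F ->_B^{ci} E \times I_\#N .
\]
Given a smooth homomorphism $f : F ->_B^{sm} E$, I would form the countable Borel equivalence relation $(\ker f) \cap F$ and, by Lusin–Novikov, choose a Borel $c : Y -> \#N$ injective on each of its classes; then $(f, c) : F ->_B^{ci} E \times I_\#N$. Conversely, projecting a class-injective homomorphism $g = (g_1, g_2) : F ->_B^{ci} E \times I_\#N$ onto the first coordinate gives a homomorphism $g_1 : F -> E$ whose point-preimages are smooth (using $g_2$ to Borel-select within each fiber), i.e.\ a smooth homomorphism by the equivalence of definitions in \cref{thm:smh-factor}. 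Consequently $\@C^{smh} = \{F \mid F ->_B^{ci} E^* \times I_\#N\} = \@E_{E^* \times I_\#N}^{cih}$, which is elementary by \cref{thm:emb-elem}.

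For the remaining inclusion $\@C^{smh} \subseteq \@C^r$, i.e.\ $F ->_B^{sm} E^* \implies F \le_B E^*$, I would use the factorization of smooth homomorphisms. Given $f : F ->_B^{sm} E^*$, the pullback relation $R := \{(y, y') \mid f(y) \mathrel{E^*} f(y')\}$ is a Borel equivalence relation containing $F$ whose classes are the sets $f^{-1}([x]_{E^*})$; since each $E^*$-class is smooth and $f$ is smooth, $F|f^{-1}([x]_{E^*})$ is smooth for every $x$. The point is to upgrade this classwise smoothness to a \emph{single} Borel $F$-invariant $u : Y -> \#R$ separating distinct $F$-classes lying in the same $R$-class; then $(f, u)$ is a reduction $F \le_B E^* \times \Delta_\#R$. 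Finally, since $E^*$ is universally structurable, \cref{thm:univstr-idemp} gives $\Delta_\#R \times E^* \cong_B E^*$, so $F \le_B E^* \times \Delta_\#R \cong_B E^*$, as required. Combining the three parts yields $\@C^r = \@C^{smh}$, both elementary; the ``in particular'' clause follows by taking $\@C = \@E_E$ and noting $\{F \mid F ->_B^{sm} E\} = \{F \mid F ->_B^{sm} E^*\}$ via $E^* = E_\infty \otimes E ->_B^{cb} E$.

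The main obstacle is precisely the factorization step in the previous paragraph: passing from the \emph{classwise} smoothness of $F$ inside $R$ (all that ``$f$ is smooth-to-one'' supplies directly) to a \emph{globally Borel} $F$-invariant separating function $u$. This is a uniform Borel selection problem for the family of smooth restrictions $F|f^{-1}([x]_{E^*})$, and is the substantive content of the smooth-homomorphism theory in \cref{thm:smh-factor}; everything else is bookkeeping that reduces $->_B^{sm}$ to the already-established $->_B^{ci}$ case.
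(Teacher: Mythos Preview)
Your elementarity argument via the equivalence $F \to_B^{sm} E \iff F \to_B^{ci} E \times I_\#N$ is correct and is a genuinely different route from the paper's.  The paper instead constructs a dedicated sentence $\sigma^{smh}_E$ (by adjoining to $\sigma^h_E$ a unary predicate $P$ and the axiom that $P$ picks exactly one element from each $(F \cap \ker f)$-class), so that $F \models \sigma^{smh}_E \iff F \to_B^{sm} E$.  Your reduction to \cref{thm:emb-elem} via $E \times I_\#N$ is arguably slicker, since it reuses the class-injective case rather than repeating the Scott-sentence construction; the paper's approach has the advantage of giving an explicit axiomatization of $\@C^{smh}$ in terms of $E$ itself.

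Your argument for $\@C^{smh} \subseteq \@C^r$, however, is more convoluted than necessary and does not quite match what \cref{thm:smh-factor} actually delivers.  You ask for a Borel \emph{$F$-invariant} $u : Y \to \#R$ separating $F$-classes within each $R = f^{-1}(E^*)$-class, so that $(f,u)$ reduces $F$ to $E^* \times \Delta_\#R$; but \cref{thm:smh-factor} does not produce such a $u$, and it is not clear one exists (the quotient map $Y \to Y/(F \cap \ker f)$ is a reduction of $F$, not $F$-invariant).  The paper's route is more direct and avoids both the $u$ and the appeal to $\Delta_\#R \times E^* \cong_B E^*$: given any $F \to_B^{sm} E$ with $E \in \@C$, \cref{thm:smh-factor}(iv) factors it as $F \le_B H \to_B^{cb} E$; since $\@C$ is elementary it is closed under $\to_B^{cb}$, so $H \in \@C$ and hence $F \in \@C^r$.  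There is no need to first collapse to the universal $E^*$, nor to invoke universal structurability or \cref{thm:univstr-idemp}.
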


Our proof strategy is as follows.  For each $(\@G, \@L)$ (= $({\sqle_B}, {->_B^{ci}})$ or $({\le_B}, {->_B^{sm}})$), we prove a ``factorization lemma'' which states that $\@L$ consists precisely of composites of homomorphisms in $\@G$ followed by class-bijective homomorphisms (in that order).  This yields that the closures of $\@C$ under $\@G$ and $\@L$ are equal, since $\@C$ is already closed under class-bijective homomorphisms.  We then prove that for any $E \in \@E$, a variation of the ``Scott sentence'' from \cref{thm:sigmae} can be used to code $\@L$-homomorphisms to $E$.  This yields that for any $E \in \@E$, the $\@L$-downward closure of $\{E\}$ is elementary, which completes the proof.

\subsection{Embeddings and class-injective homomorphisms}
\label{sec:classinj}

We begin with embeddings and class-injective homomorphisms, for which we have the following factorization lemma:

\begin{proposition}
\label{thm:classinj-factor}
Let $(X, E), (Y, F) \in \@E$ be countable Borel equivalence relations and $f : E ->_B^{ci} F$ be a class-injective homomorphism.  Then there is a countable Borel equivalence relation $(Z, G) \in \@E$, an embedding $g : E \sqle_B G$, and a class-bijective homomorphism $h : G ->_B^{cb} F$, such that $f = h \circ g$:
\begin{equation*}
\begin{tikzcd}[column sep=.2in,row sep=.15in]
(X, E) \drar[dashed][swap]{g} \ar{rr}{f} && (Y, F) \\
& (Z, G) \urar[dashed][swap]{h}
\end{tikzcd}
\end{equation*}

Furthermore, $g$ can be taken to be a complete section embedding, i.e., $[g(X)]_G = Z$.
\end{proposition}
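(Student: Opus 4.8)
The plan is to build $G$ so that it has exactly one class lying over each $E$-class, obtained by enlarging the injective copy $f([x]_E)$ of $[x]_E$ inside its target class $D := [f(x)]_F$ all the way up to $D$: the $G$-class over $[x]_E$ will consist of $[x]_E$ itself (embedded by $g$) together with one fresh ``filler'' point for each slot $d \in D \setminus f([x]_E)$, and $h$ will send the filler at $d$ to $d$. Concretely I would put $Z := X \sqcup W$, where $W$ is the (to-be-specified) set of filler points, let $g : X -> Z$ be the inclusion of the first summand, define a Borel \emph{label} map $\ell : Z -> X$ by $\ell(x) := x$ on $X$ and $\ell(w) := x$ for a filler $w$ attached to the class of $x$, and let $G := \{(p,q) \in Z^2 : \ell(p) \mathrel{E} \ell(q)\}$ be the pullback of $E$ along $\ell$. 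With $h$ equal to $f$ on $X$ and ``value of the filled slot'' on $W$, almost everything is then automatic: $\ell \circ g = \mathrm{id}_X$ makes $g$ an embedding ($g(x) \mathrel{G} g(x') \iff x \mathrel{E} x'$), and $p \mathrel{G} g(\ell(p))$ for every $p \in Z$ makes $g$ a \emph{complete section} ($[g(X)]_G = Z$); also $h \circ g = f$.

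The one real issue — and the step I expect to be the main obstacle — is to make $W$, which morally consists of the pairs $(C,d)$ with $C \in X/E$ and $d$ in the target class $[f(x)]_F$ but not in $f([x]_E)$, into an honest \emph{standard Borel} space: the naive description quotients by the $E$-class of a chosen representative, and $X/E$ is not standard Borel when $E$ is non-smooth, so one cannot simply pick one representative per class. The device is to break the symmetry using the target together with the injectivity of $f$ on classes. By Lusin--Novikov there is a Borel $S : Y -> Y^{\#N}$ with each $S(y)$ enumerating $[y]_F$; for a class $C = [x]_E$ and a missing slot $d \in D \setminus f(C)$, injectivity of $f|C$ lets me single out a canonical \emph{anchor} $a \in C$, namely the unique $a \in C$ with $f(a) = S(d)(n)$ where $n := \min\{k : S(d)(k) \in f(C)\}$ (such $k$ exists since $\emptyset \ne f(C) \subseteq [d]_F$). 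This presents $W$ as a genuine Borel subset of $X \times Y$,
\[
W := \{(x,d) \in X \times Y : d \mathrel{F} f(x),\ d \notin f([x]_E),\ f(x) = S(d)(\min\{k : S(d)(k) \in f([x]_E)\})\},
\]
with $\ell(x,d) := x$ and $h(x,d) := d$; all three conditions are Borel because $E$ is a countable Borel equivalence relation (so $f([x]_E) = \{f(x') : x' \mathrel{E} x\}$ and membership in it are Borel). Crucially, for each $(C,d)$ with $d \in D \setminus f(C)$ there is \emph{exactly one} $(x,d) \in W$ with $x \in C$, namely the anchor, so $W$ contributes precisely one filler per missing slot.

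With $W$ so defined, the verification is routine bookkeeping. Fix a $G$-class with label class $C$ and target $D$: its $X$-part contributes the points of $C$, whose $h$-values are $f(C)$, injectively since $f|C$ is injective; its $W$-part contributes one point for each $d \in D \setminus f(C)$ with $h$-value $d$; and a point $(x,d)$ of $W$ has label in $C$ only when $d \notin f(C)$, so the two families of $h$-values are disjoint and their union is exactly $f(C) \sqcup (D \setminus f(C)) = D$. Hence $h$ restricts to a bijection of each $G$-class onto the corresponding $F$-class, i.e.\ $h : G ->_B^{cb} F$, while $g : E \sqle_B G$ is a complete section embedding with $h \circ g = f$, as required. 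Note that no aperiodicity assumption is needed: the cardinality identity $|C| + |D \setminus f(C)| = |D|$ is forced by injectivity of $f|C$, so classes of every finite or infinite size are handled uniformly.
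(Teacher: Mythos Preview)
Your proof is correct and takes essentially the same approach as the paper. The paper sets $W := \{(x,y) : f(x) \mathrel{F} y\}$, defines the smooth equivalence relation $D$ on $W$ by $(x,y) \mathrel{D} (x',y') \iff x \mathrel{E} x' \wedge y = y'$, and puts $Z := W/D$; the selector it uses to witness smoothness of $D$---sending $(x,y)$ to $((f|[x]_E)^{-1}(g_i(y)),\,y)$ for the least $i$ with $g_i(y) \in f([x]_E)$---is exactly your anchor construction, so your $X \sqcup W$ is just an explicit Borel transversal of the paper's $D$, and the resulting $(Z,G,g,h)$ are canonically isomorphic.
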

\begin{proof}
Consider the equivalence relation $(W, D)$ where
\begin{gather*}
W := \{(x, y) \in X \times Y \mid f(x) \mathrel{F} y\}, \\
(x, y) \mathrel{D} (x', y') \iff x \mathrel{E} x' \AND y = y'.
\end{gather*}
Then $(W, D)$ is a countable Borel equivalence relation.  We claim that it is smooth.  Indeed, by Lusin-Novikov uniformization, write $F = \bigcup_i G_i$ where $G_i \subseteq Y^2$ for $i \in \#N$ are graphs of Borel functions $g_i : Y -> Y$.  Then a Borel selector for $D$ is found by sending $(x, y) \in W$ to $((f|[x]_E)^{-1}(g_i(y)), y)$ for the least $i$ such that $g_i(y)$ is in the image of $f|[x]_E$.  (Here we are using that $f$ is class-injective.)

Now put $Z := W/D$, and let $G$ be the equivalence relation on $Z$ given by
\begin{align*}
[(x, y)]_D \mathrel{G} [(x', y')]_D \iff x \mathrel{E} x' \AND y \mathrel{F} y'.
\end{align*}
Then $(Z, G)$ is a countable Borel equivalence relation.  Let $g : X -> Z$ and $h : Z -> Y$ be given by
\begin{align*}
g(x) &:= [(x, f(x))]_D, &
h([(x, y)]_D) &:= y.
\end{align*}
It is easily seen that $g : E \sqle_B G$ is a complete section embedding, $h : G ->_B^{cb} F$, and $f = h \circ g$, as desired.
\end{proof}

\begin{remark}
It is easy to see that the factorization produced by \cref{thm:classinj-factor} (with the requirement that $g$ be a complete section embedding) is unique up to unique Borel isomorphism.  In other words, if $(Z', G') \in \@E$, $g' : E \sqle_B G'$, and $h' : G' ->_B^{cb} F$ with $f = h' \circ g'$ are another factorization, with $g'$ a complete section embedding, then there is a unique Borel isomorphism $i : G \cong_B G'$ such that $i \circ g = g'$ and $h = h' \circ i$.
\end{remark}

\begin{corollary}
\label{thm:univstr-classinj-emb}
If $E \in \@E$ is universally structurable, then $F \sqle_B E \iff F ->_B^{ci} E$, for all $F \in \@E$.
Similarly, if $\@C$ is an elementary class, then $\@C^e = \@C^{cih}$.
\end{corollary}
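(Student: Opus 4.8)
The plan is to derive both statements directly from the factorization lemma \cref{thm:classinj-factor}, which is the only substantive ingredient; the rest is bookkeeping about composing homomorphisms of the appropriate kinds and invoking the characterization of universal structurability. In both statements the ``easy'' inclusion is immediate from the containment $\sqle_B \subseteq {->_B^{ci}}$ recorded in the diagram of \cref{sec:homoms}: every embedding is in particular class-injective, so $F \sqle_B E \implies F ->_B^{ci} E$ for the first statement, and $\@C^e \subseteq \@C^{cih}$ for the second.

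For the nontrivial direction of the first statement, suppose $E$ is universally structurable and $f : F ->_B^{ci} E$. I would apply \cref{thm:classinj-factor} to factor $f = h \circ g$, with $g : F \sqle_B G$ an embedding and $h : G ->_B^{cb} E$ class-bijective. Since $E$ is universally structurable, condition (iii) of \cref{thm:univstr} promotes the class-bijective homomorphism $h$ to an invariant embedding $G \sqle_B^i E$, and in particular $G \sqle_B E$ (as $\sqle_B^i \subseteq \sqle_B$). Composing embeddings then gives $F \sqle_B G \sqle_B E$, hence $F \sqle_B E$, as desired.

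For the reverse inclusion of the second statement, take $F \in \@C^{cih}$, witnessed by some $E \in \@C$ and a class-injective homomorphism $f : F ->_B^{ci} E$, and factor it as $f = h \circ g$ with $g : F \sqle_B G$ and $h : G ->_B^{cb} E$ exactly as above. Because $\@C$ is elementary, it is downward closed under class-bijective homomorphisms by \cref{thm:elem-classbij}, so $h : G ->_B^{cb} E$ with $E \in \@C$ forces $G \in \@C$; then $g : F \sqle_B G$ exhibits $F \in \@C^e$. This proves $\@C^{cih} \subseteq \@C^e$, and with the easy inclusion yields equality.

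I expect no genuine obstacle here: the entire difficulty is concentrated in \cref{thm:classinj-factor}, and the corollary is essentially a repackaging of that factorization together with the definition of universal structurability (to convert the class-bijective tail into an invariant embedding) and the closure of elementary classes under $->_B^{cb}$. The only point requiring care is tracking which homomorphisms compose to which kind—specifically that embeddings compose to an embedding and that $\sqle_B^i$ refines $\sqle_B$—so that the two factors $g$ and $h$ combine correctly in each case.
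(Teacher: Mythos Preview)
Your proof is correct and follows essentially the same approach as the paper: factor the class-injective map via \cref{thm:classinj-factor} into an embedding followed by a class-bijective map, then use universal structurability (respectively, closure of elementary classes under $\to_B^{cb}$) to handle the class-bijective tail. The paper's proof is more terse but uses exactly these ingredients in the same order.
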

\begin{proof}
If $E$ is universally structurable and $F ->_B^{ci} E$, then by \cref{thm:classinj-factor}, $F \sqle_B G ->_B^{cb} E$ for some $G$; then $G \sqle_B^i E$, whence $F \sqle_B E$.  The second statement is similar.
\end{proof}

We now have the following analog of \cref{thm:sigmae} for class-injective homomorphisms, which we state in the simpler but slightly weaker form of \cref{thm:classbij-elem} since that is all we will need:

\begin{proposition}
\label{thm:classinj-elem}
Let $E \in \@E$ be a countable Borel equivalence relation.  Then there is a sentence $\sigma^{cih}_E$ (in some fixed language) such that for all $F \in \@E$, we have $F |= \sigma^{cih}_E$ iff $F ->_B^{ci} E$.
\end{proposition}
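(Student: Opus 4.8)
The plan is to reuse almost verbatim the apparatus already assembled in the proof of \cref{thm:sigmae} together with the explicit computations of \cref{lm:sigmae}. Recall that there we fixed a unary language $L = \{R_0, R_1, \dotsc\}$, the structure $\#H'$ on $2^\#N$ with $R_i^{\#H'}(x) \iff x(i) = 1$, arranged for $X$ (the universe of $E$) to be a Borel subspace of $2^\#N$, and established a bijection between Borel maps $f : Y ->_B 2^\#N$ and Borel $L$-structures $f^{-1}(\#H')$ on $Y$ --- and, since $L$ is unary, between the latter and Borel $L$-structures on any $(Y, F) \in \@E$. Exactly as in \cref{thm:sigmae}, it therefore suffices to exhibit an $L_{\omega_1\omega}$-sentence $\sigma^{cih}_E$ for which
\[
f^{-1}(\#H') : F |= \sigma^{cih}_E \iff f(Y) \subseteq X \AND f : F ->_B^{ci} E,
\]
since that bijection then upgrades the equivalence to $F |= \sigma^{cih}_E \iff F ->_B^{ci} E$.

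The key point is that a class-injective homomorphism is nothing but a homomorphism whose restriction to each class is injective, and both of these requirements have already been isolated as sentences in \cref{lm:sigmae}. Accordingly I would simply set
\[
\sigma^{cih}_E := \sigma^h_E \wedge \sigma^{ci}_E.
\]
By \cref{lm:sigmae}, $f^{-1}(\#H') : F |= \sigma^h_E$ expresses that $f(Y) \subseteq X$ and $f : F ->_B E$ is a homomorphism, while $f^{-1}(\#H') : F |= \sigma^{ci}_E$ expresses that $f|D : D -> 2^\#N$ is injective for every $F$-class $D \in Y/F$; so the conjunction holds precisely when $f$ is a homomorphism into $X$ each of whose classwise restrictions is injective.

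The only step requiring comment --- and it is a very short one --- is the identification of this conjunction with class-injectivity of $f$ viewed as a homomorphism $F -> E$, as opposed to mere injectivity of $f$ as a map into $2^\#N$. But once $\sigma^h_E$ forces $f$ to be a homomorphism, each $F$-class $[y]_F$ lands inside the single $E$-class $[f(y)]_E$, so injectivity of $f|[y]_F : [y]_F -> 2^\#N$ is the same as injectivity of $f|[y]_F : [y]_F -> [f(y)]_E$, which is exactly the defining condition of $f : F ->_B^{ci} E$. This yields the displayed equivalence, and hence the proposition via \cref{thm:sigmae,lm:sigmae}. I do not expect any genuine obstacle: all the real work was done in establishing \cref{lm:sigmae}, and the present statement falls out by conjoining two of the sentences produced there, which is precisely why it can be stated in the weaker ``structurability'' form of \cref{thm:classbij-elem} rather than with the full uniqueness of a coded homomorphism.
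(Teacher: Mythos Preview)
Your proposal is correct and is exactly the paper's approach: the paper's proof of \cref{thm:classinj-elem} explicitly offers two options, the second of which is precisely to set $\sigma^{cih}_E := \sigma^h_E \wedge \sigma^{ci}_E$ and invoke \cref{lm:sigmae}. Your added remark identifying injectivity of $f|[y]_F$ into $2^\#N$ with class-injectivity into $E$ (once $\sigma^h_E$ holds) just makes explicit what the paper leaves implicit.
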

\begin{proof}
We may either modify the proof of \cref{thm:sigmae} (by considering ``injections $I ->$ (some $E$-class)'' instead of bijections in the last few lines of the proof), or take $\sigma^{cih}_E := \sigma^h_E \wedge \sigma^{ci}_E$ where $\sigma^h_E$ and $\sigma^{ci}_E$ are as in \cref{lm:sigmae}.
\end{proof}

\begin{corollary}
If $\@C = \@E_E$ is an elementary class, then so is $\@C^e = \@C^{cih} = \{F \in \@E \mid F ->_B^{ci} E\}$.
\end{corollary}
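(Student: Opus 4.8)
The plan is to assemble the corollary from ingredients already in place, using that $\@C = \@E_E$ is the downward closure of $\{E\}$ under class-bijective homomorphisms (\cref{thm:elem-cone}). First I would invoke \cref{thm:univstr-classinj-emb}, which already gives $\@C^e = \@C^{cih}$ for any elementary class $\@C$; so it remains to identify this common closure with $\{F \in \@E \mid F ->_B^{ci} E\}$ and to verify that the latter is elementary.

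For the identification I would prove two inclusions. The inclusion $\{F \mid F ->_B^{ci} E\} \subseteq \@C^{cih}$ is immediate: the identity witnesses $E ->_B^{cb} E$, so $E \in \@E_E = \@C$, and hence any $F$ with $F ->_B^{ci} E$ lies in $\@C^{cih}$ by definition. For the reverse inclusion, suppose $F ->_B^{ci} E'$ with $E' \in \@C = \@E_E$, so that $E' ->_B^{cb} E$. The key observation is that a class-injective homomorphism followed by a class-bijective one is again class-injective: if $f : F ->_B^{ci} E'$ and $g : E' ->_B^{cb} E$, then on each $F$-class $C$ the restriction $f|C$ injects $C$ into an $E'$-class $D$, while $g|D$ maps $D$ bijectively onto an $E$-class, so $(g \circ f)|C$ is injective. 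Thus $g \circ f : F ->_B^{ci} E$, giving $\@C^{cih} \subseteq \{F \mid F ->_B^{ci} E\}$.

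Finally, elementarity follows directly from \cref{thm:classinj-elem}, which supplies a sentence $\sigma^{cih}_E$ with $F |= \sigma^{cih}_E \iff F ->_B^{ci} E$ for all $F \in \@E$; hence $\{F \mid F ->_B^{ci} E\} = \@E_{\sigma^{cih}_E}$ is elementary by definition. I do not expect any genuine obstacle here, as the argument is essentially a bookkeeping assembly of earlier results; the only points needing care are the composition fact above and the observation that $E \in \@E_E$, both routine.
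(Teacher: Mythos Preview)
Your proof is correct and follows essentially the same approach as the paper: the paper does not give an explicit proof of this corollary, treating it as an immediate consequence of \cref{thm:univstr-classinj-emb} (for $\@C^e = \@C^{cih}$) and \cref{thm:classinj-elem} (for elementarity), with the identification $\@C^{cih} = \{F \mid F \to_B^{ci} E\}$ following from the routine composition observation you spell out.
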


This completes the proof of \cref{thm:emb-elem}.

\subsection{Reductions, smooth homomorphisms, and class-surjectivity}
\label{sec:smh}

Recall that a Borel homomorphism $f : E ->_B F$ between countable Borel equivalence relations $E, F$ is \defn{smooth} if the preimage of every smooth set is smooth.  We have the following equivalent characterizations of smooth homomorphisms, parts of which are implicit in \cite[2.1--2.3]{CCM}:

\begin{proposition}
\label{thm:smh-factor}
Let $(X, E), (Y, F) \in \@E$ and $f : E ->_B F$.  The following are equivalent:
\begin{enumerate}
\item[(i)]  $f$ is smooth.

\item[(ii)]  For every $y \in Y$, $f^{-1}(y)$ is smooth (i.e., $E|f^{-1}(y)$ is smooth).

\item[(iii)]  $E \cap \ker f$ is smooth (as a countable Borel equivalence relation on $X$).

\item[(iv)]  $f$ can be factored into a surjective reduction $g : E \le_B G$, followed by a complete section embedding $h : G \sqle_B H$, followed by a class-bijective homomorphism $k : H ->_B^{cb} F$, for some $G, H \in \@E$:
\begin{equation*}
\begin{tikzcd}[column sep=.2in,row sep=.15in]
E \drar[dashed][swap]{g} \ar{rrr}{f} &&[1ex]& F \\
& G \rar[dashed][swap]{h} & H \urar[dashed][swap]{k}
\end{tikzcd}
\end{equation*}

(In particular, $f$ can be factored into a reduction $h \circ g$ (with image a complete section) followed by a class-bijective homomorphism $k$, or a surjective reduction $g$ followed by a class-injective homomorphism $k \circ h$.)

\item[(v)]  $f$ belongs to the smallest class of Borel homomorphisms between countable Borel equivalence relations which is closed under composition and contains all reductions and class-injective homomorphisms.
\end{enumerate}
\end{proposition}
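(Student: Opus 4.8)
The plan is to establish the cycle of implications (i)$\implies$(ii)$\implies$(iii)$\implies$(iv)$\implies$(v)$\implies$(i). The two endpoints are the easy ones. For (i)$\implies$(ii), I would simply observe that a singleton $\{y\} \subseteq Y$ is a smooth subset (since $F|\{y\}$ is equality on one point), so smoothness of $f$ gives that $f^{-1}(y) = f^{-1}(\{y\})$ is smooth for $E$. For (ii)$\implies$(iii), I would use the measure characterization of smoothness recalled earlier, i.e. that a countable Borel equivalence relation is smooth iff every ergodic invariant $\sigma$-finite measure on it is atomic. The key point is that $R := E \cap \ker f$ is $f$-invariant, since $R \subseteq \ker f$. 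So if $\mu$ were an ergodic invariant nonatomic measure on $R$, then the $R$-invariant Borel map $f$ would be $\mu$-a.e.\ equal to a single value $y_0$, whence $\mu$ concentrates on the $R$-invariant Borel set $f^{-1}(y_0)$; but $R|f^{-1}(y_0) = E|f^{-1}(y_0)$, so $\mu$ restricts to a nonatomic ergodic invariant measure there, contradicting the smoothness of $E|f^{-1}(y_0)$ granted by (ii). Hence $R$ has no such measure and is smooth.

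The constructive heart is (iii)$\implies$(iv). Assuming $R = E \cap \ker f$ smooth, I would take a Borel selector $s : X ->_B X$ for $R$ (so $s(x) \mathrel{R} x$ and $x \mathrel{R} x' \implies s(x) = s(x')$), with transversal $Z := s(X)$, and set $G := E|Z$ and $g := s : X -> Z$. One checks directly that $g : E \le_B G$ is a \emph{surjective} reduction: it is a homomorphism since $s(x) \mathrel{R} x \subseteq E$ forces $s(x) \mathrel{E} s(x')$ whenever $x \mathrel{E} x'$, and it reflects $E$ since $s(x) \mathrel{E} s(x')$ together with $x \mathrel{E} s(x)$, $x' \mathrel{E} s(x')$ gives $x \mathrel{E} x'$; surjectivity is clear as $s$ fixes $Z$. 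Moreover $f|Z : G ->_B^{ci} F$ is class-injective: if $z, z' \in Z$ with $z \mathrel{E} z'$ and $f(z) = f(z')$ then $z \mathrel{R} z'$, so $z = s(z) = s(z') = z'$. Finally $f = (f|Z) \circ g$ since $s(x) \mathrel{R} x \subseteq \ker f$. Applying \cref{thm:classinj-factor} to the class-injective homomorphism $f|Z$ splits it as a complete section embedding $h : G \sqle_B H$ followed by a class-bijective $k : H ->_B^{cb} F$, yielding exactly the factorization $f = k \circ h \circ g$ required by (iv).

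The implication (iv)$\implies$(v) is immediate: surjective reductions and embeddings are reductions, class-bijective homomorphisms are class-injective, so (iv) exhibits $f$ as a composite of members of the generating class. For (v)$\implies$(i) I would verify that the class of smooth homomorphisms contains all reductions and all class-injective homomorphisms and is closed under composition; minimality of the class in (v) then forces $f$ to be smooth. Closure under composition is routine (pull back a smooth set through each map in turn), and reductions are smooth homomorphisms because a reduction restricts to a reduction $E|f^{-1}(B) \le_B F|B \le_B \Delta$ on the preimage of any smooth $B$. The one substantive sub-lemma — which I expect to be the main obstacle — is that a class-injective homomorphism into a smooth equivalence relation has smooth domain (after which the general class-injective case follows by restricting to $f^{-1}(B)$). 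Here I would use that a smooth $F$ admits a Borel map $c_F : Y ->_B \#N$ injective on each $F$-class (obtained by Lusin--Novikov enumeration of the fibers of a reduction of $F$ to equality), and then define a selector for $E$ by sending each $x$ to the unique $x' \mathrel{E} x$ minimizing $c_F(f(x'))$; class-injectivity guarantees $c_F \circ f$ is injective on each $E$-class, so this minimum is attained uniquely and depends only on $[x]_E$, and Borelness follows from a countable search over $[x]_E$. This completes the cycle.
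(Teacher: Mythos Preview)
Your proof is correct and follows essentially the same cycle of implications as the paper, including the use of \cref{thm:classinj-factor} in (iii)$\implies$(iv). The only minor deviation is in (v)$\implies$(i): the paper dispatches ``class-injective $\implies$ smooth'' by invoking \cref{thm:classinj-factor} again (any class-injective map factors as an embedding followed by a class-bijective map, both obviously smooth), whereas you give a direct selector argument via $c_F \circ f$; both are fine, though since you already have \cref{thm:classinj-factor} available the paper's route is shorter.
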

\begin{proof}
(i)$\implies$(ii) is obvious.

(ii)$\implies$(iii): \cite[2.2]{CCM}  If $E \cap \ker f$ is not smooth, then it has an ergodic invariant $\sigma$-finite non-atomic measure $\mu$.  The pushforward $f_*\mu$ is then a $\Delta_Y$-ergodic (because $\mu$ is $(\ker f)$-ergodic) measure on $Y$, hence concentrates at some $y \in Y$, i.e., $\mu(f^{-1}(y)) > 0$, whence $f^{-1}(y)$ is not smooth.

(iii)$\implies$(iv): Letting $g : X -> X/(E \cap \ker f)$ be the projection and $G$ be the equivalence relation on $X/(E \cap \ker f)$ induced by $E$, we have that $g : E \le_B G$ is a surjective reduction, and $f$ descends along $g$ to a class-injective homomorphism $f' : G ->_B^{ci} F$.  By \cref{thm:classinj-factor}, $f'$ factors as a complete section embedding $h : G \sqle_B H$ followed by a class-bijective homomorphism $k : H ->_B^{cb} F$, for some $H \in \@E$.

(iv)$\implies$(v) is obvious.

(v)$\implies$(i): Clearly reductions are smooth, as are class-bijective homomorphisms; it follows that so are class-injective homomorphisms, by \cref{thm:classinj-factor} (see also \cite[2.3]{CCM}).
\end{proof}

Similarly to before we now have

\begin{corollary}
\label{thm:univstr-smh-red}
If $E \in \@E$ is universally structurable, then $F \le_B E \iff F ->_B^{sm} E$, for all $F \in \@E$.
Similarly, if $\@C$ is an elementary class, then $\@C^r = \@C^{smh}$.
\end{corollary}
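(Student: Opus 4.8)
The plan is to mirror the proof of \cref{thm:univstr-classinj-emb} verbatim, trading the class-injective factorization \cref{thm:classinj-factor} for the smooth-homomorphism factorization \cref{thm:smh-factor}(iv). First I would dispose of the easy inclusions. Every reduction $f : F \le_B E$ is in particular a smooth homomorphism, since each fiber $f^{-1}(y)$ is contained in a single $E$-class and hence $E|f^{-1}(y)$ is smooth, so that $f$ is smooth by \cref{thm:smh-factor}(ii)$\implies$(i) (as was already noted in the proof of that proposition). Thus $\le_B$ is contained in ${->_B^{sm}}$, which immediately gives both the forward implication $F \le_B E \implies F ->_B^{sm} E$ and the inclusion $\@C^r \subseteq \@C^{smh}$.

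For the converse, suppose $f : F ->_B^{sm} E$. By \cref{thm:smh-factor}(iv) this factors as $F \le_B G \sqle_B H ->_B^{cb} E$ for some $G, H \in \@E$; since embeddings are reductions, composing the first two arrows yields a reduction $F \le_B H$ together with a class-bijective homomorphism $H ->_B^{cb} E$. The two assertions of the corollary then split according to the hypothesis. If $E$ is universally structurable, then $H ->_B^{cb} E$ upgrades to $H \sqle_B^i E$ by \cref{thm:univstr}(iii), hence $H \le_B E$, and composing with $F \le_B H$ gives $F \le_B E$, as desired. If instead $\@C$ is elementary, then it is closed downward under class-bijective homomorphisms by \cref{thm:elem-classbij}, so $H ->_B^{cb} E \in \@C$ forces $H \in \@C$; then $F \le_B H \in \@C$ witnesses $F \in \@C^r$, giving $\@C^{smh} \subseteq \@C^r$ and hence the equality $\@C^r = \@C^{smh}$.

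I expect essentially no genuine obstacle at this stage: all the real content has been packaged into the factorization lemma \cref{thm:smh-factor}, whose own proof rests on the measure-theoretic ergodic dichotomy in (ii)$\implies$(iii) and on the class-injective factorization \cref{thm:classinj-factor}. Granting that lemma, the present statement is a purely formal consequence, entirely parallel to \cref{thm:univstr-classinj-emb}. The only point needing a moment's care is bookkeeping: one must verify that every arrow in the chain $F \le_B G \sqle_B H ->_B^{cb} E$ is, or composes to, a bona fide reduction, so that the final composite $F \le_B E$ (respectively $F \le_B H$) is legitimate — but this is routine, since reductions are closed under composition and embeddings are reductions.
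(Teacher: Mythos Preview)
Your approach is correct and is exactly the one the paper intends (mirroring \cref{thm:univstr-classinj-emb} with \cref{thm:smh-factor} in place of \cref{thm:classinj-factor}). One small slip: in the forward direction you write that the fiber $f^{-1}(y)$ is ``contained in a single $E$-class'' and that ``$E|f^{-1}(y)$ is smooth'', but $f^{-1}(y)$ lives in the domain of $f$, where the relevant equivalence relation is $F$, not $E$; the correct statement is that $f^{-1}(y)$ is contained in a single $F$-class, whence $F|f^{-1}(y)$ is smooth.
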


\begin{proposition}
\label{thm:smh-elem}
Let $E \in \@E$ be a countable Borel equivalence relation.  Then there is a sentence $\sigma^{smh}_E$ (in some fixed language) such that for all $F \in \@E$, we have $F |= \sigma^{smh}_E$ iff $F ->_B^{sm} E$.
\end{proposition}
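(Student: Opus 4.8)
The plan is to adapt the construction of the ``Scott sentence'' from \cref{thm:sigmae} and its class-injective analogue \cref{thm:classinj-elem}, replacing the class-injectivity requirement by a Borel witness for the smoothness of the kernel. The guiding principle is \cref{thm:smh-factor}(iii): a Borel homomorphism $f : F ->_B E$ is smooth exactly when $F \cap \ker f$ is smooth. So I want a structure on $F$ that simultaneously (a) codes a Borel homomorphism $f : F ->_B E$ and (b) carries a Borel transversal for $F \cap \ker f$. The first ingredient is already available, and the second will be supplied by a single extra unary predicate.

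Concretely, I keep the setup from the proof of \cref{thm:sigmae}: assume $E$ lives on $X \subseteq 2^\#N$, let the unary symbols $R_0, R_1, \dotsc$ code an arbitrary Borel map $f : Y ->_B 2^\#N$ via the bijection denoted $(*)$ there, and recall the sentence $\sigma^h_E$ of \cref{lm:sigmae}, which holds for $f^{-1}(\#H')$ iff $f(Y) \subseteq X$ and $f : F ->_B E$. I then add one new unary symbol $P$ to the language --- its interpretation is intended to be the transversal --- and set
\[
\sigma^{sm}_E := \forall y\, \exists! y'\, \Bigl(\bigwedge_i (R_i(y) <-> R_i(y')) \AND P(y')\Bigr).
\]
Two elements $y, y'$ of a common $F$-class satisfy $\bigwedge_i (R_i(y) <-> R_i(y'))$ iff $f(y) = f(y')$, i.e.\ iff they lie in the same fibre of $f$; once $\sigma^h_E$ holds this is the same as lying in the same $(F \cap \ker f)$-class. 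Thus $\sigma^{sm}_E$ asserts that, within each $F$-class, $P$ selects exactly one element of each such fibre. I then put
\[
\sigma^{smh}_E := \sigma^h_E \AND \sigma^{sm}_E.
\]

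For the verification: if $F |= \sigma^{smh}_E$ via a Borel structure with underlying map $f$ and predicate $P$, then $\sigma^h_E$ makes $f : F ->_B E$ a homomorphism, while $\sigma^{sm}_E$ says precisely that the Borel set $P \subseteq Y$ meets every $(F \cap \ker f)$-class in exactly one point, i.e.\ $P$ is a Borel transversal; hence $F \cap \ker f$ is smooth, and by \cref{thm:smh-factor}(iii) we get $f : F ->_B^{sm} E$. Conversely, given any $f : F ->_B^{sm} E$, I code $f$ by the $R_i$ (so $\sigma^h_E$ holds), invoke \cref{thm:smh-factor}(iii) to obtain smoothness of $F \cap \ker f$, and interpret $P$ as a Borel transversal of it; then $\sigma^{sm}_E$ holds, so $F |= \sigma^{smh}_E$. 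The only genuinely delicate point --- the ``main obstacle'' --- is conceptual rather than technical: smoothness of $F \cap \ker f$ is a global Borel phenomenon, invisible on any single (countable) class, so a priori it is unclear that it can be captured by a sentence evaluated classwise. The resolution is the observation that a \emph{Borel} unary predicate which, uniformly across classes, picks one point out of each fibre is the same datum as a Borel transversal of $F \cap \ker f$; the Borelness built into the notion of a structure on $E$ is exactly what upgrades the classwise selection to the global smoothness witness. Once this is recognized, the argument parallels \cref{thm:classinj-elem}, with $\sigma^{ci}_E$ (``each fibre is a singleton'') replaced by $\sigma^{sm}_E$ (``a Borel predicate selects one point per fibre'').
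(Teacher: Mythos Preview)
Your proof is correct and essentially identical to the paper's own argument: the paper also enlarges the language by a single unary predicate $P$, defines $\sigma^{sm}_E := \forall x\, \exists! y\, (P(y) \wedge \bigwedge_i (R_i(x) \leftrightarrow R_i(y)))$, and sets $\sigma^{smh}_E := \sigma^h_E \wedge \sigma^{sm}_E$, with the same verification via \cref{thm:smh-factor}(iii).
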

\begin{proof}
The language is $L = \{R_0, R_1, \dotsc\} \cup \{P\}$ where $R_i, P$ are unary predicates, and the sentence is $\sigma^{smh}_E := \sigma^h_E \wedge \sigma^{sm}_E$, where $\sigma^h_E$ is as in \cref{lm:sigmae}, and
\begin{align*}
\sigma^{sm}_E := \forall x\, \exists! y\, (P(y) \wedge \bigwedge_i (R_i(x) <-> R_i(y))).
\end{align*}
It is easily seen that for any $L$-structure $\#A$ on $F$, we will have $\#A : F |= \sigma^{sm}_E$ iff the interpretation $P^\#A$ is a Borel transversal of $F \cap \ker f$, where $f$ is the Borel map to $E$ coded by $\#A$.
\end{proof}

\begin{corollary}
If $\@C = \@E_E$ is an elementary class, then so is $\@C^r = \@C^{smh} = \{F \in \@E \mid F ->_B^{sm} E\}$.
\end{corollary}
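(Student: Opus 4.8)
The plan is to deduce this at once from the two immediately preceding results, in exact parallel with the class-injective case. Since $\@C$ is elementary, \cref{thm:univstr-smh-red} already gives $\@C^r = \@C^{smh}$, so it remains to identify $\@C^{smh}$ with $\{F \in \@E \mid F ->_B^{sm} E\}$ and to observe that the latter is elementary.

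First I would establish the set equality. Recall from \cref{thm:elem-cone} that $\@C = \@E_E = \{F \in \@E \mid F ->_B^{cb} E\}$, and in particular $E \in \@C$ (the identity witnesses $E ->_B^{cb} E$). One inclusion is then immediate: any $F$ with $F ->_B^{sm} E$ lies in $\@C^{smh}$, since $E \in \@C$. For the reverse, suppose $F \in \@C^{smh}$, say $F ->_B^{sm} E'$ for some $E' \in \@C$; then $E' ->_B^{cb} E$. Class-bijective homomorphisms are smooth (as noted in the proof of \cref{thm:smh-factor}), and smooth homomorphisms are closed under composition — either by the characterization \cref{thm:smh-factor}(v), or directly, since for a composite $g \circ f$ and a smooth set $A$ one has $(g \circ f)^{-1}(A) = f^{-1}(g^{-1}(A))$, which is smooth whenever $f$ and $g$ are. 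Hence the composite $F ->_B^{sm} E' ->_B^{cb} E$ is a smooth homomorphism $F ->_B^{sm} E$, so that $F \in \{F \in \@E \mid F ->_B^{sm} E\}$. This proves $\@C^{smh} = \{F \in \@E \mid F ->_B^{sm} E\}$.

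Finally, \cref{thm:smh-elem} furnishes a sentence $\sigma^{smh}_E$ with $F |= \sigma^{smh}_E \iff F ->_B^{sm} E$, so $\{F \in \@E \mid F ->_B^{sm} E\} = \@E_{\sigma^{smh}_E}$ is elementary, and we are done. I do not expect any genuine obstacle here: all the substantive content lives in \cref{thm:univstr-smh-red,thm:smh-elem}, and the only point requiring verification is that smoothness of a homomorphism is preserved under post-composition with a class-bijective map, which is routine given the equivalent characterizations in \cref{thm:smh-factor}.
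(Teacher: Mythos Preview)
Your proof is correct and follows essentially the same approach as the paper, which treats this corollary as an immediate consequence of \cref{thm:univstr-smh-red} and \cref{thm:smh-elem} without spelling out the details. You have simply made explicit the one step the paper leaves implicit, namely that $\@C^{smh} = \{F \in \@E \mid F ->_B^{sm} E\}$ when $\@C = \@E_E$, via closure of smooth homomorphisms under composition with class-bijective maps.
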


This completes the proof of \cref{thm:red-elem}.

\begin{remark}
\cref{thm:red-elem} generalizes \cite[2.3]{CCM}, which shows that some particular classes of the form $\@C^r$, $\@C$ elementary, are closed under $->_B^{sm}$.

Hjorth-Kechris \cite[D.3]{HK} proved that every $\@C^r$ ($\@C$ elementary) is closed under $\subseteq$, i.e., containment of equivalence relations on the same space.  Since containment is a class-injective homomorphism (namely the identity), \cref{thm:red-elem} also generalizes this.

See \cref{sec:fiber-factor} for more on the relation between \cite[Appendix~D]{HK} and the above.
\end{remark}

We end this section by pointing out that exactly analogous proofs work for yet another pair of (``global'' resp.\ ``local'') classes of homomorphisms (which we did not include in \cref{tbl:global-local}), which forms a natural counterpart to $({\sqle_B}, {->_B^{ci}})$.  We write $\le_B^{cs}$ to denote a \defn{(Borel) class-surjective reduction}\index{homomorphism $->_B$!class-surjective reduction $\le_B^{cs}$}, and $->_B^{cssm}$ to denote a \defn{class-surjective smooth homomorphism}\index{homomorphism $->_B$!class-surjective smooth $->_B^{cssm}$}.  Then we have

\begin{theorem}
\label{thm:csr-elem}
Let $\@C = \@E_E$ be an elementary class.  Then
\begin{align*}
\@C^{csr} &:= \{F \in \@E \mid \exists E \in \@C\, (F \le_B^{cs} E)\}, \\
\@C^{cssmh} &:= \{F \in \@E \mid \exists E \in \@C\, (F ->_B^{cssm} E)\}
\end{align*}
are equal and elementary, and $\@C^{csr} = \{F \in \@E \mid F ->_B^{cssm} E\}$.
\end{theorem}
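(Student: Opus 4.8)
The plan is to follow the same three-step strategy used for $({\le_B}, {->_B^{sm}})$ and $({\sqle_B}, {->_B^{ci}})$: first establish a factorization lemma expressing every class-surjective smooth homomorphism as a class-surjective reduction followed by a class-bijective homomorphism; then deduce $\@C^{csr} = \@C^{cssmh}$ from the fact that an elementary class is (downward) closed under $->_B^{cb}$; and finally produce a ``Scott sentence'' coding class-surjective smooth homomorphisms into $E$ to conclude elementarity.

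For the factorization lemma, suppose $f : E ->_B^{cssm} F$ with $E$ on $X$ and $F$ on $Y$. Since $f$ is smooth, $E \cap \ker f$ is smooth by \cref{thm:smh-factor}, so $X/(E \cap \ker f)$ is standard Borel; let $g : X -> X/(E \cap \ker f)$ be the projection and $G$ the equivalence relation induced by $E$. Exactly as in the proof of \cref{thm:smh-factor}, $g : E \le_B G$ is a surjective reduction and $f$ descends to a homomorphism $f' : G ->_B F$ with $f = f' \circ g$. I would then check the two points that distinguish this case from the smooth one: that $g$ is in fact class-surjective, its restriction to $[x]_E$ mapping onto the whole $G$-class $\{[y] \mid y \mathrel{E} x\}$, so that $g : E \le_B^{cs} G$; and that the class-surjectivity of $f$ forces $f'$ to be not merely class-injective (as in \cref{thm:smh-factor}) but class-bijective, since $f'$ restricted to the $G$-class over $[x]_E$ has image $f([x]_E) = [f(x)]_F$. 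Thus no complete-section-embedding step is needed, and $f = f' \circ g$ with $g : E \le_B^{cs} G$ and $f' : G ->_B^{cb} F$. The converse is routine: a class-surjective reduction is smooth (being a reduction; see \cref{thm:smh-factor}) and a class-bijective homomorphism is both class-surjective and smooth, and both properties are preserved under composition, so any composite $E \le_B^{cs} G ->_B^{cb} F$ is a class-surjective smooth homomorphism.

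Given the factorization, the equality $\@C^{csr} = \@C^{cssmh}$ for $\@C = \@E_E$ follows as before: the inclusion $\subseteq$ is immediate since every class-surjective reduction is a class-surjective smooth homomorphism, and for $\supseteq$, if $F ->_B^{cssm} E' \in \@C$ then $F \le_B^{cs} G ->_B^{cb} E'$ for some $G$, and $G \in \@C$ because $\@C$ is closed under $->_B^{cb}$ (\cref{thm:elem-classbij}), so $F \le_B^{cs} G \in \@C$ gives $F \in \@C^{csr}$. Moreover $\@C^{cssmh} = \{F \in \@E \mid F ->_B^{cssm} E\}$: one inclusion uses $E \in \@C$, and the other uses that every $E' \in \@C = \@E_E$ satisfies $E' ->_B^{cb} E$ (\cref{thm:elem-cone}), together with the fact that post-composing a class-surjective smooth homomorphism with a class-bijective one again yields a class-surjective smooth homomorphism.

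Finally, for elementarity I would take $\sigma^{cssmh}_E := \sigma^h_E \wedge \sigma^{cs}_E \wedge \sigma^{sm}_E$ in the language $L = \{R_0, R_1, \dotsc\} \cup \{P\}$, combining the sentences of \cref{lm:sigmae} with the sentence $\sigma^{sm}_E$ from the proof of \cref{thm:smh-elem}. Unwinding the correspondence $(*)$ from the proof of \cref{thm:sigmae}, a structure coding a Borel map $f$ satisfies $\sigma^h_E \wedge \sigma^{cs}_E$ iff $f$ is a class-surjective homomorphism into $E$ (by \cref{lm:sigmae}, since for a homomorphism the $E$-invariance of each $f$-image of a class is exactly class-surjectivity), and satisfies $\sigma^{sm}_E$ iff $P$ interprets as a Borel transversal of $F \cap \ker f$, i.e.\ $f$ is smooth. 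Hence $F |= \sigma^{cssmh}_E$ iff $F ->_B^{cssm} E$, so $\@C^{cssmh} = \@E_{\sigma^{cssmh}_E}$ is elementary. The only genuinely new content is the factorization lemma, and the single point requiring care there is tracking class-surjectivity through the quotient by $E \cap \ker f$ to see that the descended map $f'$ is class-bijective rather than only class-injective; everything else is a direct transcription of the arguments for the other rows of \cref{tbl:global-local}.
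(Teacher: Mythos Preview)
Your proposal is correct and follows essentially the same approach as the paper's proof: a factorization lemma (surjective reduction followed by class-bijective homomorphism), the corollary $\@C^{csr} = \@C^{cssmh}$, and the sentence $\sigma^{cssmh}_E := \sigma^h_E \wedge \sigma^{cs}_E \wedge \sigma^{sm}_E$ for elementarity. The only cosmetic difference is that the paper obtains the factorization by directly citing \cref{thm:smh-factor} (which already gives a surjective reduction followed by a class-injective map) and then observing that class-surjectivity of $f$ plus surjectivity of $g$ forces the second map to be class-surjective and hence class-bijective, whereas you reconstruct the quotient by $E \cap \ker f$ by hand before making the same observation.
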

\begin{proof}
Exactly as before, we have the following chain of results:

\begin{proposition}
\label{thm:cssmh-factor}
Let $(X, E), (Y, F) \in \@E$ and $f : E ->_B^{cssm} F$.  Then there is a $(Z, G) \in \@E$, a surjective reduction $g : E \le_B G$, and a class-bijective homomorphism $h : G ->_B^{cb} F$, such that $f = h \circ g$.
\end{proposition}
\begin{proof}
By \cref{thm:smh-factor}, $f$ can be factored into a surjective reduction $g$ followed by a class-injective homomorphism $h$; since $h \circ g = f$ is class-surjective and $g$ is surjective, $h$ must be class-surjective, i.e., class-bijective.
\end{proof}

\begin{corollary}
\label{thm:univstr-cssmh-csr}
If $E \in \@E$ is universally structurable, then $F \le_B^{cs} E \iff F ->_B^{cssm} E$, for all $F \in \@E$.
Similarly, if $\@C$ is an elementary class, then $\@C^{csr} = \@C^{cssmh}$.
\end{corollary}

\begin{proposition}
\label{thm:cssmh-elem}
Let $E \in \@E$.  Then there is a sentence $\sigma^{cssmh}_E$ (in some fixed language) such that for all $F \in \@E$, we have $F |= \sigma^{cssmh}_E$ iff $F ->_B^{cssm} E$.
\end{proposition}
\begin{proof}
Like \cref{thm:smh-elem}, but put $\sigma^{cssmh}_E := \sigma^h_E \wedge \sigma^{cs}_E \wedge \sigma^{sm}_E$.
\end{proof}

It follows that $\@C^{csr} = \@C^{cssmh} = \{F \in \@E \mid F ->_B^{cssm} E\}$ is elementary.
\end{proof}

\begin{remark}
Since any reduction $f : E \le_B F$ can be factored into a surjective reduction onto its image followed by an embedding, we could have alternatively proved that $\@C^r$ is elementary (for elementary $\@C$) by combining \cref{thm:emb-elem} with \cref{thm:csr-elem}.
\end{remark}

\subsection{Elementary reducibility classes}
\label{sec:elemred}

We say that an elementary class $\@C \subseteq \@E$ is an \defn{elementary reducibility class} if it is closed under reductions.  The following elementary classes mentioned in \cref{sec:elem-examples} are elementary reducibility classes: smooth equivalence relations, hyperfinite equivalence relations, treeable equivalence relations \cite[3.3]{JKL}, $\@E$.  The following classes are \emph{not} elementary reducibility classes: finite equivalence relations, aperiodic equivalence relations, compressible equivalence relations, compressible hyperfinite equivalence relations.  In \cref{sec:freeact}, we will prove that for a countably infinite group $\Gamma$, $\@E_\Gamma^*$ is an elementary reducibility class iff $\Gamma$ is amenable, where $\@E_\Gamma^*$ consists of equivalence relations whose aperiodic part is generated by a free action of $\Gamma$.

By \cref{thm:red-elem}, for every $E \in \@E$, $\@E_E^r$ is the smallest elementary reducibility class containing $E$; this is analogous to \cref{thm:elem-cone}.  We also have the following analog of \cref{thm:elem-char}:

\begin{corollary}
\label{thm:elemred-char}
A class $\@C \subseteq \@E$ is an elementary reducibility class iff it is closed under smooth homomorphisms and contains an invariantly universal element $E \in \@C$, in which case $\@C = \@E_E^r$.
\end{corollary}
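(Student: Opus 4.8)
The plan is to treat this as the reducibility-class analog of \cref{thm:elem-char}, reusing the same two ingredients that made that characterization work — an invariantly universal element together with a closure condition — but now with smooth homomorphisms in place of class-bijective ones. The organizing observation is \cref{thm:smh-factor}: a smooth homomorphism factors as a reduction followed by a class-injective homomorphism (equivalently, the class of smooth homomorphisms is generated under composition by reductions and class-injective homomorphisms). This lets me translate ``closed under smooth homomorphisms'' into the conjunction ``closed under reductions and closed under class-injective homomorphisms'', each of which I can control from the hypotheses. The identification $\@C = \@E_E^r$ will then come from comparing with the explicit description of $\@E_E^r$ supplied by \cref{thm:red-elem}.

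For the forward direction, suppose $\@C$ is an elementary reducibility class. Since $\@C$ is elementary, \cref{thm:elem-char} supplies an invariantly universal element $E \in \@C$ with $\@C = \@E_E$, and $E$ is universally structurable by \cref{thm:univstr}. To verify closure under smooth homomorphisms I would take any $f : F ->_B^{sm} G$ with $G \in \@C$ and apply \cref{thm:smh-factor}(iv) to factor it as $F \le_B G' \sqle_B H ->_B^{cb} G$; since $\@C$ is closed under $->_B^{cb}$ (being elementary) I get $H \in \@C$, and since it is closed under $\le_B$ (and embeddings are reductions) I peel back to $G', F \in \@C$. Finally, because $E$ is universally structurable, \cref{thm:red-elem} gives $\@E_E^r = \{F \in \@E \mid F \le_B E\}$; this set is contained in $\@C$ as $E \in \@C$ and $\@C$ is reduction-closed, and it contains $\@C$ since every $F \in \@C$ satisfies $F \sqle_B^i E$ and hence $F \le_B E$. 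Thus $\@C = \@E_E^r$.

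For the reverse direction, assume $\@C$ is closed under smooth homomorphisms and has an invariantly universal element $E$. Class-bijective homomorphisms are smooth (being class-injective, their fibers meet each class in at most one point, so \cref{thm:smh-factor}(ii) applies), so $\@C$ is in particular closed under $->_B^{cb}$; \cref{thm:elem-char} then makes $\@C$ elementary with $\@C = \@E_E$, and $E$ universally structurable. Reductions are likewise smooth, so $\@C$ is reduction-closed, i.e.\ an elementary reducibility class; and the same containment computation as above yields $\@C = \@E_E^r$ (equivalently $\@E_E = \@E_E^r$ here, since $\@E_E$ has now been shown to be reduction-closed).

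I do not anticipate a genuine obstacle, as every substantive step is already in hand. The only points demanding care are the directional bookkeeping in the peeling-off argument — making sure each factor of the smooth homomorphism is traversed downward, so that membership in $\@C$ propagates from $G$ back to $F$ — and the implicit observation in the reverse direction that $\@E_E$ coincides with $\@E_E^r$ precisely because reduction-closure has been imposed. Both become routine once the factorization \cref{thm:smh-factor} and the description of $\@E_E^r$ from \cref{thm:red-elem} are invoked.
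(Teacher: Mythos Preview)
Your proof is correct and follows essentially the same route as the paper (which leaves this corollary unproved as the direct analog of \cref{thm:elem-char}). The only slack is in the forward direction: rather than re-running the factorization from \cref{thm:smh-factor} to peel back $F \le_B G' \sqle_B H ->_B^{cb} G$, you could simply cite the equality $\@C^r = \@C^{smh}$ from \cref{thm:red-elem}, which already packages that argument; since $\@C = \@C^r$ by hypothesis, closure under $->_B^{sm}$ is immediate.
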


As well, there is the analog of \cref{thm:univstr}:

\begin{corollary}
\label{thm:sunivstr}
Let $E \in \@E$.  The following are equivalent:
\begin{enumerate}
\item[(i)]  $E$ is invariantly universal in $\@E_E^r$.
\item[(ii)]  $E$ is invariantly universal in some elementary reducibility class.
\item[(iii)]  For every $F \in \@E$, $F ->_B^{sm} E$ iff $F \sqle_B^i E$.
\end{enumerate}
\end{corollary}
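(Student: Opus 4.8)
The plan is to mirror the proof of \cref{thm:univstr}, substituting the reduction/smooth-homomorphism machinery of \cref{sec:smh} for the class-bijective machinery used there. The single most useful input is the identity $\@E_E^r = \{F \in \@E \mid F ->_B^{sm} E\}$ furnished by \cref{thm:red-elem}; this plays exactly the role that $\@E_E = \{F \mid F ->_B^{cb} E\}$ played in \cref{thm:univstr}. I would establish the cycle (i)$\implies$(ii)$\implies$(iii)$\implies$(i).

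The implication (i)$\implies$(ii) is immediate, since $\@E_E^r$ is itself an elementary reducibility class by \cref{thm:red-elem}. For (ii)$\implies$(iii), suppose $E$ is invariantly universal in some elementary reducibility class $\@C$; I must show $F ->_B^{sm} E \iff F \sqle_B^i E$ for all $F \in \@E$. The direction $F \sqle_B^i E \implies F ->_B^{sm} E$ is a formal containment of homomorphism classes: an invariant embedding is in particular a reduction, and reductions are smooth homomorphisms (\cref{thm:smh-factor}). For the converse, I would invoke \cref{thm:elemred-char}, which guarantees that the elementary reducibility class $\@C$ is closed under smooth homomorphisms; hence $F ->_B^{sm} E \in \@C$ forces $F \in \@C$, and invariant universality of $E$ in $\@C$ then yields $F \sqle_B^i E$.

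Finally, (iii)$\implies$(i) is a direct consequence of the identity above: assuming (iii), we obtain $\@E_E^r = \{F \mid F ->_B^{sm} E\} = \{F \mid F \sqle_B^i E\}$, so every member of $\@E_E^r$ invariantly embeds into $E$ (and $E \in \@E_E^r$), i.e.\ $E$ is invariantly universal in $\@E_E^r$. I do not anticipate a genuine obstacle here: the substantive work has already been discharged by \cref{thm:red-elem} and \cref{thm:elemred-char}, so the corollary reduces to routine bookkeeping with the homomorphism classes. The only point requiring a moment's care is the containment $\sqle_B^i \subseteq \le_B \subseteq {->_B^{sm}}$ invoked in (ii)$\implies$(iii).
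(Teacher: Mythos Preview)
Your proposal is correct and takes essentially the same approach as the paper, which treats this corollary as the direct analog of \cref{thm:univstr} with $\@E_E^r = \{F \mid F ->_B^{sm} E\}$ (from \cref{thm:red-elem}) replacing $\@E_E = \{F \mid F ->_B^{cb} E\}$. The cycle (i)$\implies$(ii)$\implies$(iii)$\implies$(i) you describe is exactly what is intended, and the paper in fact states this corollary without a separate proof for precisely the reason you identify: once \cref{thm:red-elem} and \cref{thm:elemred-char} are in place, the argument is identical bookkeeping to \cref{thm:univstr}.
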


We call $E \in \@E$ \defn{stably universally structurable}\index{universally structurable $\mathcal E_\infty$!stably universally structurable $\mathcal E_\infty^r$} if these equivalent conditions hold.  We write $\@E_\infty^r \subseteq \@E$ for the class of stably universally structurable countable Borel equivalence relations.  For any $E \in \@E$, we write $E^r_{\infty E} := E_{\infty \sigma^{smh}_E}$\index{E@$E^r_{\infty E}$} for the $\sqle_B^i$-universal element of $\@E_E^r$.

As a simple example illustrating these notions, consider the equivalence relation $E_0$.  Its elementary class $\@E_{E_0}$ is the class of all aperiodic hyperfinite equivalence relations: since $E_0$ is aperiodic hyperfinite, so is every $F \in \@E_{E_0}$, and conversely every aperiodic hyperfinite $F$ admits a class-bijective homomorphism to $E_0$ by the Dougherty-Jackson-Kechris classification (\cref{thm:hyperfinite}).  Thus, $\@E_{E_0}$ is not an elementary reducibility class.  Its closure $\@E_{E_0}^r$ under reduction is the class of all hyperfinite equivalence relations, whose $\sqle_B^i$-universal element is $E^r_{\infty E_0} \cong \bigoplus_{1 \le n \in \#N} (\Delta_\#R \times I_n) \oplus (\Delta_\#R \times E_0)$.

\begin{remark}
We emphasize that being stably universally structurable is a \emph{stronger} notion than being universally structurable ($\@E_\infty^r$ is a transversal of $<->_B^{sm}$, which is a coarser equivalence relation than $<->_B^{cb}$).  In particular, ``stably universally structurable'' is not the same as ``$\le_B$-universal in some elementary class'' (which would be a weaker notion).
\end{remark}

\begin{remark}
By \cref{thm:smh-factor}, the preorder $->_B^{sm}$ on $\@E$ is the composite $({->_B^{cb}}) \circ ({\le_B})$ of the two preorders $\le_B$ and $->_B^{cb}$ on $\@E$, hence also the join of $\le_B$ and $->_B^{cb}$ in the complete lattice of all preorders on $\@E$ (that are $\cong_B$-invariant, say), i.e., $->_B^{sm}$ is the finest preorder on $\@E$ coarser than both $\le_B$ and $->_B^{cb}$.
Similarly, $<->_B^{sm}$ is the join of $\sim_B$ and $<->_B^{cb}$ in the lattice of equivalence relations on $\@E$; this follows from noting that $E <->_B^{cb} E_\infty \otimes E \sim_B E^r_{\infty E}$.

One may ask what is the meet of the preorders $\le_B$ and $->_B^{cb}$.  We do not know of a simple answer.  Note that the meet is strictly coarser than $\sqle_B^i$; indeed, $2 \cdot E_0 \le_B E_0$ and $2 \cdot E_0 ->_B^{cb} E_0$, but $2 \cdot E_0 \not\sqle_B^i E_0$.  (Similarly, the meet of $\sim_B$ and $<->_B^{cb}$ is strictly coarser than $\cong_B$.)
\end{remark}

\begin{remark}
Clearly one can define similar notions of ``elementary embeddability class'' and ``elementary class-surjective reducibility class''.
\end{remark}

\subsection{Reductions and compressibility}
\label{sec:compress}

Dougherty-Jackson-Kechris proved several results relating Borel reducibility to compressibility \cite[2.3, 2.5, 2.6]{DJK}, which we state here in a form suited for our purposes.

\begin{proposition}[Dougherty-Jackson-Kechris]
\label{thm:compress}
Let $E, F$ be countable Borel equivalence relations.
\begin{enumerate}
\item[(a)]  $E$ is compressible iff $E \cong_B E \times I_\#N$ (and the latter is always compressible).
\item[(b)]  If $E$ is compressible and $E \sqle_B F$, then $E \sqle_B^i F$.
\item[(c)]  If $F$ is compressible and $E \le_B^{cs} F$, then $E \sqle_B^i F$.
\item[(d)]  If $E, F$ are compressible and $E \le_B F$, then $E \sqle_B^i F$.
\item[(e)]  $E \le_B F$ iff $E \times I_\#N \sqle_B^i F \times I_\#N$.
\end{enumerate}
\end{proposition}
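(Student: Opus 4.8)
The plan is to prove (a) first and then deduce the other four parts from it, using the Borel Schr\"oder--Bernstein theorem for $\sqle_B^i$ (noted in \cref{sec:homoms}) together with routine Borel bookkeeping (Lusin--Novikov and Feldman--Moore uniformization). The only genuinely hard work lies in the nontrivial direction of (a); parts (b) and (c) are ``absorption'' statements that repackage it, while (d) and (e) then follow formally by composing explicit reductions.

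For (a), the two easy directions are immediate: the shift $f(x,n) := (x,n+1)$ is an injective reduction $E \times I_\#N \sqle_B E \times I_\#N$ carrying each class $[x]_E \times \#N$ properly into itself, so $E \times I_\#N$ is always compressible, and since compressibility is a $\cong_B$-invariant, $E \cong_B E \times I_\#N$ forces $E$ compressible. For the converse, start from a compression $f : E \sqle_B E$ with $f([x]_E) \subsetneq [x]_E$; then $f(x) \mathrel{E} x$, so $f$ is a Borel injection with graph inside $E$, and $A := X \setminus f(X)$ is a complete section whose iterated images $A, f(A), f^2(A), \dotsc$ are pairwise disjoint. \emph{If} $\bigsqcup_n f^n(A) = X$, then sending $x = f^n(a)$ (for the unique $a \in A$, $n \in \#N$) to $(a,n)$ is a Borel isomorphism $E \cong_B (E|A) \times I_\#N$, and hence $E \times I_\#N \cong_B (E|A) \times I_\#N \times I_\#N \cong_B (E|A) \times I_\#N \cong_B E$. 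The main obstacle is that $\bigsqcup_n f^n(A)$ need not exhaust $X$: the $f$-invariant core $X_\infty := \bigcap_n f^n(X)$, on which $f$ acts bijectively, may be nonempty, and it is not $E$-invariant so cannot simply be split off. My plan is to first replace $f$ by a compression with empty core: within each class the complement of the core is the rooted tower $\bigsqcup_n f^n(A \cap [x]_E)$, while the core consists of the points with infinite backward $f$-orbit, so a Borel selection of a leaf in $A \cap [x]_E$ (Lusin--Novikov) lets one re-route the backward-infinite orbits into the tower, yielding $f'$ with $\bigcap_n (f')^n(X) = \emptyset$. This Borel re-wiring is the delicate step; granting it, the tower isomorphism gives $E \cong_B E \times I_\#N$.

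Given (a), part (b) says that a compressible complete section absorbs its saturation. For $e : E \sqle_B F$ I would pass to the $F$-invariant saturation $B := [e(X)]_F$ and view $A := e(X)$ as a complete section of $F|B$ with $F|A \cong_B E$ compressible. The goal is $F|A \cong_B F|B$, proved by Borel Schr\"oder--Bernstein: the inclusion gives $F|A \sqle_B F|B$, while in the other direction the room supplied by $F|A \cong_B (F|A) \times I_\#N$ (from (a)) lets one fold each full class $[y]_F \subseteq B$ injectively and Borel-uniformly into its section $[y]_F \cap A$; then $E \cong_B F|A \cong_B F|B \sqle_B^i F$. Part (c) is the same mechanism applied to a class-surjective reduction $e : E \le_B^{cs} F$: a compressible $F$ is aperiodic, which forces $E$ aperiodic, and using $F \cong_B F \times I_\#N$ one de-collapses the fibres of $e$ into the $\#N$-room to convert $e$ into a class-bijective reduction, i.e., an invariant embedding $E \sqle_B^i F \times I_\#N \cong_B F$.

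Finally (e) and (d) are formal. For (e)($\Leftarrow$), compose the reductions $E \sqle_B E \times I_\#N$ (via $x \mapsto (x,0)$), the given $E \times I_\#N \sqle_B^i F \times I_\#N$, and the projection $F \times I_\#N \le_B F$ to obtain $E \le_B F$. For (e)($\Rightarrow$), from a reduction $e : E \le_B F$ I would define $(x,n) \mapsto (e(x), \langle n, j(x) \rangle)$, where $\langle \cdot, \cdot \rangle$ is a Borel pairing $\#N \times \#N \cong \#N$ and $j : X \to \#N$ is Borel and injective on each fibre $e^{-1}(z)$ (Lusin--Novikov); this is a reduction because $e$ is one, and is injective by the choice of $j$, so it is an embedding $E \times I_\#N \sqle_B F \times I_\#N$, which part (b) upgrades to $E \times I_\#N \sqle_B^i F \times I_\#N$ since $E \times I_\#N$ is compressible. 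Part (d) then follows at once: if $E$ and $F$ are both compressible, (a) and (e)($\Rightarrow$) give $E \cong_B E \times I_\#N \sqle_B^i F \times I_\#N \cong_B F$, i.e., $E \sqle_B^i F$.
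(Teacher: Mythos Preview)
Your overall architecture is sound, and the treatments of (a), (d), and (e) are essentially correct (the paper in fact derives (e) from (d) rather than the reverse, but your ordering is equally valid once (b) and (c) are in place). There are, however, genuine gaps in your arguments for (b) and (c).

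For (b), the Borel Schr\"oder--Bernstein theorem (as recorded in \cref{sec:homoms}) requires \emph{invariant} embeddings $\sqle_B^i$ in both directions; mere bi-embeddability $\sqle_B$ does not imply $\cong_B$ (e.g.\ $E_0$ and $2 \cdot E_0$). Your inclusion $A \hookrightarrow B$ only gives $F|A \sqle_B F|B$, since $A$ is a complete section, not an $F$-invariant set, and your ``folding'' in the other direction is likewise only an embedding. The fix is to bypass Schr\"oder--Bernstein entirely: since $F|A$ is compressible (hence aperiodic), for each $F|B$-class $C$ both $C$ and $C \cap A$ are countably infinite, so a Borel choice of enumerations (Lusin--Novikov) together with a pairing $\#N \cong \#N \times \#N$ yields a direct Borel isomorphism $F|B \cong_B (F|A) \times I_\#N$, after which (a) gives $F|B \cong_B F|A$. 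This is the content of \cite[2.2]{DJK}.

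For (c), your ``de-collapsing'' map $x \mapsto (e(x), j(x))$ is an embedding but is \emph{not} class-bijective unless every fiber $e^{-1}(z) \cap [x]_E$ happens to be infinite: the image of an $E$-class $C$ under this map is $\bigcup_{z \in e(C)} \{z\} \times \{0, \dotsc, |e^{-1}(z) \cap C|-1\}$, which need not be a full $(F \times I_\#N)$-class. So you only obtain $E \sqle_B F \times I_\#N \cong_B F$, and since $E$ is not assumed compressible you cannot then invoke (b) to upgrade this to $\sqle_B^i$. The paper's route is different: class-surjectivity makes the image $e(X)$ $F$-invariant, so $G := F|e(X) \sqle_B^i F$ is compressible; a Borel section $g$ of the surjective reduction $e : E \to G$ (Lusin--Novikov) is then a complete-section embedding $G \sqle_B E$ with $E|g(G) \cong_B G$ compressible, and the (corrected) absorption argument from (b) gives $E \cong_B G \sqle_B^i F$.
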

\begin{proof}
While these were all proved at some point in \cite{DJK}, not all of them were stated in this form.  For (a), see \cite[2.5]{DJK}.  For (b), see \cite[2.3]{DJK}.  Clearly (e) follows from (a) and (d) (and that $E \sim_B E \times I_\#N$).  We now sketch (c) and (d), which are implicit in the proof of \cite[2.6]{DJK}.

For (c), take $f : E \le_B^{cs} F$, and let $G \sqle_B^i F$ be the image of $f$.  Then $f$ is a surjective reduction $E \le_B G$, hence we can find a $g : G \sqle_B E$ such that $f \circ g = 1_G$; in particular, $g$ is a complete section embedding.  Now $G$ is compressible, so applying \cite[2.2]{DJK}, we get $G \cong_B E$, whence $E \cong_B G \sqle_B^i F$.

For (d), take $f : E \le_B F$, and let $G \sqle_B F$ be the image of $f$.  Then $f : E \le_B^{cs} G$ and $G \sqle_B F$, whence $E \times I_\#N \le_B^{cs} G \times I_\#N \sqle_B F \times I_\#N$.  By (a--c), $E \cong_B E \times I_\#N \sqle_B^i G \times I_\#N \sqle_B^i F \times I_\#N \cong_B F$.
\end{proof}

\begin{remark}
In passing, we note that \cref{thm:compress}(b) and \cref{thm:classinj-factor} together give the following: if $E$ is compressible and $E ->_B^{ci} F$, then $E ->_B^{cb} F$.
\end{remark}

It follows from \cref{thm:compress} that the compressible equivalence relations (up to isomorphism) form a transversal of bireducibility, with corresponding selector $E |-> E \times I_\#N$, which is moreover compatible with the reducibility ordering.  We summarize this as follows.  Let $\@E_c \subseteq \@E$ denote the compressible countable Borel equivalence relations.

\begin{corollary}
We have an order-isomorphism of posets
\begin{align*}
(\@E/{\sim_B}, {\le_B}) &<--> (\@E_c/{\cong_B}, {\sqle_B^i}) \\
E &|--> E \times I_\#N.
\end{align*}
\end{corollary}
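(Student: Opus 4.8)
The plan is to recognize that this corollary is essentially a bookkeeping repackaging of \cref{thm:compress}, with all of the real content already contained in parts (a) and (e); the only work is to verify that the stated assignment descends correctly to the quotient posets and is a genuine order-\emph{isomorphism} (not merely monotone). First I would note that the assignment $E \mapsto E \times I_\#N$ does land in $\@E_c$, since $E \times I_\#N$ is always compressible by \cref{thm:compress}(a); so it makes sense to ask whether it induces the claimed map on $\cong_B$-classes.

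Next I would check that the map descends to $\sim_B$-classes. Suppose $E \sim_B E'$, so that $E \le_B E'$ and $E' \le_B E$. By \cref{thm:compress}(e), $E \le_B E'$ is equivalent to $E \times I_\#N \sqle_B^i E' \times I_\#N$, and likewise on the other side, so we obtain $E \times I_\#N \sqle_B^i E' \times I_\#N$ and $E' \times I_\#N \sqle_B^i E \times I_\#N$. By the Borel Schröder--Bernstein theorem (mutual $\sqle_B^i$ coincides with $\cong_B$), this gives $E \times I_\#N \cong_B E' \times I_\#N$. Hence $[E]_{\sim_B} \mapsto [E \times I_\#N]_{\cong_B}$ is a well-defined map from $(\@E/{\sim_B}, {\le_B})$ to $(\@E_c/{\cong_B}, {\sqle_B^i})$.

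The order-isomorphism property is then immediate from reading \cref{thm:compress}(e) in both directions: $[E]_{\sim_B} \le_B [E']_{\sim_B}$ iff $E \le_B E'$ iff $E \times I_\#N \sqle_B^i E' \times I_\#N$ iff $[E \times I_\#N]_{\cong_B} \sqle_B^i [E' \times I_\#N]_{\cong_B}$. Thus the map both preserves and reflects the order; reflection in particular yields injectivity (if the two images are $\cong_B$, hence mutually $\sqle_B^i$, then $E \sim_B E'$). For surjectivity, given any compressible $F \in \@E_c$, \cref{thm:compress}(a) gives $F \cong_B F \times I_\#N$, so $[F]_{\cong_B}$ is the image of $[F]_{\sim_B}$. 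An order-preserving and order-reflecting bijection between posets is precisely an order-isomorphism, which completes the argument.

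I do not expect any genuine obstacle here, since \cref{thm:compress} does all the heavy lifting. The only point worth flagging is conceptual rather than technical: to get an order-\emph{isomorphism} one must use the full biconditional of \cref{thm:compress}(e) (to obtain order-reflection, and thereby both well-definedness via Schröder--Bernstein and injectivity) together with the idempotency $F \cong_B F \times I_\#N$ of \cref{thm:compress}(a) (for surjectivity), rather than merely verifying monotonicity of the assignment.
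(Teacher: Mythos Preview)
Your argument is correct and is exactly the unpacking the paper has in mind: the corollary is stated without proof, as an immediate consequence of \cref{thm:compress}, and your use of part (a) for landing in $\@E_c$ and for surjectivity, together with part (e) for order-preservation and order-reflection (hence well-definedness and injectivity via Schr\"oder--Bernstein), is precisely how it follows.
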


\begin{remark}
\label{thm:ein-not-clop-intop}
Unlike the selector $E |-> E_\infty \otimes E$ for $<->_B^{cb}$, the selector $E |-> E \times I_\#N$ for $\sim_B$ does not take $E$ to the $\sqle_B^i$-greatest element of its $\sim_B$-class (e.g., $E_0 \times I_\#N \cong_B E_t \sqlt_B^i E_0$).  Nor does it always take $E$ to the $\sqle_B^i$-least element of its $\sim_B$-class, or even to an element $\sqle_B^i$-less than $E$: for finite $E$ clearly $E \times I_\#N \not\sqle_B^i E$, while for aperiodic $E$, a result of Thomas \cite{T} (see also \cite[3.9]{HK}) states that there are aperiodic $E$ such that $E \times I_2 \not\sqle_B E$.
\end{remark}

We now relate compressibility to structurability.  Let $E_{\infty c}$ denote the invariantly universal compressible countable Borel equivalence relation, i.e., the $\sqle_B^i$-universal element of $\@E_c$.  Aside from $E |-> E \times I_\#N$, we have another canonical way of turning any $E$ into a compressible equivalence relation, namely $E |-> E_{\infty c} \otimes E$.  These two maps are related as follows:

\begin{proposition}
\label{thm:compress-univstr}
Let $(X, E) \in \@E$ be a countable Borel equivalence relation.
\begin{enumerate}
\item[(a)]  $E_{\infty c} \otimes E ->_B^{cb} E \times I_\#N$.
\item[(b)]  Suppose $E$ is universally structurable.  Then:
\begin{enumerate}
\item[(i)]  $E \times I_\#N$ is universally structurable;
\item[(ii)]  $E_{\infty c} \otimes E \sqle_B^i E \times I_\#N$;
\item[(iii)]  $E \times I_\#N \sqle_B E$ iff $E \times I_\#N \sqle_B^i E_{\infty c} \otimes E$ (iff $E \times I_\#N \cong_B E_{\infty c} \otimes E$).
\end{enumerate}
\end{enumerate}
\end{proposition}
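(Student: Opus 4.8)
The plan is to prove the four claims in order, relying throughout on a few preliminary observations. First, $E \times I_\#N$ is always compressible (\cref{thm:compress}(a)), and since compressibility is an elementary class, it is closed downward under $->_B^{cb}$ (\cref{thm:elem-classbij}); hence \emph{any} $F ->_B^{cb} E \times I_\#N$ is automatically compressible. Second, $E_{\infty c}$ is invariantly universal in the elementary class $\@E_c$ of compressible relations, so it is universally structurable (\cref{thm:univstr}) with $\@E_{E_{\infty c}} = \@E_c$; consequently $E_{\infty c} \otimes E$ is compressible (\cref{thm:tensor-props}(a)) and universally structurable (\cref{thm:tensor-props}(c)), with $\@E_{E_{\infty c} \otimes E} = \@E_c \cap \@E_E$. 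Third, the projection $\pi_1 : E \times I_\#N ->_B E$ is a $\le_B$-reduction — it is a bijection on quotients because $I_\#N$ is indiscrete — so in particular $E \sim_B E \times I_\#N$ and $\pi_1$ is a smooth homomorphism.

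For (a), I would use that $E_{\infty c} \otimes E$ is compressible to get $E_{\infty c} \otimes E \cong_B (E_{\infty c} \otimes E) \times I_\#N$ from \cref{thm:compress}(a), and then compose this isomorphism with $\pi_2 \times \mathrm{id} : (E_{\infty c} \otimes E) \times I_\#N ->_B^{cb} E \times I_\#N$, which is class-bijective because $\pi_2 : E_{\infty c} \otimes E ->_B^{cb} E$ is and $- \times \mathrm{id}$ preserves class-bijectivity. This yields the desired $E_{\infty c} \otimes E ->_B^{cb} E \times I_\#N$.

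The heart of the argument, and the step I expect to be the main obstacle, is (b)(i). The tempting shortcut — deducing universal structurability of $E \times I_\#N$ from that of $E_{\infty c} \otimes E$ by identifying the two — is unavailable, since $E \times I_\#N \cong_B E_{\infty c} \otimes E$ holds only under the extra hypothesis isolated in (iii). Instead I would verify condition (iii) of \cref{thm:univstr} directly: given any $f : F ->_B^{cb} E \times I_\#N$, I must produce $F \sqle_B^i E \times I_\#N$. By the first preliminary, $F$ is compressible. The key move is then that the composite $\pi_1 \circ f : F ->_B E$ is a smooth homomorphism: class-bijective homomorphisms and reductions are both smooth, and smooth homomorphisms are closed under composition (\cref{thm:smh-factor}), so $F ->_B^{sm} E$. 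Since $E$ is universally structurable, \cref{thm:univstr-smh-red} upgrades this to $F \le_B E$, whence $F \le_B E \times I_\#N$ via $E \le_B E \times I_\#N$. Finally, because $F$ is compressible, \cref{thm:compress}(a) and (e), together with $(E \times I_\#N) \times I_\#N \cong_B E \times I_\#N$, promote this reduction to the required invariant embedding $F \cong_B F \times I_\#N \sqle_B^i (E \times I_\#N) \times I_\#N \cong_B E \times I_\#N$.

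With (a) and (i) in hand the remaining parts are short. For (b)(ii), (a) gives $E_{\infty c} \otimes E ->_B^{cb} E \times I_\#N$, and since $E \times I_\#N$ is now known to be universally structurable by (i), this class-bijective homomorphism upgrades to $E_{\infty c} \otimes E \sqle_B^i E \times I_\#N$. For (b)(iii), I would first record that $\pi_2 : E_{\infty c} \otimes E ->_B^{cb} E$ together with universal structurability of $E$ gives $E_{\infty c} \otimes E \sqle_B^i E$. Writing (A), (B), (C) for the three conditions, the equivalence (B)$\iff$(C) is then immediate: (C)$\Rightarrow$(B) is trivial, while (B) combined with (ii) gives (C) by the Borel Schröder–Bernstein theorem. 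For (B)$\Rightarrow$(A), compose $E \times I_\#N \sqle_B^i E_{\infty c} \otimes E \sqle_B^i E$ to get $E \times I_\#N \sqle_B E$. For (A)$\Rightarrow$(B), compressibility of $E \times I_\#N$ and \cref{thm:compress}(b) turn $E \times I_\#N \sqle_B E$ into $E \times I_\#N \sqle_B^i E$, hence $E \times I_\#N ->_B^{cb} E$; since $E \times I_\#N$ is compressible and $\@E_{E_{\infty c} \otimes E} = \@E_c \cap \@E_E$, this gives $E \times I_\#N ->_B^{cb} E_{\infty c} \otimes E$, which upgrades to $\sqle_B^i$ by universal structurability of $E_{\infty c} \otimes E$.
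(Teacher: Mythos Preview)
Your proof is correct. Parts (a), (b)(ii), and (b)(iii) are essentially identical to the paper's arguments, but your approach to (b)(i) differs genuinely from the paper's. The paper (using an argument of Tserunyan) observes directly that any $f : F ->_B^{cb} E \times I_\#N$ exhibits $F$ as a product: setting $F_0 := F|f^{-1}(X \times \{0\})$, one has $F \cong_B F_0 \times I_\#N$ with $f$ restricting to a class-bijective map $F_0 ->_B^{cb} E$, so universal structurability of $E$ gives $F_0 \sqle_B^i E$ and hence $F \sqle_B^i E \times I_\#N$. Your route instead composes $f$ with the reduction $\pi_1$ to get a smooth homomorphism $F ->_B^{sm} E$, invokes \cref{thm:univstr-smh-red} to obtain $F \le_B E$, and then uses compressibility of $F$ and $E \times I_\#N$ to upgrade to $\sqle_B^i$. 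The paper's argument is more elementary and self-contained (it needs only the definition of universal structurability), while yours leans on the machinery of \cref{sec:smh} but shows how (b)(i) fits naturally into the general framework relating smooth homomorphisms to reductions. Incidentally, your final step in (b)(i) can be shortened: since both $F$ and $E \times I_\#N$ are compressible and $F \le_B E \times I_\#N$, \cref{thm:compress}(d) gives $F \sqle_B^i E \times I_\#N$ directly.
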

\begin{proof}
For (a), we have $E_{\infty c} \otimes E ->_B^{cb} E$, whence $E_{\infty c} \otimes E \cong_B (E_{\infty c} \otimes E) \times I_\#N ->_B^{cb} E \times I_\#N$.

For (i), let $f : F ->_B^{cb} E \times I_\#N$; we need to show that $F \sqle_B^i E \times I_\#N$.  Letting $F_0 := F|f^{-1}(X \times \{0\})$, it is easily seen that $F \cong F_0 \times I_\#N$.  We have $f|f^{-1}(X \times \{0\}) : F_0 ->_B^{cb} (E \times I_\#N)|(X \times \{0\}) \cong E$, so $F_0 \sqle_B^i E$ by universal structurability of $E$, whence $F \cong F_0 \times I_\#N \sqle_B^i E \times I_\#N$.  (This argument is due to Anush Tserunyan, and is simpler than our original argument.)

(ii) follows from (a) and (i).

For (iii), if $E \times I_\#N \sqle_B^i E_{\infty c} \otimes E$, then $E_{\infty c} \otimes E \sqle_B^i E$ gives $E \times I_\#N \sqle_B^i E$.  Conversely, if $E \times I_\#N \sqle_B E$, then since $E \times I_\#N$ is compressible, $E \times I_\#N \sqle_B^i E$, and also $E \times I_\#N \sqle_B^i E_{\infty c}$, whence $E \times I_\#N \sqle_B^i E_{\infty c} \otimes E$.
\end{proof}

\begin{remark}
\label{rmk:aperiodic-univstr-compress}
We do not know if there is an aperiodic universally structurable $E$ with $E \times I_\#N \not\sqle_B E$.  The example of Thomas \cite{T} mentioned above is far from universally structurable, since it has a unique ergodic invariant probability measure.
\end{remark}

\begin{addendum*}
The following answers the above question:

There exists an aperiodic universally structurable $(X, E) \in \@E_\infty$ such that for every $(Y, F) \in \@E$ for which there is $f : E \sqle_B F$ with $[f(X)]_F = [[f(X)]_F \setminus f(X)]_F$, we have $F \not\sqle_B E$.

In particular, taking $F := E \times I_2$ and $f(x) := (x, 0)$ yields $E \times I_2 \not\sqle_B E$.  This also yields a proof of Thomas’s result in \cite{T} cited above, which is somewhat simpler than the one implicitly contained in \cite{T2}, as it avoids the use of unique ergodicity.
\end{addendum*}
\begin{proof}
Let $\Gamma := \mathrm{SL}_3(\#Z)$ and $E := E_{\infty\Gamma} = F(\Gamma, [0, 1])$.  Let $\mu$ be the usual product Lebesgue measure on $E$.  Let $(Y, F)$ and $f$ be as above.  Suppose for contradiction that there were some $g : F \sqle_B E$.  Applying Popa's Cocycle Superrigidity Theorem to the cocycle $E -> \Gamma$ associated to $g \circ f : E \sqle_B E$ (see e.g., \cite[2.3]{T3}) yields
\begin{enumerate}
\item[(i)]  a group homomorphism $\pi : \Gamma -> \Gamma$ with finite kernel,
\item[(ii)]  an $E$-invariant $\mu$-conull Borel subset $Z \subseteq X$, and
\item[(iii)]  a Borel map $b : Z -> \Gamma$,
\end{enumerate}
such that letting
\begin{align*}
h : (Z, E|Z) &-->_B (X, E) \\
z &|--> b(z) \cdot g(f(z)),
\end{align*}
we have for all $\gamma \in \Gamma$ and $z \in Z$
\begin{align*}
h(\gamma \cdot z) = \pi(\gamma) \cdot h(z).  \tag{$*$}
\end{align*}
Since $\mathrm{SL}_3(\#Z)$ is co-Hopfian and has no nontrivial finite normal subgroups (see e.g., \cite[6.3]{T3}), $\pi$ is an automorphism.  By ($*$), it follows that $h : E|Z \sqle_B^i E$; hence $E|h(Z)$ has an invariant probability measure $\nu := h_*(\mu)$.  We have a Borel bijection
\begin{align*}
g(f(Z)) &\cong h(Z) \\
g(f(z)) &|-> h(z) = b(z) \cdot g(f(z))
\end{align*}
with graph contained in $E \subseteq X^2$, whence (see e.g., \cite[2.1]{KM}) $\nu(g(f(Z))) = \nu(h(Z))$.  On the other hand, from $[f(X)]_F = [[f(X)]_F \setminus f(X)]_F$ we easily have $h(Z) = [g(f(Z))]_E = [h(Z) \setminus g(f(Z))]_E$, whence $\nu(h(Z) \setminus g(f(Z))) > 0$, a contradiction.
\end{proof}

We call a bireducibility class $\@C \subseteq \@E$ \defn{universally structurable}\index{universally structurable $\mathcal E_\infty$!bireducibility class} if it contains a universally structurable element.  In this case, by \cref{thm:red-elem}, $\@C$ contains an invariantly universal (stably universally structurable) element, namely $E^r_{\infty E}$ for any $E \in \@C$; and by \cref{thm:compress-univstr}, it also contains a compressible universally structurable element, namely $E \times I_\#N$ for any $E \in \@C$.  Between these two (in the ordering $\sqle_B^i$) lie all those universally structurable $E \in \@C$ such that $E \times I_\#N \sqle_B E$.

Let $\@E_{\infty c} := \@E_\infty \cap \@E_c$ denote the class of compressible universally structurable equivalence relations.  Since $\@E_\infty$ forms a transversal (up to isomorphism) of the equivalence relation $<->_B^{cb}$, while $\@E_c$ forms a transversal of $\sim_B$, we would expect $\@E_{\infty c}$ to form a transversal of $<->_B^{sm}$, the join of $<->_B^{cb}$ and $\sim_B$.  That this is the case follows from the fact that the two corresponding selectors $E |-> E_\infty \otimes E$ (for $<->_B$) and $E |-> E \times I_\#N$ (for $\sim_B$) commute:

\begin{proposition}
\label{thm:einfty-e-in}
For any $E \in \@E$, $(E_\infty \otimes E) \times I_\#N \cong_B E_\infty \otimes (E \times I_\#N)$.
\end{proposition}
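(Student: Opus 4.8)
The plan is to exploit the fact that a universally structurable equivalence relation is determined up to $\cong_B$ by its elementary class, so that the whole statement reduces to a comparison of elementary classes rather than an explicit construction. First I would observe that both sides are universally structurable. The right-hand side $E_\infty \otimes (E \times I_\#N)$ is of the form $E_\infty \otimes G$ with $G := E \times I_\#N$, hence equals the invariantly universal element $E_{\infty\sigma_G}$ of $\@E_G$, so it is universally structurable with elementary class $\@E_{E \times I_\#N}$. The left-hand side is $F \times I_\#N$ with $F := E_\infty \otimes E$ universally structurable, hence is itself universally structurable by \cref{thm:compress-univstr}(b)(i). By \cref{thm:univstr} each side is invariantly universal in its own elementary class, and by \cref{thm:einftysigma} (together with the Borel Schröder–Bernstein theorem) the invariantly universal element of an elementary class is unique up to $\cong_B$. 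Thus it suffices to prove that the two sides have the same elementary class.

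Since $\@E_A = \{F \in \@E \mid F ->_B^{cb} A\}$ (\cref{thm:elem-cone}) and $A ->_B^{cb} A$ always holds, we have $\@E_A = \@E_B$ iff $A <->_B^{cb} B$; so it remains to establish $(E_\infty \otimes E) \times I_\#N <->_B^{cb} E \times I_\#N$. For one direction, I would take the canonical projection $p : E_\infty \otimes E ->_B^{cb} E$ (\cref{thm:tensor}) and cross it with the identity on $I_\#N$ to obtain $p \times \mathrm{id} : (E_\infty \otimes E) \times I_\#N ->_B^{cb} E \times I_\#N$, which is again class-bijective since $p$ restricts to a bijection on each class and the $I_\#N$-coordinate is carried over bijectively. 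For the reverse direction, $E \in \@E_E$ while $E_\infty \otimes E = E_{\infty\sigma_E}$ is invariantly universal in $\@E_E$, so $E \sqle_B^i E_\infty \otimes E$; crossing this invariant embedding with $I_\#N$ gives $E \times I_\#N \sqle_B^i (E_\infty \otimes E) \times I_\#N$, and every invariant embedding is in particular a class-bijective homomorphism. Combining the two directions yields the desired $<->_B^{cb}$, completing the reduction.

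I expect the only mildly delicate point to be the routine verification that class-bijectivity and invariant embeddability are preserved under the operation $- \times I_\#N$; this amounts to the observation that the classes of $H \times I_\#N$ are exactly the sets $C \times \#N$ for $C$ an $H$-class, so that a class-bijective (resp.\ invariantly embedding) map in the $H$-coordinate combines with the identity on $\#N$ to a map of the same kind. The real content is conceptual rather than computational: one should resist trying to build the isomorphism by hand, or to check compressibility of the right-hand side directly, and instead reduce everything to the equality of elementary classes, after which the conclusion is immediate from the universal properties already in hand.
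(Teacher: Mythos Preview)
Your proof is correct and uses essentially the same approach as the paper: the same two maps (the projection $E_\infty \otimes E \to_B^{cb} E$ and the embedding $E \sqle_B^i E_\infty \otimes E$, each crossed with $I_\#N$) together with the universal structurability of $(E_\infty \otimes E) \times I_\#N$ from \cref{thm:compress-univstr}. The only difference is organizational: you frame the conclusion as ``both sides are universally structurable with the same elementary class'', whereas the paper directly derives both $\sqle_B^i$ inclusions and applies Borel Schr\"oder--Bernstein, but unpacking your argument yields exactly those two inclusions.
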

\begin{proof}
We have $E_\infty \otimes E ->_B^{cb} E$, whence $(E_\infty \otimes E) \times I_\#N ->_B^{cb} E \times I_\#N$, and so $(E_\infty \otimes E) \times I_\#N \sqle_B^i E_\infty \otimes (E \times I_\#N)$.  Conversely, we have $E \sqle_B^i E_\infty \otimes E$, whence $E \times I_\#N \sqle_B^i (E_\infty \otimes E) \times I_\#N$, and so $E_\infty \otimes (E \times I_\#N) \sqle_B^i (E_\infty \otimes E) \times I_\#N$, since the latter is universally structurable by \cref{thm:compress-univstr}.
\end{proof}

\section{The poset of elementary classes}
\label{sec:poset}

In this section, we consider the order-theoretic structure of the poset of elementary classes under inclusion (equivalently the poset $(\@E_\infty/{\cong_B}, {\sqle_B^i})$), as well as the poset of elementary reducibility classes under inclusion (equivalently $(\@E_\infty^r/{\cong_B}, {\sqle_B^i})$, or $(\@E_{\infty c}/{\cong_B}, {\sqle_B^i})$, or $(\@E_\infty/{\sim_B}, {\le_B})$).

In \cref{sec:projection}, we introduce some concepts from order theory which give us a convenient way of concisely stating several results from previous sections.  In \cref{sec:lattice}, we discuss meets and joins in the poset $(\@E_\infty/{\cong_B}, {\sqle_B^i})$.  In \cref{sec:pbr}, we extend a well-known result of Adams-Kechris \cite{AK} to show that $(\@E_\infty/{\sim_B}, {\le_B})$ is quite complicated, by embedding the poset of Borel subsets of reals.

We remark that we always consider the empty equivalence relation $\emptyset$ on the empty set to be a countable Borel equivalence relation; this is particularly important in this section.  Note that $\emptyset$ is (vacuously) $\sigma$-structurable for any $\sigma$, hence is the $\sqle_B^i$-universal $\bot$-structurable equivalence relation, where $\bot$ denotes an inconsistent theory.

\subsection{Projections and closures}
\label{sec:projection}

Among the various posets (or preordered sets) of equivalence relations we have considered so far (e.g., $(\@E, {->_B^{cb}})$, $(\@E_\infty, {\sqle_B^i})$, $(\@E_c, {\sqle_B^i})$), there is one which is both the finest and the most inclusive, namely $(\@E/{\cong_B}, {\sqle_B^i})$.  Several of the other posets and preorders may be viewed as derived from $(\@E/{\cong_B}, {\sqle_B^i})$ via the following general order-theoretic notion.

Let $(P, \le)$ be a poset.  A \defn{projection operator} on $P$ is an idempotent order-preserving map $e : P -> P$, i.e.,
\begin{align*}
\forall x, y \in P\, (x \le y \implies e(x) \le e(y)), \qquad\qquad e \circ e = e.
\end{align*}
The image $e(P)$ of a projection operator $e$ is a \defn{retract} of $P$, i.e., the inclusion $i : e(P) -> P$ has a one-sided (order-preserving) inverse $e : P -> e(P)$, such that $e \circ i = 1_{e(P)}$.  A projection operator $e$ also gives rise to an \defn{induced preorder} $\lesim$ on $P$, namely the pullback of $\le$ along $e$, i.e.,
\begin{align*}
x \lesim y &\iff e(x) \le e(y).
\end{align*}
Letting ${\sim} := \ker e$, which is also the equivalence relation associated with $\lesim$, we thus have two posets derived from $(P, \le)$ associated with each projection operator $e$, namely the quotient poset $(P/{\sim}, \lesim)$ and the subposet $(e(P), \le)$.  These are related by an order-isomorphism:
\begin{align*}
(P/{\sim}, \lesim) &<--> (e(P), \le) = (e(P), \lesim) \\
[x]_\sim &|--> e(x) \\
e^{-1}(y) = [y]_\sim &<--| y.
\end{align*}

(There is the following analogy with equivalence relations: set $<->$ poset, equivalence relation $<->$ preorder, selector $<->$ projection, and transversal $<->$ retract.)

Summarizing previous results, we list here several projection operators on $(\@E/{\cong_B}, {\sqle_B^i})$ that we have encountered, together with their images and induced preorders.
\begin{itemize}
\item  $E |-> E_\infty \otimes E$, which has image $\@E_\infty/{\cong_B}$ (the universally structurable equivalence relations) and induces the preorder $->_B^{cb}$ (\cref{sec:univstr});
\item  $E |-> E \times I_\#N$, which has image $\@E_c/{\cong_B}$ (the compressible equivalence relations) and induces the preorder $\le_B$ (\cref{thm:compress});
\item  $E |-> (E_\infty \otimes E) \times I_\#N \cong_B E_\infty \otimes (E \times I_\#N)$ (\cref{thm:einfty-e-in}), which has image $\@E_{\infty c}/{\cong_B}$ (the compressible universally structurable equivalence relations) and induces the preorder $->_B^{sm}$;
\item  $E |-> E^r_{\infty E}$ (the $\sqle_B^i$-universal element of $\@E_E^r$), which has image $\@E_\infty^r/{\cong_B}$ (the stably universally structurable equivalence relations) and also induces the preorder $->_B^{sm}$;
\item  similarly, $E |-> $ the $\sqle_B^i$-universal element of $\@E_E^e$, which induces $->_B^{ci}$.
\end{itemize}
Also note that some of these projection operators can be restricted to the images of others; e.g., the restriction of $E |-> E \times I_\#N$ to $\@E_\infty$ is a projection operator on $\@E_\infty/{\cong_B}$ (by \cref{thm:compress-univstr}), with image $\@E_{\infty c}/{\cong_B}$.

Again let $(P, \le)$ be a poset, and let $e : P -> P$ be a projection operator.  We say that $e$ is a \defn{closure operator} if
\begin{align*}
\forall x \in P\, (x \le e(x)).
\end{align*}
In other words, each $e(x)$ is the ($\le$-)greatest element of its $\sim$-class.  In that case, the induced preorder $\lesim$ satisfies
\begin{align*}
x \lesim y \iff x \le e(y) \;\;{(\iff e(x) \le e(y))}.
\end{align*}

Among the projection operators on $(\@E/{\cong_B}, {\sqle_B^i})$ listed above, three are closure operators, namely $E |-> $ the $\sqle_B^i$-universal element of $\@E_E$, $\@E_E^r$, or $\@E_E^e$ (the first of these being $E |-> E_\infty \otimes E$).

For another example, let us say that a countable Borel equivalence relation $E \in \@E$ is \defn{idempotent} if $E \cong_B E \oplus E$.  This is easily seen to be equivalent to $E \cong_B \aleph_0 \cdot E$; hence, the idempotent elements of $\@E$ form the image of the closure operator $E |-> \aleph_0 \cdot E$ on $(\@E/{\cong_B}, {\sqle_B^i})$.  Note that all universally structurable equivalence relations are idempotent (\cref{thm:univstr-idemp}).

\subsection{The lattice structure}
\label{sec:lattice}

We now discuss the lattice structure of the poset of elementary classes under inclusion, equivalently the poset $(\@E_\infty/{\cong_B}, {\sqle_B^i})$ of universally structurable isomorphism classes under $\sqle_B^i$.

Let us first introduce the following notation.  For theories $(L, \sigma)$ and $(L', \tau)$, we write
\index{$=>^*$}
\begin{align*}
(L, \sigma) =>^* (L', \tau)  \qquad\text{(or $\sigma =>^* \tau$)}
\end{align*}
to mean that $\@E_\sigma \subseteq \@E_\tau$, i.e., for every $E \in \@E$, if $E |= \sigma$, then $E |= \tau$.  Thus $=>^*$ is a preorder on the class of theories which is equivalent to the poset of elementary classes (via $\sigma |-> \@E_\sigma$), and hence also to the poset $(\@E_\infty/{\cong_B}, {\sqle_B^i})$ (via $\sigma |-> E_{\infty\sigma}$).  We denote the associated equivalence relation by $<=>^*$.

\begin{remark}
We stress that in the notation $\sigma =>^* \tau$, $\sigma$ and $\tau$ may belong to \emph{different} languages.  Of course, if they happen to belong to the same language and $\sigma$ logically implies $\tau$, then also $\sigma =>^* \tau$; but the latter is in general a weaker condition.
\end{remark}

Let $(P, \le)$ be a poset.  We say that $P$ is an \defn{$\omega_1$-complete lattice}\index{omega@$\omega_1$-complete lattice} if every countable subset $A \subseteq P$ has a meet (i.e., greatest lower bound) $\bigwedge A$, as well as a join (i.e., least upper bound) $\bigvee A$.  We say that $P$ is an \defn{$\omega_1$-distributive lattice}\index{omega@$\omega_1$-complete lattice!omega@$\omega_1$-distributive} if it is an $\omega_1$-complete lattice which satisfies the \defn{$\omega_1$-distributive laws}
\begin{align*}
x \wedge \bigvee_i y_i &= \bigvee_i (x \wedge y_i), &
x \vee \bigwedge_i y_i &= \bigwedge_i (x \vee y_i),
\end{align*}
where $i$ runs over a countable index set.

\begin{theorem}
\label{thm:lattice}
The poset $(\@E_\infty/{\cong_B}, {\sqle_B^i})$ is an $\omega_1$-distributive lattice, in which joins are given by $\bigoplus$, nonempty meets are given by $\bigotimes$, the greatest element is $E_\infty$, and the least element is $\emptyset$.

Moreover, the inclusion $(\@E_\infty/{\cong_B}, {\sqle_B^i}) \subseteq (\@E/{\cong_B}, {\sqle_B^i})$ preserves (countable) meets and joins.  In other words, if $E_0, E_1, \dotsc \in \@E_\infty$ are universally structurable equivalence relations, then $\bigotimes_i E_i$ (respectively $\bigoplus_i E_i$) is their meet (respectively join) in $(\@E/{\cong_B}, {\sqle_B^i})$ as well as in $(\@E_\infty/{\cong_B}, {\sqle_B^i})$.
\end{theorem}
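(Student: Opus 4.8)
The plan is to reduce everything to the class-bijective ordering, where the lattice operations become categorical (co)products, and then transfer back along the order-isomorphism of \cref{thm:univstr-elem}. Since $E \mapsto E_\infty \otimes E$ exhibits $\@E_\infty$ as a transversal of ${<->_B^{cb}}$ on $\@E$, \cref{thm:univstr-elem} identifies $(\@E_\infty/{\cong_B}, {\sqle_B^i})$ with the quotient poset $(\@E/{<->_B^{cb}}, {->_B^{cb}})$. In any category, products and coproducts descend to meets and joins for the preorder in which $a \le b$ means there is a morphism $a \to b$: the projections (resp.\ injections) and the mediating morphism from the universal property supply exactly the required bounds and comparisons. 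As $(\@E, {->_B^{cb}})$ has countable products $\bigotimes$ and countable coproducts $\bigoplus$ (disjoint sum is visibly a coproduct, a class-bijective map out of $\bigoplus_i E_i$ being the same as a family of class-bijective maps out of the $E_i$), the quotient poset has countable meets $\bigotimes$ and joins $\bigoplus$, with top $E_\infty$ (every $F$ has $F ->_B^{cb} E_\infty$ since $F \sqle_B^i E_\infty$) and bottom $\emptyset$ (as $\emptyset ->_B^{cb} F$ for all $F$). To see the operations are literally $\bigotimes$ and $\bigoplus$, I check these are universally structurable when the $E_i$ are: any $F ->_B^{cb} \bigotimes_i E_i$ gives $F ->_B^{cb} E_i$, hence $F \sqle_B^i E_i$, hence $F \sqle_B^i \bigotimes_i E_i$ by \cref{thm:tensor-props}(b); and any $F ->_B^{cb} \bigoplus_i E_i$ splits as $\bigoplus_i F_i$ with $F_i ->_B^{cb} E_i$, hence $F_i \sqle_B^i E_i$ and $F \sqle_B^i \bigoplus_i E_i$.

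For distributivity, the law $x \wedge \bigvee_i y_i = \bigvee_i (x \wedge y_i)$ is the Borel isomorphism $E \otimes \bigoplus_i F_i \cong_B \bigoplus_i (E \otimes F_i)$, which is \cref{thm:tensor-props}(e) together with commutativity of $\otimes$. The dual law $x \vee \bigwedge_i y_i = \bigwedge_i (x \vee y_i)$, i.e.\ $E \oplus \bigotimes_i F_i \cong_B \bigotimes_i (E \oplus F_i)$, has an easy inequality: $\bigotimes_i F_i \sqle_B^i F_j$ gives $E \oplus \bigotimes_i F_i \sqle_B^i E \oplus F_j$ for every $j$, so $E \oplus \bigotimes_i F_i$ is $\sqle_B^i$ the meet $\bigotimes_j (E \oplus F_j)$. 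For the reverse inequality I would exhibit a class-bijective homomorphism $\bigotimes_i (E \oplus F_i) ->_B^{cb} E \oplus \bigotimes_i F_i$ and invoke universal structurability of the target: for a class $C$ of $\bigotimes_i (E \oplus F_i)$, each projection $p_i(C)$ is either an $E$-class or an $F_i$-class; if some $p_i(C)$ is an $E$-class, send $C$ via the least such $p_i$ into the $E$ summand, and otherwise all $p_i(C)$ are $F_i$-classes and the compatible family they form is precisely a class of $\bigotimes_i F_i$, into which we send $C$. This is Borel and class-bijective, so the reverse $\sqle_B^i$ follows.

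Meet preservation is immediate from the same computation: if $H \sqle_B^i E_i$ for all $i$, where now $H \in \@E$ is arbitrary, then $H \sqle_B^i E_0$ and $H ->_B^{cb} \bigotimes_{i \ge 1} E_i$, so by \cref{thm:tensor-props}(b) we get $H \sqle_B^i E_0 \otimes \bigotimes_{i \ge 1} E_i = \bigotimes_i E_i$; thus $\bigotimes_i E_i$ is the greatest lower bound in all of $(\@E/{\cong_B}, {\sqle_B^i})$. (This reflects the general fact that the closure operator $E \mapsto E_\infty \otimes E$ preserves meets.)

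The main obstacle is join preservation, since closure operators need not preserve joins. Here I must show that an arbitrary upper bound $G \in \@E$ of universally structurable $E_0, E_1, \dotsc$ satisfies $\bigoplus_i E_i \sqle_B^i G$; the naive route fails, as the combined map only yields $\bigoplus_i E_i ->_B^{cb} G$, which cannot be promoted for non-universally-structurable $G$ (e.g.\ $\Delta_\#R \times E_0 ->_B^{cb} E_0$ but $\Delta_\#R \times E_0 \not\sqle_B^i E_0$). The key is an absorption lemma: if $P$ is universally structurable and $Q \sqle_B^i P$, then $P \oplus Q \cong_B P$ (indeed $P \oplus Q ->_B^{cb} P$ promotes to $\sqle_B^i$ since $P$ is the target, and $P \sqle_B^i P \oplus Q$). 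Writing $A_i \subseteq Y$ for the invariant image of $E_i \sqle_B^i G$, absorption lets me show by induction that $G|(A_0 \cup \dotsb \cup A_n)$ is universally structurable and $\cong_B \bigoplus_{i \le n} E_i$ (at each step the overlap $A_{n+1} \cap (A_0 \cup \dotsb \cup A_n)$ is absorbed into the universally structurable $G|(A_0 \cup \dotsb \cup A_n)$), which settles all \emph{finite} joins. Passing to the countable join then reduces to showing that $\@E_\infty$ is closed under countable increasing unions of invariant subrelations---that $G|\bigcup_n U_n$ is universally structurable whenever each $G|U_n$ is---after which $\bigoplus_i E_i ->_B^{cb} G|\bigcup_i A_i$ promotes to $\sqle_B^i$. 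I expect this closure property to be the crux: it should follow from absorption in the strengthened form $P \oplus \aleph_0 \cdot Q \cong_B P$ via a back-and-forth that uses the idempotence $G|U_n \cong_B \aleph_0 \cdot G|U_n$ to create room, but making the increasing embeddings cohere is the delicate point.
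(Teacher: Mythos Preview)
Your argument is largely correct and in places parallels the paper, but the part you flag as the ``main obstacle''---join preservation in $(\@E/{\cong_B}, {\sqle_B^i})$---is genuinely incomplete, and your proposed route (closure of $\@E_\infty$ under countable increasing invariant unions, via coherent back-and-forth) is not how the paper proceeds and would be more work than necessary.

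The paper's trick is to flip your absorption lemma around. You use ``$P$ universally structurable and $Q \sqle_B^i P$ imply $P \oplus Q \cong_B P$,'' which exploits a property of the \emph{larger} piece; this is why you then need the upper bound $G|\bigcup_n U_n$ to be universally structurable, forcing the awkward increasing-union step. The paper instead uses idempotence of the \emph{smaller} piece: since each universally structurable $E_i$ satisfies $E_i \cong_B E_i \oplus E_i$ (\cref{thm:univstr-idemp}), from $E_i \sqle_B^i G$ one gets $G \cong_B E_i \oplus G_i \cong_B E_i \oplus E_i \oplus G_i \cong_B E_i \oplus G$. So \emph{any} upper bound $G$ satisfies $G \cong_B E_i \oplus G$ for each $i$, with no hypothesis on $G$. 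One then builds $\bigoplus_i E_i \sqle_B^i G$ directly by iterating: embed $E_0$ into $G \cong_B E_0 \oplus G$ leaving a remainder $\cong_B G$, embed $E_1$ into that remainder leaving a remainder $\cong_B G$, and so on. This is the content of \cref{thm:idemp-sum-join}, and it handles countably many summands in one stroke without any closure-under-unions argument.

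The remaining parts of your proof are essentially correct, though organized differently from the paper. Your categorical derivation of meets/joins and your direct class-bijective map for the law $E \oplus \bigotimes_i F_i \cong_B \bigotimes_i (E \oplus F_i)$ are fine; the paper instead passes through the theory-level operations $\bigotimes_i \sigma_i$ and $\bigoplus_i \sigma_i$ (\cref{thm:tensor-thy,thm:disjsum-thy}), where both distributive laws become transparent manipulations of partitions and conjunctions. Either route works, but once you have the idempotent join lemma the theory-level argument is slightly cleaner.
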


Before giving the proof of \cref{thm:lattice}, we discuss the operations on theories which correspond to the operations $\bigotimes$ and $\bigoplus$.  That is, let $((L_i, \sigma_i))_i$ be a countable family of theories; we want to find theories $(L', \sigma')$ and $(L'', \sigma'')$ such that $\bigotimes_i E_{\infty\sigma_i} \cong_B E_{\infty\sigma'}$ and $\bigoplus_i E_{\infty\sigma_i} \cong_B E_{\infty\sigma''}$.

\begin{proposition}
\label{thm:tensor-thy}
Let $\bigotimes_i (L_i, \sigma_i) = (\bigsqcup_i L_i, \bigotimes_i \sigma_i)$ be the theory where $\bigsqcup_i L_i$ is the disjoint union of the $L_i$, and $\bigotimes_i \sigma_i$ is the conjunction of the $\sigma_i$'s regarded as being in the language $\bigsqcup_i L_i$ (so that the different $\sigma_i$'s have disjoint languages).  Then $\bigotimes_i E_{\infty\sigma_i} \cong_B E_{\infty\bigotimes_i \sigma_i}$.
\index{tensor product of theories $\sigma \otimes \tau$}
\end{proposition}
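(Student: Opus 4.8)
The plan is to prove $\bigotimes_i E_{\infty\sigma_i} \cong_B E_{\infty\bigotimes_i\sigma_i}$ by exhibiting invariant embeddings in both directions and then invoking the Borel Schröder--Bernstein theorem (that $\sqle_B^i$ in both directions yields $\cong_B$). The whole argument rests on one elementary observation about the disjoint-union language: since the $L_i$ are pairwise disjoint, a Borel $\bigsqcup_i L_i$-structure on an equivalence relation $E$ is literally the same datum as a family of Borel $L_i$-structures on $E$, and on each $E$-class it satisfies $\bigotimes_i\sigma_i$ exactly when each constituent $L_i$-reduct satisfies $\sigma_i$. Hence for every $E\in\@E$ we have $E\models\bigotimes_i\sigma_i$ iff $E\models\sigma_i$ for all $i$; equivalently $\@E_{\bigotimes_i\sigma_i}=\bigcap_i\@E_{\sigma_i}$. (The degenerate empty-family case, where $\bigotimes_i\sigma_i$ is valid and $E_{\infty\bigotimes_i\sigma_i}=E_\infty$, is trivial and can be set aside.)

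For the first direction, I would use the canonical class-bijective projections $\pi_i:\bigotimes_i E_{\infty\sigma_i}->_B^{cb}E_{\infty\sigma_i}$ coming from the product in $(\@E,{->_B^{cb}})$. Each $E_{\infty\sigma_i}$ carries a $\sigma_i$-structure, so by \cref{thm:elem-classbij} its classwise pullback along $\pi_i$ is a $\sigma_i$-structure on $\bigotimes_i E_{\infty\sigma_i}$; assembling these into a single disjoint-language structure shows $\bigotimes_i E_{\infty\sigma_i}\models\bigotimes_i\sigma_i$. By invariant universality of $E_{\infty\bigotimes_i\sigma_i}$ (\cref{thm:einftysigma}) this already gives $\bigotimes_i E_{\infty\sigma_i}\sqle_B^i E_{\infty\bigotimes_i\sigma_i}$.

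For the reverse direction, note that $E_{\infty\bigotimes_i\sigma_i}\models\bigotimes_i\sigma_i$, hence $\models\sigma_i$ for each $i$ by the observation above; invariant universality of each $E_{\infty\sigma_i}$ then yields invariant embeddings $g_i:E_{\infty\bigotimes_i\sigma_i}\sqle_B^i E_{\infty\sigma_i}$. Since $\bigotimes_i E_{\infty\sigma_i}$ is the categorical product in $(\@E,{->_B^{cb}})$ (its existence for countable families being the content of the relevant corollary in \cref{sec:limits}), the family $(g_i)_i$ induces a class-bijective homomorphism $g:=\langle g_i\rangle_i:E_{\infty\bigotimes_i\sigma_i}->_B^{cb}\bigotimes_i E_{\infty\sigma_i}$ with $\pi_i\circ g=g_i$ for all $i$.

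The one point requiring care --- and the crux of the argument --- is upgrading $g$ from a mere class-bijective homomorphism to an invariant embedding, i.e.\ verifying that $g$ is in addition a reduction. This is precisely where the identities $\pi_i\circ g=g_i$ are used: if $g(x)$ and $g(y)$ are $\bigl(\bigotimes_i E_{\infty\sigma_i}\bigr)$-equivalent, then applying each homomorphism $\pi_i$ gives $g_i(x)\mathrel{E_{\infty\sigma_i}}g_i(y)$ for every $i$, and since each $g_i$ is a reduction we conclude $x\mathrel{E_{\infty\bigotimes_i\sigma_i}}y$. Thus $g$ is a class-bijective reduction, i.e.\ an invariant embedding $E_{\infty\bigotimes_i\sigma_i}\sqle_B^i\bigotimes_i E_{\infty\sigma_i}$. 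Combining the two embeddings via Borel Schröder--Bernstein yields $\bigotimes_i E_{\infty\sigma_i}\cong_B E_{\infty\bigotimes_i\sigma_i}$. I expect no serious obstacle beyond this reduction check; in particular, this route sidesteps having to prove separately that a countable tensor product of universally structurable equivalence relations is universally structurable.
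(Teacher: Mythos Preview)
Your proof is correct and follows essentially the same approach as the paper's. The paper's proof is terser: for the direction $E_{\infty\bigotimes_j\sigma_j}\sqle_B^i\bigotimes_i E_{\infty\sigma_i}$ it simply notes that $E_{\infty\bigotimes_j\sigma_j}\sqle_B^i E_{\infty\sigma_i}$ for each $i$ and writes ``hence $E_{\infty\bigotimes_j\sigma_j}\sqle_B^i\bigotimes_i E_{\infty\sigma_i}$'', leaving implicit exactly the reduction check you spell out (the countable analog of \cref{thm:tensor-props}(b), namely that $\langle g_i\rangle_i$ is injective because some $\pi_i\circ\langle g_i\rangle_i=g_i$ is).
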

\begin{proof}
For each $i$, the $(\bigotimes_j \sigma_j)$-structure on $E_{\infty \bigotimes_j \sigma_j}$ has a reduct which is a $\sigma_i$-structure, so $E_{\infty \bigotimes_j \sigma_j} |= \sigma_i$, i.e., $E_{\infty \bigotimes_j \sigma_j} \sqle_B^i E_{\infty\sigma_i}$; hence $E_{\infty \bigotimes_j \sigma_j} \sqle_B^i \bigotimes_i E_{\infty\sigma_i}$.  Conversely, for each $j$ we have $\bigotimes_i E_{\infty\sigma_i} ->_B^{cb} E_{\infty\sigma_j} |= \sigma_j$ so $\bigotimes_i E_{\infty\sigma_i} |= \sigma_j$; combining these $\sigma_j$-structures yields a $\bigotimes_j \sigma_j$-structure, so $\bigotimes_i E_{\infty\sigma_i} \sqle_B^i E_{\infty \bigotimes_j \sigma_j}$.
\end{proof}

While we can similarly prove that $\bigoplus_i E_{\infty\sigma_i}$ corresponds to the theory given by the disjunction of the $\sigma_i$'s, we prefer to work with the following variant, which is slightly better behaved with respect to structurability.  Let $\bigoplus_i (L_i, \sigma_i) = (\bigoplus_i L_i, \bigoplus_i \sigma_i)$ be the theory where $\bigoplus_i L_i := \bigsqcup_i (L_i \sqcup \{P_i\})$ where each is $P_i$ is a unary relation symbol, and
\begin{align*}
\bigoplus_i \sigma_i := \bigvee_i ((\forall x\, P_i(x)) \wedge \sigma_i \wedge \bigwedge_{j \ne i} \bigwedge_{R \in L_i \sqcup \{P_i\}} \forall \-x\, \neg R(\-x))
\end{align*}
\index{disjoint sum of theories $\sigma \oplus \tau$}
(where on the right-hand side, $\sigma_i$ is regarded as having language $\bigoplus_i L_i$).  In other words, $\bigoplus_i \sigma_i$ asserts that for some (unique) $i$, $P_i$ holds for all elements, and we have a $\sigma_i$-structure; and for all $j \ne i$, $P_j$ and all relations in $L_j$ hold for no elements.  Then for a countable Borel equivalence relation $(X, E) \in \@E$, a $\bigoplus_i \sigma_i$-structure $\#A : E |= \bigoplus_i \sigma_i$ is the same thing as a Borel $E$-invariant partition $(P_i^\#A)_i$ of $X$, together with a $\sigma_i$-structure $\#A|P_i^\#A : E|P_i^\#A |= \sigma_i$ for each $i$.

\begin{proposition}
\label{thm:disjsum-thy}
$\bigoplus_i E_{\infty\sigma_i} \cong_B E_{\infty\bigoplus_i \sigma_i}$.
\end{proposition}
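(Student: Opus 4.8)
The plan is to prove the isomorphism by establishing invariant embeddings in both directions and then applying the Borel Schröder-Bernstein theorem (i.e.\ $\sqle_B^i$ both ways yields $\cong_B$). Throughout I use that, by \cref{thm:einftysigma}, $E_{\infty\sigma_i}$ is invariantly universal in $\@E_{\sigma_i}$ and $E_{\infty\bigoplus_i\sigma_i}$ is invariantly universal in $\@E_{\bigoplus_i\sigma_i}$, together with the concrete description of $\bigoplus_i\sigma_i$-structures given just before the statement.

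First I would show $\bigoplus_i E_{\infty\sigma_i} \models \bigoplus_i \sigma_i$, which by invariant universality yields $\bigoplus_i E_{\infty\sigma_i} \sqle_B^i E_{\infty\bigoplus_i\sigma_i}$. To do this, equip the disjoint sum with the evident $(\bigoplus_i L_i)$-structure: on the $i$-th summand, which is the universe of $E_{\infty\sigma_i}$ and is an invariant subset of the sum, interpret $P_i$ as the entire summand and the symbols of $L_i$ via the canonical structure witnessing $E_{\infty\sigma_i}\models\sigma_i$, and interpret $P_j$ and every symbol of $L_j$ as empty for all $j \ne i$. Since each equivalence class of $\bigoplus_i E_{\infty\sigma_i}$ lies entirely within a single summand, the restriction of this structure to that class satisfies the $i$-th disjunct of $\bigoplus_i\sigma_i$; hence the whole structure is a $\bigoplus_i\sigma_i$-structure. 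Borelness is immediate since there are only countably many summands.

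Conversely, to get $E_{\infty\bigoplus_i\sigma_i} \sqle_B^i \bigoplus_i E_{\infty\sigma_i}$, write $E := E_{\infty\bigoplus_i\sigma_i}$ on $X$ and fix a $\bigoplus_i\sigma_i$-structure $\#A$ on it. By the description preceding the proposition, $\#A$ amounts to a Borel $E$-invariant partition $X = \bigsqcup_i P_i^\#A$ together with witnesses $E|P_i^\#A \models \sigma_i$ for each $i$. Invariant universality of each $E_{\infty\sigma_i}$ then supplies invariant embeddings $f_i : E|P_i^\#A \sqle_B^i E_{\infty\sigma_i}$. As the $P_i^\#A$ are $E$-invariant and partition $X$, we have $E = \bigoplus_i (E|P_i^\#A)$, and the disjoint union $\bigoplus_i f_i$ maps into $\bigoplus_i E_{\infty\sigma_i}$. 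This map is Borel (the $f_i$ have disjoint Borel domains and land in distinct summands), injective, and its image is a union of invariant sets and hence invariant, so it is an invariant embedding. Combining the two directions gives $\bigoplus_i E_{\infty\sigma_i} \cong_B E_{\infty\bigoplus_i\sigma_i}$.

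I do not expect a real obstacle here: both directions reduce to unwinding the definition of $\bigoplus_i\sigma_i$ and the universal property of the $E_{\infty\sigma_i}$. The only points needing care are the routine verifications of Borelness of the assembled structure and of the map $\bigoplus_i f_i$, and the fact that a countable disjoint sum of invariant embeddings is again an invariant embedding.
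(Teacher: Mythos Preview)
Your proof is correct and follows essentially the same approach as the paper's: both directions establish $\sqle_B^i$ by exhibiting the appropriate $\bigoplus_i\sigma_i$-structure on the disjoint sum for one direction, and by partitioning $E_{\infty\bigoplus_i\sigma_i}$ via the $P_i^\#A$ and invoking invariant universality of each $E_{\infty\sigma_i}$ for the other. The paper's write-up is more terse but the logical content is identical.
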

\begin{proof}
The $\sigma_i$-structure on each $E_{\infty\sigma_i}$ yields a $\bigoplus_j \sigma_j$-structure (with $P_i^\#A =$ everything, and $P_j^\#A = \emptyset$ for $j \ne i$); so $\bigoplus_i E_{\infty\sigma_i} \sqle_B^i E_{\infty\bigoplus_j \sigma_j}$.  Conversely, letting $\#A : E_{\infty\bigoplus_j \sigma_j} |= \bigoplus_j \sigma_j$, we have $E_{\infty\bigoplus_j \sigma_j} = \bigoplus_i E_{\infty\bigoplus_j \sigma_j}|P_i^\#A$ and $\#A|P_i^\#A : E_{\infty\bigoplus_j \sigma_j}|P_i^\#A |= \sigma_i$ for each $i$, whence $E_{\infty\bigoplus_i \sigma_i} \sqle_B^i \bigoplus_i E_{\infty\sigma_i}$.
\end{proof}

\begin{remark}
For a more formal viewpoint on the operations $\bigotimes$ and $\bigoplus$ on theories, see \cref{sec:interp-plus-times}.
\end{remark}

As noted in \cite[2.C]{KMd}, the next lemma follows from abstract properties of the poset $(\@E/{\cong_B}, {\sqle_B^i})$; for the convenience of the reader, we include a direct proof.

\begin{lemma}
\label{thm:idemp-sum-join}
Let $E_0, E_1, \dotsc \in \@E$ be countably many idempotent countable Borel equivalence relations.  Then $\bigoplus_i E_i$ is their join in the preorder $(\@E, {\sqle_B^i})$.
\end{lemma}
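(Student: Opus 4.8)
The plan is to verify the two defining properties of a join in the preorder $(\@E, {\sqle_B^i})$: that $\bigoplus_i E_i$ is an upper bound of the family $(E_i)_i$, and that it lies $\sqle_B^i$-below every other upper bound. The first property needs no idempotence and is immediate: the canonical injections $\iota_i \colon E_i \sqle_B^i \bigoplus_j E_j$ are invariant embeddings, so $\bigoplus_i E_i$ is an upper bound. All the content lies in the second property, and this is where I would use that each $E_i$ is idempotent, i.e.\ $E_i \cong_B E_i \oplus E_i$.

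So let $(Y, F) \in \@E$ be any upper bound, meaning $E_i \sqle_B^i F$ for every $i$; I must produce an invariant embedding $\bigoplus_i E_i \sqle_B^i F$. The heart of the argument is the following \emph{absorption lemma}: if $A \in \@E$ is idempotent and $A \sqle_B^i B$ for some $(W, B) \in \@E$, then $A \oplus B \cong_B B$; more precisely, there is an invariant Borel set $S \subseteq W$ with $B|S \cong_B A$ and $B|(W \setminus S) \cong_B B$. To prove it, fix an invariant embedding $A \sqle_B^i B$ with invariant image $C \subseteq W$, so that $B \cong_B (B|C) \oplus (B|(W \setminus C)) \cong_B A \oplus (B|(W \setminus C))$. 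Then, applying idempotence of $A$ exactly once,
\[
A \oplus B \cong_B (A \oplus A) \oplus (B|(W \setminus C)) \cong_B A \oplus (B|(W \setminus C)) \cong_B B,
\]
and the set $S$ is read off as the image of the $A$-summand under a fixed isomorphism $A \oplus B \cong_B B$.

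With the lemma in hand, I would build pairwise disjoint invariant Borel sets $S_0, S_1, \dotsc \subseteq Y$ with $F|S_n \cong_B E_n$ by a recursion of length $\omega$. Set $Y_0 := Y$. Given an invariant $Y_n \subseteq Y$ together with an isomorphism $F|Y_n \cong_B F$, note that $E_n \sqle_B^i F \cong_B F|Y_n$, so the absorption lemma applied with $A := E_n$ and $B := F|Y_n$ yields an invariant $S_n \subseteq Y_n$ with $F|S_n \cong_B E_n$ and $F|(Y_n \setminus S_n) \cong_B F|Y_n \cong_B F$; set $Y_{n+1} := Y_n \setminus S_n$ and iterate. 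The resulting sets $S_n = Y_n \setminus Y_{n+1}$ are invariant and pairwise disjoint, and assembling the isomorphisms $E_n \cong_B F|S_n$ into a single Borel map gives an invariant embedding $\bigoplus_n E_n \sqle_B^i F$ with invariant image $\bigsqcup_n S_n$ (it is injective and a reduction since distinct summands land in disjoint invariant pieces). This shows $\bigoplus_i E_i$ is $\sqle_B^i$-least among upper bounds, so it is the join.

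The genuinely load-bearing step is the absorption lemma, which is precisely where idempotence is needed: without it a direct disjointification fails, since peeling off an invariant copy of $E_0$ from $F$ could destroy the ability to embed $E_1$ into what remains, and the images of the given embeddings $E_i \sqle_B^i F$ may overlap uncontrollably. The only point requiring a little care is the Borelness of the $\omega$-length recursion, but this is not a real obstacle: a map out of the disjoint union $\bigoplus_n E_n$ is Borel as soon as its restriction to each summand is Borel, and each $S_n$ (together with the isomorphism onto it) depends on only finitely many earlier stages, so no uniformity in $n$ is actually needed and the construction goes through.
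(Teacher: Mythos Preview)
Your proof is correct and follows essentially the same approach as the paper: both establish the absorption fact $E_i \oplus F \cong_B F$ from idempotence (your ``absorption lemma'' is exactly the paper's computation $F \cong_B E_i \oplus F_i \cong_B E_i \oplus E_i \oplus F_i \cong_B E_i \oplus F$), and then recursively peel off invariant copies of $E_0, E_1, \dotsc$ while maintaining that the remainder is isomorphic to $F$. Your write-up is somewhat more detailed in spelling out the recursion and the Borelness considerations, but the underlying argument is identical.
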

\begin{proof}
Clearly each $E_j \sqle_B^i \bigoplus_i E_i$.  Let $F \in \@E$ and $E_i \sqle_B^i F$ for each $i$; we must show that $\bigoplus_i E_i \sqle_B^i F$.  Since each $E_i \sqle_B^i F$, we have $F \cong_B E_i \oplus F_i$ for some $F_i$; since $E_i \cong_B E_i \oplus E_i$, we have $F \cong_B E_i \oplus E_i \oplus F_i \cong_B E_i \oplus F$.  So an invariant embedding $\bigoplus_i E_i \sqle_B^i F$ is built by invariantly embedding $E_0$ into $E_0 \oplus F \cong_B F$ so that the remainder (complement of the image) is isomorphic to $F$, then similarly embedding $E_1$ into the remainder, etc.
\end{proof}

\begin{proof}[Proof of \cref{thm:lattice}]
By \cref{thm:tensor-thy,thm:disjsum-thy}, we may freely switch between the operations $\bigotimes$ and $\bigoplus$ on universally structurable equivalence relations, and the same operations on theories.

First we check that $\bigotimes$ is meet and $\bigoplus$ is join.  Let $(L_0, \sigma_0), (L_1, \sigma_1), \dotsc$ be theories; it suffices to show that $\bigotimes_i \sigma_i$, resp., $\bigoplus_i \sigma_i$, is their meet, resp., join, in the preorder $=>^*$.  For $\bigotimes$ this is clear, since a $(\bigotimes_i \sigma_i)$-structure on $E \in \@E$ is the same thing as a $\sigma_i$-structure for each $i$.  For $\bigoplus$, a $\sigma_i$-structure on $E$ for any $i$ yields a $(\bigoplus_j \sigma_j)$-structure (corresponding to the partition of $E$ where the $i$th piece is everything); thus $\sigma_i =>^* \bigoplus_j \sigma_j$ for each $i$.  And if $(L', \tau)$ is another theory with $\sigma_i =>^* \tau$ for each $i$, then given a $(\bigoplus_i \sigma_i)$-structure on $E$, we have a partition of $E$ into pieces which are $\sigma_i$-structured for each $i$, so by $\sigma_i =>^* \tau$ we can $\tau$-structure each piece of the partition; thus $\bigoplus_i \sigma_i =>^* \tau$.

That the inclusion $(\@E_\infty/{\cong_B}, {\sqle_B^i}) -> (\@E/{\cong_B}, {\sqle_B^i})$ preserves (\emph{all} existing) meets follows from the fact that $\@E_\infty/{\cong_B} \subseteq \@E/{\cong_B}$ is the image of the closure operator $E |-> E_\infty \otimes E$.  That it preserves countable joins follows from \cref{thm:idemp-sum-join}.

Now we check the $\omega_1$-distributive laws.  Distributivity of $\otimes$ over $\bigoplus$ follows from \cref{thm:tensor-props}(e).  To check distributivity of $\oplus$ over $\bigotimes$, we again work with theories.  Let $\sigma, \tau_0, \tau_1, \dotsc$ be theories; we need to show that $\sigma \oplus \bigotimes_i \tau_i <=>^* \bigotimes_i (\sigma \oplus \tau_i)$.  The $=>^*$ inequality, as in any lattice, is trivial.  For the converse inequality, let $(X, E) \in \@E$ and $\#A : E |= \bigotimes_i (\sigma \oplus \tau_i)$, which amounts to a $\#A_i : E |= \sigma \oplus \tau_i$ for each $i$.  Then for each $i$, we have a Borel $E$-invariant partition $X = A_i \cup B_i$ such that $\#A_i|A_i : E|A_i |= \sigma$ and $\#A_i|B_i : E|B_i |= \tau_i$.  By combining the various $\#A_i$, we get $E|\bigcup_i A_i |= \sigma$ and $G|\bigcap_i B_i |= \bigotimes_i \tau_i$; and so the partition $X = (\bigcup_i A_i) \cup (\bigcap_i B_i)$ witnesses that $E |= \sigma \oplus \bigotimes_i \tau_i$.
\end{proof}

\begin{remark}
It is not true that $\bigoplus$ is join in $(\@E/{\cong_B}, {\sqle_B^i})$, since there exist $E \in \@E$ such that $E \not\cong_B E \oplus E$ (e.g., $E = E_0$).  Similarly, it is not true that $\bigotimes$ is meet in $(\@E/{\cong_B}, {\sqle_B^i})$, since there are $E$ with $E \not\cong_B E \otimes E$ (see examples near the end of \cref{sec:tensor}).
\end{remark}

\begin{remark}
That the inclusion $(\@E_\infty/{\cong_B}, {\sqle_B^i}) -> (\@E/{\cong_B}, {\sqle_B^i})$ preserves countable joins suggests that perhaps $\@E_\infty/{\cong_B} \subseteq \@E/{\cong_B}$ is also the image of an ``interior operator''.  This would mean that every countable Borel equivalence relation $E \in \@E$ contains (in the sense of $\sqle_B^i$) a greatest universally structurable equivalence relation.  We do not know if this is true.
\end{remark}

By restricting \cref{thm:lattice} to the class $\@E_c$ of compressible equivalence relations, which is downward-closed under $\sqle_B^i$, closed under $\bigoplus$, and has greatest element $E_{\infty c}$, we immediately obtain

\begin{corollary}
\label{thm:lattice-compress}
The poset $(\@E_{\infty c}/{\cong_B}, {\sqle_B^i})$ is an $\omega_1$-distributive lattice, in which joins are given by $\bigoplus$, nonempty meets are given by $\bigotimes$, the greatest element is $E_{\infty c}$, and the least element is $\emptyset$.  Moreover, the inclusion $(\@E_{\infty c}/{\cong_B}, {\sqle_B^i}) \subseteq (\@E_c/{\cong_B}, {\sqle_B^i})$ preserves (countable) meets and joins.
\end{corollary}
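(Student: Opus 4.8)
The plan is to deduce the corollary from \cref{thm:lattice} by exhibiting $\@E_{\infty c}/{\cong_B}$ as a principal ideal inside the lattice $\@E_\infty/{\cong_B}$, and then exploiting the fact that $\@E_c$ is sandwiched between $\@E_{\infty c}$ and $\@E$ in order to transport the meet/join-preservation statement. First I would check that $E_{\infty c}$ is itself universally structurable, i.e.\ $E_{\infty c} \in \@E_\infty$: since $E_{\infty c} \cong_B E_\infty \times I_\#N$ and $E_\infty$ is universally structurable (being the invariantly universal element of the elementary class $\@E$ itself), this is immediate from \cref{thm:compress-univstr}(b)(i). I would then identify
\[
\@E_{\infty c}/{\cong_B} = \{[F] \in \@E_\infty/{\cong_B} \mid F \sqle_B^i E_{\infty c}\},
\]
that is, the principal ideal below $E_{\infty c}$ in $\@E_\infty/{\cong_B}$. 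One inclusion holds because $E_{\infty c}$ is the $\sqle_B^i$-greatest compressible equivalence relation; the reverse holds because $\@E_c$ is downward-closed under $\sqle_B^i$, so any universally structurable $F \sqle_B^i E_{\infty c}$ is again compressible, hence in $\@E_\infty \cap \@E_c = \@E_{\infty c}$.

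Next I would invoke the elementary lattice-theoretic fact that a principal ideal $\{x \sqle_B^i a\}$ in an $\omega_1$-distributive lattice is again an $\omega_1$-distributive lattice whose meets and joins agree with those of the ambient lattice: for a countable family below $a$, its ambient join is still $\sqle_B^i a$ (as $a$ is an upper bound) and its nonempty ambient meet is $\sqle_B^i a$ (being below some member), so both stay in the ideal and serve as join/meet there; the top becomes $a$, the bottom is unchanged, and the distributive laws are inherited verbatim since all operations coincide with those of the ambient lattice. Applying this with $a = E_{\infty c}$ and reading off from \cref{thm:lattice} that joins are $\bigoplus$, nonempty meets are $\bigotimes$, and the least element is $\emptyset$, I obtain that $(\@E_{\infty c}/{\cong_B}, {\sqle_B^i})$ is an $\omega_1$-distributive lattice with joins $\bigoplus$, nonempty meets $\bigotimes$, greatest element $E_{\infty c}$, and least element $\emptyset$.

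For the ``moreover'' clause I would argue that these meets and joins remain meets and joins when computed in $\@E_c/{\cong_B}$. By \cref{thm:lattice}, for a family in $\@E_\infty$ (in particular in $\@E_{\infty c}$) the operations $\bigoplus$ and $\bigotimes$ already compute the join and meet in the full poset $\@E/{\cong_B}$. Since $\@E_c$ is closed under $\bigoplus$, and the relevant $\bigotimes$ lies $\sqle_B^i$ below $E_{\infty c}$ and hence in $\@E_c$ by downward-closure, the resulting element lies in the intermediate poset $\@E_c$. The general principle then applies: given posets $\@E_{\infty c} \subseteq \@E_c \subseteq \@E$, a supremum (resp.\ infimum) of a family taken in the largest poset $\@E$ that happens to lie in $\@E_c$ and bounds the family there is automatically its supremum (resp.\ infimum) within $\@E_c$. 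This yields that the inclusion $(\@E_{\infty c}/{\cong_B}, {\sqle_B^i}) \subseteq (\@E_c/{\cong_B}, {\sqle_B^i})$ preserves countable meets and joins.

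The one point requiring care — and the reason the ``moreover'' clause is not purely formal — is that $(\@E_c/{\cong_B}, {\sqle_B^i})$ is almost certainly not itself a lattice (it is order-isomorphic to $(\@E/{\sim_B}, {\le_B})$ via \cref{thm:compress}, whose meets are exactly the problematic objects discussed after \cref{thm:17}). Thus ``preserves meets and joins'' must be understood in the weak sense that whatever meets and joins already exist in $\@E_{\infty c}$ remain extremal in $\@E_c$, and I must take care \emph{not} to assert that $\@E_c$ possesses all countable meets and joins. Precisely the sandwiching $\@E_{\infty c} \subseteq \@E_c \subseteq \@E$ together with the preservation half of \cref{thm:lattice} supplies this without any independent verification of lattice structure on $\@E_c$.
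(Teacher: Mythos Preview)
Your proof is correct and follows essentially the same approach as the paper: the paper's one-line argument says to restrict \cref{thm:lattice} to $\@E_c$, using that $\@E_c$ is downward-closed under $\sqle_B^i$, closed under $\bigoplus$, and has greatest element $E_{\infty c}$, which is exactly your principal-ideal argument with the details filled in. Your additional observation that $E_{\infty c} \cong_B E_\infty \times I_\#N \in \@E_\infty$ (via \cref{thm:compress-univstr}) and your care about the sandwiching $\@E_{\infty c} \subseteq \@E_c \subseteq \@E$ make explicit what the paper leaves implicit.
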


Now using that $(\@E_c/{\cong_B}, {\sqle_B^i})$ is isomorphic to $(\@E/{\sim_B}, {\le_B})$, we may rephrase this as

\begin{corollary}
\label{thm:lattice-red}
The poset of universally structurable bireducibility classes under $\le_B$ is an $\omega_1$-distributive lattice.  Moreover, the inclusion into the poset $(\@E/{\sim_B}, {\le_B})$ of all bireducibility classes under $\le_B$ preserves (countable) meets and joins.
\end{corollary}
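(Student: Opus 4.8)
The plan is to transfer \cref{thm:lattice-compress} across the order-isomorphism $(\@E/{\sim_B}, {\le_B}) \cong (\@E_c/{\cong_B}, {\sqle_B^i})$ recorded at the end of \cref{sec:compress}, which is given by $[E]_{\sim_B} \mapsto E \times I_\#N$. The key preliminary step is to check that this isomorphism carries the subposet of \emph{universally structurable} bireducibility classes onto the subposet $\@E_{\infty c}/{\cong_B}$ of compressible universally structurable equivalence relations.

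For the forward direction, I would take a universally structurable bireducibility class $\@C$ and choose a universally structurable representative $E \in \@C$; its image $E \times I_\#N$ is compressible and, by \cref{thm:compress-univstr}, again universally structurable, hence lies in $\@E_{\infty c}$. For the reverse direction, any $F \in \@E_{\infty c}$ is compressible, so $F \cong_B F \times I_\#N$ by \cref{thm:compress}; thus $F$ is the image of its own bireducibility class $[F]_{\sim_B}$, and that class is universally structurable since it contains the universally structurable element $F$. Consequently the isomorphism restricts to an order-isomorphism $(\@E_\infty/{\sim_B}, {\le_B}) \cong (\@E_{\infty c}/{\cong_B}, {\sqle_B^i})$.

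From here everything is formal transport of structure. By \cref{thm:lattice-compress} the right-hand poset is an $\omega_1$-distributive lattice, so the left-hand one is as well. For the ``moreover'' clause, I would note that the restricted isomorphism and the ambient isomorphism are given by the \emph{same} formula $[E]_{\sim_B} \mapsto E \times I_\#N$, so the square comparing the inclusions $(\@E_\infty/{\sim_B}) \hookrightarrow (\@E/{\sim_B})$ and $(\@E_{\infty c}/{\cong_B}) \hookrightarrow (\@E_c/{\cong_B})$ commutes strictly; since the latter inclusion preserves countable meets and joins by \cref{thm:lattice-compress}, so does the former.

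The only genuine content lies in the identification of the two subposets, and even this is immediate from the two already-established facts that $-\times I_\#N$ fixes compressible relations (\cref{thm:compress}) and preserves universal structurability (\cref{thm:compress-univstr}); I expect no real obstacle beyond this bookkeeping.
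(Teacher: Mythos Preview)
Your proposal is correct and is essentially the paper's own argument: the paper derives \cref{thm:lattice-red} simply as a rephrasing of \cref{thm:lattice-compress} via the order-isomorphism $(\@E/{\sim_B},{\le_B})\cong(\@E_c/{\cong_B},{\sqle_B^i})$, and your additional check that this isomorphism carries the universally structurable bireducibility classes onto $\@E_{\infty c}/{\cong_B}$ (using \cref{thm:compress} and \cref{thm:compress-univstr}) is exactly the detail the paper leaves implicit.
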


\begin{remark}
We stress that the $\le_B$-meets in \cref{thm:lattice-red} must be computed using the \emph{compressible} elements of bireducibility classes.  That is, if $E, F$ are universally structurable, then their $\le_B$-meet is $(E \times I_\#N) \otimes (F \times I_\#N)$, but \emph{not necessarily} $E \otimes F$.  For example, if $E$ is invariantly universal finite and $F$ is invariantly universal aperiodic, then $E \otimes F = \emptyset$ is clearly not the $\le_B$-meet of $E, F$.  Also, if there is an aperiodic universally structurable $E$ with $E \times I_\#N \not\sqle_B E$, then (by \cref{thm:compress-univstr}) $E \otimes E_{\infty c}$ is not the $\le_B$-meet of $E$ and $E_{\infty c} \sim_B E_\infty$.
\end{remark}

The order-theoretic structure of the poset $(\@E/{\sim_B}, {\le_B})$ of \emph{all} bireducibility classes under $\le_B$ is not well-understood, apart from that it is very complicated (by \cite{AK}).  The first study of this structure was made by Kechris-Macdonald in \cite{KMd}.  In particular, they raised the question of whether there exists any pair of $\le_B$-incomparable $E, F \in \@E$ for which a $\le_B$-meet exists.  \Cref{thm:lattice-red}, together with the existence of many $\le_B$-incomparable universally structurable bireducibility classes (\cref{thm:pbr}), answers this question by providing a large class of bireducibility classes for which $\le_B$-meets always exist.

There are some natural order-theoretic questions one could ask about the posets $(\@E_\infty/{\cong_B}, {\sqle_B^i})$ and $(\@E_\infty/{\sim}, {\le_B})$, which we do not know how to answer.  For example, is either a complete lattice?  If so, is it completely distributive?  Is it a ``zero-dimensional'' $\omega_1$-complete lattice, in that it embeds (preserving all countable meets and joins) into $2^X$ for some set $X$?  (See \cref{thm:lattice-0d} below for some partial results concerning this last question.)

\begin{remark}
It can be shown that every $\omega_1$-distributive lattice is a \emph{quotient} of a sublattice of $2^X$ for some set $X$ (see \cref{sec:sdlat}).  In particular, this implies that the set of algebraic identities involving $\bigotimes$ and $\bigoplus$ which hold in $(\@E_\infty/{\cong_B}, {\sqle_B^i})$ is ``completely understood'', in that it consists of exactly those identities which hold in $2 = \{0 < 1\}$.
\end{remark}

\subsection{Closure under independent joins}
\label{sec:marks}

We mention here some connections with recent work of Marks \cite{M}.

Let $E_0, E_1, \dotsc$ be countably many countable Borel equivalence relations on the same standard Borel space $X$.  We say that the $E_i$ are \defn{independent} if there is no sequence $x_0, x_1, \dotsc, x_n$ of distinct elements of $X$, where $n \ge 1$, such that $x_0 \mathrel{E_{i_0}} x_1 \mathrel{E_{i_1}} \dotsb \mathrel{E_{i_{n-1}}} x_n \mathrel{E_{i_n}} x_0$ for some $i_0, \dotsc, i_n$ with $i_j \ne i_{j+1}$ for each $j$.  In that case, their \defn{independent join}\index{independent join} is the smallest equivalence relation on $X$ containing each $E_i$.  For example, the independent join of treeable equivalence relations is still treeable.  Marks proves the following for elementary classes closed under independent joins \cite[4.15, 4.16]{M}:

\begin{theorem}[Marks]
\label{thm:marks-ultrafilter}
If $\@E_\sigma$ is an elementary class of aperiodic equivalence relations closed under binary independent joins, then for any Borel homomorphism $p : E_{\infty\sigma} ->_B \Delta_X$ (where $X$ is any standard Borel space), there is some $x \in X$ such that $E_{\infty\sigma} \sim_B E_{\infty\sigma}|p^{-1}(x)$.
\end{theorem}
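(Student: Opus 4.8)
The plan is to extract from $p$ a single point $x \in X$ whose fibre is already complex enough to absorb all of $E_{\infty\sigma}$. Write $Y$ for the domain of $E := E_{\infty\sigma}$, and for each Borel $B \subseteq X$ put $Y_B := p^{-1}(B)$ and $E_B := E|Y_B$. Since $p$ is a homomorphism into $\Delta_X$, each $Y_B$ is $E$-invariant, so $E_B \sqle_B^i E$ and $E_B |= \sigma$ automatically; in particular $E_B \le_B E$ always holds, and the content of the theorem is the reverse reduction for $B = \{x\}$. Call $B$ \emph{large} if $E \le_B E_B$ (equivalently, since $E_B |= \sigma$ and $E$ is invariantly universal, if $E_B \sim_B E$). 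The goal is to produce a large singleton. The guiding principle is that largeness should behave like membership in a countably complete ultrafilter on the Borel sets of $X$: a two-valued countably additive probability measure on a standard Borel space is a Dirac mass $\delta_x$, so such an ultrafilter is principal, and its generator $x$ satisfies $\{x\}$ large, i.e.\ $E \le_B E_{\{x\}}$ — exactly the conclusion.

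The first key step is \textbf{primeness} of largeness: if a large Borel set $B$ is partitioned as $B = B_0 \sqcup B_1$, then $B_0$ or $B_1$ is large. Writing $E_B = E_{B_0} \oplus E_{B_1}$ and using $E \le_B E_B$, this amounts to showing that a reduction of the universal relation into a binary disjoint sum of $\sigma$-structurable relations can be pushed into one summand. This is precisely where the hypothesis of closure under binary independent joins is needed: without it primeness fails, since for a $\sigma$ whose models are ``a copy of $\#A$ or a copy of $\#B$'' (for incomparable universal $E_{\infty\#A}, E_{\infty\#B}$) one has $E_{\infty\sigma} = E_{\infty\#A} \oplus E_{\infty\#B}$ with neither summand bireducible to the whole. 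The independent-join closure rules this out by letting one amalgamate the structures witnessing smallness of both halves (together with aperiodicity, which guarantees room to interleave the amalgam inside a single class).

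Granting primeness, I would build the point directly. Fix a countable family $(C_k)_k$ of Borel subsets of $X$ separating points, start from the large set $B_{-1} := X$, and recursively split $B_{n-1} = (B_{n-1}\cap C_n) \sqcup (B_{n-1}\setminus C_n)$, letting $B_n$ be whichever half is large (primeness applied to the universal relation $E_{B_{n-1}} \sim_B E$). This yields a decreasing sequence $X = B_{-1} \supseteq B_0 \supseteq B_1 \supseteq \dotsb$ of large sets; since membership in every $C_k$ has been decided and the $C_k$ separate points, $\bigcap_n B_n = \{x\}$ is a single point, and $\bigcap_n Y_{B_n} = Y_{\{x\}}$.

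The main obstacle is the final limit step: deducing $E \le_B E_{\{x\}}$ from $E \le_B E_{B_n}$ for all $n$. This is genuinely false for a general decreasing sequence of invariant sets — the Marker Lemma produces decreasing complete sections with empty intersection, so a universal restriction can be lost in the limit — so some infinitary input is unavoidable, and again it is closure under independent joins that supplies it. The plan is to upgrade primeness to \emph{countable completeness} of largeness along the constructed fusion sequence: using that $E_{\{x\}}$ is aperiodic and that the class is closed under independent joins, one assembles a single $\sigma$-structure on $E_{\{x\}}$ out of the reductions $E \le_B E_{B_n}$ by taking an independent join of their pulled-back structures, so that $E_{\{x\}}$ absorbs the combined complexity of all finite stages. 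This shows $\{x\}$ is large and makes the principal ultrafilter at $x$ the countably complete ultrafilter sought above, completing the argument. I expect the careful bookkeeping of this amalgamation — ensuring the independent-join structure on the limit fibre is Borel and witnesses $E \le_B E_{\{x\}}$ — to be the technically delicate heart of the proof.
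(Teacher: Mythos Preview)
The paper does not prove this theorem; it is quoted from Marks \cite[4.15]{M} and used as a black box, so there is no in-paper proof to compare against. That said, your outline has real gaps at both of its key steps.

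For primeness you write that binary independent-join closure ``lets one amalgamate the structures witnessing smallness of both halves,'' but smallness of $B_i$ is the \emph{negative} statement $E \not\le_B E_{B_i}$ and is not witnessed by any structure. It is not clear what two objects you are joining, on what common underlying space the join is to be formed, or why the resulting $\sigma$-structurable relation would contradict universality of $E_{\infty\sigma}$. You have correctly located where the hypothesis must enter, but the mechanism is absent.

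The limit step is more seriously broken, and not merely ``technically delicate.'' Your plan --- assembling a $\sigma$-structure on $E_{\{x\}}$ via an ``independent join of pulled-back structures'' coming from the reductions $f_n : E \le_B E_{B_n}$ --- does not type-check. An independent join combines equivalence relations living on a single common space; it is not an operation on reductions or on structures, and the maps $f_n$ land in $Y_{B_n} \supsetneq Y_{\{x\}}$, carrying nothing into the limit fibre. Moreover the hypothesis here is closure under \emph{binary} independent joins only; closure under countable independent joins is the separate, strictly stronger hypothesis of the paper's next theorem (Marks' 4.16). Since your limit step is precisely the countable-partition instance of the very statement you are proving, invoking an unestablished countable amalgamation at this point is circular.
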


\begin{theorem}[Marks]
\label{thm:marks-emb}
If $\@E_\sigma$ is an elementary class of aperiodic equivalence relations closed under countable independent joins, then for any $E \in \@E$, if $E_{\infty\sigma} \le_B E$, then $E_{\infty\sigma} \sqle_B E$.
\end{theorem}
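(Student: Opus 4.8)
My plan is to promote the given reduction $f : E_{\infty\sigma} \le_B E$ to an embedding by isolating and then destroying the only obstruction, namely the failure of $f$ to be injective \emph{within} $E_{\infty\sigma}$-classes. Write $D := \ker f$. Since $f$ is a reduction we have $D \subseteq E_{\infty\sigma}$, and $D$ is smooth, being the kernel of a Borel map; moreover, if $f(x) \mathrel E f(x')$ then $x \mathrel{E_{\infty\sigma}} x'$, so the $f$-preimage of any single $E$-class lies in a single $E_{\infty\sigma}$-class. Hence $f$ factors as the quotient surjection $g : E_{\infty\sigma} \le_B^{cs} G$ onto its image $G := E|f(X)$ (a class-surjective reduction, with $G \cong_B E_{\infty\sigma}/D$), followed by the inclusion embedding $G \sqle_B E$; in particular $G \sim_B E_{\infty\sigma}$. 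The naive idea of splitting each class along $D$ and re-injecting fails, because a Borel choice of injections within classes amounts to an $E_{\infty\sigma}$-invariant Borel enumeration of the classes, and such an enumeration exists only when $E_{\infty\sigma}$ is smooth (it would yield a Borel transversal). This is exactly why the two hypotheses must be used in an essential way, and it also shows that compressibility of $E_{\infty\sigma}$ is \emph{not} the right lever (indeed $E_{\infty\sigma}$ need not be compressible, e.g.\ $E_\infty$ itself).

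First I would use the ultrafilter lemma to localize to infinite target classes, where there is room to receive an entire $E_{\infty\sigma}$-class. Partition $X$ into the invariant Borel sets $X_{\mathrm{fin}}$ and $X_{\mathrm{inf}}$ according as $f$ sends the class into a finite or an infinite $E$-class. On $X_{\mathrm{fin}}$ the map $f$ is a reduction into a finite, hence smooth, relation, so $E_{\infty\sigma}|X_{\mathrm{fin}}$ is smooth. Viewing this partition as a homomorphism $E_{\infty\sigma} ->_B \Delta_2$ and applying \cref{thm:marks-ultrafilter}, one of the two pieces is bireducible with $E_{\infty\sigma}$. Note that under the present hypotheses $E_{\infty\sigma}$ is \emph{non}-smooth: $\@E_\sigma$ consists of aperiodic relations and is closed under countable independent joins, and the independent join of copies of an aperiodic relation is generally non-smooth, so the invariantly universal $E_{\infty\sigma}$ is non-smooth. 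Consequently $X_{\mathrm{fin}}$ (being smooth) cannot be the large piece, so $E_{\infty\sigma} \sim_B E_{\infty\sigma}|X_{\mathrm{inf}}$, and I may assume every $E$-class meeting $f(X)$ is infinite.

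The substantive step is to realize a genuine isomorphic copy of $E_{\infty\sigma}$ inside $E$, and this is where closure under \emph{countable} independent joins enters. Rather than trying to un-collapse $f$ directly, the plan is to transport the canonical $\sigma$-structure $\#E$ along $f$ and then build, on a Borel complete section of $E$, a new $\sigma$-structure assembled as the independent join of countably many independent transported copies. Countable independence is precisely what should allow these copies to be laid down Borel-uniformly across the (now infinite) $E$-classes so that together they occupy a complete section \emph{injectively}, while each class still carries a model of $\sigma$. Combining this with the factorization above, the invariant universality of $E_{\infty\sigma}$ (\cref{thm:einftysigma}), and the characterization of embedding-closures (\cref{thm:emb-elem}), I would aim to produce a Borel set $Z \subseteq Y$ with $E|Z \cong_B E_{\infty\sigma}$, which is exactly an embedding $E_{\infty\sigma} \sqle_B E$.

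The hard part will be this last construction: reconciling the merely \emph{bireducible} copy returned by the ultrafilter lemma with the \emph{isomorphic} copy that an embedding demands, and doing so measurably. One must guarantee that the amplified structure yields the invariantly universal relation $E_{\infty\sigma}$ and not some other $\sim_B$-equivalent relation, and that the countably many independent copies can be placed coherently despite the varying sizes of the fibers of $f$ (which is where the smooth collapse $D$ reappears). It is exactly at this point that countable — rather than merely binary — independent-join closure is indispensable, consistent with the fact that the ultrafilter lemma (\cref{thm:marks-ultrafilter}) needs only binary joins.
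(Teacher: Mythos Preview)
The paper does not actually prove \cref{thm:marks-emb}; it merely states the result and cites Marks \cite[4.16]{M}. So there is no ``paper's proof'' to compare against, and your proposal must be judged on its own.

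Your proposal is not a proof: it is an outline that explicitly leaves the main step undone. You yourself write ``The hard part will be this last construction'' and then describe the obstacles without resolving them. Everything before that point --- the factorization of $f$ through $G = E|f(X)$, the use of \cref{thm:marks-ultrafilter} to discard the part mapping to finite classes --- is routine and does not yet use closure under \emph{countable} independent joins at all. The genuine content of the theorem lies entirely in the step you have not carried out: producing, Borel-uniformly across the infinite $E$-classes in the image, an actual $\sigma$-structured copy of $E_{\infty\sigma}$ (not merely something bireducible to it). Your description of ``transporting $\#E$ along $f$ and laying down countably many independent copies'' is suggestive but not a construction; you have not specified what the copies are, where they live, why they are independent, or why their join recovers $E_{\infty\sigma}$ rather than some other relation in $\@E_\sigma$.

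Two smaller points. First, your argument that $E_{\infty\sigma}$ is non-smooth (``the independent join of copies of an aperiodic relation is generally non-smooth'') is not a proof; you would need to exhibit a specific non-smooth member of $\@E_\sigma$, or else simply observe that when $E_{\infty\sigma}$ is smooth (hence $\cong_B \Delta_\#R \times I_\#N$) the conclusion $E_{\infty\sigma} \sqle_B E$ is immediate and handle that case separately. Second, \cref{thm:marks-ultrafilter} gives you only $E_{\infty\sigma} \sim_B E_{\infty\sigma}|X_{\mathrm{inf}}$, not $E_{\infty\sigma} \cong_B E_{\infty\sigma}|X_{\mathrm{inf}}$; you need to argue that bireducibility suffices here (it does, since then $E_{\infty\sigma}|X_{\mathrm{inf}} \in \@E_\sigma$ is bireducible with the universal element and an embedding of it into $E$ pulls back), but you should say so.
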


\begin{remark}
Clearly the aperiodicity condition in \cref{thm:marks-ultrafilter,thm:marks-emb} can be loosened to the condition that $I_\#N \in \@E_\sigma$ (so that restricting $\@E_\sigma$ to the aperiodic elements does not change $E_{\infty\sigma}$ up to biembeddability).
\end{remark}

Above we asked whether the $\omega_1$-distributive lattice $(\@E_\infty/{\sim_B}, {\le_B})$ is zero-dimensional, i.e., embeds into $2^X$ for some set $X$.  This is equivalent to asking whether there are enough \defn{$\omega_1$-prime filters} (i.e., filters closed under countable meets whose complements are closed under countable joins) in $(\@E_\infty/{\sim_B}, {\le_B})$ to separate points.  \Cref{thm:marks-ultrafilter} gives some examples of $\omega_1$-prime filters:

\begin{corollary}
\label{thm:lattice-0d}
If $\@E_\sigma$ contains $I_\#N$ and is closed under binary independent joins, then
\begin{align*}
\{E \in \@E_\infty \mid E_{\infty\sigma} \le_B E\}
\end{align*}
is an $\omega_1$-prime filter in $(\@E_\infty/{\sim_B}, {\le_B})$.
\end{corollary}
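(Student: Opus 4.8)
The plan is to identify $P := \{E \in \@E_\infty \mid E_{\infty\sigma} \le_B E\}$ with the principal filter generated by $E_{\infty\sigma}$ in the lattice $(\@E_\infty/{\sim_B}, {\le_B})$, and then to reduce the one nontrivial requirement---that the complement of $P$ be closed under countable joins---to \cref{thm:marks-ultrafilter}. First I would note that $E_{\infty\sigma}$ is itself universally structurable: being invariantly universal in $\@E_\sigma$, it satisfies condition~(ii) of \cref{thm:univstr}. Hence $[E_{\infty\sigma}]$ is a genuine element of the lattice and $P = \{[E] \mid [E_{\infty\sigma}] \le [E]\}$ is exactly its principal up-set.

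That $P$ is a filter closed under countable meets is then purely order-theoretic and requires no work: upward-closure is immediate from transitivity of $\le_B$, and if $[E_{\infty\sigma}] \le [E_i]$ for every $i$ in a countable family, then $[E_{\infty\sigma}]$ is a lower bound of $\{[E_i]\}_i$, so $[E_{\infty\sigma}] \le \bigwedge_i [E_i]$ by the definition of the meet; in particular $P$ is closed under finite and countable meets. Properness also holds, since $I_\#N \in \@E_\sigma$ forces $E_{\infty\sigma} \neq \emptyset$, so $\emptyset \notin P$.

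The substance is $\omega_1$-primeness, i.e.\ that $P^c$ is closed under countable joins, equivalently: whenever $\bigvee_i [E_i] \in P$ for a countable family, some $[E_i] \in P$. By \cref{thm:lattice-red,thm:lattice-compress} (via the isomorphism $(\@E_\infty/{\sim_B}, {\le_B}) \cong (\@E_{\infty c}/{\cong_B}, {\sqle_B^i})$), countable joins here are represented by $\bigoplus$, so $\bigvee_i [E_i] = [\bigoplus_i E_i]$ and the hypothesis unwinds to a Borel reduction $f : E_{\infty\sigma} \le_B \bigoplus_i E_i$; the goal becomes to find one $i$ with $E_{\infty\sigma} \le_B E_i$. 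Realizing $\bigoplus_i E_i$ on the disjoint union $\bigsqcup_i X_i$ (with $E_i$ on $X_i$), let $q : \bigsqcup_i X_i ->_B \Delta_I$ be the index map sending $X_i$ to $i$; it is a homomorphism since it is constant on classes, and $I$ is countable, hence standard Borel. Then $p := q \circ f : E_{\infty\sigma} ->_B \Delta_I$ is a Borel homomorphism, and I would apply \cref{thm:marks-ultrafilter} to $p$: the class $\@E_\sigma$ is closed under binary independent joins, and although that theorem is stated for aperiodic classes, the Remark following it relaxes the hypothesis to $I_\#N \in \@E_\sigma$, which holds by assumption. The theorem produces an $i$ with $E_{\infty\sigma} \sim_B E_{\infty\sigma}|p^{-1}(i)$. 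Since $p$ is a homomorphism into $\Delta_I$, the set $p^{-1}(i) = f^{-1}(X_i)$ is $E_{\infty\sigma}$-invariant, and $f$ restricts on it to a reduction $E_{\infty\sigma}|p^{-1}(i) \le_B (\bigoplus_j E_j)|X_i = E_i$; hence $E_{\infty\sigma} \le_B E_i$, i.e.\ $[E_i] \in P$, as required.

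The single real obstacle is concentrated in this last step---converting a reduction into a disjoint sum into a reduction into one summand---and it is precisely \cref{thm:marks-ultrafilter} that performs the ``ultrafilter-like'' selection of the index $i$. Everything else, namely the filter axioms and closure under countable meets, is a formal consequence of $P$ being a principal up-set in an $\omega_1$-complete lattice.
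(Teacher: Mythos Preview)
Your proof is correct and follows the same approach as the paper: both reduce $\omega_1$-primeness to \cref{thm:marks-ultrafilter} by composing a reduction $E_{\infty\sigma} \le_B \bigoplus_i E_i$ with the index map to obtain a homomorphism $E_{\infty\sigma} \to_B \Delta_I$, then extracting an $i$ with $E_{\infty\sigma} \le_B E_i$. The paper's proof is a terse one-liner addressing only this step, whereas you also spell out the (formally obvious) filter axioms, properness, and closure under countable meets.
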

\begin{proof}
If $g : E_{\infty\sigma} \le_B \bigoplus_i E_i$ then we have a homomorphism $E_{\infty\sigma} ->_B \Delta_\#N$ sending the $g$-preimage of $E_i$ to $i$; by \cref{thm:marks-ultrafilter}, it follows that $E_{\infty\sigma} \le_B E_i$ for some $i$.
\end{proof}

We also have the following simple consequence of \cref{thm:marks-emb}:

\begin{corollary}
If $\@E_\sigma$ is an elementary class such that $\@E_\sigma^r$ is closed under countable independent joins, then $\@E_\sigma^e = \@E_\sigma^r$.
\end{corollary}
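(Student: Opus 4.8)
The plan is to prove the nontrivial inclusion $\@E_\sigma^r \subseteq \@E_\sigma^e$; the reverse inclusion is immediate, since every embedding is in particular a reduction. The first step is to rewrite both closures as downward cones below the single invariantly universal relation $E_{\infty\sigma}$. Since each $E \in \@E_\sigma$ satisfies $E \sqle_B^i E_{\infty\sigma}$, and hence both $E \le_B E_{\infty\sigma}$ and $E \sqle_B E_{\infty\sigma}$, the descriptions of the closures in \cref{thm:red-elem,thm:emb-elem} collapse to
\[ \@E_\sigma^r = \{F \in \@E \mid F \le_B E_{\infty\sigma}\}, \qquad \@E_\sigma^e = \{F \in \@E \mid F \sqle_B E_{\infty\sigma}\}. \]
Thus the corollary reduces to the single implication: $F \le_B E_{\infty\sigma}$ entails $F \sqle_B E_{\infty\sigma}$, for every $F \in \@E$.

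Next I would bring in the invariantly universal element of the reducibility closure itself. By \cref{thm:red-elem}, $\@E_\sigma^r$ is elementary; let $\tau$ axiomatize it and let $E_{\infty\tau}$ be its invariantly universal element, so that $\@E_\sigma^r = \@E_\tau = \{F \in \@E \mid F \sqle_B^i E_{\infty\tau}\}$. From the cone description, $E_{\infty\sigma}$ is $\le_B$-greatest in $\@E_\sigma^r$, so $E_{\infty\tau} \le_B E_{\infty\sigma}$; conversely $E_{\infty\sigma} \in \@E_\sigma \subseteq \@E_\sigma^r$ gives $E_{\infty\sigma} \sqle_B^i E_{\infty\tau}$, and in particular $E_{\infty\tau} \sim_B E_{\infty\sigma}$. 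The crux is to upgrade the reduction $E_{\infty\tau} \le_B E_{\infty\sigma}$ to an embedding. For this I apply Marks' \cref{thm:marks-emb} to the elementary class $\@E_\tau = \@E_\sigma^r$, which is closed under countable independent joins by hypothesis, taking the external relation to be $E := E_{\infty\sigma}$; since $E_{\infty\tau} \le_B E_{\infty\sigma}$, the theorem yields $E_{\infty\tau} \sqle_B E_{\infty\sigma}$.

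Composing then finishes the argument: any $F \in \@E_\sigma^r = \@E_\tau$ satisfies $F \sqle_B^i E_{\infty\tau}$, hence $F \sqle_B E_{\infty\tau} \sqle_B E_{\infty\sigma}$, so $F \in \@E_\sigma^e$. The one hypothesis of \cref{thm:marks-emb} that must be checked is aperiodicity or, by the remark following that theorem, that $I_\#N$ lie in the class $\@E_\tau = \@E_\sigma^r$ to which the theorem is applied. This is automatic whenever $\sigma$ is consistent: $I_\#N$ is $\le_B$-least among nonempty countable Borel equivalence relations (a constant map reduces $I_\#N$ to any nonempty $E$), so $I_\#N \le_B E_{\infty\sigma}$ and thus $I_\#N \in \@E_\sigma^r$; and if $\sigma$ is inconsistent then $\@E_\sigma^r = \@E_\sigma^e = \{\emptyset\}$ and there is nothing to prove. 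The main step — and the only place the independent-join hypothesis enters — is the application of Marks' embedding theorem to the pair $(E_{\infty\tau}, E_{\infty\sigma})$; everything else is bookkeeping with the cone descriptions and the universal properties of $E_{\infty\sigma}$ and $E_{\infty\tau}$.
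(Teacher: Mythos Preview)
Your proof is correct and follows essentially the same approach as the paper: apply Marks' embedding theorem (\cref{thm:marks-emb}) to the invariantly universal element of $\@E_\sigma^r$, which reduces to $E_{\infty\sigma}$ and hence embeds into it. Your version is more explicit than the paper's one-line proof, in particular in verifying the aperiodicity hypothesis of \cref{thm:marks-emb} via the remark (showing $I_\#N \in \@E_\sigma^r$ and handling the degenerate inconsistent case), which the paper leaves implicit.
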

\begin{proof}
The $\sqle_B^i$-universal element of $\@E_\sigma^r$ reduces to $E_{\infty\sigma}$, whence by \cref{thm:marks-emb} it embeds into $E_{\infty\sigma}$, i.e., belongs to $\@E_\sigma^e$.
\end{proof}

\begin{remark}
Although the conclusions of \cref{thm:marks-ultrafilter,thm:marks-emb} are invariant with respect to bireducibility (respectively biembeddability), Marks has pointed out that the notion of being closed under independent joins is not similarly invariant: there are $E_{\infty\sigma} \sim_B E_{\infty\tau}$ such that $\@E_\sigma$ is closed under independent joins but $\@E_\tau$ is not.  In particular, if $\sigma$ axiomatizes trees while $\tau$ axiomatizes trees of degree $\le 3$, then $\@E_{\infty\sigma} \sim_B E_{\infty\tau}$ by \cite[3.10]{JKL}; but it is easy to see (using an argument like that in \cref{thm:indepjoin-treeable} below) that independent joins of $\tau$-structurable equivalence relations can have arbitrarily high cost, so are not all $\tau$-structurable.
\end{remark}

Clearly if $\@E_\sigma, \@E_\tau$ are closed under independent joins, then so is $\@E_{\sigma \otimes \tau} = \@E_\sigma \cap \@E_\tau$.  In particular, the class $\@E_c$ of compressible equivalence relations is closed under arbitrary (countable) joins, since the join of compressible equivalence relations contains a compressible equivalence relation; thus the class of compressible treeable equivalence relations is closed under independent joins.  We note that this is the smallest nontrivial elementary class to which \cref{thm:marks-ultrafilter,thm:marks-emb} apply:

\begin{proposition}
\label{thm:indepjoin-treeable}
If $\@E_\sigma$ is an elementary class containing $I_\#N$ and closed under binary independent joins, then $\@E_\sigma$ contains all compressible treeable equivalence relations.
\end{proposition}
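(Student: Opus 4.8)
The plan is to show first that $\@E_\sigma$ already contains every aperiodic smooth equivalence relation, and then to realize each compressible treeable $E$ as a binary independent join of two such relations. Since $I_\#N \in \@E_\sigma$ and every elementary class is downward closed under class-bijective homomorphisms (\cref{thm:elem-classbij}), and since any aperiodic smooth $S$ admits a class-bijective homomorphism $S ->_B^{cb} I_\#N$ (enumerate each $S$-class starting from a Borel transversal), we get $\@E_{I_\#N} \subseteq \@E_\sigma$, i.e.\ every aperiodic smooth relation lies in $\@E_\sigma$. By hypothesis $\@E_\sigma$ is closed under binary independent joins, and it is isomorphism-invariant. Hence it suffices to prove: every compressible treeable $E$ can be written as an independent join $E = S_1 \vee S_2$ with $S_1, S_2 \subseteq E$ aperiodic smooth; then $S_1, S_2 \in \@E_\sigma$ forces $E = S_1 \vee S_2 \in \@E_\sigma$.

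I would reformulate the target decomposition geometrically. I want Borel $S_1, S_2 \subseteq E$ with all classes infinite, each smooth, with $S_1 \cap S_2 = \Delta$, such that on (the points of) each $E$-class the bipartite incidence graph $B$ — whose two sides are the $S_1$-classes and the $S_2$-classes, and whose edges are the points, each point $x$ joining its $S_1$-class to its $S_2$-class — is a tree. Acyclicity of $B$ translates exactly into independence of $S_1, S_2$: an alternating cycle of distinct points would, using $S_1 \cap S_2 = \Delta$, be a genuine cycle of distinct edges in $B$. Connectivity of $B$ on each $E$-class forces $S_1 \vee S_2 = E$ there, and since $S_1, S_2 \subseteq E$ this gives $E = S_1 \vee S_2$. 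Note that both sides being smooth is compatible with $E$ non-smooth, exactly as $E_0$ is the join of the two matchings given by its even- and odd-indexed edges.

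For the construction I would exploit compressibility to gain multiplicity: since $E \cong_B E \times I_\#N$, I may work on $X \times \#N$ with the treeing that places the given treeing of $E$ on $X \times \{0\}$ and runs an upward ray $(x,0),(x,1),(x,2),\dots$ from each vertex, and set $h(x,n) := n$. Taking $S_1$ to be the ray relation (with $(x,n)\,S_1\,(x,m)$ for all $n,m$) already yields one aperiodic smooth relation, with Borel transversal $X \times \{0\}$. The remaining work is to define $S_2$ so that the incidence graph becomes an $\aleph_0$-regular bipartite tree encoding the base treeing: I would route each base edge through the spare ray-points and pad every vertex up to infinite degree, while arranging that each $S_2$-class has a \emph{unique} point of minimal height. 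This last condition is the device that makes $S_2$ smooth without circularity, since the map sending $x$ to the unique minimal-height point of $[x]_{S_2}$ is then a Borel selector; by design the $S_2$-classes are infinite.

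The main obstacle is precisely this simultaneous Borel construction. The tension is twofold: naively encoding tree edges produces \emph{finite} $S_2$-classes, whereas we need infinite ones; and one cannot Borel-select a global end or root of the treeing, since that would exhibit $E$ as generated by a single function and hence hyperfinite, which fails for general treeable $E$. Both difficulties are resolved by the compressibility multiplicity together with the height function: the infinitely many ray-points at each vertex supply the room to pad all degrees to $\aleph_0$ and to subdivide each base edge into a length-two path without creating cycles, while the height function replaces a global root by a \emph{local} minimum within each class, producing the required Borel transversals. Once $E = S_1 \vee S_2$ with $S_1, S_2$ aperiodic smooth is established, closure of $\@E_\sigma$ under binary independent joins completes the proof.
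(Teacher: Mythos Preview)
Your first step (aperiodic smooth $\subseteq \@E_\sigma$) and the bipartite-tree reformulation of independence are both correct and clean. Your approach is genuinely different from the paper's: you aim for a direct \emph{binary} decomposition $E = S_1 \vee S_2$ into aperiodic smooth pieces, whereas the paper uses a Borel edge $5$-coloring of a degree-$\le 3$ treeing (via \cite[3.11]{JKL} and \cite[4.6]{KST}) to write $E$ as an independent join of five matchings, then attaches color-indexed rays to make each piece aperiodic smooth; the passage from binary to $5$-fold closure is immediate from the tree structure of the modified treeing.

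The gap in your proposal is the construction of $S_2$. Once $S_1$ is fixed as the ray relation on $X \times \#N$, the $S_1$-classes are in bijection with $X$, so in your bipartite graph the $S_1$-side is exactly the $E$-class $C \subseteq X$. An infinite $S_2$-class must then meet infinitely many rays, i.e., the corresponding $S_2$-vertex must have infinitely many neighbours in $C$. But all points of $C$ are already mutually connected through the encoded base-tree edges, so adjoining further $C$-neighbours to any $S_2$-vertex creates a cycle in the bipartite graph. The multiplicity from $\#N$ provides infinitely many \emph{edges} at each $S_1$-vertex, but it provides no new $S_1$-vertices to absorb the padding without destroying acyclicity; your sketch (``subdivide each base edge \dots\ and pad every vertex up to infinite degree'') does not explain how to escape this. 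The height-function device is a fine mechanism for smoothness \emph{once} such an $S_2$ exists, but it does not by itself produce one.

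A binary decomposition may well be achievable with a more elaborate ambient space or a different choice of $S_1$, but as written the proposal halts at precisely the nontrivial point. The paper's edge-coloring route sidesteps this entirely: each color class in a proper edge coloring is automatically a matching (hence smooth with classes of size $\le 2$), so the only work is restoring aperiodicity, which the attached rays handle without disturbing independence.
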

\begin{proof}
Since $\@E_\sigma$ is elementary and contains $I_\#N$, it contains all aperiodic smooth countable Borel equivalence relations.  Now let $(X, E) \in \@E$ be compressible treeable.  By \cite[3.11]{JKL}, there is a Borel treeing $T \subseteq E$ with degree $\le 3$.  By \cite[4.6]{KST} (see also remarks following \cite[4.10]{KST}), there is a Borel edge coloring $c : T -> 5$.  Then $E$ is the independent join of the equivalence relations $E_i := c^{-1}(i) \cup \Delta_X$ for $i = 0,1,2,3,4$.  Since the $E_i$ are not aperiodic, consider the following modification.  Let
\begin{align*}
X' := X \sqcup (X \times 5 \times \#N),
\end{align*}
let $T'$ be the tree on $X'$ consisting of $T$ on $X$ and the edges $(x, (x, i, 0))$ and $((x, i, n), (x, i, n+1))$ for $x \in X$, $i \in 5$, and $n \in \#N$, and let $c' : T' -> 5$ extend $c$ with $c'(x, (x, i, 0)) = c'((x, i, n), (x, i, n+1)) = i$ (note that $c'$ is not an edge coloring).  Then the inclusion $X -> X'$ is a complete section embedding of each $E_i$ into the equivalence relation $E_i'$ generated by $c^{\prime-1}(i)$, and of $E$ into the equivalence relation $E'$ generated by $T'$.  It follows that each $E_i'$ is (aperiodic) smooth (because $E_i$ is), hence in $\@E_\sigma$, while $E \cong_B E'$ (because $E$ is compressible).  But it is easily seen that $E'$ is the independent join of the $E_i'$, whence $E \in \@E_\sigma$.
\end{proof}

\subsection{Embedding the poset of Borel sets}
\label{sec:pbr}

Adams-Kechris \cite{AK} proved the following result showing that the poset $(\@E/{\sim_B}, {\le_B})$ is extremely complicated:

\begin{theorem}[Adams-Kechris]
There is an order-embedding from the poset of Borel subsets of $\#R$ under inclusion into the poset $(\@E/{\sim_B}, {\le_B})$.
\end{theorem}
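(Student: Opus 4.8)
The plan is to follow the rigidity-theoretic strategy of Adams--Kechris. The essential point is to manufacture a Borel-indexed continuum $(E_x)_{x \in \#R}$ of countable Borel equivalence relations that is ``maximally rigid'' for $\le_B$, and then to encode an arbitrary Borel set $A$ by a single countable Borel equivalence relation $E_A$ obtained by gluing together the fibers $E_x$ for $x \in A$. I would fix a higher-rank lattice $\Gamma$ (say $\Gamma = SL_3(\#Z)$) and, by means of the Zimmer--Margulis cocycle superrigidity machinery, construct a Borel family of free ergodic probability-measure-preserving actions of $\Gamma$ on a standard probability space $(Y, \mu_x)$, indexed by $x \in \#R$, whose orbit equivalence relations $E_x$ are \emph{pairwise incomparable on conull sets}: for $x \neq y$ and every $\mu_x$-conull Borel $C \subseteq Y$, there is no Borel reduction of $E_x|C$ into $E_y$. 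The standard device is to parametrize the actions by number-theoretic data (a set or weighting of primes, realized inside an adelic or profinite completion) so that any reduction between two members would, via superrigidity, induce a virtual isomorphism of that data, forcing $x = y$. This is where the full strength of the linear-algebraic-groups input of \cite{AK} enters, and it is by far the main obstacle; everything else is soft.

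Granting such a family, I would assemble the fibered relation. Arrange the $E_x$ into a single countable Borel equivalence relation $\widehat E$ on $Y \times \#R$ that relates points only within a common fiber $Y \times \{x\}$ and satisfies $\widehat E \mathbin{|} (Y \times \{x\}) = E_x$; Borelness of the family guarantees $\widehat E$ is itself Borel. The projection $\pi : Y \times \#R \to \#R$ is then a Borel homomorphism of $\widehat E$ into $\Delta_{\#R}$ whose fibers are precisely the $E_x$. For each Borel $A \subseteq \#R$ put $E_A := \widehat E \mathbin{|} (Y \times A)$, a countable Borel equivalence relation on the standard Borel space $Y \times A$, and consider the assignment $A \mapsto [E_A]_{\sim_B}$.

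Order-preservation is immediate: if $A \subseteq B$, then $Y \times A \subseteq Y \times B$ is an $\widehat E$-invariant Borel inclusion, which is a Borel reduction (indeed an embedding), so $E_A \le_B E_B$. The crux is order-reflection, namely $E_A \le_B E_B \implies A \subseteq B$. Suppose $f : E_A \le_B E_B$ and fix $x \in A$. Since $\pi$ is $E_B$-invariant and $f$ is a homomorphism into $E_B$, the composite $\pi \circ f$ is a Borel homomorphism of $E_A$ into $\Delta_{\#R}$. Restricting to the $E_A$-invariant fiber $Y \times \{x\}$, on which $E_A = E_x$ is ergodic with respect to $\mu_x$, and pushing forward, ergodicity forces $\pi \circ f$ to be $\mu_x$-almost everywhere equal to a single value $y \in B$. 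Thus $f$ carries a $\mu_x$-conull subset of $Y \times \{x\}$ into $Y \times \{y\}$, yielding a Borel reduction of $E_x|C$ into $E_y$ for some conull $C$. By the rigidity of the family this is impossible unless $x = y$; hence $x = y \in B$. As $x \in A$ was arbitrary, $A \subseteq B$.

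Combining the two directions gives $A \subseteq B \iff E_A \le_B E_B$, so $A \mapsto [E_A]_{\sim_B}$ is order-preserving, order-reflecting, and (hence) injective: it is the desired order-embedding of the poset of Borel subsets of $\#R$ under inclusion into $(\@E/{\sim_B}, {\le_B})$. I expect the only genuine difficulty to lie in the first step, the construction and rigidity of the family $(E_x)_x$; once that is available, the gluing together into $\widehat E$ and the ergodic-decomposition argument for order-reflection are routine. (The same blueprint should adapt to the $\@E_\infty$ setting of \cref{thm:pbr} by replacing each $E_x$ with a universally structurable representative of its class, e.g.\ $E_\infty \otimes E_x$, and checking that rigidity survives this passage.)
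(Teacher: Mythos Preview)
Your proposal is correct and follows precisely the Adams--Kechris strategy; the paper itself does not re-prove this theorem but cites it from \cite{AK}, and the ingredients you describe (a Borel-parametrized family $(E_x)_{x\in\#R}$ with ergodic measures $\mu_x$ and a cocycle-superrigidity-based mutual irreducibility, glued into a single relation $E$ fibered over $\Delta_{\#R}$, with $E_A := E|p^{-1}(A)$) are exactly what the paper invokes as \cite[4.2]{AK} in its proof of the $\@E_\infty$-strengthening \cref{thm:pbr}. Your parenthetical remark about adapting to $\@E_\infty$ via $E_\infty \otimes E_x$ is also on target---that is precisely what the paper does.
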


In this short section, we show that their proof may be strengthened to yield

\begin{theorem}
\label{thm:pbr}
There is an order-embedding from the poset of Borel subsets of $\#R$ under inclusion into the poset $(\@E_\infty/{\sim_B}, {\le_B})$.
\end{theorem}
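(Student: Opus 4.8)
The plan is to compose the Adams--Kechris embedding with the ``universally structurable closure'' $E \mapsto E_\infty \otimes E$. Recall from \cite{AK} that there is a perfect parameter set $C$ and a Borel family $(E_r)_{r \in C}$ of ergodic countable Borel equivalence relations, each $E_r$ carrying an ergodic invariant probability measure $\mu_r$ and arising from a free action of a higher-rank lattice, such that for each Borel $S \subseteq C$ the ``bundle'' $E_S := \bigoplus_{r \in S} E_r$ (the restriction of the family to the fibers over $S$) satisfies $S \subseteq T \iff E_S \le_B E_T$. I would set $F_S := E_\infty \otimes E_S$; by \cref{thm:tensor-props}(c) each $F_S$ is universally structurable, so $S \mapsto [F_S]$ lands in $(\@E_\infty/{\sim_B}, {\le_B})$, and it remains only to check that this map is an order-embedding, i.e.\ that $S \subseteq T \iff F_S \le_B F_T$.

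For the forward implication I would first establish that $E_\infty \otimes {-}$ is monotone for $\le_B$: if $E \le_B F$ then, since $F \sqle_B^i E_\infty \otimes F$ by \cref{thm:tensor-props}(b), we get $E \le_B E_\infty \otimes F$; composing the class-bijective projection $E_\infty \otimes E ->_B^{cb} E$ with this reduction gives a smooth homomorphism $E_\infty \otimes E ->_B^{sm} E_\infty \otimes F$ into a universally structurable target, which is therefore a reduction by \cref{thm:univstr-smh-red}. Hence $S \subseteq T \implies E_S \le_B E_T \implies F_S \le_B F_T$.

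The backward implication is where the work lies, and I would prove its contrapositive in the sharp form: for $r \notin T$ there is no reduction $E_r \le_B E_\infty \otimes E_T$. This suffices, since for $r \in S$ the fiber $E_r$ sits as an invariant Borel subset $E_r \sqle_B^i E_S \sqle_B^i F_S$, so $F_S \le_B F_T$ would force $E_r \le_B E_\infty \otimes E_T$ and hence $r \in T$, giving $S \subseteq T$. To analyze a hypothetical reduction $f : E_r \le_B E_\infty \otimes E_T$, I would use that the tensor construction is fiberwise over the base (visible from the skew-product and explicit descriptions in \cref{rmk:skew-product} and \cref{thm:tensor-alt}), so that $E_\infty \otimes E_T$ is a Borel bundle over $T$ with an invariant parameter map $p : E_\infty \otimes E_T ->_B T$. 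Then $p \circ f$ is constant on $E_r$-classes, hence $\mu_r$-a.e.\ equal to a fixed $s_0 \in T$ by ergodicity of $\mu_r$; thus $f$ carries a $\mu_r$-conull invariant set into the single fiber $E_\infty \otimes E_{s_0}$, yielding a reduction $(E_r, \mu_r) \le_B E_\infty \otimes E_{s_0}$. Composing with the class-bijective projection $\pi_2 : E_\infty \otimes E_{s_0} ->_B^{cb} E_{s_0}$ then produces a class-injective homomorphism $(E_r, \mu_r) ->_B^{ci} E_{s_0}$ defined $\mu_r$-a.e.

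The main obstacle is the final rigidity step: deducing $r = s_0$ (contradicting $r \notin T \ni s_0$) from the existence of such a measured class-injective homomorphism. This is precisely where I would return to the measured-group-theoretic machinery behind \cite{AK}: the class-injective (locally injective, countable-to-one) homomorphism $(E_r,\mu_r) ->_B^{ci} E_{s_0}$, with $\mu_r$ the \emph{invariant} measure on the source, induces a Borel cocycle from the $E_r$-action into the acting group of $E_{s_0}$, and one applies Zimmer cocycle superrigidity together with the number-theoretic choice of parameters to untwist it and force $r = s_0$. The care required is twofold: one must check that passing through the non-measure-preserving reduction $f$ and the class-bijective projection $\pi_2$ still yields data to which superrigidity applies (the source measure $\mu_r$ is invariant and the homomorphism is class-injective, which is exactly what the cocycle argument needs, even though $f$ itself need not preserve measures), and one must verify that the Adams--Kechris rigidity is genuinely robust enough to rule out class-injective homomorphisms \emph{into} the building blocks, not merely reductions between them. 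Once this rigidity lemma is in hand, the equivalence $S \subseteq T \iff F_S \le_B F_T$ follows, completing the order-embedding.
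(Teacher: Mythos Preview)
Your overall strategy matches the paper's exactly: set $F_A := E_\infty \otimes E_A$ and show $A \mapsto F_A$ is an order-embedding. However, the endgame of your backward implication contains both an error and an unnecessary detour.

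First, the error: your claim that $\pi_2 \circ f$ is class-injective is false in general. If $a \mathrel{E_r} b$ with $(\pi_2 \circ f)(a) = (\pi_2 \circ f)(b)$, then class-bijectivity of $\pi_2$ gives only $f(a) = f(b)$, not $a = b$; a reduction $f$ need not be injective on classes. So you do not obtain a class-injective homomorphism into $E_{s_0}$, and the cocycle-superrigidity step you propose to run on such a map is not available as stated.

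Second, and more importantly, you do not need to re-enter the superrigidity machinery at all. The paper cites \cite[4.2]{AK} in the stronger packaged form: for $x \ne y$, \emph{every Borel homomorphism} $E_x \to_B E_y$ (not just reductions or class-injective maps) sends a $\mu_x$-conull invariant set to a single $E_y$-class. With this in hand, the argument is short: $\pi_2 \circ f$ is merely a homomorphism $E_x \to_B E_y$, so it collapses a conull $N$ to one $E_y$-class $C$; since $\pi_2$ is class-bijective, any point $c \in C$ gives a Borel transversal $\pi_2^{-1}(c)$ of $(E_\infty \otimes E_B)|\pi_2^{-1}(C)$, so that restriction is smooth; and $f$ reduces $E_x|N$ into it, making $E_x|N$ smooth, contradicting nonatomicity of $\mu_x$. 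You should cite the homomorphism-rigidity form of \cite[4.2]{AK} directly rather than attempting to rederive it for class-injective maps.
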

\begin{proof}
By \cite[4.2]{AK}, there is a countable Borel equivalence relation $(X, E)$, a Borel homomorphism $p : (X, E) ->_B (\#R, \Delta_\#R)$, and a Borel map $x |-> \mu_x$ taking each $x \in \#R$ to a Borel probability measure $\mu_x$ on $X$, such that, putting $E_x := E|p^{-1}(x)$, we have
\begin{enumerate}
\item[(i)]  for each $x \in \#R$, $\mu_x$ is nonatomic, concentrated on $p^{-1}(x)$, $E_x$-invariant, and $E_x$-ergodic;
\item[(ii)]  if $x, y \in \#R$ with $x \ne y$, then every Borel homomorphism $f : E_x ->_B E_y$ maps a Borel $E_x$-invariant set $M \subseteq p^{-1}(x)$ of $\mu_x$-measure $1$ to a single $E_y$-class.
\end{enumerate}

For Borel $A \subseteq \#R$, put $E_A := E|p^{-1}(A)$ and $F_A := E_\infty \otimes E_A$.  We claim that $A |-> F_A$ gives the desired order-embedding.  It is clearly order-preserving.  Now suppose $A, B \subseteq \#R$ with $A \not\subseteq B$ but $F_A \le_B F_B$.  By taking $x \in A \setminus B$, we get $x \not\in B$ but $E_x \sqle_B^i E_\infty \otimes E_x = F_{\{x\}} \le_B F_A \le_B F_B$.  Let $f : E_x \le_B F_B = E_\infty \otimes E_B$, and let $\pi_2 : E_\infty \otimes E_B ->_B^{cb} E_B$ be the second projection.  Then $p \circ \pi_2 \circ f : E_x ->_B \Delta_B$, whence by $E_x$-ergodicity of $\mu_x$, there is a $y \in B$ and an $E_x$-invariant $M \subseteq p^{-1}(x)$ of $\mu_x$-measure $1$ such that $(p \circ \pi_2 \circ f)(M) = \{y\}$, i.e., $(\pi_2 \circ f)(M) \subseteq p^{-1}(y)$.  By (ii) above, there is a further $E_x$-invariant $N \subseteq M$ of $\mu_x$-measure $1$ such that $(\pi_2 \circ f)(N)$ is contained in a single $E_y$-class. But since $\pi_2$ is class-bijective and $f$ is a reduction, this implies that $E|N$ is smooth, a contradiction. 
\end{proof}

\begin{remark}
If in \cref{thm:pbr} we replace $(\@E_\infty/{\sim_B}, {\le_B})$ with $(\@E_\infty/{\cong_B}, {\sqle_B^i})$ (thus weakening the result), then a simpler proof may be given, using groups of different costs (see \cite[36.4]{KM}) instead of \cite{AK}.
\end{remark}

\subsection{A global picture}
\label{sec:lattice-picture}

The picture below is a simple visualization of the poset $(\@E/{\cong_B}, {\sqle_B^i})$.  For the sake of clarity, among the hyperfinite equivalence relations, only the aperiodic ones are shown.

\begin{center}
\includegraphics{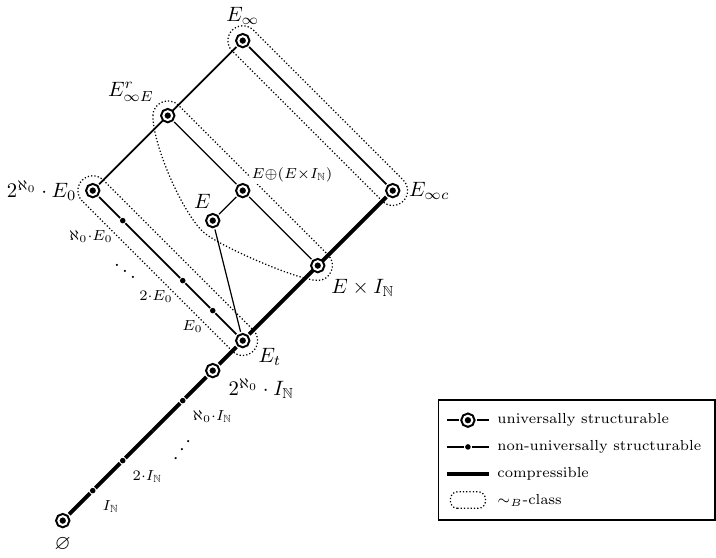}
\end{center}

Six landmark universally structurable equivalence relations are shown (circled dots): $\emptyset$, $2^{\aleph_0} \cdot I_\#N$ ($\sqle_B^i$-universal aperiodic smooth), $E_t$ ($\sqle_B^i$-universal compressible hyperfinite), $2^{\aleph_0} \cdot E_0$ ($\sqle_B^i$-universal aperiodic hyperfinite), $E_{\infty c}$ ($\sqle_B^i$-universal compressible), and $E_\infty$ ($\sqle_B^i$-universal).

Also shown is the ``backbone'' of compressible equivalence relations (bold line), which contains one element from each bireducibility class (dotted loops).

The middle of the picture shows a ``generic'' universally structurable $E$ and its relations to some canonical elements of its bireducibility class: the $\sqle_B^i$-universal element $E^r_{\infty E}$ and the compressible element $E \times I_\#N$.  Note that $E \times I_\#N$ is not depicted as being below $E$, in accordance with \cref{rmk:aperiodic-univstr-compress}.  Note also that for non-smooth $E$, the $\sqle_B^i$-universal element $E^r_{\infty E}$ of its bireducibility class would indeed be above $2^{\aleph_0} \cdot E_0$, as shown: $E_0 \le_B E$ implies $2^{\aleph_0} \cdot E_0 \sqle_B^i E^r_{\infty E}$ since $E^r_{\infty E}$ is stably universally structurable.

Finally, note that the picture is somewhat misleading in a few ways.  
It is not intended to suggest that the compressibles form a linear order.  Nor is it intended that any of the pairs $E \sqlt_B^i F$ do not have anything strictly in between them (except of course for the things below $2^{\aleph_0} \cdot E_0$, which are exactly as shown).

\section{Free actions of a group}
\label{sec:freeact}

Let $\Gamma$ be a countably infinite group.  Recall (from \cref{sec:elem-examples}) that we regard $\Gamma$ as a structure in the language $L_\Gamma = \{R_\gamma \mid \gamma \in \Gamma\}$, where $R_\gamma^\Gamma \subseteq \Gamma^2$ is the graph of the left multiplication of $\gamma$ on $\Gamma$.  Thus, $\@E_\Gamma = \@E_{\sigma_\Gamma}$ (where $\sigma_\Gamma$ is the Scott sentence of $\Gamma$ in $L_\Gamma$) is the class of Borel equivalence relations generated by a free Borel action of $\Gamma$.  Our main goal in this section is to characterize when $\@E_\Gamma$ is an elementary reducibility class.

Actually, to deal with a technicality, we need to consider the following variant of $\@E_\Gamma$.  Let $\@E_\Gamma^* := \@E_{\sigma_\Gamma \oplus \sigma_f}$, where $\sigma_f$ is a sentence axiomatizing the finite equivalence relations.  Thus $\@E_\Gamma^*$ consists of countable Borel equivalence relations whose aperiodic part is generated by a free Borel action of $\Gamma$.  This is needed because every equivalence relation in $\@E_\Gamma$ must have all classes of the same cardinality as $\Gamma$.

\begin{theorem}
\label{thm:freeact}
Let $\Gamma$ be a countably infinite group.  The following are equivalent:
\begin{enumerate}
\item[(i)]  $\Gamma$ is amenable.
\item[(ii)]  $\@E_\Gamma^*$ is closed under $\sqle_B$.
\item[(iii)]  $\@E_\Gamma^*$ is closed under $\le_B$, i.e., $\@E_\Gamma^*$ is an elementary reducibility class.
\end{enumerate}
\end{theorem}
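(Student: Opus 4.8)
Throughout write $\sigma := \sigma_\Gamma \oplus \sigma_f$, so that $\@E_\Gamma^* = \@E_\sigma$. The plan is to prove the cycle (i)$\implies$(iii)$\implies$(ii)$\implies$(i). The implication (iii)$\implies$(ii) is immediate: every embedding is a reduction, so closure under $\le_B$ entails closure under $\sqle_B$.

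For (ii)$\implies$(i) I would argue the contrapositive, exhibiting $E_0$ as a witness when $\Gamma$ is not amenable. The relation $F(\Gamma, \#R)$ is the $\sqle_B^i$-universal element of $\@E_\Gamma$, hence lies in $\@E_\Gamma^*$; it is aperiodic (as $\Gamma$ is infinite) and non-smooth, since the product measure on the free part of the shift is a non-atomic ergodic invariant probability measure. By the Glimm--Effros dichotomy (\cref{thm:hyperfinite}(e)), $E_t \sqle_B^i F(\Gamma, \#R)$, and since $E_0$ and $E_t$ are biembeddable non-smooth hyperfinite relations (\cref{thm:hyperfinite}(c)) we obtain $E_0 \sqle_B E_t \sqle_B^i F(\Gamma, \#R) \in \@E_\Gamma^*$. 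On the other hand $E_0$ is aperiodic, so $E_0 \in \@E_\Gamma^*$ would force $E_0 \in \@E_\Gamma$; but for non-amenable $\Gamma$ one has $E_0 \notin \@E_\Gamma$ (see \cite[2.3]{Kamt}, as already used in the proof of \cref{thm:union-nonelem}). Hence $\@E_\Gamma^*$ is not closed under $\sqle_B$.

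The substantial implication is (i)$\implies$(iii). Here I would first reduce the whole statement to a single structurability claim. Let $E^*$ be the invariantly universal element of $\@E_\Gamma^* = \@E_\sigma$, and let $H := E^r_{\infty E^*}$ be the $\sqle_B^i$-universal element of the reduction-closure $(\@E_\Gamma^*)^r$, which is elementary by \cref{thm:red-elem}. Since $\@E_\Gamma^*$ is itself $\sqle_B^i$-downward closed (being elementary, it is closed under $->_B^{cb}$, and $\sqle_B^i \subseteq {->_B^{cb}}$), closure of $\@E_\Gamma^*$ under $\le_B$ is \emph{equivalent} to $H \in \@E_\Gamma^*$: indeed, if $H \in \@E_\Gamma^*$ then any $G \le_B E^*$ lies in $(\@E_\Gamma^*)^r$, hence $G \sqle_B^i H \in \@E_\Gamma^*$, so $G \in \@E_\Gamma^*$; the converse is trivial as $H \sim_B E^*$. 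Thus (iii) amounts to showing that the aperiodic part of $H$ is generated by a free Borel action of $\Gamma$ (the finite part being automatically handled by the $\sigma_f$ summand).

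Producing this free action is the main obstacle, and it is exactly where amenability is used. The difficulty is that $H$ is reduction-universal and in general \emph{not} compressible (it has a continuum of invariant probability measures, being $\sim_B F(\Gamma,\#R)$ and $\sqle_B^i$-maximal), and it is non-smooth, so the torsor identifications $[x]_H \cong \Gamma$ cannot be chosen pointwise from a Borel transversal but must be assembled coherently. I would build the free transitive $\Gamma$-action on each $H$-class by a Borel marker/tiling argument adapted to a F\o lner sequence for $\Gamma$ (in the spirit of the Ornstein--Weiss machinery), exploiting the ``room'' in $H$ coming from its reduction-universality to spread the F\o lner approximations into a single coherent orbit structure. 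This step is tight: by the (ii)$\implies$(i) argument above, for non-amenable $\Gamma$ the element $H$ would reduce $E_0$ into $\@E_\Gamma^*$, which is impossible, so no such construction can exist there. I expect this F\o lner-based structurability construction to be the crux of the entire proof; once it is in place, the reduction via \cref{thm:red-elem} closes the cycle.
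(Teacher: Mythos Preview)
Your reductions (iii)$\implies$(ii) and (ii)$\implies$(i) are fine and essentially match the paper. The gap is in (i)$\implies$(iii): your ``F\o lner-based structurability construction'' is not a proof but a hope, and the paper's argument does \emph{not} proceed by directly building a free $\Gamma$-action on the aperiodic part of $H$. A direct Borel F\o lner tiling of an arbitrary aperiodic $E \in (\@E_\Gamma^*)^r$ would come dangerously close to the open problem of whether every Borel action of an amenable group is hyperfinite (see the discussion around \cref{thm:locally-nilpotent-hyperfinite}); there is no known purely Borel Ornstein--Weiss machinery at this generality, and ``room in $H$ from reduction-universality'' is not a substitute.

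The paper's key idea is to avoid this entirely by a \emph{measure-theoretic splitting} (\cref{thm:amenact-disjsum}). Given $E \le_B F$ with $F$ the orbit equivalence of an amenable action, one uses the ergodic decomposition of $E$ together with a \emph{uniform-in-the-measure} version of Ornstein--Weiss (\cref{thm:ow-uniform}) to produce a Borel $E$-invariant set on which $E$ is hyperfinite; the complement carries no invariant probability measure, hence is compressible by Nadkarni. So $E$ decomposes as hyperfinite $\oplus$ compressible. The hyperfinite piece lands in $\@E_\Gamma^*$ because (by Ornstein--Weiss a.e.\ and \cref{thm:hyperfinite}) $E_0 \sqle_B^i F(\Gamma,\#R)$, so $\@E_\Gamma^*$ contains all hyperfinite relations. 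The compressible piece is handled by the compressibility toolkit of \cref{thm:compress}: a compressible relation that class-surjectively reduces (via the factorization into a surjective reduction onto the image followed by an embedding into $F$) gets an invariant embedding back into $F$, hence lies in $\@E_\Gamma$. This decomposition---not a direct tiling---is the crux you are missing.
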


To motivate \cref{thm:freeact}, consider the following examples.  By \cref{thm:hyperfinite}, $\@E_\#Z^*$ is the class of all hyperfinite equivalence relations, which is closed under $\le_B$.  On the other hand, for every $2 \le n \le \aleph_0$, the free group $\#F_n$ on $n$ generators is such that $(\@E_{\#F_n}^*)^r$ is the class of treeable equivalence relations, by \cite[3.17]{JKL}; but $\@E_{\#F_n}^*$ is not itself the class of all treeables, since every $E \in \@E_{\#F_n}^*$ with a nonatomic invariant probability measure has cost $n$ (see \cite[36.2]{KM}).

Recall that the $\sqle_B^i$-universal element of $\@E_\Gamma$ is $F(\Gamma, \#R)$, the orbit equivalence of the free part of the shift action of $\Gamma$ on $\#R^\Gamma$.  Thus the $\sqle_B^i$-universal element of $\@E_\Gamma^*$ is $F(\Gamma, \#R) \oplus E_{\infty f}$, where $E_{\infty f}$ is the $\sqle_B^i$-universal finite equivalence relation (given by $E_{\infty f} = \bigoplus_{1 \le n \in \#N} 2^{\aleph_0} \cdot I_n$).

\begin{remark}
Seward and Tucker-Drob \cite{ST} have shown that for countably infinite $\Gamma$, every free Borel action of $\Gamma$ admits an equivariant class-bijective map into $F(\Gamma, 2)$ (clearly the same holds for finite $\Gamma$).  It follows that $F(\Gamma, 2)$ is $->_B^{cb}$-universal in $\@E_\Gamma$.
\end{remark}

A well-known open problem asks whether every orbit equivalence of a Borel action of a countable amenable group $\Gamma$ is hyperfinite.  In the purely Borel context, the best known general result is the following \cite{SS}:

\begin{theorem}[Schneider-Seward]
\label{thm:locally-nilpotent-hyperfinite}
If $\Gamma$ is a countable locally nilpotent group, i.e., every finitely generated subgroup of $\Gamma$ is nilpotent, then every orbit equivalence $E_\Gamma^X$ of a Borel action of $\Gamma$ is hyperfinite.
\end{theorem}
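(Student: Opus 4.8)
The plan is to reduce to the finitely generated case and then exploit the polynomial growth of finitely generated nilpotent groups to build the witnessing increasing sequence of finite Borel equivalence relations directly.

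First I would reduce to finitely generated $\Gamma$. Write $\Gamma = \bigcup_n \Gamma_n$ as an increasing union of finitely generated subgroups; since $\Gamma$ is locally nilpotent, each $\Gamma_n$ is nilpotent. For a Borel action of $\Gamma$ on $X$ we then have $E_\Gamma^X = \bigcup_n E_{\Gamma_n}^X$, an increasing union of the orbit equivalence relations of the $\Gamma_n$. By the theorem of Dougherty-Jackson-Kechris \cite{DJK} that an increasing union of hyperfinite Borel equivalence relations is hyperfinite, it suffices to show each $E_{\Gamma_n}^X$ is hyperfinite. Thus we may assume $\Gamma$ is finitely generated and nilpotent.

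Next, fixing a finite generating set, I would use that a finitely generated nilpotent group has polynomial growth (Bass-Guivarc'h; this is elementary here, via the lower central series), so the balls $B(1,r)$ in the word metric form a F{\o}lner sequence with uniformly bounded doubling. The strategy is then to produce, for each scale $r$ in a rapidly increasing sequence $r_0 < r_1 < \cdots$, a Borel ``tiling'' of each orbit of the action by translates of near-F{\o}lner sets of diameter comparable to $r_k$, coherent across scales in the sense that each tile at level $k{+}1$ is, up to a controlled error, a union of tiles at level $k$. Each level-$k$ tiling determines a finite Borel equivalence relation $F_k \subseteq E_\Gamma^X$ (relating two points lying in the same tile), and coherence gives $F_0 \subseteq F_1 \subseteq \cdots$ with $\bigcup_k F_k = E_\Gamma^X$, which by definition witnesses that $E_\Gamma^X$ is hyperfinite. (Note that one cannot simply quotient by the center $Z(\Gamma)$ and induct on nilpotency class in the naive way, since $E_{Z(\Gamma)}^X$, though hyperfinite, is typically non-smooth, so $X/E_{Z(\Gamma)}^X$ is not standard Borel.)

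The construction of these tilings is the heart of the matter and would proceed by a greedy marker argument: at each scale one selects a maximal Borel $r_k$-separated (``marker'') set, assigns each point to a nearest marker, and then corrects the resulting Voronoi regions into genuine F{\o}lner tiles; doubling controls the number and sizes of tiles, while the nilpotent structure --- organized along the central series, whose successive quotients contribute essentially independent ``coordinate directions'' --- is what lets one straighten the tiles into near-rectangular regions and keep their boundaries small. This last point is exactly the main obstacle: arranging the markers and tiles to be simultaneously Borel, coherent across all scales, and F{\o}lner (small boundary-to-interior ratio) requires the delicate ``orthogonal marker''/hyperrectangle machinery of Gao-Jackson for the abelian case, generalized by Schneider-Seward to carry the nilpotent commutator combinatorics through the induction on nilpotency class. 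I expect controlling the accumulation of boundary errors across infinitely many scales, while preserving Borelness of every choice, to be where essentially all of the work lies; the algebraic reductions above are routine by comparison.
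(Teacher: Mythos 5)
Your proposal fails at the very first step: the theorem you attribute to Dougherty--Jackson--Kechris does not exist. Whether an increasing union of hyperfinite countable Borel equivalence relations is again hyperfinite is the long-standing open \emph{union problem}, posed already in \cite{DJK} and \cite{JKL}; it appears in this paper too, in the guise of the open question (\cref{prb:omega1-seq} and the surrounding discussion) of whether the hierarchy $\mathcal{E}_0 = \{\text{hyperfinite}\}$, $\mathcal{E}_\alpha = \{\text{countable increasing unions of relations in } \bigcup_{\beta<\alpha} \mathcal{E}_\beta\}$ is constant. What is true by definition is that increasing unions of \emph{finite} Borel equivalence relations are hyperfinite --- which is exactly why the locally \emph{finite} case is trivial (there each $E_{\Gamma_n}^X$ is a finite Borel equivalence relation) while the locally nilpotent case is not. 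So from ``each $E_{\Gamma_n}^X$ is hyperfinite'' (which is indeed known: finitely generated nilpotent groups have polynomial growth, and \cite{JKL}, building on the Gao--Jackson orthogonal-marker method for abelian groups, shows that f.g.\ groups of polynomial growth induce hyperfinite orbit equivalence relations) you cannot conclude that $E_\Gamma^X = \bigcup_n E_{\Gamma_n}^X$ is hyperfinite.

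This matters because the union obstruction is precisely the content of the Schneider--Seward theorem, which this paper does not prove but cites from \cite{SS}: the f.g.\ nilpotent case was already known, and their contribution is to run the multiscale marker/tiling construction \emph{coherently across the entire increasing chain of finitely generated subgroups simultaneously}, producing an increasing sequence of finite Borel subequivalence relations exhausting $E_\Gamma^X$ itself, rather than treating each $\Gamma_n$ separately and invoking a union theorem. Your second stage --- coherent multiscale F{\o}lner tilings for a single f.g.\ nilpotent group --- is the right shape for the known part, but the step you dismissed as ``routine by comparison'' is where essentially all of the new work in the cited proof lies: the tilings must be coherent not only across scales but across the subgroup chain, and no amount of work within a single finitely generated subgroup can substitute for that.
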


\begin{remark}
Recently Conley, Jackson, Marks, Seward, and Tucker-Drob have found examples of solvable but not locally nilpotent countable groups for which the conclusion of \cref{thm:locally-nilpotent-hyperfinite} still holds.
\end{remark}

If \cref{thm:locally-nilpotent-hyperfinite} generalizes to arbitrary countable amenable $\Gamma$, then it would follow that $\@E_\Gamma^*$ is the class of all hyperfinite equivalence relations (since it contains $F(\Gamma, \#R)$ which admits an invariant probability measure); then the main implication (i)$\implies$(iii) in \cref{thm:freeact} would trivialize.

In the measure-theoretic context, a classical result of Ornstein-Weiss \cite{OW} states that the orbit equivalence of a Borel action of an amenable group $\Gamma$ is hyperfinite almost everywhere with respect to every probability measure.  We will need a version of this result which is uniform in the measure, which we now state.  For a standard Borel space $X$, we let $P(X)$ denote the \defn{space of probability Borel measures on $X$}\index{space of probability measures $P(X)$} (see \cite[17.E]{Kcdst}).

\begin{lemma}
\label{thm:ow-uniform}
Let $X, Y$ be standard Borel spaces, $E = E_\Gamma^X$ be the orbit equivalence of a Borel action of a countable amenable group $\Gamma$ on $X$, and $m : Y ->_B P(X)$.  Then there is a Borel set $A \subseteq Y \times X$, with $\pi_1(A) = Y$ (where $\pi_1 : Y \times X ->_B Y$ is the first projection), such that
\begin{enumerate}
\item[(i)]  for each $y \in Y$, $A_y := \{x \in X \mid (y, x) \in A\}$ has $m(y)$-measure $1$ and is $E$-invariant;
\item[(ii)]  $(\Delta_Y \times E)|A$ is hyperfinite.
\end{enumerate}
\end{lemma}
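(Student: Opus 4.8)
The plan is to recognize the relation $\Delta_Y \times E$ as the orbit equivalence $E_\Gamma^{Y \times X}$ of the amenable group $\Gamma$ acting only on the second coordinate, and then to prove a Borel-in-the-parameter version of the Ornstein--Weiss theorem, with $Y$ playing the role of the parameter space. First I would record two simplifications. Since $\Gamma$ acts trivially on $Y$, every $R$-invariant Borel set (writing $R := \Delta_Y \times E$) has the form $A = \{(y,x) \mid x \in A_y\}$ with each $A_y \subseteq X$ an $E$-invariant Borel set, and $\pi_1(A) = Y$ is automatic as soon as each $A_y \neq \emptyset$. Next, fixing an enumeration $\Gamma = \{\gamma_0, \gamma_1, \dotsc\}$ with $\gamma_0 = 1$, I replace $m(y)$ by the Borel family $\bar m(y) := \sum_n 2^{-n-1} (\gamma_n)_* m(y) \in P(X)$, which is $E$-quasi-invariant and satisfies $m(y) \ll \bar m(y)$; since the sets $A_y$ we produce will be $E$-invariant, $\bar m(y)(A_y) = 1$ forces $m(y)(A_y) = 1$. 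Thus it suffices to produce an $R$-invariant Borel $A$ with $R|A$ hyperfinite and $\bar m(y)(A_y) = 1$ for every $y$, which places us in the quasi-invariant setting where Ornstein--Weiss applies fiberwise.

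Second, I would isolate the elementary closure fact that lets me assemble the answer from approximations. Let $\@C$ denote the collection of $R$-invariant Borel sets $A \subseteq Y \times X$ with $R|A$ hyperfinite. I claim $\@C$ is closed under countable unions: given $A^{(0)} \subseteq A^{(1)} \subseteq \dotsb$ in $\@C$, write their union as the disjoint sum of the invariant pieces $A^{(n)} \setminus A^{(n-1)}$; the restriction of a hyperfinite relation to an invariant Borel subset is hyperfinite, and a countable disjoint sum of hyperfinite relations is hyperfinite (both are immediate from \cref{thm:hyperfinite}), so $R \mathbin| \bigcup_n A^{(n)}$ is hyperfinite. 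Consequently it is enough to find \emph{countably many} members $A^{(k)} \in \@C$ with $\sup_k \bar m(y)(A^{(k)}_y) = 1$ for every $y$: their union then lies in $\@C$ and has full $\bar m(y)$-fibers for all $y$, hence is the desired $A$.

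The crux is therefore to manufacture such a countable family, i.e.\ a uniform form of Ornstein--Weiss. For a single $y$, the classical theorem \cite{OW} gives an $E$-invariant $\bar m(y)$-conull Borel set $A_y$ with $E|A_y$ hyperfinite; the task is to realize these simultaneously by countably many \emph{Borel} invariant sets on $Y \times X$ whose fiberwise measures approach $1$ at \emph{every} $y$. The approach I would take is to run the Ornstein--Weiss Rokhlin-tower/F\o{}lner construction uniformly over $Y$: using a fixed F\o{}lner sequence $(K_j)$ for $\Gamma$, build finite Borel subequivalence relations $F_0 \subseteq F_1 \subseteq \dotsb$ of $R$, defined on all of $Y \times X$, together with Borel invariant sets $B_j$ on which $R$ is generated by the towers, and show that the unexhausted remainder is $\bar m(y)$-null \emph{for every $y$ at once} --- the latter coming from a maximal/wandering-set estimate valid for all quasi-invariant measures simultaneously. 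Setting $A^{(k)}$ to be the well-tiled part after $k$ stages would then give the required family.

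The hard part will be exactly this synchronization: producing a construction that is Borel in the parameter $y$ while guaranteeing the conull conclusion for \emph{every individual} $y$, rather than merely for $\nu$-almost every $y$ with respect to some auxiliary measure $\nu$ on $Y$. A tempting shortcut --- invoke a measurable selection (e.g.\ Jankov--von Neumann) for the Borel relation pairing $y$ with a generating $\#Z$-action on a $\bar m(y)$-conull invariant set, using \cref{thm:hyperfinite}(a) --- yields only a universally measurable assignment whose graph need not be Borel; the reduction to a \emph{countable} family above is precisely what is designed to absorb this defect, since a countable family can be extracted from the uniform tower construction directly without selecting one set per $y$. I expect the bulk of the work to lie in verifying that the Ornstein--Weiss exhaustion can be carried out Borel-uniformly in $y$ with pointwise-everywhere conullity, and that is where the genuine content of the lemma resides.
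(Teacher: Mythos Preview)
Your proposal is correct and takes essentially the same approach as the paper: the paper's entire proof is the assertion that ``the proofs of \cite[9.2, 10.1]{KM} can be made uniform'' with details omitted as ``tedious but straightforward,'' which is precisely the uniform-in-$y$ Ornstein--Weiss/F{\o}lner tower construction you outline. Your reductions (passing to a quasi-invariant family $\bar m$, closure of the hyperfinite locus under countable invariant unions) are sound preprocessing steps toward that same goal, and your honest identification of the crux --- pointwise-everywhere conullity from a Borel construction --- matches exactly what the cited argument in \cite{KM} provides once uniformized.
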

\begin{proof}
This follows from verifying that the proofs of \cite[9.2, 10.1]{KM} can be made uniform.  We omit the details, which are tedious but straightforward.
\end{proof}

We now have the following, which forms the core of \cref{thm:freeact}:

\begin{proposition}
\label{thm:amenact-disjsum}
Let $\Gamma$ be a countable amenable group, and let $(X, E), (Y, F) \in \@E$ be countable Borel equivalence relations.  If $E \le_B F$ and $F = E_\Gamma^Y$ for some Borel action of $\Gamma$ on $Y$, then $E$ is the disjoint sum of a hyperfinite equivalence relation and a compressible equivalence relation.
\end{proposition}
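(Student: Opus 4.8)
The plan is to produce a single Borel $E$-invariant set $H \subseteq X$ enjoying two properties: $E|H$ is hyperfinite, and $\mu(H) = 1$ for \emph{every} $E$-invariant Borel probability measure $\mu$. Granting such an $H$, the partition $X = H \sqcup (X \setminus H)$ finishes the proof. Indeed $E|H$ is hyperfinite by construction, while $E|(X \setminus H)$ admits no invariant probability measure: any such measure, extended by $0$ to $X$, would be an $E$-invariant probability measure giving $H$ measure $0$, contradicting $\mu(H)=1$. Hence $E|(X \setminus H)$ is compressible by Nadkarni's theorem \cite{N}, and $E \cong_B (E|H) \oplus (E|(X\setminus H))$ is the desired decomposition.

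To build $H$, let $M$ be the standard Borel space of $E$-invariant \emph{ergodic} Borel probability measures on $X$, and let $m : M ->_B P(Y)$ be the Borel map $\mu \mapsto f_*\mu$, where $f : E \le_B F$ is the given reduction. Crucially, I do \emph{not} need $f_*\mu$ to be $F$-invariant---only Borel in $\mu$---so the fact that reductions need not carry invariant measures to invariant measures is harmless. Applying the uniform Ornstein--Weiss lemma \cref{thm:ow-uniform} to the amenable action generating $F = E_\Gamma^Y$, with parameter space $M$ and family $m$, yields a Borel set $A \subseteq M \times Y$ whose slice $A_\mu$ is $F$-invariant with $f_*\mu(A_\mu) = 1$, and with $(\Delta_M \times F)|A$ hyperfinite. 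Pulling back, set $B := (\mathrm{id}_M \times f)^{-1}(A) \subseteq M \times X$. Then $\mathrm{id}_M \times f$ restricts to a reduction $(\Delta_M \times E)|B \le_B (\Delta_M \times F)|A$ (using that $f$ is a reduction), so $(\Delta_M \times E)|B$ is hyperfinite, since hyperfiniteness is closed downward under $\le_B$ (which follows from \cref{thm:compress}(e) together with closure of hyperfiniteness under restriction and $\sqle_B^i$). Each slice $B_\mu = f^{-1}(A_\mu)$ is then $E$-invariant with $\mu(B_\mu) = f_*\mu(A_\mu) = 1$.

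The main step is to amalgamate the measure-indexed family $(B_\mu)_{\mu \in M}$ into a single global set, which I would do via ergodic decomposition. Let $\pi : X_0 ->_B M$ be a Borel $E$-invariant map, defined on a Borel $E$-invariant set $X_0 \subseteq X$ that is co-null for every $E$-invariant probability measure, with $\pi(x) = \mu$ for $\mu$-a.e.\ $x$, for each $\mu \in M$ (see \cite{KM}). Define
\[
H := \{x \in X_0 \mid (\pi(x), x) \in B\}.
\]
Since $\pi$ is $E$-invariant and each $B_\mu$ is $E$-invariant, $H$ is $E$-invariant; and the injection $x \mapsto (\pi(x), x)$ is a Borel embedding $E|H \sqle_B (\Delta_M \times E)|B$, so $E|H$ is hyperfinite. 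Finally, for each ergodic $\mu \in M$ we have $\pi(x) = \mu$ and $x \in B_\mu$ for $\mu$-a.e.\ $x$, so $\mu(H) = \mu(B_\mu) = 1$; integrating over the ergodic decomposition gives $\mu(H) = 1$ for \emph{all} $E$-invariant probability measures $\mu$, as required.

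The hard part is precisely this amalgamation: the Ornstein--Weiss input only supplies, measure by measure, an $F$-invariant hyperfinite set of full measure, and the slices $A_\mu$ (hence $B_\mu$) genuinely depend on $\mu$; ergodic decomposition is exactly what lets each point ``select'' the unique ergodic measure it is generic for, gluing the slices into one Borel set while preserving both hyperfiniteness and the full-measure condition. The remaining ingredients---closure of hyperfiniteness under $\le_B$, Nadkarni's compressibility criterion, and the Borelness of $\mu \mapsto f_*\mu$ and of the ergodic decomposition map---are standard, and amenability of $\Gamma$ enters only through \cref{thm:ow-uniform}.
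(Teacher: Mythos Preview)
Your proof is correct and follows essentially the same approach as the paper: apply the uniform Ornstein--Weiss lemma with the space of ergodic $E$-invariant measures as parameter space and $\mu \mapsto f_*\mu$ as the family, then use the ergodic decomposition map $\pi$ to amalgamate the slices via $x \mapsto (\pi(x), f(x))$, obtaining a hyperfinite invariant piece whose complement supports no invariant probability measure and is therefore compressible by Nadkarni. The paper's $g = (p,f)$ and your two-step pullback $(\mathrm{id}_M \times f)^{-1}$ followed by $x \mapsto (\pi(x),x)$ produce literally the same set; the only cosmetic difference is that the paper disposes of the compressible case up front rather than letting it fall out as the degenerate case $M = \emptyset$.
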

\begin{proof}
If $E$ is compressible, then we are done.  Otherwise, $E$ has an invariant probability measure.  Consider the ergodic decomposition of $E$; see e.g., \cite[3.3]{KM}.  This gives a Borel homomorphism $p : E ->_B \Delta_{P(X)}$ such that
\begin{enumerate}
\item[(i)]  $p$ is a surjection onto the Borel set $P_e(E) \subseteq P(X)$ of ergodic invariant probability measures on $E$;
\item[(ii)]  for each $\mu \in P_e(E)$, we have $\mu(p^{-1}(\mu)) = 1$.
\end{enumerate}
Let $f : E \le_B F$, and apply \cref{thm:ow-uniform} to $F$ and $f_* : P_e(E) ->_B P(Y)$, where $f_*$ is the pushforward of measures.  This gives Borel $A \subseteq P_e(E) \times Y$ such that
\begin{enumerate}
\item[(iii)]  for each $\mu \in P_e(E)$, $\mu(f^{-1}(A_\mu)) = (f_*\mu)(A_\mu) = 1$, and $A_\mu \subseteq Y$ is $F$-invariant (so $A$ is $(\Delta_{P_e(E)} \times F)$-invariant);
\item[(iv)]  $(\Delta_{P_e(E)} \times F)|A$ is hyperfinite.
\end{enumerate}
Now consider the homomorphism $g := (p, f) : E ->_B \Delta_{P_e(E)} \times F$, i.e., $g(x) = (p(x), f(x))$.  Then $g$ is a reduction because $f$ is.  It follows that $B := g^{-1}(A)$ is $E$-invariant and $E|B$ is hyperfinite.  It now suffices to note that $E|(X \setminus B)$ is compressible.  Indeed, otherwise it would have an ergodic invariant probability measure, i.e., there would be some $\mu \in P_e(E)$ such that $\mu(X \setminus B) = 1$.  But then $\mu(p^{-1}(\mu) \cap f^{-1}(A_\mu)) = 1$, while $p^{-1}(\mu) \cap f^{-1}(A_\mu) \subseteq B$, a contradiction.
\end{proof}

\begin{proof}[Proof of \cref{thm:freeact}]
Clearly (iii)$\implies$(ii).  If (ii) holds, then by the Glimm-Effros dichotomy, $E_0 \sqle_B F(\Gamma, \#R)$, so (ii) implies $E_0 ->_B^{cb} F(\Gamma, \#R)$, i.e., $E_0$ is generated by a free action of $\Gamma$, and so since $E_0$ is hyperfinite and has an invariant probability measure, $\Gamma$ is amenable (see \cite[2.5(ii)]{JKL}).  So it remains to prove (i)$\implies$(iii).

Let $E \le_B F \in \@E_\Gamma^*$.  Then $E$ splits into a smooth part, which is clearly in $\@E_\Gamma^*$, and a part which reduces to some $F' \in \@E_\Gamma$; so we may assume $F \in \@E_\Gamma$.  Factor the reduction $E \le_B F$ into a surjective reduction $f : E \le_B G$ (onto the image) followed by an embedding $G \sqle_B F$.  By \cref{thm:amenact-disjsum}, $G = G' \oplus G''$, where $G'$ is hyperfinite and $G''$ is compressible.  Then $E = f^{-1}(G') \oplus f^{-1}(G'')$.  Since $f^{-1}(G'') \le_B^{cs} G'' \sqle_B F$ and $G''$ is compressible, we have $f^{-1}(G'') \sqle_B^i F$ (\cref{thm:compress}) and so $f^{-1}(G'') \in \@E_\Gamma$.  Finally, we have $f^{-1}(G') \in \@E_\Gamma^*$, since $\@E_\Gamma^*$ contains all hyperfinite equivalence relations (because $E_0 \sqle_B^i F(\Gamma, \#R)$, by Ornstein-Weiss's theorem and \cref{thm:hyperfinite}).
\end{proof}

\section{Structurability and model theory}
\label{sec:struct-logic}

In the previous sections, we have studied the relationship between structurability and common notions from the theory of countable Borel equivalence relations.  This section, by contrast, concerns the other side of the $|=$ relation, i.e., logic.  In particular, we are interested in model-theoretic properties of theories $(L, \sigma)$ which are reflected in the elementary class $\@E_\sigma$ that they axiomatize.

A general question one could ask is when two theories $(L, \sigma), (L', \tau)$ axiomatize the same elementary class, i.e., in the notation of \cref{sec:lattice}, when does $\sigma <=>^* \tau$.  Our main result here answers one instance of this question.  Let $\sigma_{sm}$ denote any sentence axiomatizing the smooth countable Borel equivalence relations.

\begin{theorem}
\label{thm:einftys-smooth}
Let $(L, \sigma)$ be a theory.  The following are equivalent:
\begin{enumerate}
\item[(i)]  There is an $L_{\omega_1\omega}$-formula $\phi(x)$ which defines a finite nonempty subset in any countable model of $\sigma$.
\item[(ii)]  $\sigma =>^* \sigma_{sm}$, i.e., any $\sigma$-structurable equivalence relation is smooth, or equivalently $E_{\infty\sigma}$ is smooth.
\item[(iii)]  For any countably infinite group $\Gamma$, we have $\sigma \otimes \sigma_\Gamma =>^* \sigma_{sm}$, i.e., any $\sigma$-structurable equivalence relation generated by a free Borel action of $\Gamma$ is smooth.
\item[(iv)]  There is a countably infinite group $\Gamma$ such that $\sigma \otimes \sigma_\Gamma =>^* \sigma_{sm}$.
\end{enumerate}
\end{theorem}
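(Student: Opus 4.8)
The plan is to prove the cycle $(i)\Rightarrow(ii)\Rightarrow(iii)\Rightarrow(iv)\Rightarrow(i)$, with essentially all the content in the last step. The three easy implications come first. For $(i)\Rightarrow(ii)$, suppose $\phi(x)$ defines a finite nonempty subset in every countable model of $\sigma$, and let $\#A : E \models \sigma$ be a Borel $\sigma$-structure on some $E$ on $X$. The classwise interpretation $B := \{x \in X \mid \#A|[x]_E \models \phi(x)\}$ is Borel — here one uses that the classes are countable, so that $L_{\omega_1\omega}$-satisfaction within a class can be evaluated in a Borel way — and by hypothesis $B$ meets every $E$-class in a finite nonempty set. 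Thus $B$ is a Borel complete section with $E|B$ finite, hence smooth; a Borel transversal of $E|B$ is then a Borel transversal of $E$, so $E$ is smooth. As $E$ was arbitrary, $\@E_\sigma \subseteq \@E_{\sigma_{sm}}$, which is $(ii)$. Then $(ii)\Rightarrow(iii)$ is immediate from $\@E_{\sigma\otimes\sigma_\Gamma} = \@E_\sigma \cap \@E_{\sigma_\Gamma} \subseteq \@E_\sigma \subseteq \@E_{\sigma_{sm}}$, and $(iii)\Rightarrow(iv)$ is trivial (take $\Gamma = \#Z$).

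For $(iv)\Rightarrow(i)$ I would argue the contrapositive: assuming $\neg(i)$, I must show $\neg(iv)$, i.e.\ that for \emph{every} countably infinite $\Gamma$ there is a non-smooth $(\sigma\otimes\sigma_\Gamma)$-structurable equivalence relation. The key reduction is the following. Since $\sigma_\Gamma$-structurable means generated by a free Borel $\Gamma$-action, and a $\sigma$-structure on a free $\Gamma$-action on $Y$ is the same thing as a Borel $\Gamma$-equivariant map $Y \to \Mod_\Gamma(\sigma)$ (where $\Gamma$ acts on $\Mod_\Gamma(\sigma)$ by the logic action through its left-regular representation $\Gamma \hookrightarrow S_\Gamma$), it suffices to produce a free, probability-measure-preserving, ergodic, non-atomic $\Gamma$-action admitting such an equivariant map: by the measure characterization of smoothness recalled in \cref{sec:equivs}, such an equivalence relation carries an ergodic invariant non-atomic measure and is therefore non-smooth.

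The decisive observation is that the $\Gamma$-action on $\Mod_\Gamma(\sigma)$ is the restriction, along the regular representation $\Gamma \hookrightarrow S_\Gamma \cong S_\infty$, of the full logic action of $S_\Gamma$; hence \emph{any} $S_\infty$-invariant Borel probability measure $\mu$ on $\Mod_\Gamma(\sigma)$ is automatically $\Gamma$-invariant. Given such a $\mu$, I would form $\Mod_\Gamma(\sigma) \times [0,1]^\Gamma$ with the diagonal $\Gamma$-action and the product of $\mu$ with the Bernoulli (product Lebesgue) measure; the Bernoulli factor forces the action to be free and non-atomic, while the first projection is equivariant and pulls back the tautological classwise $\sigma$-structure. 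Passing to an ergodic component and restricting to its invariant conull free part yields the desired non-smooth relation in $\@E_{\sigma\otimes\sigma_\Gamma}$. Thus everything reduces to the single assertion: $\neg(i)$ implies that $\Mod_\#N(\sigma)$ admits an $S_\infty$-invariant Borel probability measure.

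This last assertion is the main obstacle and the only genuinely nontrivial point, and is where topological dynamics and ergodic theory enter. Its converse is an easy exchangeability computation — if $\phi$ defined a finite nonempty set in $\mu$-almost every model, then $\phi^{\#M}$ would be an $S_\infty$-invariant (exchangeable) finite nonempty random subset of $\#N$, which de Finetti's theorem forbids — confirming that the dichotomy sits exactly at condition $(i)$. For the hard direction I plan to read a formula defining a finite nonempty set in all models as an $S_\infty$-equivariant Borel assignment of a finite nonempty subset of the universe (using Lopez--Escobar definability to match $L_{\omega_1\omega}$-formulas with $S_\infty$-invariant Borel data), so that $\neg(i)$ says precisely that no such equivariant ``finite-set selector'' exists; and then invoke an Ackerman--Freer--Patel-type existence theorem \cite{AFP} — extracting a model with trivial definable closure, or building the measure directly — to obtain the invariant measure. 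Converting the syntactic, parameter-free, uniform-over-models condition $\neg(i)$ into the existence of this measure, and handling the case where the models of $\sigma$ do not form a closed subspace of $\Mod_\#N(L)$ (which I would treat by approximating and reducing to the closed case), is the technical heart of the argument.
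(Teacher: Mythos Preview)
Your reduction of $(iv)\Rightarrow(i)$ to the existence of an $S_\infty$-invariant probability measure on $\Mod_\#N(\sigma)$ is where the argument breaks: the implication ``$\neg(i)$ implies $\Mod_\#N(\sigma)$ carries an $S_\infty$-invariant Borel probability measure'' is simply false. Take $\sigma$ to be the theory of countably infinite trees. Then $\neg(i)$ holds --- the bi-infinite path has transitive automorphism group, so no parameter-free formula can pick out a finite nonempty subset in it --- yet $\Mod_\#N(\sigma)$ carries no $S_\infty$-invariant probability measure: by Adams--Spatzier, no free p.m.p.\ action of an infinite Kazhdan group $\Gamma$ is treeable, so $\sigma\otimes\sigma_\Gamma\Rightarrow^*\sigma_c$ for such $\Gamma$, and \cref{lm:modgamma-compress} converts this into the nonexistence of a $\Gamma$-invariant (hence a fortiori $S_\infty$-invariant) probability measure on $\Mod_\Gamma(\sigma)$. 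Your product-with-Bernoulli construction only ever produces p.m.p.\ free $\Gamma$-actions, but for Kazhdan $\Gamma$ the non-smooth treeable free $\Gamma$-actions that $\neg(iv)$ requires are necessarily compressible; so an AFP-type invariant-measure argument cannot reach them.

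The paper's proof of $(iv)\Rightarrow(i)$ therefore avoids invariant probability measures entirely and works with topological recurrence and Baire category. It introduces a combinatorial weakening of TDC, the \emph{weak duplication property} (WDP): every finite configuration (in any finite reduct) admits a disjoint copy. The dynamical step (\cref{lm:modgamma-recurrent}) shows that if some countable model of $\sigma$ has WDP and is not essentially a pure set, then for every countably infinite $\Gamma$ the $\Gamma$-action on $\Mod_\Gamma(\sigma)$ has a recurrent point, so $E_\Gamma^{\Mod_\Gamma(\sigma)}$ is non-smooth; combined with \cref{lm:modgamma-smooth} (which only needs ergodic non-atomic $\sigma$-finite invariant measures, available for any non-smooth relation) this yields $\neg(iv)$. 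On the other side, if no model has WDP, a purely combinatorial argument --- encoding failure of WDP by intersecting families of finite sets and then applying a lemma of Clemens--Conley--Miller (\cref{thm:nif-fin}) to extract a finite nonempty set uniformly from any intersecting family --- produces a single formula witnessing $(i)$. The AFP theorem plays no role in this direction; it enters the paper only for the separate \cref{thm:einftya-univ}.
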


In particular, this answers a question of Marks \cite[end of Section~4.3]{M}, who asked for a characterization of when $E_{\infty\sigma_\#A}$ ($\sigma_\#A$ a Scott sentence) is smooth.  The proof uses ideas from topological dynamics and ergodic theory.

Marks observed that recent work of Ackerman-Freer-Patel \cite{AFP} implies the following sufficient condition for a structure $\#A$ to structure every aperiodic countable Borel equivalence relation.  In \cref{sec:einftys-univ}, we present his proof of this result, as well as several corollaries and related results.  The result refers to the model-theoretic notion of \defn{trivial definable closure}; see \cref{sec:einftys-univ} for details.  Let $\sigma_a$ denote any sentence axiomatizing the aperiodic countable Borel equivalence relations.

\begin{theorem}[Marks]
\label{thm:einftya-univ}
Let $L$ be a language and $\#A$ be a countable $L$-structure with trivial definable closure.  Then $\sigma_a =>^* \sigma_\#A$, i.e., every aperiodic countable Borel equivalence relation is $\#A$-structurable.
\end{theorem}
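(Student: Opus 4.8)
The plan is to realize an $\#A$-structure on an arbitrary aperiodic $E$ as an equivariant selection against the $S_\infty$-invariant measure supplied by Ackerman--Freer--Patel, and to reduce the whole statement to such a selection problem. Fix an aperiodic $(X,E)\in\@E$ and, exactly as in the proof of \cref{thm:esigma}, a Borel family $T\colon X->X^\#N$ of enumerations $T(x)\colon\#N\cong[x]_E$, with induced cocycle $\alpha_T\colon E->S_\infty$, $\alpha_T(x,x')=T(x')^{-1}\circ T(x)$. Unwinding the definitions (cf.\ \cref{rmk:skew-product}), a Borel $\sigma_\#A$-structure on $E$ is exactly a Borel map $b\colon X->\Mod_\#N(\sigma_\#A)$ that is $\alpha_T$-equivariant, i.e.\ $b(x')=\alpha_T(x,x')\cdot b(x)$ whenever $x\mathrel{E}x'$ (for the logic action of $S_\infty$ on $\Mod_\#N(\sigma_\#A)$); equivalently, it is an $E$-invariant Borel section of the projection $\pi_1\colon E\ltimes_{\alpha_T}\Mod_\#N(\sigma_\#A)->_B^{cb}E$, along which one pulls back the canonical structure $\#E$. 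Thus everything comes down to producing such an equivariant $b$, and doing this for every aperiodic $E$ is precisely $\sigma_a =>^* \sigma_\#A$.

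Next I would bring in the hypothesis. By the theorem of Ackerman--Freer--Patel, trivial definable closure of $\#A$ is exactly the condition guaranteeing an $S_\infty$-invariant Borel probability measure $\mu\in P(\Mod_\#N(\sigma_\#A))$ concentrated on the isomorphism type of $\#A$. Since $\mu$ is invariant, the constant map $x\mapsto\mu$ is trivially $\alpha_T$-equivariant into the space of measures $P(\Mod_\#N(\sigma_\#A))$ (with the pushforward action), and transporting $\mu$ through the enumerations $T(x)$ yields a coherent Borel assignment of a measure to each $E$-class. So an equivariant map into \emph{measures} always exists for free; the real content is to descend equivariantly from $P(Z)$ to $Z:=\Mod_\#N(\sigma_\#A)$ itself.

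For this descent I would use the exchangeable representation underlying the invariant measure: $\mu$ can be realized by a Borel, $S_\infty$-equivariant map $\Phi$ from a space of labels attached to the finite subsets of $\#N$ (carrying a product Lebesgue measure and the permutation action) to $Z$, pushing the label measure forward to $\mu$ and outputting a structure isomorphic to $\#A$ off a conull invariant set $G$. To feed $\Phi$ on each class I would exploit that the $E$-classes are infinite: fixing an $E$-preserving Borel isomorphism of $X$ with a space carrying independent coordinates, I extract a Borel labeling $\xi$ of the finite subsets lying inside $E$-classes, set $b(x):=T(x)^{-1}$ applied to $\Phi$ of the reindexed labels of $[x]_E$, and observe that $\alpha_T$-equivariance of $b$ follows from the $S_\infty$-equivariance of $\Phi$, so that $b$ descends to a single Borel $\sigma_\#A$-structure on $X$.

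The main obstacle, and the step I expect to absorb most of the work, is the derandomization in the previous paragraph: a single Borel labeling $\xi$ must land in the conull good set $G$ on \emph{every} $E$-class at once, not merely $\mu$-almost surely, so that the resulting structure is genuinely isomorphic to $\#A$ on each class rather than on a conull family of classes. I would handle this by a Fubini/Kuratowski--Ulam argument together with a measurable-selection (uniformization) step: the labelings good for a fixed class form a conull Borel set, the infinitude of the classes provides a full product of independent label-choices per class, and one then selects a single Borel field of labels simultaneously good for all classes, the conullity of $G$ being exactly what the trivial-definable-closure hypothesis buys via Ackerman--Freer--Patel. Granting the equivariant $b$, pulling back $\#E$ produces the required Borel $\#A$-structure on the given aperiodic $E$; since $E$ was an arbitrary aperiodic countable Borel equivalence relation, this yields $\sigma_a =>^* \sigma_\#A$ (alternatively one may run the construction only on the invariantly universal aperiodic relation and invoke downward closure of $\@E_{\sigma_\#A}$ under $->_B^{cb}$ from \cref{thm:elem-classbij}).
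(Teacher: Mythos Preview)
Your outline diverges from the paper's proof at the crucial derandomization step, and the gap you flag there is real and not closed by the Fubini/uniformization sketch you give.  Having a conull ``good'' set of labelings for each individual $E$-class does \emph{not} yield a single Borel field of labels that lands in the good set for \emph{every} class simultaneously: there are uncountably many classes, and nothing in a Kuratowski--Ulam argument lets you intersect uncountably many conull conditions.  The phrase ``the infinitude of the classes provides a full product of independent label-choices per class'' does not help, since the labels you need are attached to finite tuples \emph{within} each class, not to the classes themselves, and there is no obvious Borel way to make a ``generic'' choice that is generic from the point of view of every class at once.

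The paper sidesteps this entirely by using a \emph{stronger} output of the Ackerman--Freer--Patel construction than the invariant measure.  From the AFP proof one extracts (\cref{thm:afp-dense}) a single Borel $L$-structure $\#A'$ on $\#R$ such that $\#A'|A \cong \#A$ for \emph{every} countable dense $A \subseteq \#R$, not merely for almost every such $A$.  This is the deterministic object that makes derandomization unnecessary.  The proof then works topologically: realize $X$ as $2^\#N$, strip off a smooth piece so that every remaining class is dense-in-itself, build a Borel family of homeomorphisms $f_x : 2^\#N \to \overline{[x]_E}$ depending only on $[x]_E$, compose with a continuous bijection $2^\#N \setminus Z \to \#R$ (stripping off another smooth piece meeting $Z$), and pull back $\#A'$ along the resulting bijection from each class onto a dense subset of $\#R$.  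No measure, no selection argument---the universality of $\#A'$ over dense sets does all the work.  Your cocycle/section framing of what needs to be produced is fine; what is missing is this pointwise (rather than almost-sure) input from AFP.
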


In \cref{sec:scott} we discuss the problem of when an elementary class can be axiomatized by a Scott sentence.

\subsection{Smoothness of $E_{\infty\sigma}$}
\label{sec:einftys-smooth}

We now begin the proof of \cref{thm:einftys-smooth}.  The implication (i)$\implies$(ii) is easy: given a formula $\phi$ as in (i), $\phi$ may be used to uniformly pick out a finite nonempty subset of each $E_{\infty\sigma}$-class, thus $E_{\infty\sigma}$ is smooth.  The implications (ii)$\implies$(iii)$\implies$(iv) are obvious.  So let $\Gamma$ be as in (iv).

Consider the logic action of $S_\Gamma$ on $\Mod_\Gamma(L)$, the space of $L$-structures with universe $\Gamma$.  Recall that this is given as follows: for $f \in S_\Gamma$, $\-\delta \in \Gamma^n$, $n$-ary $R \in L$, and $\#A \in \Mod_\Gamma(L)$, we have
\begin{align*}
R^{f(\#A)}(\-\delta) \iff R^\#A(f^{-1}(\-\delta)).
\end{align*}
We regard $\Gamma$ as a subgroup of $S_\Gamma$ via the left multiplication action, so that $\Gamma$ acts on $\Mod_\Gamma(L)$.

In an earlier version of this paper, we had stated the following lemma without the condition on finite stabilizers; only the $\Longrightarrow$ direction (without the condition) is used in what follows.  Anush Tserunyan pointed out to us that the $\Longleftarrow$ direction was wrong, and gave the corrected version below together with the necessary additions to its proof.

\begin{lemma}
\label{lm:modgamma-smooth}
Let $\Gamma$ be a countably infinite group.  Then $\sigma \otimes \sigma_\Gamma =>^* \sigma_{sm}$ iff $E_\Gamma^{\Mod_\Gamma(\sigma)}$ is smooth and the action of $\Gamma$ on $\Mod_\Gamma(\sigma)$ has finite stabilizers.
\end{lemma}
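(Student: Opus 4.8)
The plan is to reformulate $\sigma$-structurability of a \emph{free} $\Gamma$-action in terms of equivariant Borel maps into $M := \Mod_\Gamma(\sigma)$, and then read off both conditions from this reformulation. Equip $M$ with its $\Gamma$-action (the logic action restricted along $\Gamma \hookrightarrow S_\Gamma$), so that $E_\Gamma^M = E_\Gamma^{\Mod_\Gamma(\sigma)}$. The key observation is a correspondence: if $E = E_\Gamma^X$ is generated by a free Borel $\Gamma$-action, then each Borel $\sigma$-structure $\#A$ on $E$ yields, by classwise pullback along the orbit maps, a $\Gamma$-equivariant Borel map $\Phi : X -> M$ determined by $R^{\Phi(x)}(\-\delta) \iff R^\#A(\-\delta \cdot x)$; and conversely every equivariant Borel $\Phi : X -> M$ arises this way from a unique $\sigma$-structure (well-definedness of the inverse uses freeness and Lusin--Novikov). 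Thus $E$ is $\sigma \otimes \sigma_\Gamma$-structurable iff $E$ is generated by a free $\Gamma$-action $X$ admitting an equivariant Borel map $\Phi : X -> M$; and for such $\Phi$, freeness forces $\Phi$ restricted to any $E$-class to be a surjection onto a $\Gamma$-orbit whose fibers are cosets of $\mathrm{Stab}_\Gamma(\Phi(x))$. In particular, $\Phi$ is finite-to-one on each $E$-class iff the stabilizers of the relevant orbits in $M$ are finite.

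For the ($\Longleftarrow$) direction, suppose $E_\Gamma^M$ is smooth and all stabilizers are finite, and let $E = E_\Gamma^X$ be any $\sigma \otimes \sigma_\Gamma$-structurable relation with associated equivariant $\Phi : X -> M$. By the stabilizer computation, $\Phi : E ->_B E_\Gamma^M$ is finite-to-one on each class, so $E \cap \ker \Phi$ is a finite (hence smooth) Borel equivalence relation; by \cref{thm:smh-factor}, $\Phi$ is therefore a smooth homomorphism. Since $E_\Gamma^M$ is smooth, the whole space $M$ is a smooth set, whence its $\Phi$-preimage $X$ is smooth for $E$, i.e.\ $E$ is smooth. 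This gives $\sigma \otimes \sigma_\Gamma =>^* \sigma_{sm}$.

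For the ($\Longrightarrow$) direction I would prove the contrapositive of each condition separately. For \emph{smoothness}: assuming $E_\Gamma^M$ is non-smooth, fix an ergodic, invariant, non-atomic $\sigma$-finite measure $\mu$ on $M$, and let $\lambda$ be the product (Bernoulli) measure on the free part $Y$ of the shift action of $\Gamma$ on $\#R^\Gamma$. The free diagonal $\Gamma$-action on $Y \times M$ has equivariant projection $\pi_2 : Y \times M -> M$, so by the correspondence $E_\Gamma^{Y \times M}$ is $\sigma \otimes \sigma_\Gamma$-structurable; but the Bernoulli action $(Y,\lambda)$ is weakly mixing, so $\lambda \times \mu$ is an ergodic, invariant, non-atomic $\sigma$-finite measure for the diagonal action, witnessing that $E_\Gamma^{Y \times M}$ is non-smooth and contradicting the hypothesis. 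For \emph{finite stabilizers}: if some $\#A_0 \in M$ has infinite stabilizer $S \le \Gamma$, realize the non-smooth $F(S, \#R)$ as $E_S^B$ on a space $B$, form the induced free $\Gamma$-action $X := \Gamma \times_S B$, and set $\Phi([\gamma, b]) := \gamma \cdot \#A_0$, which is well-defined (as $S$ fixes $\#A_0$) and equivariant. Then $E_\Gamma^X$ is $\sigma \otimes \sigma_\Gamma$-structurable, while its restriction to the complete section $\{[e,b] : b \in B\}$ is isomorphic to $E_S^B$, so $E_\Gamma^X$ is non-smooth --- again a contradiction.

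The main obstacle is exactly the smoothness half of ($\Longrightarrow$), and it is where the earlier version of the lemma failed. One cannot transfer non-smoothness of $E_\Gamma^M$ to a structured relation along a class-surjective finite-to-one homomorphism, since such maps do \emph{not} reflect smoothness (e.g.\ an aperiodic smooth relation admits a class-bijective homomorphism onto the non-smooth $E_0$). The device that repairs this is to pass to the genuinely free ``resolution'' $Y \times M$ and to invoke weak mixing of the Bernoulli shift, which guarantees that the lifted measure $\lambda \times \mu$ is \emph{ergodic} and non-atomic --- an honest input from ergodic theory, without which $\lambda \times \mu$ could decompose into atomic ergodic pieces and fail to witness non-smoothness. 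Checking Borelness of the correspondence, the stabilizer/fiber bookkeeping (keeping the left/right conventions consistent), and that induction from $S$ to $\Gamma$ preserves non-smoothness are the remaining routine points.
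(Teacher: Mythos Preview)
Your proof is correct and follows essentially the same strategy as the paper: the equivariant-map correspondence for $\Longleftarrow$, and for $\Longrightarrow$ the free diagonal action $Y \times M$ with the mixing Bernoulli shift (the paper cites Schmidt--Walters for ergodicity of $\rho \times \nu$). The only difference is in the infinite-stabilizer case: where you use the induced action $\Gamma \times_S B$ and restrict to the section $\{[e,b]\}$, the paper instead takes the diagonal action on $Y \times [\#A_0]_\Gamma$ (with $Y$ the free part of $2^\Gamma$) and observes that the $S$-action on $Y \times \{\#A_0\}$ contains a copy of the free $S$-shift on $2^S$ --- both arguments embed a non-smooth free $S$-action into a $\sigma \otimes \sigma_\Gamma$-structurable relation, so this is a cosmetic variation rather than a different idea.
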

\begin{proof}
The proof is largely based on that of \cite[29.5]{KM}.

$\Longleftarrow$: Suppose $(X, E)$ is generated by a free Borel action of $\Gamma$ and $\#A : E |= \sigma$.  Define $f : X -> \Mod_\Gamma(\sigma)$ by
\begin{align*}
R^{f(x)}(\gamma_1, \dotsc, \gamma_n) \iff R^\#A(\gamma_1^{-1} \cdot x, \dotsc, \gamma_n^{-1} \cdot x).
\end{align*}
Then $f$ is $\Gamma$-equivariant, so since $\Gamma \curvearrowright \Mod_\Gamma(\sigma)$ has finite stabilizers, $f$ is finite-to-one on every $E$-class.  Thus $f$ is a smooth homomorphism, and so since $E_\Gamma^{\Mod_\Gamma(\sigma)}$ is smooth, so is $E$.

$\Longrightarrow$: First, suppose $E_\Gamma^{\Mod_\Gamma(\sigma)}$ is not smooth.  Let $\nu$ be an ergodic non-atomic invariant $\sigma$-finite measure on $E_\Gamma^{\Mod_\Gamma(\sigma)}$.  Consider the free part $Y \subseteq 2^\Gamma$ of the shift action of $\Gamma$ on $2^\Gamma$, with orbit equivalence $F = F(\Gamma, 2)$.  The usual product measure $\rho$ on $2^\Gamma$ concentrates on $Y$, and is invariant and mixing with respect to the action of $\Gamma$ on $Y$ (see \cite[3.1]{KM}).  Then consider the product action of $\Gamma$ on $Y \times \Mod_\Gamma(\sigma)$, which is free since $\Gamma$ acts freely on $Y$.  By \cite[2.3, 2.5]{SW}, this product action admits $\rho \times \nu$ as an ergodic non-atomic invariant $\sigma$-finite measure.  Thus $E_\Gamma^{Y \times \Mod_\Gamma(\sigma)}$ is not smooth.

Observe that $E_\Gamma^{Y \times \Mod_\Gamma(\sigma)}$ is the skew product $F \ltimes \Mod_\Gamma(\sigma)$ with respect to the cocycle $\alpha : F -> \Gamma$ associated to the free action of $\Gamma$ on $Y$; and that $\alpha$, when regarded as a cocycle $F -> S_\Gamma$, is induced, in the sense of \cref{rmk:skew-product}, by $T : Y -> Y^\Gamma$ where $T(y)(\gamma) := \gamma^{-1} \cdot y$.
So (as in the proof of \cref{thm:esigma}) $E_\Gamma^{Y \times \Mod_\Gamma(\sigma)}$ is $\sigma$-structurable, hence witnesses that $\sigma \otimes \sigma_\Gamma \not=>^* \sigma_{sm}$.

Now, suppose that the stabilizer $\Gamma_\#A$ of some $\#A \in \Mod_\Gamma(\sigma)$ is infinite.  Again, we let $Y \subseteq 2^\Gamma$ be the free part of the shift action, and consider the product action of $\Gamma$ on $Y \times [\#A]_\Gamma$, which is $\sigma$-structurable as above.  The action of $\Gamma_\#A$ on $Y \times [\#A]_\Gamma$ is not smooth because it contains the action on $Y \times \{\#A\} \cong Y$ which in turn contains the free part of the shift on $2^{\Gamma_\#A} \cong 2^{\Gamma_\#A} \times \{0\}^{\Gamma \setminus \Gamma_\#A} \subseteq 2^\Gamma$.  Since $E_{\Gamma_\#A}^{Y \times [\#A]_\Gamma} \subseteq E_\Gamma^{Y \times [\#A]_\Gamma}$, it follows that $E_\Gamma^{Y \times [\#A]_\Gamma}$ is not smooth, hence witnesses that $\sigma \otimes \sigma_\Gamma \not=>^* \sigma_{sm}$.
\end{proof}

So we have converted (iv) in \cref{thm:einftys-smooth} into a property of the action of $\Gamma$ on $\Mod_\Gamma(\sigma)$.  Our next step requires some preparation.

Let $L$ be a language and $\#A = (X, R^\#A)_{R \in L}$ be a countable $L$-structure.  We say that $\#A$ has the \defn{weak duplication property (WDP)} if for any finite sublanguage $L' \subseteq L$ and finite subset $F \subseteq X$, there is a finite subset $G \subseteq X$ disjoint from $F$ such that $(\#A|L')|F \cong (\#A|L')|G$ (here $\#A|L'$ denotes the reduct in the sublanguage $L'$).

\begin{remark}
If we define the \defn{duplication property (DP)} for $\#A$ by replacing $L'$ in the above by $L$, then clearly the DP is equivalent to the \defn{strong joint embedding property (SJEP)} for the age of $\#A$: for any $\#F, \#G \in \Age(\#A)$, there is $\#H \in \Age(\#A)$ and embeddings $\#F -> \#H$ and $\#G -> \#H$ with disjoint images.  (Recall that $\Age(\#A)$ is the class of finite $L$-structures embeddable in $\#A$.)
\end{remark}

For a countable group $\Gamma$ acting continuously on a topological space $X$, we say that a point $x \in X$ is \defn{recurrent} if $x$ is not isolated in the orbit $\Gamma \cdot x$ (with the subspace topology).  When $X$ is a Polish space, a basic fact is that $E_\Gamma^X$ is smooth iff it does not have a recurrent point; see e.g., \cite[22.3]{Kgaega}.

Thus far, we have only regarded the space $\Mod_X(L)$ of $L$-structures on a countable set $X$ as a standard Borel space.  Below we will also need to consider the topological structure on $\Mod_X(L)$.  See e.g., \cite[16.C]{Kcdst}.  In particular, we will use the system of basic clopen sets consisting of
\begin{align*}
N_\#F := \{\#A \in \Mod_X(L) \mid (\#A|L')|F = \#F\}
\end{align*}
where $L' \subseteq L$ is a finite sublanguage and $\#F = (F, R^\#F)_{R \in L'}$ is an $L'$-structure on a finite nonempty subset $F \subseteq X$.

The next lemma, which translates between the dynamics of $\Mod_\Gamma(\sigma)$ and a model-theoretic property of $\sigma$, is the heart of the proof of (iv)$\implies$(i) in \cref{thm:einftys-smooth}:

\begin{lemma}
\label{lm:modgamma-recurrent}
Let $\Gamma$ be a countably infinite group.  Let $L$ be a language, and let $\sigma$ be an $L_{\omega_1\omega}$-sentence such that $\Mod_\Gamma(\sigma) \subseteq \Mod_\Gamma(L)$ is a $G_\delta$ subspace.  Suppose there is a countable model $\#A |= \sigma$ with the WDP, such that the interpretation $R_0^\#A$ of some $R_0 \in L$ is not definable (without parameters) from equality.  Then the action of $\Gamma$ on $\Mod_\Gamma(\sigma)$ has a recurrent point, thus $E_\Gamma^{\Mod_\Gamma(\sigma)}$ is not smooth.
\end{lemma}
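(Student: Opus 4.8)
The plan is to exhibit an explicit recurrent point of the $\Gamma$-space $\Mod_\Gamma(\sigma)$ and then invoke the quoted fact that a continuous action of a countable group on a Polish space is smooth iff it has no recurrent point. Since $\Mod_\Gamma(\sigma)$ is a $G_\delta$ subset of the compact metrizable space $\Mod_\Gamma(L)$ (a product of copies of $2$), it is itself a Polish $\Gamma$-space, so this fact applies and the conclusion follows once a recurrent point is produced. Note first that the WDP forces the universe $X$ of $\#A$ to be infinite (taking $F = X$ would otherwise require a disjoint isomorphic copy inside a finite set), hence countably infinite; so \emph{any} bijection $b \colon \Gamma \to X$ yields a point $\#A^* := b^{-1}(\#A) \in \Mod_\Gamma(\sigma)$, which is isomorphic to $\#A$ and thus automatically a model of $\sigma$. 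The whole problem therefore reduces to choosing $b$ so that $\#A^*$ is recurrent, i.e.\ not isolated in its $\Gamma$-orbit.

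Unwinding the logic action, $\#A^*$ is recurrent exactly when, for every finite sublanguage $L' \subseteq L$ and finite $F \subseteq \Gamma$, there is $\gamma \ne 1$ with $\gamma \cdot \#A^* \ne \#A^*$ such that left translation by $\gamma^{-1}$ carries the $L'$-structure on $F$ isomorphically onto that on $\gamma^{-1}F$ — that is, the finite pattern of $\#A^*$ on the window $(L',F)$ recurs as a $\Gamma$-translate elsewhere in $\#A^*$. I would build such a $b$ by a back-and-forth recursion, as an increasing union of finite partial bijections. Fix exhaustions $L_0 \subseteq L_1 \subseteq \dotsb$ of $L$ by finite sublanguages and $F_0 \subseteq F_1 \subseteq \dotsb$ of $\Gamma$ by finite sets. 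At each stage, with a finite piece of $b$ defined, I would (a) extend $b$ to absorb the next element of $\Gamma$ into its domain and the next element of $X$ into its range, keeping bijectivity; and (b) \emph{install a recurrence}: letting $\#F := \#A|b(F_n)$ reducted to $L_n$, the WDP applied to the finite sublanguage $L_n$ supplies a disjoint $L_n$-isomorphic copy $\#G$ of $\#F$ inside $\#A$, disjoint from the current finite range; choosing $\gamma_n \ne 1$ so far out that $\gamma_n^{-1}F_n$ avoids the current finite domain, I define $b$ on $\gamma_n^{-1}F_n$ so that $\gamma_n^{-1}\delta \mapsto \theta(b(\delta))$, where $\theta \colon \#F \cong \#G$. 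A one-line check then gives $R^{\gamma_n \cdot \#A^*}(\-\delta) \iff R^{\#A^*}(\-\delta)$ for $R \in L_n$ and $\-\delta \in F_n$, so $\gamma_n \cdot \#A^*$ agrees with $\#A^*$ on $(L_n, F_n)$; since every fixed window eventually lies inside some $(L_n, F_n)$, the $\gamma_n \cdot \#A^*$ converge to $\#A^*$.

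It remains to guarantee $\gamma_n \cdot \#A^* \ne \#A^*$, so that these are genuinely distinct orbit points witnessing non-isolation; this is the sole use of the non-definability hypothesis. Because $R_0^\#A$ is not defined from equality, there are tuples $\-a, \-a'$ in $\#A$ of the same equality type (same pattern of coincidences among coordinates) with $\-a \in R_0^\#A$ and $\-a' \notin R_0^\#A$. At a later stage I would pick a fresh tuple of indices $\-\epsilon$ in $\Gamma$ together with its translate $\gamma_n^{-1}\-\epsilon$ — arranged disjoint from everything defined so far, the shared equality type ensuring the assignment remains injective — and set $b$ to send $\-\epsilon$ onto $\-a'$ and $\gamma_n^{-1}\-\epsilon$ onto $\-a$. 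Then $R_0^{\#A^*}(\gamma_n^{-1}\-\epsilon) = 1 \ne 0 = R_0^{\#A^*}(\-\epsilon)$, i.e.\ $R_0^{\gamma_n \cdot \#A^*}(\-\epsilon) \ne R_0^{\#A^*}(\-\epsilon)$, so $\gamma_n \cdot \#A^* \ne \#A^*$. Interleaving (a), (b), and this symmetry-breaking across all stages produces a bijection $b$ for which $\#A^*$ is recurrent, which completes the argument.

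The main obstacle is the bookkeeping in this back-and-forth: one must interleave the bijectivity demands, the recurrence installations (one per window, using the sublanguage form of the WDP so that only $L_n$-agreement is required — precisely why the \emph{weak} duplication property, rather than full duplication, suffices), and the symmetry-breaking steps, while keeping all partial maps injective and their supports disjoint. Everything else — compactness and Polishness of the ambient spaces, the reduction to finding a recurrent point, and the final appeal to ``smooth iff no recurrent point'' — is routine.
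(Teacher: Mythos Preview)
Your approach is correct and genuinely different from the paper's. The paper does \emph{not} build a single explicit recurrent point by back-and-forth; instead it proves a local density statement --- for each basic clopen $N_\#F$ meeting $\Mod_\Gamma(\sigma_\#A)$, it constructs one bijection $f:\Gamma\to X$ and one $\gamma$ with $f^{-1}(\#A)\in N_\#F$ and $\gamma\cdot f^{-1}(\#A)\in N_\#F\setminus\{f^{-1}(\#A)\}$ --- and then invokes Baire category in the Polish space $\overline{\Mod_\Gamma(\sigma_\#A)}\subseteq\Mod_\Gamma(\sigma)$ to conclude that recurrent points are comeager. Your construction is more explicit (the recurrent point is actually isomorphic to $\#A$), at the price of heavier bookkeeping: you must keep all the infinitely many recurrence and symmetry-breaking commitments consistent simultaneously, whereas the paper handles one window at a time and lets Baire category glue things together.

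One point you gloss over: in the symmetry-breaking step you need, at \emph{each} stage $n$, tuples $\-a,\-a'$ (of the same equality type, one in $R_0^\#A$ and one not) that are disjoint from the current finite range of $b$ and from each other. The non-definability hypothesis only hands you one such pair; to relocate it away from a growing finite set you must apply WDP again (with $L'=\{R_0\}$) exactly as the paper does. This is easily absorbed into your bookkeeping, but it is worth saying explicitly, since otherwise the symmetry-breaking step only works once.
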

\begin{proof}
We claim that it suffices to show that
\begin{enumerate}
\item[($*$)]  every basic clopen set $N_\#F \subseteq \Mod_\Gamma(L)$ containing some isomorphic copy of $\#A$ also contains two distinct isomorphic copies of $\#A$ from the same $\Gamma$-orbit, i.e., there is $\#B \in N_\#F$ and $\gamma \in \Gamma$ such that $\#B \cong \#A$ and $\gamma \cdot \#B \ne \#B$.
\end{enumerate}

Suppose this has been shown; we complete the proof.  Note that since $\#A$ has WDP, $\#A$ must be infinite.  Let $\-{\Mod_\Gamma(\sigma_\#A)}$ denote the closure in $\Mod_\Gamma(\sigma)$ of $\Mod_\Gamma(\sigma_\#A)$ (where $\sigma_\#A$ is the Scott sentence of $\#A$).  Since $\Mod_\Gamma(\sigma) \subseteq \Mod_\Gamma(L)$ is $G_\delta$, $\-{\Mod_\Gamma(\sigma_\#A)}$ is a Polish space, which is nonempty because it contains an isomorphic copy of $\#A$.  For each basic clopen set $N_\#F \subseteq \Mod_\Gamma(L)$, the set of $\#B \in \-{\Mod_\Gamma(\sigma_\#A)}$ such that
\begin{align*}
\#B \in N_\#F  \implies  \exists \gamma \in \Gamma\, (\#B \ne \gamma \cdot \#B \in N_\#F)  \tag{$**$}
\end{align*}
is clearly open; and by ($*$), it is also dense.  Thus the set of recurrent points in $\-{\Mod_\Gamma(\sigma_\#A)}$, i.e., the set of $\#B \in \-{\Mod_\Gamma(\sigma_\#A)}$ for which ($**$) holds for \emph{every} $N_\#F$, is comeager.

So it remains to prove ($*$).  Let $\#F$ be such that $N_\#F$ contains an isomorphic copy of $\#A$.  Let $\#A = (X, R^\#A)_{R \in L}$, and let $\#F = (F, R^\#F)_{R \in L'}$ where $F \subseteq \Gamma$ is finite nonempty and $L' \subseteq L$ is finite.  We may assume $R_0 \in L'$.

Since $N_\#F$ contains an isomorphic copy of $\#A$, there is a map $f : F -> X$ which is an embedding $\#F -> \#A$.  We will extend $f$ to a bijection $\Gamma -> X$, and then define $\#B := f^{-1}(\#A)$, thus ensuring that $\#B \in N_\#F$; we need to choose $f$ appropriately so that there is $\gamma \in \Gamma$ with $\#B \ne \gamma \cdot \#B \in N_\#F$.

Put $G := f(F) \subseteq X$.  By WDP, there is a $G' \subseteq X$ disjoint from $G$ such that $(\#A|L')|G \cong (\#A|L')|G'$, say via $g : G -> G'$.  By the hypothesis that $R_0^\#A$ is not definable from equality, there are $\-x = (x_1, \dotsc, x_n) \in X^n$ and $\-x' = (x_1', \dotsc, x_n') \in X^n$, where $n$ is the arity of $R_0$, such that $\-x \in R_0^\#A$, $\-x' \not\in R_0^\#A$, and $\-x, \-x'$ have the same equality type, i.e., we have a bijection $\{x_1, \dotsc, x_n\} -> \{x_1', \dotsc, x_n'\}$ sending $x_i$ to $x_i'$.  Again by WDP, we may find $\-x, \-x'$ disjoint from $G$, $G'$, and each other.

Now pick $\-\delta = (\delta_1, \dotsc, \delta_n) \in \Gamma^n$ disjoint from $F$ and with the same equality type as $\-x$, and pick $\gamma \in \Gamma$ such that $\gamma^{-1} F$ and $\gamma^{-1} \-\delta$ are disjoint from $F$ and $\-\delta$.  Extend $f : F -> X$ to a bijection $f : \Gamma -> X$ such that
\begin{align*}
f|\gamma^{-1} F = g \circ (f|F) \circ \gamma : \gamma^{-1} F &-> G', &
f(\-\delta) &= \-x, &
f(\gamma^{-1} \-\delta) &= \-x'.
\end{align*}
Then putting $\#B := f^{-1}(\#A)$, it is easily verified that $(\gamma \cdot \#B|L')|F = \#F$, i.e., $\gamma \cdot \#B \in N_\#F$; but
$R_0^{\gamma \cdot \#B}(\-\delta) \iff R_0^\#B(\gamma^{-1} \-\delta) \iff R_0^\#A(\-x') \iff \neg R_0^\#A(\-x) \iff \neg R_0^\#B(\-\delta)$, so $\gamma \cdot \#B \ne \#B$.
\end{proof}

\begin{corollary}
\label{lm:einftys-smooth-wdp}
Let $\Gamma$ be a countably infinite group.  Let $L$ be a language, and let $\sigma$ be an $L_{\omega_1\omega}$-sentence such that $\Mod_\Gamma(\sigma) \subseteq \Mod_\Gamma(L)$ is a $G_\delta$ subspace.  If $\sigma \otimes \sigma_\Gamma =>^* \sigma_{sm}$, then no countable model of $\sigma$ has the WDP.
\end{corollary}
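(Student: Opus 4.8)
The plan is to prove the contrapositive: assuming a countable model $\#A \models \sigma$ with the WDP, I will exhibit a $(\sigma \otimes \sigma_\Gamma)$-structurable non-smooth equivalence relation, i.e.\ show $\sigma \otimes \sigma_\Gamma \not=>^* \sigma_{sm}$. By \cref{lm:modgamma-smooth}, it suffices to arrange that \emph{either} $E_\Gamma^{\Mod_\Gamma(\sigma)}$ is non-smooth \emph{or} the action of $\Gamma$ on $\Mod_\Gamma(\sigma)$ has some infinite stabilizer, since the failure of either conjunct in \cref{lm:modgamma-smooth} already yields $\not=>^*$. First I would record that the WDP forces $\#A$ to be infinite: if the universe $X$ were finite, taking $F = X$ (and any finite $L'$) would demand a nonempty $G \subseteq X$ disjoint from $X$, which is impossible. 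Thus $\#A$ is countably infinite, and in particular we may present it on the universe $\Gamma$.

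The argument then splits according to whether $\#A$ meets the hypotheses of \cref{lm:modgamma-recurrent}. In the first case, some interpretation $R_0^\#A$ is \emph{not} definable without parameters from equality. Then \cref{lm:modgamma-recurrent} applies verbatim—its standing hypothesis that $\Mod_\Gamma(\sigma) \subseteq \Mod_\Gamma(L)$ be $G_\delta$ is exactly the one assumed in the present statement—and produces a recurrent point, whence $E_\Gamma^{\Mod_\Gamma(\sigma)}$ is not smooth. By \cref{lm:modgamma-smooth} this gives $\sigma \otimes \sigma_\Gamma \not=>^* \sigma_{sm}$.

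In the remaining case every $R^\#A$ is definable from equality, and here \cref{lm:modgamma-recurrent} is unavailable, so a separate observation is needed. The key point is that when all relations are equality-definable, the isomorphic copy of $\#A$ with universe $\Gamma$ is \emph{unique}: for any bijection $f : \Gamma \to X$, the pullback $f^{-1}(\#A)$ interprets each $R$ using only the equality type of the input tuple, hence is independent of the choice of $f$. Call this copy $\#A_0 \in \Mod_\Gamma(\sigma)$. By the same reasoning $\#A_0$ is fixed by the entire symmetric group $S_\Gamma$, and in particular by the translation action of $\Gamma$, so its stabilizer is $\Gamma_{\#A_0} = \Gamma$, which is infinite. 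Invoking the infinite-stabilizer half of \cref{lm:modgamma-smooth} once more yields $\sigma \otimes \sigma_\Gamma \not=>^* \sigma_{sm}$.

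In either case we contradict $\sigma \otimes \sigma_\Gamma =>^* \sigma_{sm}$, which completes the proof. The step I expect to be the main obstacle—or at least the one that is easy to overlook—is exactly this case analysis: \cref{lm:modgamma-recurrent} carries the hidden hypothesis of a non-equality-definable relation, so the degenerate situation in which $\#A$ is essentially a pure set (with every relation a Boolean combination of equalities) escapes the recurrence argument and must instead be dispatched through the direct stabilizer computation above.
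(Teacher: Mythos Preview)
Your proof is correct and follows essentially the same route as the paper: both argue the contrapositive, note that WDP forces $\#A$ to be infinite, and split on whether some $R_0^{\#A}$ fails to be equality-definable, invoking \cref{lm:modgamma-recurrent} together with \cref{lm:modgamma-smooth} in the nontrivial case. The only difference is in the degenerate case where every relation is equality-definable: the paper simply observes that then any aperiodic countable Borel equivalence relation (in particular $F(\Gamma,2)$) is $\#A$-structurable, directly witnessing $\sigma \otimes \sigma_\Gamma \not\Rightarrow^* \sigma_{sm}$, whereas you instead note that the unique copy of $\#A$ on $\Gamma$ has full (hence infinite) stabilizer and appeal to the finite-stabilizer clause of \cref{lm:modgamma-smooth}---a minor but equally valid variation.
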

\begin{proof}
Suppose a countable (infinite) $\#A |= \sigma$ has the WDP.  If for some $R_0 \in L$, the interpretation $R_0^\#A$ is not definable (without parameters) from equality, then $\sigma \otimes \sigma_\Gamma \not=>^* \sigma_{sm}$ by \cref{lm:modgamma-smooth} and \cref{lm:modgamma-recurrent}.  Otherwise, clearly any aperiodic countable Borel equivalence relation is $\sigma_\#A$-structurable, hence $\sigma$-structurable; taking $F(\Gamma, 2)$ then yields that $\sigma \otimes \sigma_\Gamma \not=>^* \sigma_{sm}$.
\end{proof}

Working towards (i) in \cref{thm:einftys-smooth}, which asserts the existence of a formula with certain properties, we now encode the WDP into formulas, using the following combinatorial notion.

Let $X$ be a set and $1 \le n \in \#N$.  An \defn{$n$-ary intersecting family} on $X$ is a nonempty collection $\@F$ of subsets of $X$ of size $n$ such that every pair $A, B \in \@F$ has $A \cap B \ne \emptyset$.

\begin{lemma}
\label{lm:wdp-nif}
Let $L$ be a language.  There are $L_{\omega_1\omega}$-formulas $\phi_n(x_0, \dotsc, x_{n-1})$ for each $1 \le n \in \#N$, such that for any countable $L$-structure $\#A = (X, R^\#A)_{R \in L}$ without the WDP, there is some $n$ such that
\begin{align*}
\{\{x_0, \dotsc, x_{n-1}\} \mid \phi_n^\#A(x_0, \dotsc, x_{n-1})\}
\end{align*}
is an $n$-ary intersecting family on $X$.
\end{lemma}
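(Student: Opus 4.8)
The plan is to build $\phi_n$ so that, for a fixed finite sublanguage and a fixed isomorphism type of finite substructure, it picks out exactly those copies of that type that admit \emph{no} disjoint copy; the point is that such a family is automatically intersecting. First I would fix notation: for a finite sublanguage $L' \subseteq L$ and a finite $L'$-structure $\#S$ with universe $\{0, \dots, n-1\}$, let $\mathrm{cop}_{\#S, L'}(\bar x)$ be the finitary quantifier-free formula asserting that $x_0, \dots, x_{n-1}$ are distinct and $i \mapsto x_i$ is an isomorphism of $\#S$ onto the induced structure $(\#A|L')|\{x_0, \dots, x_{n-1}\}$ (a finite conjunction of literals, since $L'$ is finite). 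Then set
\[\psi_{\#S, L'}(\bar x) := \mathrm{cop}_{\#S, L'}(\bar x) \wedge \neg \exists \bar y\, \Big( \bigwedge_{i,j} x_i \ne y_j \wedge \mathrm{cop}_{\#S, L'}(\bar y) \Big),\]
so that $\psi_{\#S, L'}^\#A$ consists of the copies of $\#S$ having no disjoint copy. The key observation — the heart of the argument — is that for each fixed $(\#S, L')$ the family $\@F_{\#S, L'} := \{\{x_0, \dots, x_{n-1}\} \mid \psi_{\#S, L'}^\#A(\bar x)\}$ is always intersecting (though possibly empty): if $G_1, G_2 \in \@F_{\#S, L'}$ were disjoint, then $G_2$ would be a copy of $\#S$ disjoint from $G_1$, contradicting $G_1 \in \@F_{\#S, L'}$.

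Next I would assemble the uniform $\phi_n$ from the $\psi_{\#S, L'}$ by a canonical-choice device. This is needed because a naive disjunction over all types $\#S$ would union together many intersecting families and thereby destroy the intersecting property — for instance if $\#A$ contains two rigid but disjoint configurations. Since $L$ is countable, the set of pairs $(\#S, L')$ with $|\#S| = n$ is countable; enumerate it as $\mu_0, \mu_1, \dots$ and let $\mathrm{NE}_m := \exists \bar x\, \psi_{\mu_m}(\bar x)$ be the sentence expressing that $\@F_{\mu_m} \ne \emptyset$. Put
\[\phi_n(\bar x) := \bigvee_m \Big( \psi_{\mu_m}(\bar x) \wedge \mathrm{NE}_m \wedge \bigwedge_{l < m} \neg \mathrm{NE}_l \Big),\]
a legitimate $L_{\omega_1\omega}$-formula (only countable connectives) depending only on $L$ and $n$. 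In any $\#A$, if some $\mathrm{NE}_m$ holds then $\phi_n^\#A = \@F_{\mu_{m^*}}$ for the least such index $m^*$, which is a nonempty intersecting family by the observation above; and if no $\mathrm{NE}_m$ holds, then $\phi_n^\#A = \emptyset$.

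Finally I would invoke the failure of the WDP to produce a witnessing $n$. If $\#A$ lacks the WDP, there are a finite $L' \subseteq L$ and a finite $F \subseteq X$, say $|F| = n$, such that the induced structure $(\#A|L')|F$ has no isomorphic copy in $\#A|L'$ disjoint from $F$. Letting $\#S$ be the isomorphism type of this induced structure, $F$ itself witnesses $F \in \@F_{\#S, L'}$, so $\#A \models \mathrm{NE}_m$ for the index $m$ coding $(\#S, L')$; hence for this value of $n$ the set $\phi_n^\#A$ is a nonempty intersecting family, as required.

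I expect the only genuinely subtle point to be the one already flagged: intersectingness forces us to commit to a single finite type rather than range over all of them, which is what the least-index selection in the definition of $\phi_n$ accomplishes. I would emphasize that, pleasantly, the finitary (quantifier-free, finite-sublanguage) notion of copy already suffices here — nonemptiness of $\@F_{\#S,L'}$ comes directly from the definition of WDP failure, while intersectingness is the cheap self-contradiction argument — so that no appeal to Scott analysis, automorphism orbits, or infinitary types is needed despite $\phi_n$ being phrased in $L_{\omega_1\omega}$.
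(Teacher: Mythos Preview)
Your proposal is correct and follows essentially the same approach as the paper: both define $\psi$-formulas picking out copies of a fixed finite type (in a fixed finite sublanguage) admitting no disjoint copy, observe that such families are automatically intersecting, and then use a least-index selection device to commit to a single type. The only cosmetic difference is that the paper enumerates all triples $(L', n, \#F)$ globally and selects the least index across all arities (so at most one $\phi_n$ is nonempty in any given $\#A$), whereas you enumerate and select separately within each arity $n$; either variant suffices for the stated lemma.
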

\begin{proof}
Let $((L_k, n_k, \#F_k))_k$ enumerate all countably many triples where $L_k \subseteq L$ is a finite sublanguage, $1 \le n_k \in \#N$, and $\#F_k \in \Mod_{n_k}(L_k)$ is an $L_k$-structure with universe $n_k$ (= $\{0, \dotsc, n_k-1\}$).  For each $k$, let $\psi_k(x_0, \dotsc, x_{n_k-1})$ be an $L_k$-formula asserting that $x_0, \dotsc, x_{n_k-1}$ (are pairwise distinct and) form an $L_k$-substructure isomorphic to $\#F_k$, which is not disjoint from any other such substructure.  Thus $\#A$ does not have the WDP iff some $\psi_k$ holds for some tuple in $\#A$; and in that case, the collection of all tuples (regarded as sets) for which $\psi_k$ holds will form an $n_k$-ary intersecting family.  Finally put
\begin{align*}
\phi_n(\-x) := \bigvee_{n_k = n} (\psi_k(\-x) \wedge \neg \bigvee_{k' < k} \exists \-y\, \psi_{k'}(\-y)),
\end{align*}
so that $\phi_{n_k}$ is equivalent to $\psi_k$ for the least $k$ which holds for some tuple.
\end{proof}

Recall that (i) in \cref{thm:einftys-smooth} asserts the existence of a single formula defining a finite nonempty set.  The following lemma, due to Clemens-Conley-Miller \cite[4.3]{CCM}, gives a way of uniformly defining a finite nonempty set from an intersecting family.  For the convenience of the reader, we include its proof here.

\begin{lemma}[Clemens-Conley-Miller]
\label{thm:nif-fin}
Let $\@F$ be an $n$-ary intersecting family on $X$.  For $1 \le m < n$, define
\begin{align*}
\@F^{(m)} := \{A \subseteq X \mid |A| = m \AND |\{B \in \@F \mid A \subseteq B\}| \ge \aleph_0\}.
\end{align*}
Then there exist $m_k < m_{k-1} < \dotsb < m_1 < n$ such that $\@F^{(m_1)}, \@F^{(m_1)(m_2)}, \dotsc$ are (respectively $m_1$-ary, $m_2$-ary, etc.) intersecting families, and $\@F^{(m_1)\dotsm(m_k)}$ is finite.
\end{lemma}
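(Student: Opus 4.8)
The plan is to reduce the arity one derived family at a time, always contracting along the \emph{largest} available core size. Concretely, given a $p$-ary intersecting family $\@G$ (starting with $\@G = \@F$, $p = n$), I would proceed as follows: if $\@G$ is finite, stop; otherwise let $m^\ast$ be the largest $m$ with $1 \le m < p$ and $\@G^{(m)} \ne \emptyset$, set $\@G' := \@G^{(m^\ast)}$, and repeat with $\@G'$ in place of $\@G$. Recording the successive values $m_1 > m_2 > \dotsb$ of $m^\ast$ produces exactly the chain $\@F^{(m_1)}, \@F^{(m_1)(m_2)}, \dotsc$ demanded by the statement, provided two things hold at each stage: that $m^\ast$ exists and $\@G'$ is again a \emph{nonempty intersecting} family, and that the arity strictly drops, so that the process halts at a finite family.

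Several of these points are routine, and I would dispatch them first. The arity strictly decreases because $m^\ast < p$ by definition, so the process terminates after at most $n - 1$ steps. A $1$-ary intersecting family consists of pairwise-meeting singletons, hence is a single singleton and in particular finite; so the recursion cannot run below arity $1$, and indeed any infinite intersecting family has arity $\ge 2$. Existence of $m^\ast$ for infinite $\@G$ follows from pigeonhole: fixing one member $A_0 \in \@G$ of size $p$, every member of $\@G$ meets $A_0$, so some point of $A_0$ lies in infinitely many members, giving $\@G^{(1)} \ne \emptyset$. Nonemptiness of $\@G'$ is built into the choice of $m^\ast$, and since we only recurse on infinite (hence nonempty) families, the terminal family is finite and nonempty.

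The substantive step, and the one I expect to be the main obstacle, is showing that $\@G^{(m^\ast)}$ is itself intersecting (this is precisely where maximality of $m^\ast$ is needed; it can fail for smaller $m$). The key sublemma is that for $A \in \@G^{(m^\ast)}$, writing $\@B_A := \{B \in \@G \mid A \subseteq B\}$ (an infinite set), every point $x \notin A$ lies in only \emph{finitely} many members of $\@B_A$: indeed $B \in \@B_A$ with $x \in B$ means $A \cup \{x\} \subseteq B$, and $A \cup \{x\}$ has size $m^\ast + 1$, so by maximality of $m^\ast$ it lies in only finitely many members of $\@G$ (when $m^\ast + 1 = p$ this is immediate, as a $p$-set is contained in at most one member). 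Consequently, for any finite $S \subseteq X$, all but finitely many $B \in \@B_A$ satisfy $(B \setminus A) \cap S = \emptyset$. Granting this, suppose toward a contradiction that $A, A' \in \@G^{(m^\ast)}$ with $A \cap A' = \emptyset$. Applying the sublemma to $A$ with $S = A'$ yields some $B \in \@B_A$ with $B \cap A' = \emptyset$; applying it to $A'$ with $S = B$ then yields some $B' \in \@B_{A'}$ with $B' \cap B = \emptyset$. But then $B, B' \in \@G$ are disjoint, contradicting that $\@G$ is intersecting. Hence $\@G^{(m^\ast)}$ is an $m^\ast$-ary intersecting family, completing the inductive step, and chaining these steps produces the finite nonempty family $\@F^{(m_1)\dotsm(m_k)}$ as required.
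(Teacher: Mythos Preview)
Your proposal is correct and follows essentially the same approach as the paper's proof: both take the largest $m$ for which $\@F^{(m)}$ is nonempty, use maximality to show that any point outside a member $A \in \@F^{(m)}$ lies in only finitely many supersets of $A$ from $\@F$, and then produce $C \supseteq A$, $D \supseteq B$ with $A \cap B = C \cap D \ne \emptyset$ (you phrase this last step contrapositively, but it is the same computation). Your explicit pigeonhole argument for the existence of $m^\ast$ is a detail the paper leaves implicit.
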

\begin{proof}
It suffices to show that if $\@F$ is infinite, then there is some $1 \le m < n$ such that $\@F^{(m)}$ is an $m$-ary intersecting family.  Indeed, having shown this, we may find the desired $m_1, m_2, \dotsc$ inductively; the process must terminate since a $1$-ary intersecting family is necessarily a singleton.

So assume $\@F$ is infinite, and let $m < n$ be greatest so that $\@F^{(m)}$ is nonempty.  Let $A, B \in \@F^{(m)}$.  For each $x \in B \setminus A$, by our choice of $m$, there are only finitely many $C \in \@F$ such that $A \cup \{x\} \subseteq C$.  Thus by definition of $\@F^{(m)}$, there is $C \in \@F$ such that $A \subseteq C$ and $(B \setminus A) \cap C = \emptyset$.  Similarly, there is $D \in \@F$ such that $B \subseteq D$ and $(C \setminus B) \cap D = \emptyset$.  Then $A \cap B = C \cap B = C \cap D \ne \emptyset$, as desired.
\end{proof}

\begin{corollary}
\label{lm:wdp-fin}
Let $L$ be a language.  There is an $L_{\omega_1\omega}$-formula $\phi(x)$ such that for any countable $L$-structure $\#A$ without the WDP, $\phi^\#A$ is a finite nonempty subset.
\end{corollary}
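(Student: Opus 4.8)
The plan is to combine the two preceding lemmas, \cref{lm:wdp-nif} and \cref{thm:nif-fin}, into a single formula. The idea is that \cref{lm:wdp-nif} produces, for a structure $\#A$ without the WDP, \emph{some} $n$ for which the $n$-tuples satisfying $\phi_n$ form an $n$-ary intersecting family $\@F_n$; and \cref{thm:nif-fin} shows how to shrink any such intersecting family down to a finite nonempty set by iterating the operation $\@F \mapsto \@F^{(m)}$. So the combined formula $\phi(x)$ should first locate the correct $n$, then carry out the finite-set extraction of \cref{thm:nif-fin}, all expressed in $L_{\omega_1\omega}$.

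The key steps I would carry out are as follows. First, fix the formulas $\phi_n(x_0,\dotsc,x_{n-1})$ from \cref{lm:wdp-nif}. For each $n$, I need to express the operation of \cref{thm:nif-fin} syntactically. The crucial observation is that the passage $\@F \mapsto \@F^{(m)}$, and the choice of the relevant $m_1 > m_2 > \dotsb > m_k$, are all definable in $L_{\omega_1\omega}$: the predicate ``$|\{B \in \@F \mid A \subseteq B\}| \ge \aleph_0$'' is expressible by an $L_{\omega_1\omega}$-formula (``there exist infinitely many extensions'' is a countable conjunction of ``there exist at least $N$ extensions''), and the quantifier ``there exist $m_1 < n$ such that $\@F^{(m_1)}$ is an $m_1$-ary intersecting family, and $\dotsc$'' ranges over a \emph{finite} set of possible decreasing sequences below $n$, hence is a finite disjunction. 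Thus for each fixed $n$ there is an $L_{\omega_1\omega}$-formula $\theta_n(x)$ which, whenever $\@F_n := \{\{x_0,\dotsc,x_{n-1}\} \mid \phi_n^\#A\}$ is an $n$-ary intersecting family, defines exactly the finite nonempty set $\@F_n^{(m_1)\dotsm(m_k)}$ guaranteed by \cref{thm:nif-fin} (unwound as a set of singletons, i.e.\ a unary predicate). Second, I would assemble these across all $n$: since \cref{lm:wdp-nif} only tells us that \emph{some} $n$ works, I take the least such $n$, so I set
\begin{align*}
\phi(x) := \bigvee_n \bigl(\theta_n(x) \wedge \chi_n \wedge \neg\bigvee_{n' < n} \chi_{n'}\bigr),
\end{align*}
where $\chi_n$ is an $L_{\omega_1\omega}$-sentence asserting that $\@F_n$ (the family cut out by $\phi_n$) is a nonempty $n$-ary intersecting family. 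Then for $\#A$ without the WDP, $\phi^\#A$ equals $\theta_n^\#A$ for the least good $n$, which is finite and nonempty by construction.

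The main obstacle I anticipate is the careful verification that every clause above is genuinely expressible in $L_{\omega_1\omega}$ with \emph{finitely many} free variables and no hidden dependence on the (a priori unbounded) size of the intersecting family. The operation $\@F \mapsto \@F^{(m)}$ shrinks the arity, so each intermediate family is defined by a formula in $m$ free variables for a specific $m < n$; the worry is that nesting these definitions (substituting the defining formula of $\@F^{(m_1)}$ into that of $\@F^{(m_1)(m_2)}$, and so on) could blow up, but since the chain $m_1 > m_2 > \dotsb$ has length at most $n$ and each step is a fixed $L_{\omega_1\omega}$-formula, the nesting terminates and stays within $L_{\omega_1\omega}$. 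The ``infinitely many extensions'' predicate is the one place where genuine infinitary disjunction is needed, and I would check that it interacts correctly with the finite-disjunction-over-decreasing-sequences so that the whole thing remains a bona fide $L_{\omega_1\omega}$-formula. Once definability is in hand, correctness is immediate from \cref{lm:wdp-nif,thm:nif-fin}.
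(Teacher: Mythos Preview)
Your proposal is correct and essentially the same as the paper's proof. Both encode the operation $\@F \mapsto \@F^{(m)}$ via an $L_{\omega_1\omega}$-formula $\psi^{(m)}$, observe that the decreasing chains $(m_1,\dotsc,m_k)$ below a given $n$ are finite in number, and then use a ``first successful case'' disjunction; the paper enumerates all tuples $(n,m_1,\dotsc,m_k)$ at once and selects the least index $l$ for which the resulting family is finite nonempty, whereas you first select the least $n$ and then disjunct over the finitely many chains below it, but this is only a cosmetic reorganization. One small point to tighten: your parenthetical ``unwound as a set of singletons'' is slightly misleading, since \cref{thm:nif-fin} does not guarantee $m_k = 1$; the final unary predicate should be described (as the paper does) as the union of the finitely many $m_k$-element sets in the terminal family $\@F_n^{(m_1)\dotsm(m_k)}$.
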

\begin{proof}
This follows from \cref{lm:wdp-nif} and \cref{thm:nif-fin}, by a straightforward encoding of the operation $\@F |-> \@F^{(m)}$ in $L_{\omega_1\omega}$.

In more detail, for each $L_{\omega_1\omega}$-formula $\psi(x_0, \dotsc, x_{n-1})$ and $m < n$, let $\psi^{(m)}(x_0, \dotsc, x_{m-1})$ be a formula asserting that $x_0, \dotsc, x_{m-1}$ are pairwise distinct and there are infinitely many extensions $(x_m, \dotsc, x_{n-1})$ such that $\psi(x_0, \dotsc, x_{n-1})$ holds, so that if $\psi$ defines (in the sense of \cref{lm:wdp-nif}) a family $\@F$ of subsets of size $n$, then $\psi^{(m)}$ defines $\@F^{(m)}$.  Let $\phi_n$ for $1 \le n \in \#N$ be given by \cref{lm:wdp-nif}.  For each finite tuple $t = (n, m_1, \dotsc, m_k)$ such that $n > m_1 > \dotsb > m_k \ge 1$, let $\tau_t$ be a sentence asserting that $\phi_n^{(m_1)\dotsm(m_k)}$ holds for at least one but only finitely many tuples.  Then letting $(t^l = (n^l, m_1^l, \dotsc, m_{k^l}^l))_{l \in \#N}$ enumerate all such tuples, the desired formula $\phi$ can be given by
\begin{align*}
\phi(x) = \bigvee_l \left( \tau_{t^l} \wedge \exists \-x \left(\phi_{n^l}^{(m_1^l)\dotsm(m_{k^l}^l)}(\-x) \wedge \bigvee_i (x = x_i)\right) \wedge \neg \bigvee_{l' < l} \tau_{t^{l'}} \right).
\end{align*}
By \cref{lm:wdp-nif,thm:nif-fin}, in any countable $L$-structure $\#A$ without the WDP, $\phi^\#A$ will be the union of the finitely many sets in some intersecting family.
\end{proof}

\begin{corollary}
\label{lm:einftys-smooth-gdelta}
Let $\Gamma$ be a countably infinite group.  Let $L$ be a language, and let $\sigma$ be an $L_{\omega_1\omega}$-sentence such that $\Mod_\Gamma(\sigma) \subseteq \Mod_\Gamma(L)$ is a $G_\delta$ subspace.  If $\sigma \otimes \sigma_\Gamma =>^* \sigma_{sm}$, then there is an $L_{\omega_1\omega}$-formula $\phi(x)$ which defines a finite nonempty subset in any countable model of $\sigma$.
\end{corollary}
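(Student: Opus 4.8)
The plan is to derive this corollary directly by chaining the two corollaries that immediately precede it, which together already carry all the content; there is essentially no new work to do beyond assembling them correctly.

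First I would invoke \cref{lm:einftys-smooth-wdp}. Its hypotheses are exactly those in force here: $\Mod_\Gamma(\sigma) \subseteq \Mod_\Gamma(L)$ is a $G_\delta$ subspace, and $\sigma \otimes \sigma_\Gamma =>^* \sigma_{sm}$. That corollary then yields the conclusion that \emph{no} countable model of $\sigma$ has the WDP. Next I would apply \cref{lm:wdp-fin}, which produces a single $L_{\omega_1\omega}$-formula $\phi(x)$ with the property that $\phi^\#A$ is a finite nonempty subset for \emph{every} countable $L$-structure $\#A$ lacking the WDP. Combining the two observations: every countable model of $\sigma$ is in particular a countable $L$-structure without the WDP, so the formula $\phi$ supplied by \cref{lm:wdp-fin} defines a finite nonempty subset in each such model. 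This is precisely the asserted conclusion.

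The only things worth checking carefully are bookkeeping rather than mathematics: that the $G_\delta$ hypothesis is stated identically to what \cref{lm:einftys-smooth-wdp} demands, and that the WDP is quantified over \emph{all} countable models in both corollaries, so that the single formula $\phi$ from \cref{lm:wdp-fin} applies uniformly across the whole class of countable models of $\sigma$. I do not expect any genuine obstacle at this step: the real difficulty lives in the lemmas I am permitted to assume. Specifically, \cref{lm:modgamma-smooth} and \cref{lm:modgamma-recurrent} do the dynamical heavy lifting (producing a recurrent point, hence non-smoothness of $E_\Gamma^{\Mod_\Gamma(\sigma)}$, from a model with the WDP carrying a relation not definable from equality), while the combinatorial \crefrange{lm:wdp-nif}{thm:nif-fin} extract, via intersecting families, a uniformly definable finite set from the failure of the WDP. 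This corollary is merely the packaging of those results into the form needed for the implication (iv)$\implies$(i) of \cref{thm:einftys-smooth} under the temporary $G_\delta$ assumption, so a two-line proof of exactly the shape above is all that is required.
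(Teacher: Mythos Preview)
Your proposal is correct and matches the paper's own proof exactly: the paper's proof is the single line ``By \cref{lm:einftys-smooth-wdp} and \cref{lm:wdp-fin},'' which is precisely the chaining you describe. Your additional remarks about checking the uniform quantification over all countable models are accurate and confirm that no gap arises in combining the two corollaries.
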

\begin{proof}
By \cref{lm:einftys-smooth-wdp} and \cref{lm:wdp-fin}.
\end{proof}

To complete the proof of \cref{thm:einftys-smooth}, we need to remove the assumption that $\Mod_\Gamma(\sigma) \subseteq \Mod_\Gamma(L)$ is $G_\delta$ from \cref{lm:einftys-smooth-gdelta}.  This can be done using the standard trick of \defn{Morleyization}, as described for example in \cite[Section~2.6]{Hod} for finitary first-order logic, or \cite[2.5]{AFP} for $L_{\omega_1\omega}$.  Given any language $L$ and $L_{\omega_1\omega}$-sentence $\sigma$, by adding relation symbols for each formula in a countable fragment of $L_{\omega_1\omega}$ containing the sentence $\sigma$, we obtain a new (countable) language $L'$ and an $L'_{\omega_1\omega}$-sentence $\sigma'$ such that
\begin{itemize}
\item  the $L$-reduct of every countable model of $\sigma'$ is a model of $\sigma$;
\item  every countable model of $\sigma$ has a unique expansion to a model of $\sigma'$;
\item  $\sigma'$ is (logically equivalent to a formula) of the form
\begin{align*}
\bigwedge_i \forall \-x\, \exists y\, \bigvee_j \phi_{i,j}(\-x, y),
\end{align*}
where each $\phi_{i,j}$ is a quantifier-free finitary $L'$-formula, whence $\Mod_\Gamma(\sigma') \subseteq \Mod_\Gamma(L')$ is $G_\delta$.
\end{itemize}
It follows that the conditions (i) and (iv) in \cref{thm:einftys-smooth} for $(L, \sigma)$ are equivalent to the same conditions for $(L', \sigma')$.  So \cref{lm:einftys-smooth-gdelta} holds also without the assumption that $\Mod_\Gamma(\sigma) \subseteq \Mod_\Gamma(L)$ is $G_\delta$, which completes the proof of \cref{thm:einftys-smooth}.

We conclude this section by pointing out the following analog of \cref{lm:modgamma-smooth}:

\begin{lemma}
\label{lm:modgamma-compress}
Let $\Gamma$ be a countably infinite group and $(L, \sigma)$ be a theory.  Then $\sigma \otimes \sigma_\Gamma =>^* \sigma_c$ iff $E_\Gamma^{\Mod_\Gamma(\sigma)}$ is compressible.
\end{lemma}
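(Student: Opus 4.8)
The plan is to mirror the structure of \cref{lm:modgamma-smooth}, but to replace smoothness everywhere by Nadkarni's theorem \cite{N}, which characterizes compressibility as the \emph{absence of an invariant probability measure}. The main point to appreciate in advance is that this should make the proof strictly simpler than its smoothness counterpart: invariance of a \emph{finite} measure transfers cleanly across $\Gamma$-equivariant maps in both directions, so there will be no need for the auxiliary finite-stabilizer hypothesis, nor for preserving ergodicity or non-atomicity of $\sigma$-finite measures under products. Throughout I use that $\@E_{\sigma \otimes \sigma_\Gamma} = \@E_\sigma \cap \@E_{\sigma_\Gamma}$, so that $\sigma \otimes \sigma_\Gamma =>^* \sigma_c$ says precisely that every $\sigma$-structurable equivalence relation generated by a free Borel action of $\Gamma$ is compressible.

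For the direction $\Longleftarrow$, assuming $E_\Gamma^{\Mod_\Gamma(\sigma)}$ is compressible, I would take an arbitrary $(X, E) \in \@E$ generated by a free Borel action of $\Gamma$ with a Borel $\sigma$-structure $\#A : E |= \sigma$, and reuse verbatim the $\Gamma$-equivariant Borel map $f : X -> \Mod_\Gamma(\sigma)$ from the proof of \cref{lm:modgamma-smooth}, defined by $R^{f(x)}(\gamma_1, \dotsc, \gamma_n) \iff R^\#A(\gamma_1^{-1} \cdot x, \dotsc, \gamma_n^{-1} \cdot x)$. The key step is the following: since $\Gamma$ generates $E$ freely, any $E$-invariant probability measure $\mu$ is $\Gamma$-invariant, and equivariance of $f$ gives $\gamma_*(f_*\mu) = f_*(\gamma_*\mu) = f_*\mu$, so $f_*\mu$ is a $\Gamma$-invariant (hence $E_\Gamma^{\Mod_\Gamma(\sigma)}$-invariant) probability measure, contradicting compressibility of $E_\Gamma^{\Mod_\Gamma(\sigma)}$ via Nadkarni. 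Thus $E$ admits no invariant probability measure, so $E$ is compressible; as $(X, E)$ was arbitrary, this yields $\sigma \otimes \sigma_\Gamma =>^* \sigma_c$. Note that no control on the fibers of $f$ is required, which is exactly why the stabilizer condition of \cref{lm:modgamma-smooth} disappears.

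For the contrapositive of $\Longrightarrow$, assuming $E_\Gamma^{\Mod_\Gamma(\sigma)}$ is \emph{not} compressible, Nadkarni supplies a $\Gamma$-invariant probability measure $\nu$ on $\Mod_\Gamma(\sigma)$ (here I would not even need ergodic decomposition). Exactly as in \cref{lm:modgamma-smooth} I would form the product action of $\Gamma$ on $Y \times \Mod_\Gamma(\sigma)$, where $Y \subseteq 2^\Gamma$ is the free part of the shift with its invariant product measure $\rho$; this product action is free because $\Gamma \curvearrowright Y$ is free, and $E_\Gamma^{Y \times \Mod_\Gamma(\sigma)}$ is $\sigma$-structurable since it is the skew product $F(\Gamma, 2) \ltimes \Mod_\Gamma(\sigma)$ carrying the canonical $\sigma$-structure (as in \cref{rmk:skew-product} and the proof of \cref{thm:esigma}). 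The simplification relative to the smooth case is that $\rho \times \nu$ is automatically a $\Gamma$-invariant probability measure on $Y \times \Mod_\Gamma(\sigma)$, so $E_\Gamma^{Y \times \Mod_\Gamma(\sigma)}$ is not compressible by Nadkarni; being free and $\sigma$-structurable, it witnesses $\sigma \otimes \sigma_\Gamma \not=>^* \sigma_c$.

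The steps requiring genuine verification are routine and already in hand: that $f_*\mu$ and $\rho \times \nu$ are honestly $\Gamma$-invariant probability measures, and that the relevant skew product is $\sigma$-structurable (imported unchanged from \cref{lm:modgamma-smooth}). I do not expect a real obstacle; the entire content is the observation that, through Nadkarni's reformulation, compressibility is detected by invariant probability measures, which both push forward and pull back along equivariant maps with no hypothesis on fibers, so the argument is a clean and slightly easier parallel of the smoothness lemma.
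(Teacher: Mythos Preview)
Your proposal is correct and matches the paper's own proof essentially verbatim: the paper explicitly says to rerun the $\Longrightarrow$ argument of \cref{lm:modgamma-smooth} with probability measures in place of non-atomic $\sigma$-finite ones, and to rerun the $\Longleftarrow$ argument by pushing forward an invariant probability measure along the same equivariant $f$. Your observation that the stabilizer condition and the ergodicity/mixing machinery from \cite{SW} become unnecessary is exactly the simplification the paper has in mind.
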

\begin{proof}
For $\Longrightarrow$, the proof is exactly the same as the first part of the proof of $\Longrightarrow$ in \cref{lm:modgamma-smooth}, but using probability measures instead of non-atomic $\sigma$-finite measures.  Similarly, for $\Longleftarrow$, let $(X, E)$ and $f : X -> \Mod_\Gamma(\sigma)$ be as in the proof of $\Longleftarrow$ in \cref{lm:modgamma-smooth}; if $E$ were not compressible, then it would have an invariant probability measure $\mu$, whence $f_*\mu$ would be an invariant probability measure on $E_\Gamma^{\Mod_\Gamma(\sigma)}$, contradicting compressibility of the latter.
\end{proof}

This has the following corollaries.  The first strengthens \cite[Section~6.1.10]{AFP}:

\begin{corollary}
Let $\@T_1$ denote the class of trees, and more generally, let $\@T_n$ denote the class of contractible $n$-dimensional simplicial complexes.  Then for each $n$, there is some countably infinite group $\Gamma$ such that $\Mod_\Gamma(\@T_n)$ admits no $\Gamma$-invariant measure (and thus no $S_\Gamma$-invariant measure).
\end{corollary}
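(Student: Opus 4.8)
The plan is to route the claim through \cref{lm:modgamma-compress} together with Nadkarni's theorem. Since $E_\Gamma^{\Mod_\Gamma(\@T_n)}$ is by construction generated by the $\Gamma$-action, an invariant measure for this equivalence relation is the same thing as a $\Gamma$-invariant measure on $\Mod_\Gamma(\@T_n)$; so by Nadkarni \cite{N}, $\Mod_\Gamma(\@T_n)$ carries no $\Gamma$-invariant probability measure exactly when $E_\Gamma^{\Mod_\Gamma(\@T_n)}$ is compressible, and by \cref{lm:modgamma-compress} the latter holds iff $\sigma_{\@T_n} \otimes \sigma_\Gamma =>^* \sigma_c$, where $\sigma_{\@T_n}$ axiomatizes $\@T_n$. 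Moreover, once no $\Gamma$-invariant probability measure exists, none can be $S_\Gamma$-invariant either, since $\Gamma \le S_\Gamma$. Thus for each $n$ it suffices to produce a single countably infinite $\Gamma$ with $\sigma_{\@T_n} \otimes \sigma_\Gamma =>^* \sigma_c$.

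Unwinding the tensor of theories (\cref{thm:tensor-thy}), this last statement asserts precisely that every $\@T_n$-structurable countable Borel equivalence relation generated by a free Borel action of $\Gamma$ is compressible. I would prove this by contradiction. If some such $E$ is not compressible, then by Nadkarni it admits an invariant probability measure, and passing to an ergodic component yields an ergodic invariant probability measure $\mu$. Then $(E, \mu)$ is the orbit equivalence relation of a free p.m.p.\ action of $\Gamma$, carrying a Borel (hence $\mu$-measurable) structuring by contractible $n$-dimensional simplicial complexes. By Gaboriau's theory of $\ell^2$-Betti numbers of equivalence relations \cite{G}, on the one hand $\beta_k^{(2)}(E, \mu) = \beta_k^{(2)}(\Gamma)$ for every $k$, and on the other hand the existence of a measurable $\@T_n$-structuring forces $\beta_k^{(2)}(E, \mu) = 0$ for all $k > n$. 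So it remains only to choose $\Gamma$ with $\beta_{n+1}^{(2)}(\Gamma) \ne 0$.

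Such groups are produced directly by the Künneth formula for $\ell^2$-Betti numbers. I would take $\Gamma := \#F_2 \times \dotsb \times \#F_2$ with $n+1$ factors. As $\#F_2$ is infinite we have $\beta_0^{(2)}(\#F_2) = 0$, while $\beta_1^{(2)}(\#F_2) = 1$ and $\beta_k^{(2)}(\#F_2) = 0$ for $k \ge 2$; hence in the Künneth expansion of $\beta_{n+1}^{(2)}(\Gamma)$ the only surviving term is the one in which each of the $n+1$ factors contributes in degree $1$, giving $\beta_{n+1}^{(2)}(\Gamma) = (\beta_1^{(2)}(\#F_2))^{n+1} = 1 \ne 0$. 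This contradicts the vanishing established above, so every $\@T_n$-structurable equivalence relation generated by a free action of this $\Gamma$ is compressible, which is what was needed.

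The main obstacle lies entirely in the second paragraph: the correct invocation of Gaboriau's machinery, namely both the invariance $\beta_k^{(2)}(E_\Gamma^X, \mu) = \beta_k^{(2)}(\Gamma)$ for free p.m.p.\ actions and the dimension bound that an $n$-dimensional contractible structuring annihilates $\ell^2$-homology in all degrees above $n$. Granting those two inputs from \cite{G}, the remaining steps — the reduction through \cref{lm:modgamma-compress} and Nadkarni, the ergodic decomposition, and the Künneth computation — are routine bookkeeping.
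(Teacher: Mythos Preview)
Your proof is correct and follows the same overall reduction as the paper, routing through \cref{lm:modgamma-compress} and Nadkarni to reduce to showing that no free p.m.p.\ action of a suitable $\Gamma$ is $\@T_n$-structurable. The difference is in the choice of $\Gamma$ and the obstruction invoked. The paper splits into cases: for $n=1$ it takes any infinite Kazhdan group and cites Adams--Spatzier \cite{AS}; for $n>1$ it takes $\Gamma = \#F_2^n \times \#Z$ and cites a result of Gaboriau recorded in \cite[p.~59]{HK}. Your uniform choice $\Gamma = \#F_2^{n+1}$ together with the explicit K\"unneth computation of $\beta_{n+1}^{(2)}(\Gamma) = 1$ is cleaner and makes the mechanism---a dimension obstruction via $\ell^2$-Betti numbers---completely explicit. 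Note that the paper's group $\#F_2^n \times \#Z$ has all $\ell^2$-Betti numbers equal to zero (the $\#Z$ factor kills them via K\"unneth), so the obstruction the paper is citing for $n>1$ is genuinely different from yours; your argument trades that black box for a direct appeal to the two basic theorems in \cite{G} (invariance under orbit equivalence and vanishing above the dimension of a contractible structuring), which is a fair exchange.
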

\begin{proof}
For each $n$, let $\sigma_n$ be a sentence axiomatizing $\@T_n$.  For $n = 1$, take $\Gamma$ to be any infinite Kazhdan group.  By \cite{AS}, no free Borel action of $\Gamma$ admitting an invariant probability measure is treeable, i.e., $\sigma_1 \otimes \sigma_\Gamma =>^* \sigma_c$; thus $\Mod_\Gamma(\@T_n)$ admits no $\Gamma$-invariant measure by \cref{lm:modgamma-compress}.  For $n > 1$, take $\Gamma := \#F_2^n \times \#Z$.  By a result of Gaboriau (see, e.g., \cite[p.~59]{HK}), no free Borel action of $\Gamma$ admitting an invariant probability measure can be $\@T_n$-structurable.
\end{proof}

\begin{corollary}
\label{thm:pi1-meastr}
Let $L$ be a language, and let $\sigma$ be an $L_{\omega_1\omega}$-sentence such that $\Mod_\#N(\sigma) \subseteq \Mod_\#N(L)$ is a closed subspace.  Then for any countably infinite group $\Gamma$, there is a free Borel action of $\Gamma$ which admits an invariant probability measure and is $\sigma$-structurable.
\end{corollary}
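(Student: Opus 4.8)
The plan is to reduce the statement, via \cref{lm:modgamma-compress}, to the existence of a $\Gamma$-invariant Borel probability measure on the space $\Mod_\Gamma(\sigma)$, and then to produce such a measure from the amenability of the full symmetric group $S_\Gamma$ together with the compactness supplied by the closedness hypothesis. First I would record the reduction: recall (\cref{sec:elem-examples}) that $E \models \sigma_\Gamma$ exactly when $E$ is generated by a free Borel action of $\Gamma$, and that an invariant measure on $E$ is invariant for any generating action. Since $E \models \sigma \otimes \sigma_\Gamma$ iff $E \models \sigma$ and $E \models \sigma_\Gamma$, a $(\sigma \otimes \sigma_\Gamma)$-structurable equivalence relation that is not compressible is precisely (by Nadkarni's theorem) a free Borel action of $\Gamma$ that is $\sigma$-structurable and admits an invariant probability measure. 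Thus it suffices to prove $\sigma \otimes \sigma_\Gamma \not=>^* \sigma_c$, which by \cref{lm:modgamma-compress} is equivalent to $E_\Gamma^{\Mod_\Gamma(\sigma)}$ being \emph{not} compressible, i.e.\ to $\Mod_\Gamma(\sigma)$ carrying a $\Gamma$-invariant probability measure.

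Next I would set up the dynamics. By consistency we may assume $\sigma$ has a countably infinite model, so, identifying its universe with $\Gamma$, the space $\Mod_\Gamma(\sigma)$ is nonempty; and since $\Mod_\Gamma(L)$ is compact while $\Mod_\Gamma(\sigma) \subseteq \Mod_\Gamma(L)$ is closed by hypothesis, $\Mod_\Gamma(\sigma)$ is compact. Being the model set of a sentence, it is invariant under the jointly continuous logic action of the Polish group $S_\Gamma \cong S_\infty$. The key point is that $S_\infty$ is amenable as a topological group, so its continuous action on the nonempty compact convex set $P(\Mod_\Gamma(\sigma))$ of probability measures has a fixed point $\nu$, i.e.\ an $S_\Gamma$-invariant Borel probability measure on $\Mod_\Gamma(\sigma)$. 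Concretely, one fixes an exhaustion $F_0 \subseteq F_1 \subseteq \dotsb$ of $\Gamma$ by finite sets, starts from a point mass $\delta_{\#A}$ with $\#A \in \Mod_\Gamma(\sigma)$, averages over the finite subgroups $\mathrm{Sym}(F_n) \le S_\Gamma$, and passes to a weak-$*$ limit $\nu$; this $\nu$ is invariant under every finitary permutation, and by joint continuity of the action together with density of the finitary permutations in $S_\Gamma$, it is invariant under all of $S_\Gamma$.

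Since $\Gamma$ embeds into $S_\Gamma$ via left translation, $\nu$ is in particular $\Gamma$-invariant, so $E_\Gamma^{\Mod_\Gamma(\sigma)}$ is not compressible; by \cref{lm:modgamma-compress} this gives $\sigma \otimes \sigma_\Gamma \not=>^* \sigma_c$, and unwinding the reduction above yields the desired free $\sigma$-structurable action of $\Gamma$ with an invariant probability measure. I expect the one genuinely substantive point to be the passage from finitary-permutation invariance to full $S_\Gamma$- (hence $\Gamma$-) invariance: the left translations by elements of $\Gamma$ are \emph{not} finitary permutations, so averaging over finite symmetric groups does not by itself produce $\Gamma$-invariance, and one must lean on joint continuity of the logic action and the density of the finitary permutations (equivalently, on the amenability of $S_\infty$). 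This is also precisely where the closedness hypothesis is indispensable — it is what makes $\Mod_\Gamma(\sigma)$ compact, so that $P(\Mod_\Gamma(\sigma))$ is a compact convex set on which the fixed-point argument runs; for merely $G_\delta$ model spaces no invariant measure need exist, consistent with \cref{thm:einftys-smooth}.
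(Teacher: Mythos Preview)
Your proposal is correct and follows essentially the same route as the paper: use closedness of $\Mod_\Gamma(\sigma)$ to get compactness, invoke amenability of $S_\Gamma \cong S_\infty$ to obtain an $S_\Gamma$-invariant (hence $\Gamma$-invariant) probability measure, and then apply \cref{lm:modgamma-compress}. The paper's proof is a one-liner citing amenability of $S_\Gamma$ as a black box, whereas you unpack both the reduction through Nadkarni's theorem and the averaging construction behind amenability of $S_\infty$; but the underlying argument is identical.
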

\begin{proof}
Since $\Mod_\Gamma(\sigma) \subseteq \Mod_\Gamma(L)$ is closed, it is compact, so since $S_\Gamma$ is amenable, $\Mod_\Gamma(\sigma)$ admits a $S_\Gamma$-invariant probability measure, thus a $\Gamma$-invariant probability measure; then apply \cref{lm:modgamma-compress}.
\end{proof}

\subsection{Universality of $E_{\infty\sigma}$}
\label{sec:einftys-univ}

Several theories $(L, \sigma)$ are known to axiomatize $\@E$, the class of all countable Borel equivalence relations.  For example, by \cite[(proof~of)~3.12]{JKL}, every $E \in \@E$ is structurable via locally finite graphs.  More generally, one can consider $\sigma$ such that every aperiodic or compressible countable Borel equivalence relation is $\sigma$-structurable.  For example, it is folklore that every aperiodic countable Borel equivalence relation can be structured via dense linear orders (this will also follow from \cref{thm:einftya-univ}), while the proof of \cite[3.10]{JKL} shows that every compressible $E \in \@E$ is structurable via graphs of vertex degree $\le 3$.

A result that some particular $\sigma$ axiomatizes $\@E$ (or all aperiodic $E$) shows that every (aperiodic) $E \in \@E$ carries a certain type of structure, which can be useful in applications.  A typical example is the very useful Marker Lemma (see \cite[4.5.3]{BK}), which shows that every aperiodic $E$ admits a decreasing sequence of Borel complete sections $A_0 \supseteq A_1 \supseteq \dotsb$ with empty intersection.  This can be phrased as: every aperiodic countable Borel equivalence relation $E$ is $\sigma$-structurable, where $\sigma$ in the language $L = \{P_0, P_1, \dotsc\}$ asserts that each (unary) $P_i$ defines a nonempty subset, $P_0 \supseteq P_1 \supseteq \dotsb$, and $\bigcap_i P_i = \emptyset$.

We now give the proof of \cref{thm:einftya-univ}, which provides a large class of examples of such theories.  To do so, we first review the main result from \cite{AFP}.

Let $L$ be a language and $\#A = (X, R^\#A)_{R \in L}$ be a countable $L$-structure.  For a subset $F \subseteq X$, let $\Aut_F(\#A) \subseteq \Aut(\#A)$ denote the pointwise stabilizer of $F$, i.e., the set of all automorphisms $f \in \Aut(\#A)$ fixing every $x \in F$.  We say that $\#A$ has \defn{trivial definable closure (TDC)}\index{trivial definable closure (TDC)} if the following equivalent conditions hold (see \cite[2.12--15]{AFP}, \cite[4.1.3]{Hod}):
\begin{itemize}
\item  for every finite $F \subseteq X$, $\Aut_F(\#A) \curvearrowright X$ fixes no element of $X \setminus F$;
\item  for every finite $F \subseteq X$, $\Aut_F(\#A) \curvearrowright X$ has infinite orbits on $X \setminus F$ (\defn{trivial algebraic closure});
\item  for every finite $F \subseteq X$ and $L_{\omega_1\omega}$-formula $\phi(x)$ with parameters in $F$, if there is a unique $x \in X$ such that $\phi^\#A(x)$ holds, then $x \in F$;
\item  for every finite $F \subseteq X$ and $L_{\omega_1\omega}$-formula $\phi(x)$ with parameters in $F$, if there are only finitely many $x_1, \dotsc, x_n \in X$ such that $\phi^\#A(x_i)$ holds, then $x_i \in F$ for each $i$.
\end{itemize}

\begin{remark}
If $\#A$ is a Fraïssé structure, then TDC is further equivalent to the \defn{strong amalgamation property (SAP)} for the age of $\#A$: for any $\#F, \#G, \#H \in \Age(\#A)$ living on $F, G, H$ respectively and embeddings $f : \#H -> \#F$ and $g : \#H -> \#G$, there is $\#K \in \Age(\#A)$ and embeddings $f' : \#F -> \#K$ and $g' : \#G -> \#K$ with $f' \circ f = g' \circ g$ and $f'(F) \cap g'(G) = (f' \circ f)(H)$.
\end{remark}

\begin{theorem}[{Ackerman-Freer-Patel \cite[1.1]{AFP}}]
\label{thm:afp}
Let $L$ be a language and $\#A = (X, R^\#A)_{R \in L}$ be a countably infinite $L$-structure.  The following are equivalent:
\begin{enumerate}
\item[(i)]  The logic action of $S_X$ on $\Mod_X(\sigma_\#A)$ ($\sigma_\#A$ the Scott sentence of $\#A$) admits an invariant probability measure.
\item[(ii)]  $\#A$ has TDC.
\end{enumerate}
\end{theorem}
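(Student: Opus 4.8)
The plan is to prove the two implications separately, first reducing to the case $X = \#N$ (any countably infinite universe works by transport of structure). Throughout I would regard an $S_\#N$-invariant probability measure $\mu$ on $\Mod_\#N(\sigma_\#A)$ as the law of an \emph{exchangeable random structure} $\mathbf B$: a random element of $\Mod_\#N(L)$ whose distribution is invariant under the logic action and which is almost surely isomorphic to $\#A$ (since $\sigma_\#A$ is the Scott sentence, its models on $\#N$ are exactly the copies of $\#A$). The one nonelementary input I would allow myself is the Scott--Svenonius fact that, over a countable structure, the orbit of any tuple under $\Aut_{\bar a}(\#A)$ is definable by an $L_{\omega_1\omega}$-formula with parameters $\bar a$; this translates the (algebraic form of the) TDC condition into a statement about definable sets.

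For (i)$\implies$(ii) I would argue by contraposition with a counting estimate. Suppose TDC fails; by orbit-definability there is a distinct tuple $\bar a$ of length $k$ and a formula $\phi(x,\bar y)$ such that $P := \{x \in \#N : \#A \models \phi(x,\bar a)\}$ is nonempty, finite, and disjoint from $\bar a$; set $m_0 = |P| \ge 1$. Call an injective $(k{+}1)$-tuple $(\bar\jmath; x)$ a \emph{witness} in $\mathbf B$ if $\mathbf B \models \phi(x,\bar\jmath)$ and $\{x' : \mathbf B \models \phi(x',\bar\jmath)\} \setminus \bar\jmath$ has size exactly $m_0$. Since $\mathbf B \cong \#A$ almost surely, transporting $(\bar a;b)$ for $b \in P$ along an isomorphism shows a witness exists a.s.; invariance of $\mu$ under the transitive action of $S_\#N$ on injective $(k{+}1)$-tuples then forces the common witness-probability $p$ of any fixed such tuple to be strictly positive. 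For $W_n = \{1,\dots,n\}$, let $Z_n$ count the injective witness-tuples inside $W_n^{k+1}$. For \emph{every} structure, each injective $\bar\jmath \in W_n^k$ contributes at most $m_0$ witnesses, so $Z_n \le m_0 n^k$ deterministically; on the other hand invariance gives $\mathbb E[Z_n] = p\cdot n(n-1)\cdots(n-k) \sim p\,n^{k+1}$. For large $n$ these are incompatible since $p > 0$, the desired contradiction.

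For (ii)$\implies$(i), the harder direction, I would construct $\mu$ by \emph{sampling}, which automatically yields exchangeability. The goal is a ``measurable model'': a standard probability space $(\Omega,\mathbb P)$ with a measurable interpretation of each $R \in L$ on tuples from $\Omega$, such that drawing $\omega_1,\omega_2,\dots$ i.i.d.\ and setting $R^{\mathbf B}(i_1,\dots,i_n) \iff R^{\mathbf M}(\omega_{i_1},\dots,\omega_{i_n})$ produces $\mathbf B \cong \#A$ almost surely; the pushforward of the law of $(\omega_i)_i$ is then the required invariant measure. Here TDC enters decisively: triviality of the definable and algebraic closure over every finite set is exactly what guarantees that a freshly sampled coordinate almost surely lands outside the finitely many points to which the earlier coordinates could ``pin'' it, so that distinct labels give distinct elements and partial isomorphisms between finite configurations always extend. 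Concretely I would build $\mathbf M$ by an inductive amalgamation driven by the strong amalgamation property (the Fra\"iss\'e-style reformulation of TDC), randomizing over the amalgamation choices so that each finite induced substructure realizes the correct $L_{\omega_1\omega}$-type.

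The main obstacle is the final step of (ii)$\implies$(i): upgrading ``every finite configuration realizes the right type / the sample has the right age'' to ``the sample is genuinely isomorphic to $\#A$''. This is a back-and-forth argument running through the Scott analysis, and it is where one must arrange that the measurable model enjoys the extension property \emph{almost surely} and at every level of the Scott rank, with TDC used precisely to rule out definable obstructions to extending a partial isomorphism. I expect this Scott-analytic heart of \cite{AFP}, rather than the measure-theoretic bookkeeping, to be where essentially all the difficulty lies.
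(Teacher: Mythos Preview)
The paper does not prove this theorem; it is quoted as \cite[1.1]{AFP} and no proof is given. What the paper \emph{does} include, immediately after the statement, is a summary of the key construction from the Ackerman--Freer--Patel proof of (ii)$\implies$(i): one Morleyizes the Scott sentence $\sigma_\#A$ to a $\Pi_2$ sentence $\sigma_\#A' = \bigwedge_i \forall \bar x\,\exists y\,\psi_i(\bar x,y)$ with quantifier-free $\psi_i$, and then builds a Borel $L'$-structure on $\#R$ in which each existential $\exists y\,\psi_i(\bar x,y)$ is witnessed either by some coordinate of $\bar x$ or by every $y$ in a nonempty open interval. Any countable dense subset of $\#R$ then automatically satisfies $\sigma_\#A'$, hence is isomorphic to $\#A$; i.i.d.\ sampling from a nonatomic measure on $\#R$ therefore produces the invariant measure.

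Your (i)$\implies$(ii) counting argument is correct and is the standard one. Your (ii)$\implies$(i) sketch is in the right direction (construct a continuous model and sample), and you correctly locate the difficulty in the passage from ``correct finite configurations'' to ``genuinely isomorphic to $\#A$''. But your proposed mechanism for that passage --- a back-and-forth run through the Scott analysis with amalgamations randomized at each stage --- is vaguer and harder to make precise than what \cite{AFP} actually does. The Morleyization trick converts the whole Scott-analytic content into a single $\Pi_2$ sentence, and arranging open-interval witnesses turns ``isomorphic to $\#A$'' into the purely topological condition ``dense in $\#R$'', which i.i.d.\ sampling handles immediately. This replaces the delicate inductive argument you anticipate with a one-step reduction; the real work is then in constructing the Borel structure on $\#R$ with the interval-witnessing property, which is where TDC is used.
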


We will in fact need the following construction from Ackerman-Freer-Patel's proof of \cref{thm:afp}.  Starting with a countable $L$-structure $\#A$ with TDC, they consider the Morleyization $(L', \sigma_\#A')$ of the Scott sentence $\sigma_\#A$ of $\#A$, where
\begin{align*}
\sigma_\#A' = \bigwedge_i \forall \-x\, \exists y\, \psi_i(\-x, y)
\end{align*}
with each $\psi_i$ quantifier-free, as described following \cref{lm:einftys-smooth-gdelta}.  They then produce (see \cite[Section~3.4]{AFP}) a Borel $L'$-structure $\#A' |= \sigma_\#A'$ with universe $\#R$ such that for each $i$ and $\-x \in \#R$, the corresponding subformula $\exists y\, \psi_i(\-x, y)$ in $\sigma_\#A'$ is witnessed either by some $y$ in the tuple $\-x$, or by all $y$ in some nonempty open interval.  Clearly then the restriction of $\#A'$ to any countable dense set of reals still satisfies $\sigma_\#A'$, hence (its $L$-reduct) is isomorphic to $\#A$.  This shows:

\begin{corollary}[{of proof of \cite[1.1]{AFP}}]
\label{thm:afp-dense}
Let $L$ be a language and $\#A$ be a countable $L$-structure with TDC.  Then there is a Borel $L$-structure $\#A'$ with universe $\#R$ such that for any countable dense set $A \subseteq \#R$, $\#A'|A \cong \#A$.
\end{corollary}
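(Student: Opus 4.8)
The plan is to read the conclusion off directly from the construction that Ackerman--Freer--Patel use to prove \cref{thm:afp}, so that essentially no new idea beyond a density observation is required. First I would pass to the Morleyization recalled following \cref{lm:einftys-smooth-gdelta}: applying that procedure to the Scott sentence $\sigma_\#A$ produces a countable language $L' \supseteq L$ and an $L'_{\omega_1\omega}$-sentence $\sigma_\#A'$ of the form $\bigwedge_i \forall \-x\, \exists y\, \psi_i(\-x, y)$ with each $\psi_i$ quantifier-free, such that the $L$-reduct of every countable model of $\sigma_\#A'$ is a model of $\sigma_\#A$ (hence, as $\sigma_\#A$ is a Scott sentence, isomorphic to $\#A$). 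I would then invoke the structure built in \cite[Section~3.4]{AFP}: a Borel $L'$-structure $\#B'$ with universe $\#R$ satisfying $\sigma_\#A'$ and enjoying the crucial \emph{local witnessing} property that, for every index $i$ and every tuple $\-x$ of reals, the existential $\exists y\, \psi_i(\-x, y)$ is witnessed either by some coordinate of $\-x$ or by \emph{every} real $y$ in some nonempty open interval. I then set $\#A' := \#B'|L$, the $L$-reduct, which is a Borel $L$-structure on $\#R$.

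The heart of the argument is to show that $\#B'|A \models \sigma_\#A'$ for every countable dense $A \subseteq \#R$. Fixing such an $A$, an index $i$, and a tuple $\-x$ from $A$, I must produce a witness $y \in A$ with $\#B'|A \models \psi_i(\-x, y)$. By the local witnessing property there are two cases: either some coordinate $y$ of $\-x$ already works, in which case $y \in A$ automatically; or there is a nonempty open interval $I \subseteq \#R$ every point of which witnesses the existential in $\#B'$, and then density of $A$ gives some $y \in A \cap I$. In either case $\psi_i(\-x, y)$ holds in $\#B'$; since $\psi_i$ is quantifier-free and $A$ carries the induced substructure, truth of $\psi_i$ is absolute between $\#B'$ and $\#B'|A$, so $\psi_i(\-x, y)$ holds in $\#B'|A$ as well. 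As $i$ and $\-x$ were arbitrary and $A$ is countably infinite, this gives $\#B'|A \models \sigma_\#A'$.

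It then follows from the first property of the Morleyization that the $L$-reduct $(\#B'|A)|L$ models $\sigma_\#A$ and is therefore isomorphic to $\#A$; and since reduct and restriction commute, $(\#B'|A)|L = (\#B'|L)|A = \#A'|A$, which is exactly the assertion. I do not expect a serious obstacle here, since all the genuine difficulty is contained in the Ackerman--Freer--Patel construction of $\#B'$ with the interval-witnessing property, which I am taking as given. The only new points are the two elementary observations that a dense set meets every nonempty open interval and that quantifier-free truth is preserved in both directions under passage to a substructure; combined with the $\Pi_2$ shape of $\sigma_\#A'$, these immediately yield that the restriction to any countable dense set again models $\sigma_\#A'$ and hence recovers $\#A$ up to isomorphism.
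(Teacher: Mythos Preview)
Your proposal is correct and follows exactly the same approach as the paper: Morleyize $\sigma_\#A$ to a $\Pi_2$ sentence with quantifier-free matrix, invoke the Ackerman--Freer--Patel construction of a Borel $L'$-structure on $\#R$ with the interval-witnessing property, and observe that quantifier-free absoluteness plus density gives that any countable dense substructure again models the Morleyized sentence, hence its $L$-reduct is isomorphic to $\#A$. If anything you are slightly more careful than the paper, distinguishing the $L'$-structure $\#B'$ from its $L$-reduct $\#A'$ and making the commutation of reduct and restriction explicit.
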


\begin{proof}[Proof of \cref{thm:einftya-univ}](Marks)
If $E$ is smooth, then clearly it is $\#A$-structurable.  So we may assume $X = 2^\#N$.  Let $N_s = \{x \in 2^\#N \mid s \subseteq x\}$ for $s \in 2^{<\#N}$ denote the basic clopen sets in $2^\#N$.  Note that the set
\begin{align*}
X_1 := \{x \in X \mid \exists s \in 2^{<\#N}\, (|[x]_E \cap N_s| = 1)\}
\end{align*}
of points whose class contains an isolated point is Borel, and $E|X_1$ is smooth (with a selector given by $x |->{}$the unique element of $[x]_E \cap N_s$ for the least $s$ such that $|[x]_E \cap N_s| = 1$), hence $\#A$-structurable.  For $x \in X \setminus X_1$, the closure $\-{[x]_E}$ has no isolated points, hence is homeomorphic to $2^\#N$.  For each such $x$, define $f_x(t)$ inductively for $t \in 2^{<\#N}$ by
\begin{align*}
f_x(\emptyset) &:= \emptyset, \\
f_x(t\^{\;\;}i) &:= \text{$s\^{\;\;}i$ for the unique $s \supseteq f_x(t)$ such that $[x]_E \cap N_{f_x(t)} \subseteq N_s$ but $[x]_E \cap N_{f_x(t)} \not\subseteq N_{s\^{\;\;}0}, N_{s\^{\;\;}1}$}
\end{align*}
(for $i = 0, 1$), so that $f_x : 2^\#N -> \-{[x]_E}$, $f_x(y) := \bigcup_{t \subseteq y} f_x(t)$ is a homeomorphism, such that $x \mathrel{E} x' \implies f_x = f_{x'}$.  It is easy to see that $(x, y) |-> f_x(y)$ is Borel.

Now let the structure $\#A'$ on $\#R$ be given by \cref{thm:afp-dense}.  Let $Z = \{z_0, z_1, \dotsc\} \subseteq 2^\#N$ be a countable set so that there is a continuous bijection $g : 2^\#N \setminus Z -> \#R$.  Let
\begin{align*}
X_2 := \{x \in X \setminus X_1 \mid \exists x' \in [x]_E\, (f_x^{-1}(x') \in Z)\}.
\end{align*}
Then $E|X_2$ is smooth (with selector $x |-> x' \in [x]_E$ such that $f_x^{-1}(x') = z_j$ with $j$ minimal), hence $\#A$-structurable.  Finally, $E|(X \setminus (X_1 \cup X_2))$ is $\#A$-structurable: for each $x \in X \setminus (X_1 \cup X_2)$, we have that $f_x^{-1}([x]_E) \subseteq 2^\#N \setminus Z$ is dense, so $g \circ f_x^{-1}$ gives a bijection between $[x]_E$ and a dense subset of $\#R$, along which we may pull back $\#A'$ to get a structure on $[x]_E$ isomorphic to $\#A$.
\end{proof}


\cref{thm:einftya-univ} has the following immediate corollary.

\begin{corollary}\label{thm:817}
The following Fraïssé structures can structure every aperiodic countable Borel equivalence relation: $(\#Q, <)$, the random graph, the random $K_n$-free graph (where $K_n$ is the complete graph on $n$ vertices), the random poset, and the rational Urysohn space.
\end{corollary}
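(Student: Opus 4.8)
The plan is to deduce the corollary directly from \cref{thm:einftya-univ}: that theorem guarantees that every aperiodic countable Borel equivalence relation is $\#A$-structurable whenever the countable structure $\#A$ has trivial definable closure (TDC), so it suffices to check that each of the five listed structures has TDC. Now all five are Fraïssé structures, being the Fraïssé limits of, respectively, the class of finite linear orders, of finite graphs, of finite $K_n$-free graphs, of finite posets, and of finite rational metric spaces. Hence, by the equivalence of TDC with the strong amalgamation property (SAP) for Fraïssé structures (noted in the Remark preceding \cref{thm:afp}), the entire problem reduces to verifying SAP for each of these five ages.

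For the first four ages, SAP is witnessed by an explicit amalgamation keeping $\#F \setminus \#H$ and $\#G \setminus \#H$ disjoint. Given $\#F, \#G \in \Age(\#A)$ with a common substructure $\#H$, work on the pushout $\#F \sqcup_{\#H} \#G$. For finite graphs, add no edges between the two parts; this is a finite graph containing disjoint copies of $\#F$ and $\#G$. For finite $K_n$-free graphs the same construction works: a copy of $K_n$ in the amalgam is a clique and hence cannot use vertices from both parts (no edges cross), so it lies inside $\#F$ or $\#G$, where none exists. For finite posets, take the transitive closure of the union of the two orders, adding no comparabilities across the parts; antisymmetry survives because a derived relation $p < q < p$ with $p \in \#F \setminus \#H$ and $q \in \#G \setminus \#H$ would transit $\#H$ on both legs, and a short chase forces the two transit points to coincide, collapsing the would-be cycle into $\#H$. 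For finite linear orders, distribute $\#F \setminus \#H$ and $\#G \setminus \#H$ into the gaps determined by $\#H$, ordering the two sides disjointly within each gap; this is always order-compatible. In every case the result lies in the age and meets $\#F, \#G$ exactly in $\#H$.

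The rational Urysohn case is the only one requiring a genuine computation, and is the main (still routine) obstacle. Here the age is the class of finite rational metric spaces, and given $\#F, \#G$ meeting in $\#H$ one defines the amalgamated metric on $\#F \sqcup_{\#H} \#G$ by leaving distances within $\#F$ and within $\#G$ unchanged and setting $d(p,q) := \min_{h \in \#H}(d(p,h) + d(h,q))$ for $p \in \#F \setminus \#H$ and $q \in \#G \setminus \#H$. The point to check is that this $d$ is genuinely a metric: positivity off the diagonal is clear since the summands are positive, and the triangle inequality follows from a short case analysis according to how the three points distribute across the two parts, using that the restrictions to $\#F$ and $\#G$ are already metrics and that the minimum over $\#H$ is subadditive in the obvious way. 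Since $\#H$ is finite and all distances are rational, the minimum is attained and rational, so the amalgam is again a finite rational metric space, and by construction the copies of $\#F$ and $\#G$ intersect exactly in $\#H$. With SAP (hence TDC) verified in all five cases, \cref{thm:einftya-univ} yields the corollary.
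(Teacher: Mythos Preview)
Your proof is correct and follows exactly the approach the paper intends: the paper states the corollary as an ``immediate'' consequence of \cref{thm:einftya-univ}, relying on the well-known fact that each listed Fra\"iss\'e limit has TDC (equivalently, its age has SAP), and you have simply supplied the routine SAP verifications that the paper leaves implicit. One small remark: in the rational Urysohn case your formula for $d(p,q)$ presumes $\#H \ne \emptyset$; the empty-base case is handled trivially by assigning a sufficiently large rational cross-distance.
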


The concept of amenability of a structure in the next result can be either the one in \cite[2.6(iii)]{JKL} or the one in \cite[3.4]{Kamt}.  This result was first proved by the authors by a different method but it can also be seen as a corollary of \cref{thm:einftya-univ}.

\begin{corollary}
\label{thm:amenable-ntdc}
Let $\#A$ be a countably infinite amenable structure.  Then $\#A$ fails TDC.
\end{corollary}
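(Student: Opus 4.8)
The plan is to read this off from \cref{thm:einftya-univ} (Marks's theorem), arguing by contradiction. Suppose, toward a contradiction, that $\#A$ is a countably infinite amenable structure which nevertheless has trivial definable closure. Since $\#A$ is then a countable structure with TDC, \cref{thm:einftya-univ} applies and yields $\sigma_a =>^* \sigma_\#A$; that is, \emph{every} aperiodic countable Borel equivalence relation is $\#A$-structurable.

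The next step is to run a non-amenable equivalence relation through this universality. First I would fix a concrete aperiodic, non-amenable member of $\@E$, namely $F(\#F_2, 2)$, the orbit equivalence of the free part of the shift action of the free group $\#F_2$ on $2^{\#F_2}$. This relation is aperiodic (its classes are copies of $\#F_2$) and is not amenable, being the orbit equivalence of a measure-preserving free action of the non-amenable group $\#F_2$. By the implication $\sigma_a =>^* \sigma_\#A$ just obtained, $F(\#F_2, 2)$ is $\#A$-structurable. On the other hand, I would invoke the defining property of amenability of a structure (in the sense of \cite[2.6(iii)]{JKL}, equivalently \cite[3.4]{Kamt}): if $\#A$ is amenable, then every $\#A$-structurable countable Borel equivalence relation is itself amenable, since a Borel $\#A$-structure places a copy of $\#A$ on each class and the amenability data of $\#A$ can be transported in a Borel-uniform way along these copies to witness amenability of the relation. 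Applying this to $F(\#F_2, 2)$ forces it to be amenable, contradicting the previous sentence. Hence no such $\#A$ exists, i.e., every countably infinite amenable structure fails TDC.

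The only genuine content — and the step I expect to be the main obstacle — is the link between the combinatorial notion of amenability of the structure $\#A$ and amenability of the equivalence relations it structures: one must check that the definition in \cite{JKL} or \cite{Kamt} literally gives "$\#A$-structurable implies amenable as a countable Borel equivalence relation," uniformly in the Borel structure. Everything else is either quoted (Marks's \cref{thm:einftya-univ}) or standard (the existence of an aperiodic non-amenable countable Borel equivalence relation, witnessed by $F(\#F_2, 2)$). In carrying this out I would also verify the minor points that $\#A$ being countably infinite with TDC indeed meets the hypotheses of \cref{thm:einftya-univ}, and that the chosen witness $F(\#F_2, 2)$ is genuinely aperiodic and non-amenable, so that the final contradiction is clean.
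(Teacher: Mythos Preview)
Your proposal is correct and follows essentially the same approach as the paper: both argue via the contrapositive of \cref{thm:einftya-univ}, using that amenability of $\#A$ forces every $\#A$-structurable equivalence relation to be amenable (the paper cites \cite[2.18]{JKL} or \cite[2.6]{Kamt} for this), together with the existence of an aperiodic non-amenable countable Borel equivalence relation. You are simply more explicit in naming $F(\#F_2,2)$ as the witness, whereas the paper leaves this implicit.
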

\begin{proof}
Since $\#A$ is amenable, every $\#A$-structurable equivalence relation is amenable (see \cite[2.18]{JKL} or \cite[2.6]{Kamt}), thus it is not true that $\#A$ structures every aperiodic countable Borel equivalence relation, and so $\#A$ fails TDC by \cref{thm:einftya-univ}.
\end{proof}

We do not know of a counterexample to the converse of \cref{thm:einftya-univ}, i.e., of a single structure $\#A$ without TDC such that every aperiodic $E \in \@E$ is $\#A$-structurable.  There do exist structures without TDC which structure every compressible $E$, as the following simple example shows:

\begin{proposition}
\label{thm:qlex-univ}
For any countable linear order $(Y, <)$, every compressible $(X, E) \in \@E_c$ is structurable via linear orders isomorphic to $\#Q \times (Y, <)$ with the lexicographical order.

In particular, $\#Q \times \#Z$ structures every compressible equivalence relation.
\end{proposition}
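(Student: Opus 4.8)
The plan is to realize each $E$-class as a lexicographic sum of $\#Q$-many copies of $(Y,<)$, i.e.\ as $\sum_{q \in \#Q}(Y,<) \cong \#Q \times (Y,<)$, assembled Borel-uniformly from two ingredients, both furnished by compressibility. Write $m := |Y| \in \{1, 2, \dotsc, \aleph_0\}$ and fix a set $M$ with $|M| = m$. The first ingredient is a block structure with $|Y|$-element blocks: since $E$ is compressible it is aperiodic, and by \cref{thm:compress}(a) we have $E \cong_B E \times I_\#N$; since $I_\#N \cong_B I_\#N \times I_M$ (as $\#N \times M$ is countably infinite), this gives a Borel isomorphism $E \cong_B (E \times I_\#N) \times I_M = G \times I_M$, where $G := E \times I_\#N \cong_B E$ is again aperiodic compressible. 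The second ingredient is a dense ordering of the blocks: since $G$ is aperiodic, by \cref{thm:817} (equivalently \cref{thm:einftya-univ}, as $(\#Q,<)$ is a Fraïssé structure with trivial definable closure) $G$ is $(\#Q,<)$-structurable, so I fix a Borel linear order $\prec$ on the underlying space of $G$, relating only $G$-equivalent points and making each $G$-class have order type $(\#Q,<)$.

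First I would fix, once and for all, a bijection $j : M \to Y$ and transport $<_Y$ to $M$ along it. Then, on the space of the presentation $G \times I_M$, I define a Borel relation $<^*$ by declaring, for $x \mathrel{G} x'$,
\begin{align*}
(x, a) <^* (x', a') \iff x \prec x' \text{ or } (x = x' \text{ and } j(a) <_Y j(a')),
\end{align*}
with $<^*$ not relating points in distinct $(G \times I_M)$-classes. On each $(G \times I_M)$-class $[x]_G \times M$ this is exactly the lexicographic sum, over the block index $[x]_G$ ordered by $\prec$, of copies of $(Y,<)$: the blocks are the convex sets $\{x'\} \times M$ for $x' \mathrel{G} x$, each of order type $(Y,<)$ via $j$, while the set of blocks, ordered by $\prec$, has order type $(\#Q,<)$. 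Hence each class has order type $\sum_{q \in \#Q}(Y,<) \cong \#Q \times (Y,<)$. Transporting $<^*$ back along the isomorphism $E \cong_B G \times I_M$ yields a Borel $\#A$-structure on $E$ for $\#A = \#Q \times (Y,<)$, as required.

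The verifications are routine, so the only points needing care are bookkeeping: that $<^*$ is Borel and relates only $E$-equivalent points (immediate from Borelness of $\prec$, of the isomorphism $E \cong_B G \times I_M$, and of $j$), and the order-theoretic identity that a countable linear order decomposing into convex blocks each of type $(Y,<)$ with quotient order of type $(\#Q,<)$ is isomorphic to $\#Q \times (Y,<)$. I expect no genuine obstacle; the one place to be slightly careful is the finite-$Y$ case, where the blocks must have \emph{exactly} $|Y|$ elements and so cannot be taken infinite — this is precisely why I pass to the presentation $E \cong_B G \times I_M$ with $|M| = |Y|$ rather than using a single $I_\#N$ factor. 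Specializing to $Y = (\#Z,<)$ gives the final assertion that $\#Q \times \#Z$ structures every compressible equivalence relation.
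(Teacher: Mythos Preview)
Your proof is correct and follows essentially the same approach as the paper's: structure (an isomorphic copy of) $E$ via $(\#Q,<)$ using \cref{thm:einftya-univ}, then form the lexicographic product with a set of size $|Y|$ and use compressibility (\cref{thm:compress}(a)) to identify the result back with $E$. The paper's proof just phrases this more tersely, working directly with $E \times I_Y$ rather than your $G \times I_M$.
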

\begin{proof}
By \cref{thm:einftya-univ}, $E$ is structurable via linear orders isomorphic to $\#Q$.  Take the lexicographical order on $E \times I_Y$ and apply \cref{thm:compress}(a).
\end{proof}

Concerning classes of structures (or theories) which can structure every (compressible) equivalence relation, we can provide the following examples. Below a \defn{graphing}\index{graphing} of an equivalence relation $E$ is a $\@K$-structuring, where $\@K$ is the class of connected graphs.

\begin{proposition}
Every $(X, E) \in \@E$ is structurable via connected bipartite graphs.
\end{proposition}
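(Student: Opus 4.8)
The plan is to equip each $E$-class with a \emph{complete bipartite} graph, which is automatically connected as soon as both sides are nonempty and is manifestly bipartite. Everything then reduces to producing a single Borel map $c : X -> \{0,1\}$ with the property that every $E$-class of size $\ge 2$ meets both $c^{-1}(0)$ and $c^{-1}(1)$. Given such a $c$, define the Borel graph
\[
x \mathrel{H} y \iff x \mathrel{E} y \AND c(x) \ne c(y).
\]
Then $H \subseteq E$ is a Borel $L$-structure on $E$ (where $L = \{R\}$ with $R$ a binary edge relation), and for each $E$-class $C$ the restriction $H|C$ is the complete bipartite graph with parts $C \cap c^{-1}(0)$ and $C \cap c^{-1}(1)$. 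This graph is bipartite by construction. It is also connected: when $\lvert C\rvert \ge 2$ both parts are nonempty, so any two vertices on opposite sides are adjacent while any two on the same side share a common neighbour on the other side; and when $\lvert C\rvert = 1$ the structure is a single vertex, which is trivially connected and bipartite. Since the class of countable connected bipartite graphs is Borel — connectedness is expressed by $\forall x\,\forall y\,\bigvee_n(\exists z_1 \dotsb z_{n-1}\,(\text{$x, z_1, \dotsc, z_{n-1}, y$ is a path}))$ and bipartiteness by the conjunction over odd $n$ of ``there is no closed walk of length $n$'', both $L_{\omega_1\omega}$-sentences — this shows $E$ is structurable by connected bipartite graphs.

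It remains to construct $c$. Split $X$ into the $E$-invariant Borel sets $X_f$ and $X_a$ on which $E$ is finite, respectively aperiodic (as recalled in \cref{sec:prelims}); it suffices to build $c$ separately on each. On $X_f$ the relation is smooth, so it has a Borel transversal $T$; setting $c = 0$ on $T$ and $c = 1$ elsewhere makes every class of size $\ge 2$ non-monochromatic, since its unique transversal point is coloured $0$ and some other point is coloured $1$ (singletons, coloured $0$, need no attention). On $X_a$ every class is countably infinite, and a standard marker argument (cf.\ the Marker Lemma \cite[4.5.3]{BK}) produces a Borel map $h : X_a -> \#N$ whose restriction to each class is surjective — equivalently, a partition of $X_a$ into countably many Borel complete sections. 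Concretely, starting from a decreasing sequence of Borel complete sections $A_0 = X_a \supseteq A_1 \supseteq \dotsb$ with $\bigcap_n A_n = \emptyset$, one sets $h(x)$ to be the position of $\max\{n \mid x \in A_n\}$ within the set of ``ranks'' actually attained in $[x]_E$, which is infinite since these ranks are unbounded on each class; this $h$ is Borel because $[x]_E$ is countable. Now put $c = h \bmod 2$, which is non-constant on each class since $h$ is surjective there.

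The only genuine content lies in the construction of $c$ on the aperiodic part, i.e.\ in the existence of a Borel $2$-colouring that is non-constant on every infinite class (equivalently, of two disjoint Borel complete sections); this is the one place a marker-type argument is needed, and the naive ``parity of rank'' colouring must be corrected by the compression to attained ranks described above, since otherwise a class might attain ranks of only one parity. The remaining steps are soft: the passage from such a colouring to a connected bipartite graphing, and the verification that the target class of structures is Borel, are both immediate. It is worth noting that, unlike a \emph{tree} structuring — which would force treeability and hence fail for general $E \in \@E$ — a connected \emph{bipartite} graph is allowed to contain (even) cycles, so it exists on every countable set with no acyclicity obstruction; this is exactly what lets the simple two-colouring argument succeed for all of $\@E$.
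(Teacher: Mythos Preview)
Your proof is correct and follows essentially the same route as the paper: reduce to producing two disjoint Borel complete sections and then take the complete bipartite graph between them. The paper is more terse---it simply cites the standard fact that an aperiodic $E$ admits a Borel partition $X = Y \cup Z$ into two complete sections (see \cite[4.5.4]{BK}) and disposes of the finite part by treeing it---whereas you handle finite classes by a transversal $2$-colouring and derive the two complete sections from the Marker Lemma via an auxiliary class-surjective map $h : X_a \to \#N$, which is more work than needed (once you have the decreasing $A_n$, one can get two complete sections more directly). But these are cosmetic differences; the underlying idea is identical.
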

\begin{proof}
The finite part of $E$ can be treed, so assume $E$ is aperiodic.  Then we may partition $X = Y \cup Z$ where $Y, Z$ are complete sections (this is standard; see e.g., \cite[4.5.4]{BK}).  Then the graph $G \subseteq E$ which connects each $y\in Y$ and $z \in Z$ (and with no other edges) works.
\end{proof}

\begin{proposition}
For every $k \ge 1$, every compressible $(X, E) \in \@E_c$ is structurable via connected graphs in which all cycles have lengths divisible by $k$.
\end{proposition}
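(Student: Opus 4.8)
The plan is to realize the target class of graphs via the classical \emph{subdivision} operation. A connected graph has all of its cycles of length divisible by $k$ precisely when, up to its cycle structure, it arises by replacing each edge of a connected graph with a path of $k$ edges: such a subdivision multiplies every cycle length by $k$ and creates no new cycles, since each inserted (subdivision) vertex has degree $2$, so any cycle passing through it must continue straight along the inserted path. Thus a cycle of the subdivided graph visiting $m$ of the original vertices has length exactly $mk$, while a single subdivided edge is a path, not a cycle. Accordingly, I would start from \emph{any} Borel graphing of $E$ that is connected on each class --- for instance the complete graphing $G := E \setminus \Delta_X$, or the connected graphs furnished by the previous proposition --- and subdivide every edge into a path of $k$ edges.

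To carry this out in a Borel fashion, first orient $G$: by Lusin--Novikov uniformization write the symmetric relation $G$ as a countable union of graphs of Borel functions, and thereby choose a Borel set $\vec{G} \subseteq X^2$ containing exactly one of $(x,y),(y,x)$ for each edge. Then enlarge the space by adjoining $k-1$ subdivision vertices per oriented edge,
\[
X' := X \sqcup (\vec{G} \times \{1, \dots, k-1\}),
\]
and let $E'$ be the countable Borel equivalence relation on $X'$ with $E'|X = E$ in which each new point $((x,y),i)$ is placed in the $E'$-class of $x$ (equivalently of $y$, since $x \mathrel{E} y$). Define the Borel graph $G'$ on $X'$ to consist, for each oriented edge $e = (x,y) \in \vec{G}$, of the path $x, (e,1), (e,2), \dots, (e,k-1), y$. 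By construction $G' \subseteq E'$, $G'$ is connected on each $E'$-class, and by the observation above every cycle of $G'$ has length divisible by $k$; that is, $G'$ is a Borel structuring of $E'$ by connected graphs whose cycles all have length divisible by $k$. (For $k=1$ no vertices are added, $G' = G$, and the cycle condition is vacuous.)

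It remains to transport this structure back to $E$, and this is the step where compressibility is essential and which I expect to be the only point requiring genuine care. Since $X$ is a complete section of $E'$ with $E'|X = E$, we have $E \sim_B E'$; and since $E$ is compressible it is aperiodic, and $E'$ is compressible as well, because a countable Borel equivalence relation admits an invariant probability measure iff its restriction to a complete section does. Applying \cref{thm:compress}(d) to the two reductions $E \le_B E'$ and $E' \le_B E$ coming from $E \sim_B E'$ yields $E \sqle_B^i E'$ and $E' \sqle_B^i E$, whence $E \cong_B E'$ by the Borel Schröder--Bernstein theorem. Pulling the structure $G'$ back along a Borel isomorphism $E \cong_B E'$ then gives the desired Borel structuring of $E$ by connected graphs all of whose cycles have length divisible by $k$, completing the proof.
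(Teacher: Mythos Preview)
The proposal is correct and follows essentially the same approach as the paper: subdivide each edge of a Borel graphing of $E$ into a path of length $k$ on an enlarged space $X'$, observe that $X$ is a complete section for the resulting $E'$, and use compressibility to conclude $E \cong_B E'$. The only cosmetic differences are that the paper orients edges via a Borel linear order rather than Lusin--Novikov, and invokes the complete-section isomorphism result from \cite[2.2]{DJK} directly rather than routing through \cref{thm:compress}(d) and Schr\"oder--Bernstein.
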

\begin{proof}
Let $<$ be a Borel linear order on $X$, and let $G \subseteq E$ be any Borel graphing, e.g., $G = E \setminus \Delta_X$.  Let
\begin{align*}
X' := X \sqcup (G \times \{1, \dotsc, k-1\}),
\end{align*}
let $E'$ be the equivalence relation on $X'$ generated by $E$ and $x \mathrel{E'} (y, z, i)$ for $x \mathrel{E} y \mathrel{E} z$, $(y, z) \in G$, and $1 \le i < k$, and let $G' \subseteq E'$ be the graph generated by, for each $(x, y) \in G$ with $x < y$,
\begin{align*}
x \mathrel{G'} (x, y, 1) \mathrel{G'} (x, y, 2) \mathrel{G'} \dotsb \mathrel{G'} (x, y, k-1) \mathrel{G'} y
\end{align*}
(and no other edges).  That is, every edge in $G$ has been replaced by a $k$-length path with the same endpoints.  It is clear that $G'$ graphs $E'$ and every cycle in $G'$ has length divisible by $k$.  Now since $E$ is compressible, and the inclusion $X \subseteq X'$ is a complete section embedding $E \sqle_B E'$, we have $E \cong_B E'$, thus $E$ is structurable via a graph isomorphic to $G'$.
\end{proof}

A similar example is provided by

\begin{theorem}[Kechris-Miller {\cite[3.2]{Mi}}]
Let $E$ be a countable Borel equivalence relation and $n \in \#N$.  Then every graphing of $E$ admits a spanning subgraphing with no cycles of length $\le n$.
\end{theorem}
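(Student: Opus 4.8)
The plan is to realize the desired object as a \emph{maximal} element among those Borel subgraphings of $G$ that omit all short cycles, and to build such a maximal object by a greedy procedure driven by a Borel coloring. Write the given graphing as a symmetric Borel relation $G \subseteq E$; it is locally countable since the $E$-classes are countable. Call a Borel set $G' \subseteq G$ of edges \emph{admissible} if it contains no cycle of length $\le n$, and \emph{maximal admissible} if in addition $G' \cup \{e\}$ contains a cycle of length $\le n$ for every edge $e \in G \setminus G'$. The first observation is that any maximal admissible $G'$ is automatically a spanning subgraphing of $E$: if $x \mathrel{E} y$ but $x, y$ lay in distinct $G'$-components, then some edge $e$ along a $G$-path from $x$ to $y$ would join two distinct $G'$-components, so $G' \cup \{e\}$ could contain no cycle through $e$ at all, contradicting maximality. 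Thus it suffices to produce a \emph{Borel} maximal admissible $G'$.

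To do this in a Borel way I would exploit the locality of the constraint: whether $e$ may be added depends only on the edges within $G$-distance $\le n$ of $e$. Form the \emph{conflict graph} $L$ on the standard Borel space of $G$-edges, joining distinct $e, f$ whenever their $G$-distance is $\le n$. Since balls of radius $n$ in a locally countable graph are countable, $L$ is itself a locally countable Borel graph, and hence has a countable Borel proper coloring $\kappa$ by \cite{KST}. The key feature of $\kappa$ is that two distinct edges of the same color cannot lie on a common cycle of length $\le n$, since such a cycle would force their $G$-distance to be strictly less than $n$.

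I would then run a greedy construction along the colors $0, 1, 2, \dotsc$. Set $G'_{-1} := \emptyset$, and having defined the Borel set $G'_{m-1}$, let $G'_m$ consist of $G'_{m-1}$ together with every edge $e$ of color $m$ for which $G'_{m-1} \cup \{e\}$ still omits all cycles of length $\le n$; this is an evidently Borel condition, as there are only countably many relevant short paths to inspect. Adding the entire eligible color class simultaneously is safe: a newly created cycle of length $\le n$ would contain either a single new edge, contradicting its eligibility, or two new edges of the same color, which is impossible by the defining property of $\kappa$. Hence each $G'_m$ is admissible, and so is the increasing Borel union $G' := \bigcup_m G'_m$, since any cycle is finite and therefore already lives in some single $G'_m$. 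Finally $G'$ is maximal admissible: if some $e \in G \setminus G'$ could be added without creating a short cycle, then for $m := \kappa(e)$ the set $G'_{m-1} \cup \{e\} \subseteq G' \cup \{e\}$ would likewise omit short cycles, so $e$ would have been placed into $G'_m \subseteq G'$, a contradiction. By the first paragraph, this Borel $G'$ is the required spanning subgraphing with no cycles of length $\le n$.

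The only genuinely delicate point is the simultaneous greedy step: one must commit to an entire color class at once — processing one edge at a time has no Borel meaning — and the correctness of this rests precisely on the coloring guaranteeing that same-colored edges are too far apart to interact within a short cycle. Everything else (local countability of $G$ and of $L$, existence of the countable Borel coloring, Borelness of each stage, and the maximality-implies-connectivity argument) is routine.
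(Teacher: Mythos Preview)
The paper does not prove this theorem; it merely quotes the result and cites \cite[3.2]{Mi}. There is therefore nothing in the paper to compare your argument against.

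For what it is worth, your proof is correct and follows the standard pattern for this type of result: build a maximal admissible subgraph greedily along color classes of a Borel proper coloring of a conflict graph on edges, with the conflict graph chosen so that two edges of the same color can never share a forbidden short cycle; then observe that maximality forces spanning, since an edge joining two distinct $G'$-components can never close a cycle. One small remark: the existence of a countable Borel proper coloring of the conflict graph $L$ is not quite literally in \cite{KST} in the generality you need (your $L$ is only locally countable, not locally finite), but it is immediate anyway---the equivalence relation on the edge set generated by $L$ is a countable Borel equivalence relation, so by Lusin--Novikov there is a Borel map to $\#N$ injective on each class, which is in particular a proper $L$-coloring.
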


Thus in contrast to the fact that not every countable Borel equivalence relation is treeable, we have the following result, using also \cite[proof of 3.12]{JKL}.

\begin{corollary}
Every countable Borel equivalence relation has locally finite graphings of arbitrarily large girth.
\end{corollary}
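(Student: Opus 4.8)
The plan is to combine the two cited results directly, with essentially no additional work. First I would invoke \cite[proof of 3.12]{JKL}, which shows that every countable Borel equivalence relation $E$ is structurable via locally finite (connected) graphs; fix such a locally finite graphing $G \subseteq E$. The key elementary observation is that passing to a subgraph preserves local finiteness, since the degree of any vertex in a subgraph is bounded by its degree in the ambient graph.

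Next, for each $n \in \#N$, I would apply the Kechris-Miller theorem \cite[3.2]{Mi} to the graphing $G$, obtaining a spanning subgraphing $G_n \subseteq G$ of $E$ with no cycles of length $\le n$, i.e.\ of girth exceeding $n$. Because $G_n$ is a subgraph of the locally finite graph $G$, it is itself locally finite; and because it is a \emph{spanning} subgraphing it remains connected on each $E$-class and hence still generates $E$. Thus $G_n$ is a locally finite graphing of $E$ of girth $> n$. Letting $n$ range over $\#N$ then yields locally finite graphings of $E$ of arbitrarily large girth, as desired.

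I do not expect any genuine obstacle here: both substantive facts — the existence of a locally finite graphing and the girth-increasing subgraphing — are quoted as black boxes, and the only thing to verify is that local finiteness descends to subgraphs, which is immediate. The one point worth stating carefully is the reading of ``spanning subgraphing'' in \cite[3.2]{Mi}: it means that the subgraph $G_n$ is again a graphing (connected on each class, and hence generating $E$), so that $G_n$ is genuinely a graphing of $E$ rather than merely a Borel subgraph. With this understood, the corollary is immediate.
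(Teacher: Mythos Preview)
Your proposal is correct and takes essentially the same approach as the paper: the paper presents this corollary as an immediate consequence of the Kechris--Miller theorem together with \cite[proof of 3.12]{JKL}, and your argument spells out exactly that combination. There is nothing to add.
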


\subsection{Classes axiomatizable by a Scott sentence}
\label{sec:scott}

Let us say that an elementary class $\@C \subseteq \@E$ is \defn{Scott axiomatizable}\index{Scott axiomatizable} if it is axiomatizable by a Scott sentence $\sigma_\#A$ of some structure $\#A$, or equivalently by some sentence $\sigma$ which is countably categorical (i.e., it has exactly one countable model up to isomorphism).  Several elementary classes we have considered are naturally Scott axiomatizable: e.g., aperiodic, aperiodic smooth, aperiodic hyperfinite (by $\sigma_\#Z$), free actions of a group $\Gamma$ (by $\sigma_\Gamma$), and compressible (by the sentence in the language $\{R\}$ asserting that $R$ is the graph of an injective function with infinite and coinfinite image and with no fixed points).

It is an open problem to characterize the elementary classes which are Scott axiomatizable.  In fact, we do not even know if every elementary class of aperiodic equivalence relations is Scott axiomatizable.  Here we describe a general construction which can be used to show that certain compressible elementary classes are Scott axiomatizable.

Let $(L, \sigma), (M, \tau)$ be theories.  Let $\sigma \times \tau$ be a sentence in the language $L \sqcup M \sqcup \{R_1, R_2\}$ asserting
\begin{enumerate}
\item[(i)]  $R_1, R_2$ are equivalence relations such that the quotient maps $X -> X/R_1$ and $X -> X/R_2$ (where $X$ is the universe) exhibit a bijection between $X$ and $X/R_1 \times X/R_2$; and
\item[(ii)]  the $L$-reduct (respectively $M$-reduct) is an $R_1$-invariant (resp., $R_2$-invariant) structure which induces a model of $\sigma$ (resp., $\tau$) on the quotient $X/R_1$ (resp., $X/R_2$).
\end{enumerate}
\index{cross product of theories $\sigma \times \tau$}
Thus, a countable $(\sigma \times \tau)$-structure $\#A$ on a set $X$ is essentially the same thing as a $\sigma$-structure $\#B$ on a set $Y$ and a $\tau$-structure $\#C$ on a set $Z$, together with a bijection $X \cong Y \times Z$.  The following are clear:

\begin{proposition}
$E_{\infty\sigma} \times E_{\infty\tau} |= \sigma \times \tau$ (equivalently, $E_{\infty\sigma} \times E_{\infty\tau} \sqle_B^i E_{\infty(\sigma \times \tau)}$).
\end{proposition}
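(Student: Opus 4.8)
The plan is to produce an explicit Borel $(\sigma\times\tau)$-structure on $E_{\infty\sigma}\times E_{\infty\tau}$; the parenthetical reformulation is then automatic. Indeed, by the invariant universality of $E_{\infty(\sigma\times\tau)}$ (\cref{thm:einftysigma}), any $F\models\sigma\times\tau$ satisfies $F\sqle_B^i E_{\infty(\sigma\times\tau)}$; conversely, since $\sqle_B^i$ is in particular a class-bijective homomorphism and $E_{\infty(\sigma\times\tau)}\models\sigma\times\tau$, \cref{thm:elem-classbij} gives $F\sqle_B^i E_{\infty(\sigma\times\tau)}\implies F\models\sigma\times\tau$. So the two clauses of the statement are equivalent, and it suffices to exhibit a Borel $\sigma\times\tau$-structure on $E_{\infty\sigma}\times E_{\infty\tau}$. (Recall this is a countable Borel equivalence relation, being a \emph{finite} cross product.)

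Fix Borel witnesses $\#B:E_{\infty\sigma}\models\sigma$ on $X$ and $\#C:E_{\infty\tau}\models\tau$ on $Y$, so that $E_{\infty\sigma}\times E_{\infty\tau}$ lives on $X\times Y$ with classes of the form $C\times D$, where $C$ is an $E_{\infty\sigma}$-class and $D$ an $E_{\infty\tau}$-class. On $X\times Y$ I would define the two auxiliary equivalence relations
\[
(x,y)\mathrel{R_1}(x',y')\iff x=x'\AND y\mathrel{E_{\infty\tau}}y', \qquad (x,y)\mathrel{R_2}(x',y')\iff y=y'\AND x\mathrel{E_{\infty\sigma}}x',
\]
which are Borel and relate only points lying in a common class. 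For the $L$-reduct I would take the classwise pullback of $\#B$ along the first projection, and for the $M$-reduct the classwise pullback of $\#C$ along the second; concretely, for $n$-ary $R\in L$,
\[
R^{\#A}((x_1,y_1),\dotsc,(x_n,y_n))\iff R^{\#B}(x_1,\dotsc,x_n)\AND y_1\mathrel{E_{\infty\tau}}\dotsb\mathrel{E_{\infty\tau}}y_n,
\]
and symmetrically for $M$ using $\#C$ (note $R^{\#B}(x_1,\dotsc,x_n)$ already forces $x_1\mathrel{E_{\infty\sigma}}\dotsb\mathrel{E_{\infty\sigma}}x_n$, so the added conjunct exactly ensures all points share one product-class, making $\#A$ a genuine structure on the relation).

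Next I would check that the restriction of this data to an arbitrary class $C\times D$ is a model of $\sigma\times\tau$. Inside $C\times D$ the $R_1$-classes are the vertical fibers $\{x\}\times D$ and the $R_2$-classes the horizontal fibers $C\times\{y\}$, so $z\mapsto([z]_{R_1},[z]_{R_2})$ is the bijection $C\times D\cong (C\times D)/R_1\times (C\times D)/R_2$, giving clause (i). Because $R^{\#A}$ depends on its arguments only through their first coordinates via $R^{\#B}$, the $L$-reduct is $R_1$-invariant and descends, under the identification $(C\times D)/R_1\cong C$, to $\#B|C$, which models $\sigma$; symmetrically the $M$-reduct descends under $(C\times D)/R_2\cong D$ to $\#C|D\models\tau$. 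This is clause (ii), so $\#A|(C\times D)\models\sigma\times\tau$.

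I do not expect a genuine obstacle here; the entire content is the explicit construction, and the only points demanding care are bookkeeping. Specifically, I would verify that $R_1,R_2$ and both reducts are Borel and relate only points within a common $(E_{\infty\sigma}\times E_{\infty\tau})$-class, and I would make the identifications $(C\times D)/R_1\cong C$ and $(C\times D)/R_2\cong D$ fully explicit, so as to confirm that the induced quotient structures are \emph{literally} $\#B|C$ and $\#C|D$ rather than some relabeling of them.
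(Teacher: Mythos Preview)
Your proposal is correct and is precisely the natural verification; the paper itself gives no proof, simply declaring the proposition ``clear,'' so you have just spelled out what the paper leaves implicit. The construction (pulling back $\#B$ and $\#C$ along the coordinate projections and taking $R_1, R_2$ to be the vertical and horizontal fibers) is exactly the intended one.
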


\begin{remark}
It is not true in general that $E_{\infty\sigma} \times E_{\infty\tau} \cong_B E_{\infty(\sigma \times \tau)}$.  For example, if $\sigma = \sigma_\Gamma$ and $\tau = \sigma_\Delta$ axiomatize free actions of countable groups $\Gamma, \Delta$, then it is easy to see that $\sigma_\Gamma \times \sigma_\Delta$ axiomatizes free actions of $\Gamma \times \Delta$; taking $\Gamma = \Delta = \#F_2$, we have that $E_{\infty(\sigma \times \tau)}$ is the universal orbit equivalence of a free action of $\#F_2 \times \#F_2$, which does not reduce to a product of two treeables (such as $E_{\infty\sigma} \times E_{\infty\tau}$) by \cite[8.1(iii)]{HK}.
\end{remark}

\begin{proposition}
If $\sigma, \tau$ are countably categorical, then so is $\sigma \times \tau$.
\end{proposition}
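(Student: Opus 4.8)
The plan is to establish both halves of countable categoricity for $\sigma \times \tau$ — existence of a countable model and uniqueness up to isomorphism — by exploiting the canonical decomposition of $(\sigma \times \tau)$-structures recorded just before the proposition. Write $\#B_0$ and $\#C_0$ for the unique countable models of $\sigma$ and $\tau$, with (countable) universes $Y_0$ and $Z_0$. For existence, I would build the product structure directly on $X_0 := Y_0 \times Z_0$, which is countable: let $R_1$ relate two pairs iff they agree in the first coordinate and $R_2$ relate them iff they agree in the second, so the projections $X_0 \to X_0/R_1 = Y_0$ and $X_0 \to X_0/R_2 = Z_0$ realize the bijection demanded in clause (i); then pull back $\#B_0$ along the first projection and $\#C_0$ along the second to interpret the symbols of $L$ and $M$. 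By construction this is a countable model of $\sigma \times \tau$, establishing consistency.

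For uniqueness, let $\#A$ and $\#A'$ be two countable models, on universes $X$ and $X'$. Clause (i) supplies canonical bijections $X \cong (X/R_1) \times (X/R_2)$ and $X' \cong (X'/R_1) \times (X'/R_2)$, and since $X, X'$ are countable so are the four quotients. By clause (ii) the $L$-reduct of $\#A$ induces a countable model $\#B \models \sigma$ on $X/R_1$ and the $M$-reduct induces a countable model $\#C \models \tau$ on $X/R_2$, and likewise $\#B', \#C'$ for $\#A'$. Countable categoricity of $\sigma$ and $\tau$ then furnishes isomorphisms $\alpha : \#B \cong \#B'$ and $\beta : \#C \cong \#C'$. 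I would form $\alpha \times \beta : (X/R_1) \times (X/R_2) \to (X'/R_1) \times (X'/R_2)$ and transport it along the clause-(i) bijections to a single bijection $h : X \to X'$.

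The remaining step, and the only real content, is to verify that $h$ is an isomorphism $\#A \cong \#A'$; this is where I would be most careful. Since $R_1$ (resp.\ $R_2$) is exactly the kernel of the first (resp.\ second) projection of the product decomposition and $\alpha, \beta$ are bijections, $h$ preserves $R_1$ and $R_2$. The heart of the verification is the observation that, by clause (ii), every symbol of $L$ is interpreted in $\#A$ purely through $\#B$ on the first-coordinate quotient — it is $R_1$-invariant and descends to $\#B$ — so that compatibility of $h$ with the $L$-symbols reduces to $\alpha$ being an isomorphism $\#B \cong \#B'$, and symmetrically the $M$-symbols reduce to $\beta$. In other words, the full structure $\#A$ is completely reconstructed from the triple consisting of $\#B$, $\#C$, and the product identification $X \cong (X/R_1) \times (X/R_2)$, whence $\alpha \times \beta$ transports $\#A$ to $\#A'$. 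I do not anticipate a genuine obstacle here: the argument is a routine reconstruction, and once it is spelled out that $\#A$ depends on nothing beyond that triple, it follows that any two countable models are isomorphic, which together with existence yields that $\sigma \times \tau$ is countably categorical.
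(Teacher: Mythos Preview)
Your proof is correct and follows exactly the approach the paper intends: the paper states this proposition as ``clear'' immediately after noting that a countable $(\sigma \times \tau)$-structure on $X$ is essentially the same thing as a $\sigma$-structure on $Y$, a $\tau$-structure on $Z$, and a bijection $X \cong Y \times Z$. You have simply spelled out the routine existence and uniqueness argument that this characterization makes evident.
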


Now consider the case where $\tau$ in the language $\{P_0, P_1, \dotsc\}$ asserts that the $P_i$ are disjoint singleton subsets which enumerate the universe.  Then clearly $\tau$ axiomatizes the aperiodic smooth countable Borel equivalence relations, i.e., $E_{\infty\tau} = \Delta_\#R \times I_\#N$, whence $E_{\infty\sigma} \times E_{\infty\tau} \cong_B E_{\infty\sigma} \times I_\#N$.

\begin{proposition}
For this choice of $\tau$, $E_{\infty(\sigma \times \tau)} \cong_B E_{\infty\sigma} \times E_{\infty\tau} \cong_B E_{\infty\sigma} \times I_\#N$.
\end{proposition}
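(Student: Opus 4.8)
The plan is to prove the nontrivial half, namely $E_{\infty(\sigma \times \tau)} \cong_B E_{\infty\sigma} \times E_{\infty\tau}$; the identification $E_{\infty\sigma} \times E_{\infty\tau} \cong_B E_{\infty\sigma} \times I_\#N$ has already been recorded above (from $E_{\infty\tau} = \Delta_\#R \times I_\#N$ and \cref{thm:univstr-idemp}). One inequality, $E_{\infty\sigma} \times E_{\infty\tau} \sqle_B^i E_{\infty(\sigma \times \tau)}$, is exactly the preceding proposition. By the Borel Schröder--Bernstein theorem it therefore suffices to establish the reverse invariant embedding $E_{\infty(\sigma \times \tau)} \sqle_B^i E_{\infty\sigma} \times I_\#N$, and I would in fact prove the stronger statement that \emph{every} $(\sigma \times \tau)$-structurable $F \in \@E$ satisfies $F \sqle_B^i E_{\infty\sigma} \times I_\#N$, then specialize to $F = E_{\infty(\sigma \times \tau)}$.

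The heart of the argument, and the step I expect to be the main obstacle, is a structural analysis of an arbitrary Borel $(\sigma \times \tau)$-structure $\#A$ on a given $(Y,F)$. Such a structure supplies Borel sub-equivalence-relations $R_1, R_2 \subseteq F$ with $y \mapsto ([y]_{R_1}, [y]_{R_2})$ a classwise bijection, an $R_1$-invariant $L$-reduct inducing $\sigma$ on $Y/R_1$, and an $R_2$-invariant $\{P_i\}$-reduct inducing $\tau$ on $Y/R_2$. The crucial observation is that the $\tau$-factor forces $R_1$ to be \emph{smooth}: on each $F$-class $D$ the predicate $P_0$ picks out a single $R_2$-class $K_0$, and since within any $R_1$-class the map $[\cdot]_{R_2}$ is a bijection onto $D/R_2$, each $R_1$-class meets $K_0$ in exactly one point; hence the Borel set $P_0^\#A$ is a transversal for $R_1$. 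Consequently $Y/R_1$ is standard Borel, the induced $F_1 := F/R_1$ is a countable Borel equivalence relation carrying a Borel $\sigma$-structure (so $F_1 |= \sigma$), and the Borel map $y \mapsto ([y]_{R_1}, j(y))$, where $j(y)$ is the index $i$ with $y \in P_i^\#A$, is a classwise bijection witnessing $F \cong_B F_1 \times I_\#N$.

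With this in hand the conclusion is bookkeeping. From $F_1 |= \sigma$ and \cref{thm:einftysigma} we get $F_1 \sqle_B^i E_{\infty\sigma}$, hence $F_1 ->_B^{cb} E_{\infty\sigma}$, and taking the product with the identity on $I_\#N$ gives $F \cong_B F_1 \times I_\#N ->_B^{cb} E_{\infty\sigma} \times I_\#N =: G$. Now $E_{\infty\sigma}$ is universally structurable (\cref{thm:univstr}), so $G$ is universally structurable by \cref{thm:compress-univstr}(b)(i); thus $F ->_B^{cb} G$ upgrades to $F \sqle_B^i G$ by the characterization in \cref{thm:univstr}(iii). Specializing to $F = E_{\infty(\sigma \times \tau)}$ yields $E_{\infty(\sigma \times \tau)} \sqle_B^i E_{\infty\sigma} \times I_\#N \cong_B E_{\infty\sigma} \times E_{\infty\tau}$, which combines with the preceding proposition and Schröder--Bernstein to give $E_{\infty(\sigma \times \tau)} \cong_B E_{\infty\sigma} \times E_{\infty\tau} \cong_B E_{\infty\sigma} \times I_\#N$, as desired.
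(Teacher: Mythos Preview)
Your proof is correct and follows essentially the same idea as the paper: both observe that $P_0^\#A$ is a Borel transversal for $R_1$, so that the $R_1$-quotient (equivalently, the restriction to $P_0^\#A$) is a $\sigma$-structurable equivalence relation $F_1$, and then use the $P_i$-indexing to produce the $I_\#N$ factor, yielding $F \cong_B F_1 \times I_\#N$ with $F_1 \sqle_B^i E_{\infty\sigma}$.

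The only difference is in the final step, where you take an unnecessary detour. Having obtained $F_1 \sqle_B^i E_{\infty\sigma}$, you downgrade this to $F_1 \to_B^{cb} E_{\infty\sigma}$, take the product with $I_\#N$ to get a class-bijective map, and then invoke universal structurability of $E_{\infty\sigma} \times I_\#N$ (via \cref{thm:compress-univstr}(b)(i)) to upgrade back to an invariant embedding. But the product of an invariant embedding with the identity is already an invariant embedding: if $h : F_1 \sqle_B^i E_{\infty\sigma}$, then $h \times \mathrm{id}_\#N : F_1 \times I_\#N \sqle_B^i E_{\infty\sigma} \times I_\#N$ directly, since the image $h(Y_1) \times \#N$ is $(E_{\infty\sigma} \times I_\#N)$-invariant. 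The paper simply writes down this map explicitly as $g(x) = (f(x), i)$, where $f$ is the invariant embedding of the $P_0$-slice and $i$ is the unique index with $x \in P_i^\#E$. Your route is valid but longer than necessary.
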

\begin{proof}
Let $E_{\infty(\sigma \times \tau)}$ live on $X$ and let $\#E : E_{\infty(\sigma \times \tau)} |= \sigma \times \tau$.  Then from the definition of $\sigma \times \tau$, we have that (the reduct to the language of $\sigma$ of) $\#E|P_0^\#E : E_{\infty(\sigma \times \tau)}|P_0^\#E |= \sigma$ (where $P_i$ is from the language of $\tau$ as above).  Let $f : E_{\infty(\sigma \times \tau)}|P_0^\#E \sqle_B^i E_{\infty\sigma}$.  Then it is easy to see that $g : E_{\infty(\sigma \times \tau)} \sqle_B^i E_{\infty\sigma} \times I_\#N$, where $g(x) := (f(x), i)$ for the unique $i$ such that $x \in P_i^\#E$.
\end{proof}

Since $\tau$ is clearly countably categorical, this yields

\begin{corollary}
If an elementary class $\@E_E$ is Scott axiomatizable, then so is $\@E_{E \times I_\#N}$.

In particular, if an elementary class $\@C$ is Scott axiomatizable and closed under $E |-> E \times I_\#N$, then $\@C \cap \@E_c$ (i.e., the compressible elements of $\@C$) is Scott axiomatizable.
\end{corollary}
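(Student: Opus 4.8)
The plan is to deduce both statements from the two propositions immediately preceding the corollary, together with the fact (\cref{thm:univstr-elem}) that an elementary class is completely determined by its $\sqle_B^i$-universal element. Throughout I fix the sentence $\tau$ in the language $\{P_0, P_1, \dotsc\}$ asserting that the $P_i$ are disjoint singletons enumerating the universe; recall that $\tau$ is countably categorical, that $\sigma \times \tau$ is countably categorical whenever $\sigma$ is, and that $E_{\infty(\sigma \times \tau)} \cong_B E_{\infty\sigma} \times I_\#N$.

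For the first statement, suppose $\@E_E = \@E_\sigma$ for a countably categorical $\sigma$. Since $E_{\infty\sigma}$ is the $\sqle_B^i$-universal element of $\@E_\sigma = \@E_E$, we have $E_{\infty\sigma} \cong_B E_\infty \otimes E$. The $\sqle_B^i$-universal element of $\@E_{E \times I_\#N}$ is $E_\infty \otimes (E \times I_\#N)$, which by \cref{thm:einfty-e-in} is isomorphic to $(E_\infty \otimes E) \times I_\#N \cong_B E_{\infty\sigma} \times I_\#N$, and by the preceding proposition this is in turn isomorphic to $E_{\infty(\sigma \times \tau)}$. Thus the elementary classes $\@E_{E \times I_\#N}$ and $\@E_{\sigma \times \tau}$ share a $\sqle_B^i$-universal element, so they coincide; and since $\sigma \times \tau$ is countably categorical, $\@E_{E \times I_\#N}$ is Scott axiomatizable.

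For the second statement, let $E$ be the $\sqle_B^i$-universal element of $\@C$, so that $\@C = \@E_E$ and $E$ is universally structurable (\cref{thm:univstr}), whence $E_\infty \otimes E \cong_B E$. The first statement already gives that $\@E_{E \times I_\#N}$ is Scott axiomatizable, so it suffices to identify $\@C \cap \@E_c$ with $\@E_{E \times I_\#N}$, which I do by comparing $\sqle_B^i$-universal elements. Note $\@C \cap \@E_c$ is elementary, being $\@E_\sigma \cap \@E_{\sigma_c} = \@E_{\sigma \otimes \sigma_c}$ for $\@C = \@E_\sigma$. I claim its $\sqle_B^i$-universal element is $E \times I_\#N$: it is compressible and lies in $\@C$ (which is closed under $E \mapsto E \times I_\#N$), and for any compressible $F \in \@C$ we have $F \sqle_B^i E$, hence $F \cong_B F \times I_\#N \sqle_B^i E \times I_\#N$ by \cref{thm:compress}(a). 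On the other hand, the $\sqle_B^i$-universal element of $\@E_{E \times I_\#N}$ is $E_\infty \otimes (E \times I_\#N) \cong_B (E_\infty \otimes E) \times I_\#N \cong_B E \times I_\#N$, using \cref{thm:einfty-e-in} and universal structurability of $E$. Hence the two elementary classes share a $\sqle_B^i$-universal element and so are equal, completing the proof.

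The only real bookkeeping — and the step most prone to error — is verifying that $E \times I_\#N$ is genuinely $\sqle_B^i$-universal in $\@C \cap \@E_c$, rather than merely $\le_B$-greatest in the corresponding bireducibility class. This is exactly where both the hypothesis that $\@C$ is closed under $E \mapsto E \times I_\#N$ (to guarantee $E \times I_\#N \in \@C$) and \cref{thm:compress}(a) (to promote reductions between compressibles to invariant embeddings) are essential. Everything else is a matter of matching up invariantly universal elements via \cref{thm:einfty-e-in} and the two preceding propositions, and invoking \cref{thm:univstr-elem} to pass from equality of $\sqle_B^i$-universal elements to equality of elementary classes.
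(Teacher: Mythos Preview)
Your proof is correct and follows essentially the same route as the paper. The first statement is argued identically. For the second, the paper simply cites \cref{thm:compress-univstr} (which shows $E \times I_\#N$ is universally structurable when $E$ is) to conclude $\@C \cap \@E_c = \@E_{E \times I_\#N}$; you instead verify directly that $E \times I_\#N$ is the $\sqle_B^i$-universal element of both classes using \cref{thm:compress}(a) and \cref{thm:einfty-e-in}, which amounts to unpacking that citation.
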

\begin{proof}
If $\@E_E = \@E_\sigma$ where $\sigma$ is countably categorical, then $E_\infty \otimes (E \times I_\#N) = (E_\infty \otimes E) \times I_\#N = E_{\infty\sigma} \times I_\#N = E_{\infty(\sigma \times \tau)}$ (using \cref{thm:einfty-e-in}), whence $\@E_{E \times I_\#N} = \@E_{\sigma \times \tau}$.

For the second statement, if $\@C = \@E_E$ where $E$ is universally structurable, then $\@C \cap \@E_c = \@E_{E \times I_\#N}$ (\cref{thm:compress-univstr}).
\end{proof}

\begin{corollary}
\label{thm:compress-hyperfinite-treeable-scott}
The following elementary classes are Scott axiomatizable: compressible hyperfinite, compressible treeable.
\end{corollary}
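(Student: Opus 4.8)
The plan is to apply the preceding corollary twice: once to a Scott-axiomatizable parent class $\@C$ whose compressible part is exactly the compressible hyperfinite relations, and once to one whose compressible part is exactly the compressible treeable relations. Recall that corollary requires an elementary class $\@C$ that is Scott axiomatizable and closed under $E \mapsto E \times I_\#N$, and then concludes that $\@C \cap \@E_c$ is Scott axiomatizable. Since every compressible equivalence relation is aperiodic, in each case $\@C \cap \@E_c$ is the class of compressible members of $\@C$. For the closure hypothesis I will repeatedly use that $E \times I_\#N$ is always compressible (\cref{thm:compress}(a)) and satisfies $E \times I_\#N \le_B E \sim_B E \times I_\#N$, together with the fact that the classes of hyperfinite and of treeable relations are closed under $\le_B$ (being elementary reducibility classes).

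For compressible hyperfinite I take $\@C$ to be the aperiodic hyperfinite relations, which by \cref{thm:hyperfinite} equal $\@E_{\sigma_\#Z}$, the relations structurable by copies of $(\#Z,<)$; as $(\#Z,<)$ is the unique countable model of $\sigma_\#Z$, this class is Scott axiomatizable. It is closed under $E \mapsto E \times I_\#N$, since for aperiodic hyperfinite $E$ the relation $E \times I_\#N \le_B E$ is hyperfinite and has infinite classes. The preceding corollary then gives that $\@E_{\sigma_\#Z} \cap \@E_c$, which is precisely the compressible hyperfinite relations, is Scott axiomatizable.

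For compressible treeable the parent class must be built by hand, and this is where the real work lies. Note first that a single tree cannot structure \emph{all} aperiodic treeable relations (by Gaboriau's theorem all treeings of a measure-preserving treeable relation have the same cost, so e.g.\ no infinite-degree tree structures a finite-cost example); but this cost obstruction disappears for compressible relations, which admit no invariant probability measure. So let $\#T_\infty$ denote the countable tree in which every vertex has infinite degree, which is unique up to isomorphism by a standard back-and-forth; its Scott sentence $\sigma_{\#T_\infty}$ is countably categorical, so $\@E_{\#T_\infty}$ is Scott axiomatizable, and it consists of aperiodic treeable relations. I take $\@C := \@E_{\#T_\infty}$. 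The key claim, and the main obstacle, is that every compressible treeable relation is $\#T_\infty$-structurable, i.e.\ compressible treeable $\subseteq \@E_{\#T_\infty}$. To prove it I would start from a Borel treeing $T \subseteq E$ of a compressible treeable $(X,E)$ and homogenize it: set $X' := X \sqcup (X \times \#N \times S)$, where $S$ is the vertex set of a fixed rooted countable tree in which every vertex has infinitely many children, and let $T'$ be $T$ together with, for each $x \in X$ and $n \in \#N$, a fresh copy of that rooted tree attached by a single edge at $x$. Every vertex of $T'$ then has infinite degree, and $T'$ is acyclic and connected on each class, so each class of the generated relation $E'$ is isomorphic to $\#T_\infty$; thus $E'$ is $\#T_\infty$-structurable. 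Since $X \hookrightarrow X'$ is a complete section embedding $E \sqle_B E'$ and $E$ is compressible, \cite[2.2]{DJK} gives $E \cong_B E'$ (exactly the device used in \cref{thm:indepjoin-treeable}), whence $E$ is $\#T_\infty$-structurable. The delicate points to verify are that this construction is Borel and uniform in $x$, and the uniqueness of $\#T_\infty$.

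Granting the claim, the bookkeeping is immediate. On one hand $\@E_{\#T_\infty}$ is contained in the class of treeable relations, since a $\#T_\infty$-structure is a treeing, so $\@E_{\#T_\infty} \cap \@E_c \subseteq$ compressible treeable; on the other hand compressible treeable $\subseteq \@E_{\#T_\infty} \cap \@E_c$ by the claim, so equality holds. Finally $\@E_{\#T_\infty}$ is closed under $E \mapsto E \times I_\#N$: for $E \in \@E_{\#T_\infty}$ the relation $E \times I_\#N$ is compressible and treeable (as $E \times I_\#N \sim_B E$), hence lies in $\@E_{\#T_\infty}$ by the claim again. The preceding corollary then yields that $\@E_{\#T_\infty} \cap \@E_c$, which equals the class of compressible treeable relations, is Scott axiomatizable, completing the proof.
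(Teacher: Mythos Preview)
Your argument is correct. For the compressible hyperfinite case you do exactly what the paper (implicitly) does: take $\@C = \@E_{\sigma_\#Z}$, the aperiodic hyperfinite relations, and apply the preceding corollary.

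For the compressible treeable case you take a genuinely different route. The paper uses $\@C = \@E_{\#F_2}$, the class of relations generated by a free Borel action of $\#F_2$, and invokes the result of \cite[3.17]{JKL} that $E_{\infty\#F_2}$ is $\sqle_B$-universal among treeables; this immediately gives both that $\@E_{\#F_2}$ is closed under $E \mapsto E \times I_\#N$ and that $\@E_{\#F_2} \cap \@E_c$ is exactly the compressible treeables. You instead take $\@C = \@E_{\#T_\infty}$ for the regular infinite-degree tree $\#T_\infty$, and prove the key inclusion (compressible treeable $\subseteq \@E_{\#T_\infty}$) by a direct homogenization: graft infinitely many copies of a rooted infinite-branching tree onto every vertex of a given treeing, then use compressibility and \cite[2.2]{DJK} to transport the resulting $\#T_\infty$-structuring back. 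Your approach is more self-contained (it avoids the black box from \cite{JKL}) and has the pleasant feature that the structure witnessing Scott axiomatizability is itself a tree; the paper's approach is shorter and exploits a result already in the literature. Both lead to the same conclusion via the same corollary.
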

\begin{proof}
For the compressible treeables, use that $E_{\infty\#F_2}$ (i.e., the $\sqle_B^i$-universal orbit equivalence of a free action of $\#F_2$) is $\sqle_B$-universal treeable \cite[3.17]{JKL}; it follows that $\@E_{\#F_2}$ is closed under $E |-> E \times I_\#N$, and also that $\@E_{\#F_2} \cap \@E_c$ is the class of compressible treeables.
\end{proof}

However, we do not know if the elementary class of aperiodic treeable equivalence relations is Scott axiomatizable.

\section{Some open problems}
\label{sec:open-problems}

\subsection{General questions}

At the end of \cref{sec:tensor} we asked:

\begin{problem}
Is $E \otimes E$ universally structurable (or equivalently, isomorphic to $E_\infty \otimes E$) for every aperiodic $E$?
\end{problem}

The following question (\cref{rmk:aperiodic-univstr-compress}) concerns the structure of universally structurable $\sim_B$-classes:

\begin{problem}
Is $E \times I_\#N \sqle_B E$ for every aperiodic universally structurable $E$?  Equivalently, is the compressible element of every universally structurable $\sim_B$-class the $\sqle_B^i$-least of the aperiodic elements?
\end{problem}

\begin{addendum*}
The answer is no; see addendum after \cref{rmk:aperiodic-univstr-compress}.
\end{addendum*}

By \cref{thm:pbr}, we know that there are many incomparable elementary reducibility classes, or equivalently, many $\le_B$-incomparable universally structurable $E$.  However, these were produced using the results in \cite{AK}, which use rigidity theory for measure-preserving group actions.  One hope for the theory of structurability is the possibility of producing $\le_B$-incomparable equivalence relations using other methods, e.g., using model theory.

\begin{problem}
Show that there are $\le_B$-incomparable $E_{\infty\sigma}, E_{\infty\tau}$ without using ergodic theory.
\end{problem}

\subsection{Order-theoretic questions}

We turn now to the order-theoretic structure of the lattice $(\@E_\infty/{\cong_B}, {\sqle_B^i})$ (equivalently, the poset of elementary classes) and the lattice $(\@E_\infty/{\sim_B}, {\le_B})$ (equivalently, the poset of elementary reducibility classes).  The following questions, posed in \cref{sec:lattice} (near end), are natural from an abstract order-theoretic perspective, though perhaps not so approachable:

\begin{problem}
\label{prb:lattice-complete}
Is either $(\@E_\infty/{\cong_B}, {\sqle_B^i})$ or $(\@E_\infty/{\sim_B}, {\le_B})$ a complete lattice?  If so, is it completely distributive?
\end{problem}

\begin{problem}
Is either $(\@E_\infty/{\cong_B}, {\sqle_B^i})$ or $(\@E_\infty/{\sim_B}, {\le_B})$ a zero-dimensional $\omega_1$-complete lattice, in that it embeds into $2^X$ for some set $X$?
\end{problem}

We noted above (\cref{thm:lattice-0d}) that the recent work of Marks \cite{M} gives some examples of $\omega_1$-prime filters on $(\@E_\infty/{\sim_B}, {\le_B})$, and also (\cref{thm:indepjoin-treeable}) that these filters cannot separate elements of $(\@E_\infty/{\sim_B}, {\le_B})$ below the universal treeable equivalence relation $E_{\infty T}$.

Regarding \cref{prb:lattice-complete}, a natural attempt at a negative answer would be to show that some ``sufficiently complicated'' collection of universally structurable equivalence relations does not have a join.  For example, one could try to find the join of a strictly increasing $\omega_1$-sequence.

\begin{problem}
\label{prb:omega1-seq}
Is there an ``explicit'' strictly increasing $\omega_1$-sequence in $(\@E_\infty/{\cong_B}, {\sqle_B^i})$? Similarly for $(\@E_\infty/{\sim_B}, {\le_B})$.
\end{problem}

Note that by \cref{thm:pbr}, such a sequence does exist, abstractly; the problem is thus to find a sequence which is in some sense ``definable'', preferably corresponding to some ``natural'' hierarchy of countable structures.  For example, a long-standing open problem (implicit in e.g., \cite[Section~2.4]{JKL}) asks whether the sequence of elementary classes $(\@E_\alpha)_{\alpha < \omega_1}$, where
\begin{align*}
\@E_0 &:= \{\text{hyperfinite}\}, \\
\@E_\alpha &:= \{\text{countable increasing union of $E \in \@E_\beta$ for $\beta < \alpha$}\},
\end{align*}
stabilizes (or indeed is constant); a negative answer would constitute a positive solution to \cref{prb:omega1-seq}.

One possible approach to defining an $\omega_1$-sequence would be by iterating a ``jump'' operation, $E |-> E'$, that sends any non-universal $E \in \@E_\infty$ to some non-universal $E' \in \@E_\infty$ such that $E <_B E'$.

\begin{problem}
Is there an ``explicit'' jump operation on the non-universal elements of $(\@E_\infty/{\sim_B}, {\le_B})$?
\end{problem}

On the other hand, this would not be possible if there were a greatest non-universal element:

\begin{problem}
Is there a greatest element among the non-universal elements of $(\@E_\infty/{\cong_B}, {\sqle_B^i})$, or of $(\@E_\infty/{\sim_B}, {\le_B})$?  If so, do the non-universal equivalence relations form an elementary class, i.e., are they downward-closed under $->_B^{cb}$?
\end{problem}

\subsection{Model-theoretic questions}

The general model-theoretic question concerning structurability is which properties of a theory $(L, \sigma)$ (or a Borel class of structures $\@K$) yield properties of the corresponding elementary class $\@E_\sigma$ (or $\@E_\@K$).  \Cref{thm:einftys-smooth} fits into this mold, by characterizing the $\sigma$ which yield smoothness.  One could seek similar results for other properties of countable Borel equivalence relations.

\begin{problem}
Find a model-theoretic characterization of the $\sigma$ such that $\@E_\sigma$ consists of only hyperfinite equivalence relations, i.e., such that $\sigma => \sigma_{hf}$, for any sentence $\sigma_{hf}$ axiomatizing hyperfiniteness.
\end{problem}

Less ambitiously, one might look for ``natural'' examples of such $\sigma$, for specific classes of structures.  For example, for the Borel class of locally finite graphs, we have:
\begin{itemize}
\item  If $E$ is structurable via locally finite trees with one end, then $E$ is hyperfinite \cite[8.2]{DJK}.
\item  If $E$ is structurable via locally finite graphs with two ends, then $E$ is hyperfinite \cite[5.1]{Mi}.
\end{itemize}

\begin{remark}
If $E$ is structurable via locally finite graphs with at least 3 but finitely many ends, then $E$ is smooth; this follows from \cite[6.2]{Mi}, or simply by observing that in any such graph, a finite nonempty subset may be defined as the set of all vertices around which the removal of a ball of minimal radius leaves $\ge 3$ infinite connected components.
\end{remark}

\begin{problem}
Find ``natural'' examples of $\sigma$ such that $\@E_\sigma$ consists of only Fréchet-amenable equivalence relations (see \cite[2.12]{JKL}).
\end{problem}

For example, every $E$ structurable via countable scattered linear orders is Fréchet-amenable \cite[2.19]{JKL} (recall that a countable linear order is \defn{scattered} if it does not embed the rationals); note however that the scattered linear orders do not form a Borel class of structures.

\begin{problem}
Find ``natural'' examples of $\sigma$ such that $\@E_\sigma$ consists of only compressible equivalence relations.
\end{problem}

For example, by \cite{Mi2}, the class of $E$ structurable via locally finite graphs whose space of ends is not perfect but has cardinality at least 3 is exactly $\@E_c$.

There is also the converse problem of determining for which $\sigma$ is every equivalence relation of a certain form (e.g., compressible) $\sigma$-structurable.  \Cref{thm:einftya-univ} fits into this mold, by giving a sufficient condition for a single structure to structure every aperiodic equivalence relation.

\begin{problem}
Is there a structure $\#A$ without TDC which structures every aperiodic countable Borel equivalence relation?  That is, does the converse of \cref{thm:einftya-univ} hold?
\end{problem}

In particular, does $\#Q \times \#Z$ structure every aperiodic equivalence relation?  We noted above that it structures every compressible equivalence relation, thus the analogous question for the compressibles has a negative answer.

\begin{problem}
Find a model-theoretic characterization of the structures $\#A$ such that every compressible equivalence relation is $\#A$-structurable, i.e., $\@E_c \subseteq \@E_\#A$.
\end{problem}

We also have the corresponding questions for theories (or classes of structures):

\begin{problem}
Find a model-theoretic characterization of the $\sigma$ such that $\@E_\sigma = \@E$, or more generally, $\@E_c \subseteq \@E_\sigma$.
\end{problem}

We gave several examples in \cref{sec:einftys-univ}.  Another example is the following \cite[4.1]{Mi}: every $E \in \@E$ is structurable via locally finite graphs with at most one end.

For a different sort of property that $\@E_\sigma$ may or may not have, recall (\cref{sec:elemred}) that $\@E_\sigma$ is an elementary reducibility class, i.e., closed under $\le_B$, when $\sigma$ axiomatizes linear orders embeddable in $\#Z$, or when $\sigma$ axiomatizes trees.

\begin{problem}
Find a model-theoretic characterization of the $\sigma$ such that $\@E_\sigma$ is closed under $\le_B$.
\end{problem}

We considered in \cref{sec:scott} the question of which elementary classes are Scott axiomatizable, i.e., axiomatizable by a Scott sentence.

\begin{problem}
Find other ``natural'' examples of Scott axiomatizable elementary classes.
\end{problem}

We showed above (\cref{thm:compress-hyperfinite-treeable-scott}) that the class of compressible treeable equivalence relations is Scott axiomatizable.

\begin{problem}
Is the class of aperiodic treeable (countable Borel) equivalence relations Scott axiomatizable?
\end{problem}

\begin{remark}
The class of aperiodic treeables cannot be axiomatized by the Scott sentence of a single countable tree $T$.  Indeed, since $E_0$ would have to be treeable by $T$, by a result of Adams (see \cite[22.3]{KM}), $T$ can have at most $2$ ends; but then by \cite[8.2]{DJK} and \cite[5.1]{Mi}, every $E$ treeable by $T$ is hyperfinite.
\end{remark}

\begin{problem}
Find a model-theoretic characterization of the $\sigma$ such that $\@E_\sigma$ is axiomatizable by a Scott sentence (possibly in some other language).  In particular, is every elementary class of aperiodic, or compressible, equivalence relations axiomatizable by a Scott sentence?
\end{problem}

We conclude by stating two very general (and ambitious) questions concerning the relationship between structurability and model theory.  For the first, note that by (i)$\iff$(ii) in \cref{thm:einftys-smooth}, the condition $\sigma =>^* \sigma_{sm}$ is equivalent to the existence of a formula(s) in the language of $\sigma$ with some definable properties which are logically implied by $\sigma$.  Our question is whether a similar equivalence continues to hold when $\sigma_{sm}$ is replaced by an arbitrary sentence $\tau$.

\begin{problem}
\label{prb:elem-interp}
Is there, for any $\tau$, a sentence $\tau'(R_1, R_2, \dotsc)$ in a language consisting of relation symbols $R_1, R_2, \dotsc$ (thought of as ``predicate variables''), such that for any $\sigma$, we have $\sigma =>^* \tau$ iff there are formulas $\phi_1, \phi_2, \dotsc$ in the language of $\sigma$ such that $\sigma$ logically implies $\tau'(\phi_1, \phi_2, \dotsc)$ (the result of ``substituting'' $\phi_i$ for $R_i$ in $\tau'$)?
\end{problem}

Finally, there is the question of completely characterizing containment between elementary classes:

\begin{problem}
Find a model-theoretic characterization of the pairs $(\sigma, \tau)$ such that $\sigma =>^* \tau$.
\end{problem}

\appendix

\section{Appendix: Fiber spaces}
\label{sec:fiber}

In this appendix, we discuss fiber spaces on countable Borel equivalence relations, which provide a more general context for structurability and related  notions.  The application of fiber spaces to structurability was previously considered in \cite{G} and \cite[Appendix~D]{HK}.

In both this appendix and the next, we will use categorical terminology somewhat more liberally than in the body of this paper.

\subsection{Fiber spaces}

Let $(X, E) \in \@E$ be a countable Borel equivalence relation.  A \defn{fiber space over $E$}\index{fiber space} consists of a countable Borel equivalence relation $(U, P)$, together with a surjective countable-to-1 class-bijective homomorphism $p : P ->_B^{cb} E$.  We refer to the fiber space by $(U, P, p)$, by $(P, p, E)$, by $(P, p)$, or (ambiguously) by $P$.  We call $(U, P)$ the \defn{total space}, $(X, E)$ the \defn{base space}, and $p$ the \defn{projection}.  For $x \in X$, the \defn{fiber} over $x$ is the set $p^{-1}(x) \subseteq U$.  For $x, x' \in X$ such that $x \mathrel{E} x'$, we let
\begin{align*}
p^{-1}(x, x') : p^{-1}(x) -> p^{-1}(x')
\end{align*}
denote the \defn{fiber transport map}, where for $u \in p^{-1}(x)$, $p^{-1}(x, x')(y)$ is the unique $u' \in p^{-1}(x')$ such that $u \mathrel{P} u'$.

For two fiber spaces $(U, P, p), (V, Q, q)$ over $(X, E)$, a \defn{fiberwise map}\index{fiberwise map $\~f : P ->_E Q$} between them \defn{over $E$}, denoted $\~f : (P, p) ->_E (Q, q)$ (we use letters like $\~f, \~g$ for maps between total spaces), is a homomorphism $\~f : P ->_B Q$ such that $p = q \circ \~f$ (note that this implies that $\~f$ is class-bijective):
\begin{equation*}
\begin{tikzcd}[column sep=0pt]
(U, P) \drar[swap]{p} \ar{rr}{\~f} && (V, Q) \dlar{q} \\
& (X, E)
\end{tikzcd}
\end{equation*}

For a fiber space $(U, P, p)$ over $(X, E)$ and a fiber space $(V, Q, q)$ over $(Y, F)$, a \defn{fiber space homomorphism}\index{fiber space homomorphism} from $(P, p, E)$ to $(Q, q, F)$, denoted $f : (P, p, E) -> (Q, q, F)$, consists of two homomorphisms $f : E ->_B F$ and $\~f : P ->_B Q$ such that $f \circ p = q \circ \~f$:
\begin{equation*}
\begin{tikzcd}
(U, P) \dar[swap]{p} \rar{\~f} & (V, Q) \dar{q} \\
(X, E) \rar[swap]{f} & (Y, F)
\end{tikzcd}
\end{equation*}
We sometimes refer to $\~f$ as the fiber space homomorphism; note that $f$ is determined by $\~f$ (since $p$ is surjective).  We say that $\~f$ is a fiber space homomorphism \defn{over} $f$.  Note that a fiberwise map over $E$ is the same thing as a fiber space homomorphism over the identity function on $E$.

A fiber space homomorphism $f : (P, p, E) -> (Q, q, F)$ is \defn{fiber-bijective}\index{fiber space homomorphism!fiber-bijective} if $\~f|p^{-1}(x) : p^{-1}(x) -> q^{-1}(f(x))$ is a bijection for each $x \in X$ (where $E$ lives on $X$); \defn{fiber-injective}, \defn{fiber-surjective} are defined similarly.

Let $(U, P, p)$ be a fiber space over $(X, E)$, and let $(Y, F) \in \@E$ be a countable Borel equivalence relation with a homomorphism $f : F ->_B E$.  Recall (\cref{sec:pullback}) that we have the fiber product equivalence relation $(Y \times_X U, F \times_E P)$ with respect to $f$ and $p$, which comes equipped with the canonical projections $\pi_1 : F \times_E P ->_B F$ and $\pi_2 : F \times_E P ->_B P$ obeying $f \circ \pi_1 = p \circ \pi_2$.  It is easy to check that $\pi_1$ is class-bijective, surjective, and countable-to-1 (because $p$ is).  In this situation, we also use the notation
\begin{align*}
(f^{-1}(U), f^{-1}(P), f^{-1}(p)) = f^{-1}(U, P, p) := (Y \times_X U, F \times_E P, \pi_1).
\end{align*}
Note that $\~f := \pi_2 : f^{-1}(P) ->_B P$ is then a fiber space homomorphism over $f$.  We refer to $f^{-1}(U, P, p)$ as the \defn{pullback of $(U, P, p)$ along $f$}\index{pullback fiber space $f^{-1}(U, P, p)$}.  Here is a diagram:
\begin{equation*}
\begin{tikzcd}
f^{-1}(U, P) \dar[swap]{f^{-1}(p)} \rar{\~f} & (U, P) \dar{p} \\
(Y, F) \rar[swap]{f} & (X, E)
\end{tikzcd}
\end{equation*}

Let $\*{Fib}(E)$ denote the category of fiber spaces and fiberwise maps over $E$, and let $\int_\@E \*{Fib}$ denote the category of fiber spaces and fiber space homomorphisms.  For a homomorphism $f : E ->_B F$, pullback along $f$ gives a functor $f^{-1} : \*{Fib}(F) -> \*{Fib}(E)$ (with the obvious action on fiberwise maps).  The assignment $f |-> f^{-1}$ is itself functorial, and turns $\*{Fib}$ into a contravariant functor from the category $(\@E, ->_B)$ to the category of (essentially small) categories.  (Technically $f |-> f^{-1}$ is only \emph{pseudo}functorial, i.e., $f^{-1}(g^{-1}(P))$ is naturally isomorphic, not equal, to $(g \circ f)^{-1}(P)$; we will not bother to make this distinction.)

\subsection{Fiber spaces and cocycles}

Let $(U, P, p)$ be a fiber space over $(X, E) \in \@E$.  By Lusin-Novikov uniformization, we may Borel partition $X$ according to the cardinalities of the fibers.  Suppose for simplicity that each fiber is countably infinite.  Again by Lusin-Novikov uniformization, there is a Borel map $T : X -> U^\#N$ such that each $T(x)$ is a bijection $\#N -> p^{-1}(x)$.  Let $\alpha_T : E -> S_\infty$ be the cocycle given by
\begin{align*}
\alpha_T(x, x') := T(x')^{-1} \circ p^{-1}(x, x') \circ T(x)
\end{align*}
(where $p^{-1}(x, x')$ is the fiber transport map; compare \cref{rmk:skew-product}).  We then have a (fiberwise) isomorphism of fiber spaces over $E$, between $(U, P, p)$ and the skew product $E \ltimes_{\alpha_T} \#N$ (with its canonical projection $q : E \ltimes_{\alpha_T} \#N ->_B^{cb} E$):
\begin{align*}
(U, P, p) &<--> (X \times \#N, E \ltimes_{\alpha_T} \#N, q) \\
u &|--> (p(u), T(p(u))^{-1}(u)) \\
T(x)(n) &<--| (x, n).
\end{align*}

Recall that two cocycles $\alpha, \beta : E -> S_\infty$ are \defn{cohomologous} if there is a Borel map $\phi : X -> S_\infty$ such that $\phi(x') \alpha(x, x') = \beta(x, x') \phi(x)$, for all $(x, x') \in E$.  It is easy to see that in the above, changing the map $T : X -> U^\#N$ results in a cohomologous cocycle $\alpha_T : E -> S_\infty$; so we get a well-defined map from (isomorphism classes of) fiber spaces over $E$ with countably infinite fibers to $S_\infty$-valued cohomology classes on $E$.  Conversely, given any cocycle $\alpha : E -> S_\infty$, the skew product $E \ltimes_\alpha \#N$ yields a fiber space over $E$ with countably infinite fibers.  These two operations are inverse to each other, so we have a bijection
\begin{align*}
&\{\text{iso.\ classes of fiber spaces over $E$ with $\aleph_0$-sized fibers}\} 
\cong \{\text{$S_\infty$-valued cohomology classes on $E$}\}.
\end{align*}

\begin{remark}
In fact, we have the following more refined correspondence, which also smoothly handles the case with finite fibers.  Let $\*C$ denote the category whose objects are $1, 2, \dotsc, \#N$ and morphisms are maps between them (where as usual, $n = \{0, \dotsc, n-1\}$ for $n \in \#N$).  Then $\*C$ is a ``standard Borel category''.  Regarding $E$ as the groupoid on $X$ with a single morphism between any two related points, we have a \defn{Borel functor category} $\*C^E_B$, whose objects are Borel functors $E -> \*C$ and morphisms are Borel natural transformations.  We then have a functor
\begin{align*}
\*C^E_B --> \*{Fib}(E)
\end{align*}
which takes a Borel functor $\alpha : E -> \*C$ to the obvious generalization of the skew product of $E$ with respect to $\alpha$ (but where the fibers are no longer uniformly $\#N$, but vary from point to point according to $\alpha$); and this functor is an equivalence of categories.  We leave the details to the reader.
\end{remark}

Using this correspondence between fiber spaces and cocycles, we obtain

\begin{proposition}
\label{thm:pinfty}
There is a fiber space $(U_\infty, P_\infty, p_\infty)$ over $E_\infty$, which is universal with respect to fiber-bijective invariant embeddings: for any other fiber space $(U, P, p)$ over $E$, there is a fiber-bijective homomorphism $\~f : P -> P_\infty$ over an invariant embedding $f : E \sqle_B^i E_\infty$.
\end{proposition}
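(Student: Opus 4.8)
The plan is to run everything through the correspondence, established just above, between fiber spaces over $E$ with countably infinite fibers and $S_\infty$-valued cohomology classes on $E$ (and, for fibers of size $n<\aleph_0$, between such fiber spaces and $S_n$-valued cohomology classes). Under this dictionary the proposition reduces to producing, on $E_\infty$, a \emph{universal cocycle}: a Borel cocycle $\alpha_\infty : E_\infty -> S_\infty$ (and likewise $\alpha_\infty^{(n)} : E_\infty -> S_n$) such that every cocycle $\beta : E -> S_\infty$ on every $E \in \@E$ equals the pullback $\alpha_\infty \circ (f \times f)$ along \emph{some} invariant embedding $f : E \sqle_B^i E_\infty$. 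For then the skew product realizing $(U,P,p)$ is fiberwise isomorphic to the pullback $f^{-1}(U_\infty, P_\infty, p_\infty)$, and composing this isomorphism with the canonical fiber-bijective homomorphism $f^{-1}(P_\infty) ->_B P_\infty$ over $f$ (i.e.\ $\pi_2$ from the pullback diagram) yields the required $\~f : P -> P_\infty$.

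First the reductions. By Lusin--Novikov the base of any fiber space splits into Borel invariant pieces on which the fibers have a fixed cardinality $\kappa \in \{1,2,\dots,\aleph_0\}$, so it suffices to treat each $\kappa$ separately; since $E_\infty$ is idempotent (\cref{thm:univstr-idemp}) we have $E_\infty \cong_B \bigoplus_\kappa E_\infty$ over the countable index set $\kappa \in \{1,2,\dots,\aleph_0\}$, so we may build a universal fiber space with fibers of size $\kappa$ over each summand and let $(U_\infty, P_\infty, p_\infty)$ be the disjoint sum. Fix $\kappa = \aleph_0$ (the cases $\kappa = n$ are identical with $S_n$ in place of $S_\infty$), and realize $E_\infty$ as the orbit equivalence of a free Borel action of $\Gamma := \#F_\omega$, which is legitimate since $F(\#F_\omega, \#R)$ is a realization of $E_\infty$.

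For the universal cocycle, write $\Gamma = \langle s_1, s_2, \dots\rangle$ and consider the product shift action of $\Gamma$ on $Z := \#R^\Gamma \times \prod_{i \ge 1} S_\infty^\Gamma$, with free part $Z^* \subseteq Z$; then $E_\Gamma^{Z^*}$ is again a free $\Gamma$-action and $\cong_B E_\infty$. Because $\Gamma$ is free and acts freely, a cocycle $E_\Gamma^{Z^*} -> S_\infty$ is determined by, and may be freely prescribed through, the Borel generator maps $z \mapsto \alpha(z, s_i \cdot z)$. Let $\phi_i : Z^* -> S_\infty$ read off the identity coordinate of the $i$-th $S_\infty^\Gamma$ factor, and let $\alpha_\infty$ be the cocycle with generator maps $\phi_i$. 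Now let $\beta$ be any cocycle on a free $\Gamma$-action $E_\Gamma^A$ (with $A$ standard Borel), with generator maps $\psi_i : A -> S_\infty$. Fix a $\Gamma$-equivariant Borel injection $e_0 : A \to \#R^\Gamma$ (one exists since every $\Gamma$-action embeds into the shift on $\#R^\Gamma$) and define the $\Gamma$-equivariant Borel map
\[
h : A \to Z^*, \qquad h(a) := \bigl(e_0(a),\ (\gamma \mapsto \psi_i(\gamma^{-1} \cdot a))_{i \ge 1}\bigr).
\]
Then $h$ is injective with $\Gamma$-invariant image (landing in the free part since the action on $A$ is free), hence induces an invariant embedding $E_\Gamma^A \sqle_B^i E_\Gamma^{Z^*}$; and $\phi_i(h(a)) = \psi_i(a)$ for all $i$, so $h$ carries the generator maps of $\alpha_\infty$ back to those of $\beta$, whence $h^*\alpha_\infty = \beta$ exactly.

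To assemble: given an arbitrary fiber space $(U,P,p)$ over $(X,E)$ with infinite fibers, first take any invariant embedding $E \sqle_B^i E_\infty \cong_B E_\Gamma^{Z^*}$, identifying $E$ with $E_\Gamma^A$ for an invariant $A \subseteq Z^*$ and transporting $(U,P,p)$ to a cocycle $\beta$ on $E_\Gamma^A$ with generator maps $\psi_i$; then apply the previous step to obtain $h$, and set $f := h \circ (\text{first embedding}) : E \sqle_B^i E_\infty$. Since $h^*\alpha_\infty = \beta$, the pullback $f^{-1}(U_\infty, P_\infty, p_\infty)$ is fiberwise isomorphic to $(U,P,p)$, and composing with the canonical fiber-bijective homomorphism over $f$ gives $\~f$. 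Finite- and mixed-cardinality cases are routed to the appropriate summand of $P_\infty$ exactly as in the reduction. The main obstacle is precisely the construction of the universal cocycle: everything hinges on realizing $E_\infty$ by a free action of the \emph{free} group $\#F_\omega$, which is what makes cocycles freely specifiable by generator maps and lets the universal equivariant embedding into the shift absorb an arbitrary cocycle; the remaining steps (the cardinality bookkeeping and the passage from cocycles back to fiber spaces via the stated correspondence and pullbacks) are routine.
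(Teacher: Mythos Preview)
Your approach is correct in spirit and genuinely different from the paper's, though there is a small sign slip. With the paper's right shift convention $(\delta \cdot \bar x)(\gamma) = \bar x(\gamma\delta)$, the map $a \mapsto (\gamma \mapsto \psi_i(\gamma^{-1}\cdot a))$ is \emph{not} $\Gamma$-equivariant; you want $a \mapsto (\gamma \mapsto \psi_i(\gamma\cdot a))$ (this is exactly the standard embedding $x \mapsto (\gamma \mapsto \gamma\cdot x)$ postcomposed with $\psi_i$). With that fix, $\phi_i(h(a)) = \psi_i(a)$ still holds at the identity coordinate and the rest of your argument goes through unchanged.

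The paper's proof also passes through the fiber-space/cocycle correspondence, but constructs the universal cocycle by an entirely different mechanism: it encodes an $S_\infty$-valued cocycle as a first-order structure in the language $\{R_{ij}\}_{i,j\in\#N}$ (with $R_{ij}(x,y)$ meaning $\alpha(x,y)(i)=j$), writes down a sentence $\sigma$ axiomatizing such structures, and then simply takes $P_\infty := E_{\infty\sigma} \ltimes_{\alpha_\infty} \#N$ where $\alpha_\infty$ is read off the canonical $\sigma$-structure on $E_{\infty\sigma}$. Universality then falls out of the universal property of $E_{\infty\sigma}$ (\cref{thm:esigma}), with no group theory at all. Your route instead exploits the specific realization $E_\infty \cong F(\#F_\omega,\#R)$: freeness of $\#F_\omega$ makes cocycles freely specifiable by generator maps, and enlarging the shift alphabet by factors $S_\infty^\Gamma$ lets you record those maps so that the tautological cocycle on the bigger shift pulls back to any prescribed one. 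The paper's argument is shorter and reuses machinery already built; yours is more self-contained (it does not invoke $E_{\infty\sigma}$) and makes the combinatorics of the universal cocycle explicit, at the cost of the cardinality bookkeeping and the two-step embedding.
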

\begin{proof}
For simplicity, we restrict again to the case where $P$ has countably infinite fibers.  Let $\sigma$ be a sentence over the language $L = \{R_{ij}\}_{i, j \in \#N}$, where each $R_{ij}$ is binary, asserting that
\begin{align*}
\alpha(x, y)(i) = j \iff R_{ij}(x, y)
\end{align*}
defines a cocycle $\alpha : I_X -> S_\infty$, where $X$ is the universe of the structure (and $I_X$ is the indiscrete equivalence relation on $X$).  Then the canonical $\sigma$-structure on $E_{\infty\sigma}$ corresponds to a cocycle $\alpha_\infty : E_{\infty\sigma} -> S_\infty$.  We will in fact define the universal fiber space $P_\infty$ over $E_{\infty\sigma}$, since clearly $E_\infty \sqle_B^i E_{\infty\sigma}$ (by giving $E_\infty$ the trivial cocycle).  Let $P_\infty := E_{\infty\sigma} \ltimes_{\alpha_\infty} \#N$, with $p_\infty : P_\infty ->_B^{cb} E_{\infty\sigma}$ the canonical projection.  For another fiber space $(U, P, p)$ over $E$ with countably infinite fibers, by the above remarks, $P$ is isomorphic (over $E$) to a skew product $E \ltimes_\alpha \#N$, for some cocycle $\alpha : E -> S_\infty$.  This $\alpha$ corresponds to a $\sigma$-structure on $E$, which yields an invariant embedding $f : E \sqle_B^i E_{\infty\sigma}$ such that $\alpha$ is the restriction of $\alpha_\infty$ along $f$, giving the desired fiber-bijective homomorphism $\~f := f \times \#N : E \ltimes_\alpha \#N -> E_{\infty\sigma} \ltimes_{\alpha_\infty} \#N$ over $f$.
\end{proof}

There is a different kind of universality one could ask for, which we do not know how to obtain.  Namely, for each $E \in \@E$, is there a fiber space $(U_\infty, P_\infty, p_\infty)$ over $E$ which is universal with respect to fiberwise injective maps?

\subsection{Equivalence relations as fiber spaces}
\label{sec:fiber-equiv}

Let $(X, E) \in \@E$ be a countable Borel equivalence relation.  The \defn{tautological fiber space}\index{tautological fiber space $(E, \^E)$} over $E$ is $(E, \^E, \pi_1)$, where $\^E$ is the equivalence relation on the set $E \subseteq X^2$ given by
\begin{align*}
(x, x') \mathrel{\^E} (y, y') \iff x' = y',
\end{align*}
and $\pi_1 : (E, \^E) ->_B^{cb} (X, E)$ is the first coordinate projection (i.e., $\pi(x, x') = x$).  In other words, the $\^E$-fiber over each $E$-class $C \in X/E$ consists of the elements of $C$.

Note that $\^E$ is the kernel of the second coordinate projection $\pi_2 : E -> X$; thus $\^E$ is smooth, and in fact $E/\^E$ is isomorphic to $X$ (via $\pi_2$).  Now let $(U, P, p)$ be any smooth fiber space over $(X, E)$, and let $F$ be the (countable Borel) equivalence relation on $Y := U/P$ given by
\begin{align*}
[u]_P \mathrel{F} [u']_P \iff p(u) \mathrel{E} p(u').
\end{align*}
By Lusin-Novikov uniformization, there is a Borel map $X -> U$ which is a section of $p$, which when composed with the projection $U -> Y$ gives a reduction $f : (X, E) \le_B (Y, F)$ whose image is a complete section.

Let us say that a \defn{presentation} of the quotient space $X/E$ consists of a countable Borel equivalence relation $(Y, F) \in \@E$ together with a bijection $X/E \cong Y/F$ which admits a Borel lifting $X ->_B Y$ (which is then a reduction $E \le_B F$ with image a complete section).  By the above, every smooth fiber space over $E$ gives rise to a presentation of $X/E$.  Conversely, given any presentation $(Y, F)$ of $X/E$, letting $f : E \le_B F$ with image a complete section, the pullback $f^{-1}(\^F)$ is a fiber space over $E$, which is smooth (because $f^{-1}(\^F)$ reduces to $\^F$, via the map $\~f$ coming from the pullback).  It is easily seen that the two operations we have just described are mutually inverse up to isomorphism, yielding a bijection
\begin{align*}
\{\text{iso.\ classes of smooth fiber spaces over $E$}\} \cong \{\text{iso.\ classes of presentations of $X/E$}\}.
\end{align*}

\begin{remark}
This correspondence between smooth fiber spaces and presentations of the same quotient space is essentially the proof of \cite[D.1]{HK}.
\end{remark}

We now describe the correspondence between homomorphisms of equivalence relations and fiber space homomorphisms.  Let $(X, E), (Y, F) \in \@E$.  A homomorphism $f : E ->_B F$ induces a fiber space homomorphism $\^f : \^E -> \^F$ over $f$, given by
\begin{align*}
\^f(x, x') := (f(x), f(x')).
\end{align*}
Conversely, let $\~g : \^E -> \^F$ be any fiber space homomorphism over $g : E ->_B F$.  Then $\~g$ must be given by $\~g(x, x') = (g(x), f(x'))$ for some $f : X -> Y$ such that $g(x) \mathrel{F} f(x)$ for each $x \in X$; in particular, $f$ is a homomorphism $E ->_B F$.

Let us say that two homomorphisms $f, g : E ->_B F$ are \defn{equivalent}\index{equivalence of homomorphisms $\simeq$}, denoted $f \simeq g$, if $f(x) \mathrel{F} g(x)$ for each $x \in X$; equivalently, they induce the same map on the quotient spaces $X/E -> Y/F$.  The above yield mutually inverse bijections
\begin{align*}
\{\text{homomorphisms $E ->_B F$}\} \cong \{\text{$\simeq$-classes of fiber space homomorphisms $\^E -> \^F$}\}.
\end{align*}
Class-injectivity on the left translates to fiber-injectivity on the right, etc.

\subsection{Countable Borel quotient spaces}
\label{sec:fiber-quot}

We discuss here an alternative point of view on fiber spaces and equivalence relations.  The idea is that the tautological fiber space $\^E$ over an equivalence relation $(X, E)$ allows a clean distinction to be made between the quotient space $X/E$ and the presentation $(X, E)$.

A \defn{countable Borel quotient space} is, formally, the same thing as a countable Borel equivalence relation $(X, E)$, except that we denote it by $X/E$.  A \defn{Borel map} between countable Borel quotient spaces $X/E$ and $Y/F$, denoted $f : X/E ->_B Y/F$, is a map which admits a Borel lifting $X -> Y$, or equivalently an $\simeq$-class of Borel homomorphisms $E ->_B F$.  Let $(\@Q, ->_B)$ denote the category of countable Borel quotient spaces and Borel maps.  (Note that $X/E, Y/F$ are isomorphic in $(\@Q, ->_B)$ iff they are \emph{bireducible} as countable Borel equivalence relations.)

Let $\@B$ denote the class of standard Borel spaces.  By identifying $X \in \@B$ with $X/\Delta_X \in \@Q$, we regard $(\@B, ->_B)$ as a full subcategory of $(\@Q, ->_B)$.  By regarding Borel maps in $\@Q$ as $\simeq$-classes of homomorphisms, we have that $(\@Q, ->_B)$ is the quotient category of $(\@E, ->_B)$ (with the same objects) by the congruence $\simeq$.

A \defn{(quotient) fiber space}\index{fiber space} over a quotient space $X/E \in \@Q$ is a quotient space $U/P \in \@Q$ together with a countable-to-1 surjection $p : U/P ->_B X/E$.  This definition agrees with the previous notion of fiber space over $(X, E)$, in that fiber spaces over $X/E$ are in natural bijection with fiber spaces over $(X, E)$, up to isomorphism.  Indeed, by \cref{thm:smh-factor}, we may factor any lifting $(U, P) ->_B (X, E)$ of $p$ into a reduction with image a complete section, followed by a class-bijective homomorphism; the former map becomes an isomorphism when we pass to the quotient, so $U/P$ is isomorphic to a fiber space with class-bijective projection.

We have obvious versions of the notions of \defn{fiberwise map over $X/E$}, \defn{fiber space homomorphism}\index{fiber space homomorphism}, and \defn{fiber-bijective homomorphism}\index{fiber space homomorphism!fiber-bijective} for quotient fiber spaces.  Let $\*{Fib}(X/E)$ denote the category of fiber spaces over $X/E$; in light of the above remarks, $\*{Fib}(X/E)$ is equivalent to $\*{Fib}(E)$.  Let $\int_\@Q \*{Fib}$ denote the category of quotient fiber spaces and homomorphisms ($\int_\@Q \*{Fib}$ is then the quotient of $\int_\@E \*{Fib}$ by $\simeq$).  We now have a full embedding
\begin{align*}
(\@E, ->_B) &--> \int_\@Q \*{Fib},
\end{align*}
that sends an equivalence relation $(X, E)$ to its tautological fiber space $(E, \^E)$ but regarded as the quotient fiber space $\^E/E \cong X$ over $X/E$, and sends a homomorphism $f$ to the corresponding fiber space homomorphism $\^f$ given above.  Thus, we may regard equivalence relations as special cases of fiber spaces over quotient spaces.

To summarize, here is a (non-commuting) diagram of several relevant categories and functors:
\begin{equation*}
\begin{tikzcd}
\int_\@E \*{Fib} \dar \drar[->>] \\
(\@E, ->_B) \rar[hook] \drar[->>] & \int_\@Q \*{Fib} \dar \\
(\@B, ->_B) \rar[hook] & (\@Q, ->_B)
\end{tikzcd}
\end{equation*}
The horizontal arrows are full embeddings, the diagonal arrows are quotients by $\simeq$, and the vertical arrows are forgetful functors that send a fiber space to its base space.

\subsection{Factorizations of fiber space homomorphisms}
\label{sec:fiber-factor}

Let $(U, P, p), (V, Q, q)$ be fiber spaces over $(X, E), (Y, F)$ respectively, and $f : E ->_B F$.  A fiber space homomorphism $\~f : P -> Q$ over $f$ corresponds, via the universal property of the pullback $f^{-1}(V, Q, q)$, to a fiberwise map $\~f' : P ->_E f^{-1}(Q)$ over $E$:
\begin{equation*}
\begin{tikzcd}
P \ar[bend right=15]{ddr}[swap]{p} \drar[swap,pos=.8]{\~f'} \ar{drr}{\~f} \\[-1em]
~ & f^{-1}(Q) \dar{f^{-1}(q)} \rar & Q \dar{q} \\
~ & E \rar[swap]{f} & F
\end{tikzcd}
\end{equation*}
Note that $\~f$ is fiber-bijective iff $\~f'$ is an isomorphism.  In general, since $\~f'$ is countable-to-1, we may further factor it into the surjection onto its image $(\~f'(U), \~f'(P))$ followed by an inclusion:
\begin{equation*}
\begin{tikzcd}
P \ar[bend right=40]{ddrr}[swap]{p} \drar[->>][swap,pos=.8]{\~f'} \ar[bend left=5]{drrr}{\~f} \\[-1em]
~ & \~f'(P) \drar \rar[hook] &[-1em]
f^{-1}(Q) \dar{f^{-1}(q)} \rar &
Q \dar{q} \\
~ & ~ & E \rar[swap]{f} & F
\end{tikzcd}
\end{equation*}
So we have a canonical factorization of any fiber space homomorphism $\~f$ into a fiberwise surjection over $E$, followed by a fiberwise injection over $E$, followed by a fiber-bijective homomorphism.

In the case where $P = \^E$, $Q = \^F$, and $\~f = \^f : \^E -> \^F$ is the fiber space homomorphism induced by $f$, the fiber space $f^{-1}(V, Q, q) = f^{-1}(F, \^F, \pi_1)$ is given by
\begin{gather*}
f^{-1}(F) = \{(x, (y_1, y_2)) \in F \mid f(x) = y_1\} \cong \{(x, y) \in F \mid f(x) \mathrel{F} y\}, \\
(x, y) \mathrel{f^{-1}(\^F)} (x', y') \iff y = y', \\
f^{-1}(\pi_1)(x, y) = x,
\end{gather*}
while the map $\~f' : (E, \^E) = (U, P) -> f^{-1}(V, Q) = f^{-1}(F, \^F)$ is given by
\begin{align*}
\~f'(x, x') = (x, f(x')).
\end{align*}
Comparing with the proofs of \cref{thm:classinj-factor,thm:smh-factor} reveals that when $f : E ->_B F$ is smooth, the above factorization of $\^f$ corresponds (via the correspondence between smooth fiber spaces and presentations from \cref{sec:fiber-equiv}) to the factorization of $f$ produced by \cref{thm:smh-factor}.  In particular, we obtain a characterization of smooth homomorphisms in terms of fiber spaces:

\begin{proposition}
$f : E ->_B F$ is smooth iff the fiber space $f^{-1}(\^F)$ over $E$ is smooth.
\end{proposition}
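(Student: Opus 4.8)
The plan is to unwind the definitions and connect them to the factorization machinery already developed in \cref{thm:smh-factor} via the correspondence between smooth fiber spaces and presentations of $\cref{sec:fiber-equiv}$. Concretely, I want to prove both directions of the equivalence ``$f$ is smooth iff $f^{-1}(\^F)$ is smooth'' by explicitly identifying the countable Borel equivalence relation $f^{-1}(\^F)$ on the total space $f^{-1}(F)$.

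First I would write out $f^{-1}(\^F)$ concretely, exactly as displayed just above the statement: its universe is (up to the obvious identification) $\{(x,y) \in X \times Y \mid f(x) \mathrel{F} y\}$, and $(x,y) \mathrel{f^{-1}(\^F)} (x',y') \iff y = y'$, with projection $f^{-1}(\pi_1)(x,y) = x$. The key observation is that $f^{-1}(\^F)$ is precisely the equivalence relation $(W, D)$ appearing in the proof of \cref{thm:classinj-factor}, except that there $f$ was assumed class-injective; here I must handle general smooth $f$. So the real content is to compute when this equivalence relation is smooth. Since $f^{-1}(\^F)$ relates two points exactly when their second coordinates agree, a Borel selector for $f^{-1}(\^F)$ must Borel-uniformly choose, within each $E$-class $C$ and for each fixed target $y \in [f(C)]_F$, a canonical preimage $x \in C$ with $f(x) \mathrel{F} y$. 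I would then relate this directly to the smoothness of $E \cap \ker f$, which by \cref{thm:smh-factor}(iii) is equivalent to $f$ being smooth.

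For the forward direction, assume $f$ is smooth, so by \cref{thm:smh-factor}(iv) it factors as a surjective reduction $g : E \le_B G$, a complete section embedding $h : G \sqle_B H$, and a class-bijective $k : H ->_B^{cb} F$. As indicated in the paragraph preceding the proposition, this factorization of $f$ matches, under the smooth-fiber-space/presentation correspondence, the canonical factorization of $\^f$ into a fiberwise surjection, a fiberwise injection, and a fiber-bijective homomorphism. Since $f^{-1}(\^F) \cong \~f'(P)$ as fiber spaces (the image sitting inside $f^{-1}(\^F)$, with $\~f'$ fiberwise surjective), and this image reduces to $\^F$ which is smooth, $f^{-1}(\^F)$ is smooth. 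Conversely, if $f^{-1}(\^F)$ is smooth, then via its Borel selector I obtain a Borel reduction of $E \cap \ker f$ into $\Delta$: the map $x \mapsto$ (the canonical $f^{-1}(\^F)$-representative of $(x, f(x))$) is $(E\cap\ker f)$-invariant and separates distinct $(E\cap\ker f)$-classes, so $E \cap \ker f$ is smooth, whence $f$ is smooth by \cref{thm:smh-factor}. The main obstacle I anticipate is bookkeeping the identifications cleanly—verifying that the fiberwise surjection $\~f' : \^E \to f^{-1}(\^F)$ given by $\~f'(x,x') = (x, f(x'))$ really does have $f^{-1}(\^F)$ as its smooth refinement and that smoothness transfers correctly through the fiber product—rather than any genuinely hard estimate; everything ultimately reduces to \cref{thm:smh-factor} once the correspondence of $\cref{sec:fiber-equiv}$ is invoked.
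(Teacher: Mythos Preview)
Your overall strategy---reduce both directions to \cref{thm:smh-factor} via the smooth-fiber-space/presentation correspondence---is exactly the paper's, and your converse direction is clean and correct: the map $x \mapsto s(x,f(x))$ (for $s$ a selector of $f^{-1}(\^F)$) is indeed an injective $(E\cap\ker f)$-invariant Borel map, so $E\cap\ker f$ is smooth and \cref{thm:smh-factor}(iii) finishes.

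The forward direction, however, has a genuine gap. You claim $f^{-1}(\^F)\cong\~f'(\^E)$ ``as fiber spaces (the image sitting inside $f^{-1}(\^F)$, with $\~f'$ fiberwise surjective)''. But $\~f'(x,x')=(x,f(x'))$, so the image $\~f'(\^E)$ is $\{(x,y):\exists x'\,(x\mathrel{E}x'\text{ and }f(x')=y)\}$, which equals the full pullback $f^{-1}(\^F)=\{(x,y):f(x)\mathrel{F}y\}$ only when $f$ is class-surjective. In general $\~f'(\^E)$ is a proper sub-fiber-space, and showing \emph{it} is smooth (which is what ``reduces to $\^F$'' gives you) says nothing about the smoothness of the larger $f^{-1}(\^F)$. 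In the paper's factorization picture, the presentation corresponding to $\~f'(\^E)$ is $G$, whereas the one corresponding to $f^{-1}(\^F)$ is $H$; you have collapsed these.

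The fix is short. Either (a) chase the correspondence correctly---under the factorization $f=k\circ h\circ g$ from \cref{thm:smh-factor}(iv), the smooth fiber space over $E$ associated to the presentation $(H,h\circ g)$ is (isomorphic to) $f^{-1}(\^F)$, not $\~f'(\^E)$---or (b) argue directly: the second projection $\pi_2:(x,y)\mapsto(f(x),y)$ from $f^{-1}(\^F)$ to $\^F$ has point-preimages $\pi_2^{-1}(y_1,y_2)\cong (E\cap\ker f)|f^{-1}(y_1)$, which are smooth since $E\cap\ker f$ is; hence $\pi_2$ is a smooth homomorphism into the smooth equivalence relation $\^F$, so $f^{-1}(\^F)$ is smooth.
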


\begin{remark}
In fact, the proof of \cref{thm:classinj-factor} is essentially just the above correspondence, plus the observation that $\~f'(\^E) ->_B^{ci} \^F$ and smoothness of $\^F$ imply that $\~f'(\^E)$ is smooth (compare also \cite[D.2]{HK}).
\end{remark}

\subsection{Structures on fiber spaces}

Let $L$ be a language and $(U, P, p)$ be a fiber space over $(X, E) \in \@E$.  A \defn{Borel $L$-structure on $(U, P, p)$} is a Borel $L$-structure $\#A = (U, R^\#A)_{R \in L}$ with universe $U$ which only relates elements within the same fiber, i.e.,
\begin{align*}
R^\#A(u_1, \dotsc, u_n) \implies p(u_1) = \dotsb = p(u_n),
\end{align*}
such that structures on fibers over the same $E$-class are related via fiber transport, i.e.,
\begin{align*}
x \mathrel{E} x' \implies p^{-1}(x, x')(\#A|p^{-1}(x)) = \#A|p^{-1}(x').
\end{align*}
For an $L_{\omega_1\omega}$-sentence $\sigma$, we say that $\#A$ is a \defn{Borel $\sigma$-structure on $(U, P, p)$}\index{Borel $\sigma$-structure $\#A : P \models \sigma$}, denoted
\begin{align*}
\#A : (U, P, p) |= \sigma,
\end{align*}
if $\#A|p^{-1}(x)$ satisfies $\sigma$ for each $x \in X$.

For $(X, E) \in \@E$, $\sigma$-structures on $E$ are in bijection with $\sigma$-structures on the tautological fiber space $(\^E, \pi_1)$ over $E$, where $\#A : E |= \sigma$ corresponds to $\^{\#A} : (\^E, \pi_1) |= \sigma$ given by
\begin{align*}
R^{\^{\#A}}((x, x_1), \dotsc, (x, x_n)) \iff R^\#A(x_1, \dotsc, x_n).
\end{align*}
In other words, for each $x \in X$, $\#A|[x]_E$ and $\^{\#A}|\pi_1^{-1}(x)$ are isomorphic via the canonical bijection $x' |-> (x, x')$ between $[x]_E$ and $\pi_1^{-1}(x)$.

For a fiber space homomorphism $f : (P, p, E) -> (Q, q, F)$ and a $\sigma$-structure $\#A : (Q, q) |= \sigma$, the \defn{fiberwise pullback structure} $f^{-1}_{(P, p)}(\#A) : (P, p) |= \sigma$ is defined in the obvious way, i.e.,
\begin{align*}
R^{f^{-1}_{(P, p)}(\#A)}(u_1, \dotsc, u_n) \iff R^\#A(\~f(u_1), \dotsc, \~f(u_n)) \AND p(u_1) = \dotsb = p(u_n).
\end{align*}

We have the following generalization of \cref{thm:esigma}:

\begin{proposition}
Let $(U, P, p)$ be a fiber space over $(X, E) \in \@E$ and $(L, \sigma)$ be a theory.  There is a fiber space $(U, P, p) \ltimes \sigma = (U \ltimes_p \sigma, P \ltimes_p \sigma, p \ltimes \sigma)$ over an equivalence relation $E \ltimes_p \sigma \in \@E$, together with a fiber-bijective homomorphism $\pi : (P, p, E) \ltimes \sigma -> (P, p, E)$ and a $\sigma$-structure $\#E : (P, p) \ltimes \sigma |= \sigma$, such that the triple $((U, P, p) \ltimes \sigma, \pi, \#E)$ is universal: for any other fiber space $(V, Q, q)$ over $(Y, F) \in \@E$ with a fiber-bijective homomorphism $f : (Q, q, F) -> (P, p, E)$ and a structure $\#A : (Q, q) |= \sigma$, there is a unique fiber-bijective $g : (Q, q, F) -> (P, p, E) \ltimes \sigma$ such that $f = \pi \circ g$ and $\#A = g^{-1}_{(Q, q)}(\#E)$.
\end{proposition}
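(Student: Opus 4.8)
The plan is to mimic the construction of $E \ltimes \sigma$ in the proof of \cref{thm:esigma}, but carrying it out one \emph{fiber} at a time rather than one $E$-class at a time. Ignoring Borelness at first, I would let $E \ltimes_p \sigma$ live on the set of pairs $(x, \#B)$ with $x \in X$ and $\#B \in \Mod_{p^{-1}(x)}(\sigma)$ a $\sigma$-structure on the fiber over $x$, and set $(x, \#B) \mathrel{(E \ltimes_p \sigma)} (x', \#B') \iff x \mathrel{E} x' \AND p^{-1}(x,x')(\#B) = \#B'$, so that within a single class the fiber structures are forced to agree under fiber transport. The base projection $(x, \#B) \mapsto x$ is then class-bijective onto $E$. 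I would take the total space $(U,P,p) \ltimes \sigma$ to be the pullback of $(U,P,p)$ along this base projection (using the fiber product of \cref{sec:pullback} and the pullback construction for fiber spaces above), with $\pi$ the resulting fiber-bijective homomorphism, and define $\#E$ fiberwise by declaring that the fiber over $(x, \#B)$, canonically identified with $p^{-1}(x)$, carries the structure $\#B$. Transport-compatibility of $\#E$ is then immediate from the defining relation of $E \ltimes_p \sigma$, and $\#E |= \sigma$ since each $\#B |= \sigma$.

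Next I would verify the universal property by the same diagram chase as in \cref{thm:esigma}. Given $(V,Q,q)$ over $(Y,F)$, a fiber-bijective $f$ over a base map $f : F ->_B E$, and a structure $\#A : (Q,q) |= \sigma$, fiber-bijectivity lets me push $\#A|q^{-1}(y)$ forward along the bijection $\tilde f|q^{-1}(y) : q^{-1}(y) -> p^{-1}(f(y))$ to a $\sigma$-structure $\#B_y$ on $p^{-1}(f(y))$; transport-invariance of $\#A$ together with $\tilde f$ being a fiber space homomorphism guarantees that $y \mathrel{F} y'$ implies $(f(y), \#B_y) \mathrel{(E \ltimes_p \sigma)} (f(y'), \#B_{y'})$. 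Thus $y \mapsto (f(y), \#B_y)$ is a homomorphism $F ->_B E \ltimes_p \sigma$ lifting $f$, and composing with $\tilde f$ through the pullback yields the required $g$, namely $\tilde g(v) = ((f(q(v)), \#B_{q(v)}), \tilde f(v))$. The identities $f = \pi \circ g$ and $\#A = g^{-1}_{(Q,q)}(\#E)$ hold by unravelling the definitions, and uniqueness follows because $\pi \circ g = f$ forces the $U$-coordinate of $\tilde g$ to be $\tilde f$, while $\#A = g^{-1}_{(Q,q)}(\#E)$ forces the structure coordinate to be $\#B_{q(v)}$.

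Finally I would make the construction Borel exactly as in \cref{thm:esigma}. Partitioning $X$ (by Lusin--Novikov) according to the cardinality $n \in \{1, 2, \dots, \aleph_0\}$ of the fibers, on each piece I would choose a Borel family $S : X -> U^\#N$ of bijections $\#N -> p^{-1}(x)$ and replace $\Mod_{p^{-1}(x)}(\sigma)$ by $\Mod_\#N(\sigma)$; the fiber transport then becomes the $S_\infty$-valued cocycle $\alpha_S(x,x') := S(x')^{-1} \circ p^{-1}(x,x') \circ S(x)$, and $E \ltimes_p \sigma$ is realized as the skew product $E \ltimes_{\alpha_S} \Mod_\#N(\sigma)$ for the logic action of $S_\infty$, in the manner of \cref{rmk:skew-product} and the fiber-space/cocycle correspondence established above. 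I expect the main point requiring care to be precisely this Borel realization: one must confirm that encoding the (canonically transport-invariant) fiber structures through a choice of Borel enumeration yields a genuinely Borel equivalence relation, and that the verifications of the universal property survive the passage to $\Mod_\#N(\sigma)$, with the dependence on $S$ washing out up to the cohomology of $\alpha_S$. Everything else is a routine transcription of the proof of \cref{thm:esigma}, with ``$E$-class'' replaced by ``fiber'' throughout.
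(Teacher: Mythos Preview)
Your proposal is correct and matches the paper's proof essentially line for line: the same base space $\{(x,\#B) \mid x \in X,\ \#B \in \Mod_{p^{-1}(x)}(\sigma)\}$ with the same equivalence relation via fiber transport, the same realization of the total space as the pullback $\pi^{-1}(U,P,p)$, the same definition of $\#E$, and the same Borel realization via uniform enumerations of the fibers. You have in fact spelled out the universal property and the Borelness argument in more detail than the paper's proof sketch does.
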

\begin{proof}[Proof sketch]
This is a straightforward generalization of \cref{thm:esigma} (despite the excessive notation).  The equivalence relation $E \ltimes_p \sigma$ lives on
\begin{align*}
\{(x, \#B) \mid x \in X,\, \#B \in \Mod_{p^{-1}(x)}(\sigma)\},
\end{align*}
and is given by
\begin{align*}
(x, \#B) \mathrel{(E \ltimes_p \sigma)} (x', \#B') \iff x \mathrel{E} x' \AND p^{-1}(x, x^{-1})(\#B) = \#B'.
\end{align*}
As usual, the Borel structure on $E \ltimes_p \sigma$ is given by uniformly enumerating each $p^{-1}(x)$.  The base space part of $\pi$ is given by $\pi(x, \#B) := x$, the fiber space $(U, P, p) \ltimes \sigma$ is given by the pullback $\pi^{-1}(U, P, p)$, and the structure $\#E$ is given by
\begin{align*}
R^\#E((x, \#B, u_1), \dotsc, (x, \#B, u_n)) \iff R^\#B(u_1, \dotsc, u_n)
\end{align*}
for $x \in X$, $\#B \in \Mod_{p^{-1}(x)}(\sigma)$, and $u_1, \dotsc, u_n \in p^{-1}(x)$.  The universal property is straightforward.
\end{proof}

\begin{remark}
However, our other basic universal construction for structuring equivalence relations, the ``Scott sentence'' (\cref{thm:sigmae}), fails to generalize in a straightforward fashion to fiber spaces; this is essentially because we require languages to be countable, whereas the invariant Borel $\sigma$-algebra of a nonsmooth fiber space is not countably generated.
\end{remark}

\begin{remark}
Nonetheless, we may define the \defn{fiber-bijective product} $(P, p, E) \otimes (Q, q, F)$ of two fiber spaces $(U, P, p), (V, Q, q)$ over $(X, E), (Y, F) \in \@E$ respectively, by generalizing \cref{thm:tensor-alt}, yielding their categorical product in the category of fiber spaces and fiber-bijective homomorphisms; we leave the details to the reader.
In particular, by taking $(Q, q, F)$ to be the universal fiber space $(P_\infty, p_\infty, E_\infty)$ from \cref{thm:pinfty}, we obtain

\begin{proposition}
For every fiber space $(P, p)$ over $E \in \@E$, there is a fiber space $(P_\infty, p_\infty, E_\infty) \otimes (P, p, E)$ admitting a fiber-bijective homomorphism to $(P, p, E)$ and which is universal among such fiber spaces with respect to fiber-bijective embeddings.
\end{proposition}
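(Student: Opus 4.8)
The plan is to mirror the proofs of \cref{thm:tensor-props}(b) and (c), replacing equivalence relations and class-bijective homomorphisms throughout by fiber spaces and fiber-bijective homomorphisms, and replacing $E_\infty$ by the universal fiber space $(U_\infty, P_\infty, p_\infty)$ of \cref{thm:pinfty}. Write $(R, r, G) := (P_\infty, p_\infty, E_\infty) \otimes (P, p, E)$ for the fiber-bijective product. By the explicit construction generalizing \cref{thm:tensor-alt}, this is a fiber space lying over the tensor product $G \cong_B E_\infty \otimes E$, equipped with fiber-bijective projections $\Pi_1 : (R, r, G) -> (P_\infty, p_\infty, E_\infty)$ and $\Pi_2 : (R, r, G) -> (P, p, E)$ whose base maps are the class-bijective tensor projections $\pi_1 : E_\infty \otimes E ->_B^{cb} E_\infty$ and $\pi_2 : E_\infty \otimes E ->_B^{cb} E$. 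In particular $\Pi_2$ is the required fiber-bijective homomorphism to $(P, p, E)$, so it remains only to establish universality. Here I read ``fiber-bijective embedding'', exactly as in \cref{thm:pinfty}, to mean a fiber-bijective homomorphism whose base map is an invariant embedding $\sqle_B^i$.

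For universality, I would start with an arbitrary fiber space $(V, Q, q)$ over $(Y, F) \in \@E$ admitting a fiber-bijective homomorphism $h : (Q, q, F) -> (P, p, E)$, and first apply \cref{thm:pinfty} to $(V, Q, q)$ to obtain a fiber-bijective homomorphism $\~g : (Q, q, F) -> (P_\infty, p_\infty, E_\infty)$ over an invariant embedding $g : F \sqle_B^i E_\infty$. Feeding the pair $(\~g, h)$ into the universal property of the product $(R, r, G)$ then yields a unique fiber-bijective homomorphism $\ang{\~g, h} : (Q, q, F) -> (R, r, G)$ with $\Pi_1 \circ \ang{\~g, h} = \~g$ and $\Pi_2 \circ \ang{\~g, h} = h$. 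Since $\ang{\~g, h}$ is by construction a morphism in the category of fiber spaces and fiber-bijective homomorphisms, its fiber-bijectivity is automatic, and I would claim it is the desired fiber-bijective embedding.

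The substance of that claim, and the only real computation, is to check that the base map $b : F -> E_\infty \otimes E$ of $\ang{\~g, h}$ is an invariant embedding; this is the fiber-space analog of \cref{thm:tensor-props}(b). Taking base maps in $\Pi_1 \circ \ang{\~g, h} = \~g$ gives $\pi_1 \circ b = g$. As $g$ is an invariant embedding it is globally injective, so $b$ is too. For class-bijectivity I would run a diagram chase on each class $[y]_F$: the restriction $\pi_1|[b(y)]$ is a bijection onto $[g(y)]_{E_\infty}$ (since $\pi_1$ is class-bijective), and $\pi_1 \circ b|[y]_F = g|[y]_F$ is a bijection onto $[g(y)]_{E_\infty}$ (since $g$ is class-bijective); comparing these forces $b|[y]_F$ to be a bijection onto $[b(y)]_{E_\infty \otimes E}$. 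A globally injective, class-bijective homomorphism is an invariant embedding, so $b : F \sqle_B^i E_\infty \otimes E$, completing the proof.

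The step I expect to be the main obstacle is not producing $\ang{\~g, h}$ but the bookkeeping underlying the first paragraph, namely verifying that the base projection $\pi_1$ of the fiber-bijective product is genuinely class-bijective. This forces one to lean on the explicit description of $\otimes$ for fiber spaces (via \cref{thm:tensor-alt}) rather than on the abstract universal property alone, because a general fiber-bijective homomorphism need \emph{not} have a class-bijective base map; it is precisely the projections arising from the product lying over $E_\infty \otimes E$ that do, and this is exactly what makes the diagram chase above go through.
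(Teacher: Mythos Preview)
Your proposal is correct and follows precisely the approach the paper intends: the proposition is stated as an immediate consequence of the fiber-bijective product construction (the remark preceding it) specialized to $(Q,q,F) = (P_\infty,p_\infty,E_\infty)$, and the details you fill in---applying \cref{thm:pinfty} to the given $(V,Q,q)$, pairing into the product, and checking that the resulting base map is an invariant embedding via the analog of \cref{thm:tensor-props}(b)---are exactly what that one-line proof unpacks to. Your caveat about needing the explicit \cref{thm:tensor-alt}-style description to know that the base projections of the product are the class-bijective tensor projections is apt and matches how the paper sets things up.
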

\end{remark}

We conclude by noting that restricting attention to smooth fiber spaces and applying the correspondence with presentations gives a different perspective on some results from \cref{sec:reductions}:
\begin{itemize}
\item  \cite[D.1]{HK}  If $(X, E) \in \@E$ admits a smooth fiber space $(P, p)$, and $\#A : (P, p) |= \sigma$, then $(P, p)$ corresponds to a presentation $(Y, F)$ of $X/E$, and $\#A$ corresponds to a structure $(\^F, \pi_1) |= \sigma$, i.e., a structure $F |= \sigma$; hence $E$ is bireducible with a $\sigma$-structurable equivalence relation.
\item  In particular, if $f : E ->_B^{sm} F$ and $\#A : F |= \sigma$, then pulling back $\^{\#A} : (\^F, \pi_1) |= \sigma$ along $f$ gives a smooth $\sigma$-structured fiber space (namely $f^{-1}(\^F)$) over $E$, whence $E$ is bireducible with a $\sigma$-structurable equivalence relation.  So $\@E_\sigma^r$ is closed under $->_B^{sm}$ (\cref{thm:red-elem}).
\item  If $f : E ->_B^{ci} F$, then the induced $\^f : \^E -> \^F$ is fiber-injective, which yields a fiberwise injection $\^E ->_E f^{-1}(\^F)$ over $E$, whence $E$ embeds into the $\sigma$-structurable presentation corresponding to $f^{-1}(\^F)$; this similarly re-proves part of \cref{thm:emb-elem}.
\end{itemize}

\section{Appendix: The category of theories}
\label{sec:interp}

We discuss here a categorical structure on the class of all theories, which interacts well with several of the constructions we have considered.

\subsection{Interpretations}

Let $\@T$ denote the class of all theories.  Let $(L, \sigma), (M, \tau), (N, \upsilon) \in \@T$ be theories.  We denote logical equivalence of $L_{\omega_1\omega}$-formulas by $\equiv$, and logical equivalence modulo $\sigma$ by $\equiv_\sigma$.

By an \defn{interpretation} of $L$ in $(M, \tau)$, written
\begin{align*}
\alpha : L ->_I (M, \tau),
\end{align*}
we mean a function $\alpha : L -> M_{\omega_1\omega}/{\equiv_\tau}$ mapping each $n$-ary relation $R \in L$ to a $\tau$-equivalence class of $M_{\omega_1\omega}$-formulas with free variables from $x_1, \dotsc, x_n$.  We generally abuse notation by identifying equivalence classes of formulas with individual formulas.  Thus, for $R \in L$, we denote by $\alpha(R) = \alpha(R)(x_1, \dotsc, x_n)$ any formula in the equivalence class $\alpha(R)$.

Given an interpretation $\alpha : L ->_I (M, \tau)$ and an $L_{\omega_1\omega}$-formula $\phi(x_1, \dotsc, x_n)$, we define the $M_{\omega_1\omega}$-formula $\alpha(\phi)(x_1, \dotsc, x_n)$ (modulo $\equiv_\tau$) by ``substituting'' $\alpha(R)$ for $R$ in $\phi$, for each $R \in L$; formally, this is defined by induction on $\phi$ in the obvious manner.  (In general this will require renaming bound variables in $\phi$; hence $\alpha(\phi)$ is only well-defined modulo $\equiv_\tau$.)

Given an interpretation $\alpha : L ->_I (M, \tau)$ and a model $\#A = (X, S^\#A)_{S \in M}$ of $\tau$, the \defn{$\alpha$-reduct} of $\#A$ is the $L$-structure $\alpha^*\#A = (X, R^{\alpha^*\#A})_{R \in L}$ given by
\begin{align*}
R^{\alpha^*\#A} := \alpha(R)^\#A.
\end{align*}
It follows by induction that for any $L_{\omega_1\omega}$-formula $\phi$, we have $\phi^{\alpha^*\#A} = \alpha(\phi)^\#A$.  When $\alpha$ is the inclusion of a sublanguage $L \subseteq M$, $\alpha^*\#A$ is the $L$-reduct in the usual sense.

Now by an \defn{interpretation} of $(L, \sigma)$ in $(M, \tau)$, written
\begin{align*}
\alpha : (L, \sigma) ->_I (M, \tau) \qquad\text{(or $\alpha : \sigma ->_I \tau$)},
\end{align*}
we mean an interpretation $\alpha$ of $L$ in $(M, \tau)$ such that the $M_{\omega_1\omega}$-sentence $\alpha(\sigma)$ is logically implied by $\tau$; equivalently, for any (countable) model $\#A |= \tau$, we have $\alpha^*\#A |= \sigma$.

\begin{remark}
This notion of ``interpretation'' is more restrictive than the usual notion considered in model theory (see \cite[Section~5.3]{Hod}).
\end{remark}

Given interpretations $\alpha : (L, \sigma) ->_I (M, \tau)$ and $\beta : (M, \tau) ->_I (N, \upsilon)$, we may compose them in the obvious manner to get $\beta \circ \alpha : (L, \sigma) ->_I (N, \upsilon)$, where $(\beta \circ \alpha)(R) := \beta(\alpha(R))$ for $R \in L$.  We also have the identity interpretation $1_\sigma : (L, \sigma) ->_I (L, \sigma)$.  Thus, we have a \defn{category $(\@T, ->_I)$ of theories and interpretations}.

\subsection{Products and coproducts of theories}
\label{sec:interp-plus-times}

The category $(\@T, ->_I)$ has an initial object $(\emptyset, \top)$ where $\top$ is a tautology, as well as a terminal object $(\emptyset, \bot)$ where $\bot$ is a contradictory sentence.  It also has countable coproducts and products, given by the operations $\bigotimes$ and $\bigoplus$ respectively (\emph{not} vice-versa) from \cref{sec:lattice}; we sketch here the verification of the universal properties.

Let $((L_i, \sigma_i))_i$ be a countable family of theories.  We verify that their coproduct is
\begin{align*}
\bigotimes_i (L_i, \sigma_i) = (\bigsqcup_i L_i, \bigwedge_i \sigma_i),
\end{align*}
with the canonical injections $\iota_i : (L_i, \sigma_i) ->_I \bigotimes_j (L_j, \sigma_j)$ given by the inclusions $L_i -> \bigsqcup_i L_i$.  Let $(M, \tau)$ be another theory, and let $\alpha_i : \sigma_i ->_I \tau$ be interpretations for each $i$; we must find a unique interpretation $\alpha : \bigotimes_i \sigma_i ->_I \tau$ such that $\alpha \circ \iota_i = \alpha_i$ for each $i$.  This condition means precisely that $\alpha(R) \equiv_\tau \alpha_i(R)$ for each $R \in L_i$, which determines $\alpha$ uniquely as an interpretation $\bigsqcup_i L_i ->_I (M, \tau)$; and since $\tau$ logically implies each $\alpha_i(\sigma_i)$, $\tau$ also logically implies $\bigwedge_i \alpha_i(\sigma_i) = \alpha(\bigwedge_i \sigma_i)$, whence $\alpha$ is an interpretation $\bigotimes_i \sigma_i ->_I \tau$, as desired.

Now we verify that the product is
\begin{align*}
\bigoplus_i (L_i, \sigma_i) &= (\bigoplus_i L_i, \bigoplus_i \sigma_i) \\
&= (\bigsqcup_i (L_i \sqcup \{P_i\}),\, \bigvee_i ((\forall x\, P_i(x)) \wedge \sigma_i \wedge \bigwedge_{j \ne i} \bigwedge_{R \in L_i \sqcup \{P_i\}} \forall \-x\, \neg R(\-x))),
\end{align*}
with the canonical projections $\pi_i : \bigoplus_j (L_j, \sigma_j) ->_I (L_i, \sigma_i)$ given by
\begin{align*}
\pi_i(R) := \begin{cases}
R &\text{if $R \in L_i$}, \\
\top &\text{if $R = P_i$}, \\
\bot &\text{otherwise};
\end{cases}
\end{align*}
a computation shows that $\pi_i(\bigoplus_j \sigma_j)$ is logically equivalent to (hence logically implied by) $\sigma_i$, whence $\pi_i$ is an interpretation $\bigoplus_j \sigma_j ->_I \sigma_i$.  By a straightforward induction,
\begin{enumerate}
\item[($*$)]  for each $(\bigoplus_i L_i)_{\omega_1\omega}$-formula $\phi$, we have $\phi \equiv_{\bigoplus_i \sigma_i} \bigvee_i (\pi_i(\phi) \wedge \forall x\, P_i(x))$.
\end{enumerate}
Now let $(M, \tau)$ be another theory, and let $\alpha_i : \tau ->_I \sigma_i$ for each $i$; we must find a unique $\alpha : \tau ->_I \bigoplus_i \sigma_i$ such that $\pi_i \circ \alpha = \alpha_i$ for each $i$.  This condition means that for each $S \in M$ and each $i$, we have $\pi_i(\alpha(S)) \equiv_{\bigoplus_j \sigma_j} \alpha_i(S)$.  So by ($*$), we must have
\begin{align*}
\alpha(S) \equiv_{\bigoplus_i \sigma_i} \bigvee_i (\alpha_i(S) \wedge \forall x\, P_i(x)).
\end{align*}
This determines $\alpha$ uniquely.  To check that $\bigoplus_i \sigma_i$ logically implies $\alpha(\tau)$, use ($*$) on $\alpha(\tau)$, together with $\pi_i(\alpha(\tau)) \equiv \alpha_i(\tau)$ (by definition of $\alpha$ and $\pi_i$); we omit the details.

In \cref{sec:interp-ooobool} we will give a more abstract description of countable coproducts (and other countable colimits) in $(\@T, ->_I)$.

\subsection{Interpretations and structurability}

We now relate interpretations to structurability.

For an interpretation $\alpha : (L, \sigma) ->_I (M, \tau)$ and a countable Borel equivalence relation $(X, E) \in \@E$ with a $\tau$-structure $\#A : E |= \tau$, the \defn{classwise $\alpha$-reduct} of $\#A$ is the $\sigma$-structure $\alpha^*_E \#A : E |= \sigma$ where
\begin{align*}
R^{\alpha^*_E \#A}(\-x) \iff \alpha(R)^{\#A|[x_1]_E}(\-x)
\end{align*}
for $n$-ary $R \in L$ and $\-x = (x_1, \dotsc, x_n) \in X^n$ with $x_1 \mathrel{E} \dotsb \mathrel{E} x_n$.  In other words, for each $E$-class $C \in X/E$, we have
\begin{align*}
\alpha^*_E \#A | C = \alpha^*(\#A|C).
\end{align*}
(Note that in general, $\alpha^*_E \#A \ne \alpha^* \#A$.)  Clearly, $\alpha |-> \alpha^*_E$ preserves identity and reverses composition: $(1_\sigma)^*_E \#A = \#A$, and $(\beta \circ \alpha)^*_E \#A = \alpha^*_E \beta^*_E \#A$ (for $\alpha : \sigma ->_I \tau$, $\beta : \tau ->_I \upsilon$, and $\#A : E |= \upsilon$).

To make things more explicit, let $\*{Set}$ denote the category of sets, and let
\begin{align*}
\Str(E, \sigma) := \{\#A \mid \#A : E |= \sigma\}
\end{align*}
denote the set of $\sigma$-structures on $E$.  Then for $\alpha : \sigma ->_I \tau$, and fixed $E \in \@E$, we get a function $\alpha^*_E : \Str(E, \tau) -> \Str(E, \sigma)$; and the assignment $\alpha |-> \alpha^*_E$ yields a functor
\begin{align*}
\Str(E, \cdot) : (\@T, {->_I})^\op -> \*{Set}
\end{align*}
(here $(\cdot)^\op$ means opposite category).  On the other hand, for fixed $\sigma$, and $f : E ->_B^{cb} F$, recall that we have the classwise pullback operation $f^{-1}_E : \Str(F, \sigma) -> \Str(E, \sigma)$; the assignment $f |-> f^{-1}_E$ is also (contravariantly) functorial, and furthermore $f^{-1}_E$ commutes with $\alpha^*_E$.  So we in fact get a bifunctor
\begin{align*}
\Str : (\@E, {->_B^{cb}})^\op \times (\@T, {->_I})^\op -> \*{Set}
\end{align*}
(here $\times$ denotes the product category), where for $f : E ->_B^{cb} F$ and $\alpha : \sigma ->_I \tau$, $\Str(f, \alpha)$ is the function $f^{-1}_E \circ \alpha^*_F = \alpha^*_E \circ f^{-1}_E : \Str(F, \tau) -> \Str(E, \sigma)$.

The following proposition says that the bifunctor $\Str$ is ``representable in the second coordinate'': for every $E$, there is a canonical theory, namely $\sigma_E$ (the ``Scott sentence'' of $E$), such that $\sigma$-structures on $E$ are in natural bijection with interpretations $\sigma -> \sigma_E$.

\begin{proposition}
\label{thm:sigmae-rep}
Let $(X, E) \in \@E$ be a countable Borel equivalence relation.  Let $\sigma_E$ be the ``Scott sentence'', and let $\#H : E |= \sigma_E$ be the canonical structure given by \cref{thm:sigmae}.  Then for any other theory $(L, \sigma)$ and structure $\#A : E |= \sigma$, there is a unique interpretation $\alpha : \sigma ->_I \sigma_E$ such that $\alpha^*_E \#H = \#A$.  This is illustrated by the following diagram:
\begin{equation*}
\begin{tikzcd}
E \rar[models]{\#H} \drar[models][swap]{\#A} & \sigma_E \\
& \sigma \uar[dashed][swap]{\alpha}
\end{tikzcd}
\end{equation*}
\end{proposition}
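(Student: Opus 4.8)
The plan is to unwind the explicit construction of $\sigma_E$ and $\#H$ from the proof of \cref{thm:sigmae} and to read the interpreting formulas off of $\#A$ directly. Recall from that proof that we may take $X \subseteq 2^\#N$, the language of $\sigma_E$ to be the unary language $L_{\sigma_E} = \{R_0, R_1, \dotsc\}$, and $\#H = \#H'|X$, where $\#H'$ is the structure on $2^\#N$ with $R_i^{\#H'}(x) \iff x(i) = 1$. The content of the bijection ($*$) there is that a Borel $L_{\sigma_E}$-structure $\#B$ on a set $Y$ is the same thing as a Borel map $b_\#B : Y -> 2^\#N$, namely $b_\#B(y) := (i \mapsto R_i^\#B(y))$, and $\sigma_E$ was chosen precisely so that a countable $\#B |= \sigma_E$ iff $b_\#B$ is a bijection from the universe of $\#B$ onto some $E$-class.

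First I would define $\alpha$. For each $n$-ary $R \in L$, the relation $R^\#A \subseteq X^n \subseteq (2^\#N)^n$ is Borel, so exactly as the formulas $\phi_i$ are produced in the proof of \cref{thm:sigmae}, there is a quantifier-free $L_{\sigma_E}$-formula $\theta_R(x_1, \dotsc, x_n)$, a countable Boolean combination of the atoms $R_j(x_k)$, with $\theta_R^{\#H'} = R^\#A$ as subsets of $(2^\#N)^n$. I put $\alpha(R) := \theta_R$. To see that $\alpha^*_E \#H = \#A$, I use that $\theta_R$ is quantifier-free, hence absolute between $\#H'$ and any substructure: for a tuple $\-x$ with $E$-related coordinates, $\theta_R^{\#H|[x_1]_E}(\-x) \iff \theta_R^{\#H'}(\-x) \iff R^\#A(\-x)$, which is exactly the defining condition $R^{\alpha^*_E \#H}(\-x) \iff \alpha(R)^{\#H|[x_1]_E}(\-x)$ matching $R^\#A$.

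The key structural fact, which I would isolate next, is that every countable model $\#B |= \sigma_E$ is isomorphic, as an $L_{\sigma_E}$-structure, to $\#H|C$ for some $E$-class $C$: indeed $b_\#B$ is a bijection onto such a $C$, and $R_i^\#B(y) \iff b_\#B(y)(i) = 1 \iff R_i^{\#H}(b_\#B(y))$, so $b_\#B : \#B \cong \#H|C$. This gives both remaining claims at once. For validity of $\alpha$ (that $\sigma_E$ logically implies $\alpha(\sigma)$), given countable $\#B |= \sigma_E$ with $b_\#B : \#B \cong \#H|C$, quantifier-free absoluteness yields $\alpha(R)^\#B(\-y) \iff R^\#A(b_\#B(\-y))$, so $b_\#B : \alpha^* \#B \cong \#A|C$; since $\#A|C |= \sigma$, also $\alpha^* \#B |= \sigma$. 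For uniqueness, suppose $\beta : \sigma ->_I \sigma_E$ also satisfies $\beta^*_E \#H = \#A$. Then for every $E$-class $C$ and tuple $\-x \in C^n$ we have $\alpha(R)^{\#H|C}(\-x) \iff R^\#A(\-x) \iff \beta(R)^{\#H|C}(\-x)$, so $\alpha(R)$ and $\beta(R)$ agree on every $\#H|C$; since these exhaust the countable models of $\sigma_E$ up to isomorphism, $\alpha(R) \equiv_{\sigma_E} \beta(R)$, i.e.\ $\alpha = \beta$.

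The only real content beyond bookkeeping is the identification of the countable models of $\sigma_E$ with the restrictions $\#H|C$, which is what makes agreement on $\#H$ alone force agreement modulo $\sigma_E$; everything else is the quantifier-free definability of Borel relations in the unary language together with absoluteness. I expect the main care to be needed in keeping straight the distinction between $\alpha^*$ (the ordinary reduct, used for validity and for models $\#B$) and $\alpha^*_E$ (the classwise reduct, used for the condition $\alpha^*_E \#H = \#A$), and in noting that the Borel extension of $R^\#A$ from $X^n$ to $(2^\#N)^n$ used to form $\theta_R$ is irrelevant, since $\theta_R$ is only ever evaluated on tuples lying inside a single $E$-class.
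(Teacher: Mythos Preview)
Your proof is correct and follows essentially the same approach as the paper: define $\alpha(R)$ as a quantifier-free $L_{\sigma_E}$-formula encoding the Borel relation $R^\#A$, use quantifier-free absoluteness to verify $\alpha^*_E \#H = \#A$, and use that every countable model of $\sigma_E$ is isomorphic to some $\#H|C$ both to check that $\alpha$ is a valid interpretation and to establish uniqueness. The paper's write-up is slightly terser but the argument is the same.
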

\begin{proof}
We recall the definition of $\sigma_E$ from the proof of \cref{thm:sigmae}.  Let $M = \{R_0, R_1, \dotsc\}$ be the language of $\sigma_E$.  Recall that we regard $X$ as a subspace of $2^\#N$, and that the structure $\#H$ is given by $R_i^\#H(x) \iff x(i) = 1$.

Now in order to have $\alpha^*_E \#H = \#A$, we must have, for each $n$-ary $R \in L$ and $C \in X/E$, that
\begin{align*}
\alpha(R)^{\#H|C}
&= R^{\alpha^*(\#H|C)}
= R^{\alpha^*_E \#H|C}
= R^{\#A|C}
= R^\#A|C.  \tag{$*$}
\end{align*}
This can be achieved by letting $\alpha(R)(x_1, \dotsc, x_n)$ be a quantifier-free $M_{\omega_1\omega}$-formula such that $\alpha(R)^\#H = R^\#A \subseteq X^n$; such $\alpha(R)$ exists, as in the remarks following the proof of \cref{thm:sigmae}.  Since $\alpha(R)$ is quantifier-free, we have $\alpha(R)^{\#H|C} = \alpha(R)^\#H|C$, whence ($*$) holds.  Furthermore, $\alpha$ is an interpretation $\sigma ->_I \sigma_E$, since (from the proof of \cref{thm:sigmae}) every countable model of $\sigma_E$ is isomorphic to $\#H|C$ for some $C \in X/E$, and for such models we have $\alpha^*(\#H|C) = \#A|C |= \sigma$.

To check that $\alpha$ is unique, suppose $\alpha' : \sigma ->_I \sigma_E$ is another interpretation such that $\alpha^{\prime *}_E \#H = \#A$.  Then $\alpha'$ obeys ($*$), so for every $R \in L$ and $C \in X/E$, we have $\alpha'(R)^{\#H|C} = R^{\#A|C} = \alpha(R)^{\#H|C}$.  Since every countable model of $\sigma_E$ is isomorphic to some $\#H|C$, this means that $\alpha'(R) \equiv_{\sigma_E} \alpha(R)$ for every $R \in L$, i.e., that $\alpha' = \alpha$ as interpretations $\sigma ->_I \sigma_E$.
\end{proof}

As usual with representable bifunctors, we may now extend $E |-> \sigma_E$ to a functor
\begin{align*}
S : (\@E, {->_B^{cb}})^\op -> (\@T, ->_I),
\end{align*}
where $S(E) := \sigma_E$, and for a class-bijective homomorphism $f : E ->_B^{cb} F$, $S(f) : \sigma_F ->_I \sigma_E$ is the unique interpretation (given by \cref{thm:sigmae-rep}) such that $S(f)^*_E \#H_E = f^{-1}_E(\#H_F)$, as illustrated by the following diagram:
\begin{equation*}
\begin{tikzcd}
E \dar[][swap]{f} \rar[models]{\#H_E}
    \drar[models][pos=.35]{f^{-1}_E(\#H_F)}
&[2em] \sigma_E \\[2em]
F \rar[models][swap]{\#H_F} & \sigma_F \uar[dashed][swap]{S(f)}
\end{tikzcd}
\end{equation*}
Then \cref{thm:sigmae} says precisely that this functor is full and faithful, whence

\begin{corollary}
The functor $S$ is a contravariant equivalence of categories between $(\@E, {->_B^{cb}})$ and a full subcategory of $(\@T, ->_I)$.
\end{corollary}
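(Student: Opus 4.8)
The plan is to derive the corollary from the standard categorical principle that a fully faithful functor is an equivalence onto the full subcategory of its codomain spanned by the objects in its image (see \cite[IV.4]{ML}). Since $S$ has already been defined as a functor, all that remains is to check that it is full and faithful and then to pin down the relevant full subcategory; the substantive content is entirely packaged into \cref{thm:sigmae,thm:sigmae-rep}, so the argument reduces to composing two bijections and tracking variance.

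First I would record full faithfulness as a single bijection of hom-sets. Fix $E, F \in \@E$. Applying \cref{thm:sigmae} to $F$ gives a bijection between the class-bijective homomorphisms $f : E ->_B^{cb} F$ (i.e.\ the morphisms $E \to F$ in $(\@E, {->_B^{cb}})$) and the $\sigma_F$-structures on $E$, sending $f$ to the classwise pullback $f^{-1}_E(\#H_F)$. Applying \cref{thm:sigmae-rep} to $E$ with the theory $\sigma := \sigma_F$ gives a bijection between those same $\sigma_F$-structures on $E$ and the interpretations $\alpha : \sigma_F ->_I \sigma_E$ (i.e.\ the morphisms $S(F) \to S(E)$ in $(\@T, {->_I})$), sending $\alpha$ to $\alpha^*_E \#H_E$. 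Composing these yields a bijection
\begin{align*}
\mathrm{Hom}_{(\@E, {->_B^{cb}})}(E, F) \cong \mathrm{Hom}_{(\@T, {->_I})}(\sigma_F, \sigma_E).
\end{align*}

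The one point requiring care is that this composite bijection is exactly the action $f \mapsto S(f)$ of the functor on morphisms, so that it really witnesses full faithfulness. This is forced by the two uniqueness clauses: by definition $S(f)$ is the unique interpretation with $S(f)^*_E \#H_E = f^{-1}_E(\#H_F)$, which says precisely that $S(f)$ is the image of the structure $f^{-1}_E(\#H_F)$ under the inverse of the \cref{thm:sigmae-rep} bijection. Hence $f \mapsto S(f)$ coincides with the composite above, so $S$ is injective on each hom-set (faithful) and surjective onto each hom-set (full). Concretely, for the surjectivity one reverses the steps: given $\alpha : \sigma_F ->_I \sigma_E$, the structure $\alpha^*_E \#H_E$ is a $\sigma_F$-structure on $E$, so \cref{thm:sigmae} produces a unique $f : E ->_B^{cb} F$ with $f^{-1}_E(\#H_F) = \alpha^*_E \#H_E$, and then $S(f) = \alpha$ by uniqueness.

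Finally I would let $\@T_0 \subseteq (\@T, {->_I})$ be the full subcategory whose objects are the sentences $\sigma_E$ for $E \in \@E$. By construction $S$ is surjective on objects onto $\@T_0$, hence essentially surjective; and because $\@T_0$ is \emph{full}, the hom-set bijections above are unchanged upon restricting the codomain to $\@T_0$, so $S : (\@E, {->_B^{cb}})^\op \to \@T_0$ remains fully faithful. A fully faithful and essentially surjective functor is an equivalence \cite[IV.4]{ML}, giving the asserted contravariant equivalence. The main obstacle here is genuinely minor and purely a matter of bookkeeping: one must keep the contravariance of $S$ straight so that the source and target of each class-bijective homomorphism are matched to the correct domain and codomain of the corresponding interpretation.
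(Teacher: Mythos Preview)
Your argument is correct and is exactly the approach taken in the paper, just spelled out in more detail: the paper's entire proof is the single clause ``Then \cref{thm:sigmae} says precisely that this functor is full and faithful,'' which implicitly composes the bijection of \cref{thm:sigmae-rep} (already used to define $S$ on morphisms) with that of \cref{thm:sigmae}, precisely as you do. Your explicit verification that the composite bijection coincides with $f \mapsto S(f)$ via the uniqueness clauses is the right way to unpack that sentence.
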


\begin{remark}
For $f : E ->_B^{cb} F$, unravelling the definitions and proofs reveals that $S(f) : \sigma_F ->_I \sigma_E$ maps each $R_i$ in the language of $\sigma_F$ coding the $i$th subbasic clopen subset of $2^\#N$ to a quantifier-free formula coding the preimage of that subset under $f$.
\end{remark}

It would be interesting to characterize the essential image of $S$, i.e., those theories which are isomorphic (in the sense of $->_I$) to $\sigma_E$ for some countable Borel equivalence relation $E$.

We note that the operation $(E, \sigma) |-> E \ltimes \sigma$, being characterized by a universal property, can now be extended to a bifunctor $(\@E, {->_B^{cb}}) \times (\@T, ->_I)^\op -> (\@E, {->_B^{cb}})$ (satisfying the obvious compatibility conditions).

Finally, we note that the notion of interpretation gives us a convenient way of restating \cref{thm:einftys-smooth}.  Let $\sigma_{fns}$ be a sentence in the language $L_{fns} := \{R\}$, $R$ unary, asserting that $R$ defines a finite nonempty subset.  Clearly $\sigma_{fns}$ axiomatizes the smooth countable Borel equivalence relations.  For another theory $(L, \sigma)$, an interpretation $\alpha : (L_{fns}, \sigma_{fns}) ->_I (L, \sigma)$ is the same thing as a choice of an $L_{\omega_1\omega}$-formula $\alpha(R)(x)$ such that $\sigma$ logically implies $\alpha(\sigma_{fns})$, i.e., $\sigma$ logically implies that $\alpha(R)$ defines a finite nonempty subset.  Thus (i)$\iff$(ii) in \cref{thm:einftys-smooth} can be restated simply as
\begin{align*}
\sigma_{fns} ->_I \sigma \quad\iff\quad \sigma =>^* \sigma_{fns}.
\end{align*}
Similarly, \cref{prb:elem-interp} above asks whether for any $\tau$, there is a $\tau' <=>^* \tau$ such that for any $\sigma$, $\tau' ->_I \sigma$ iff $\sigma =>^* \tau'$.

\subsection{$\omega_1\omega$-Boolean algebras}
\label{sec:interp-ooobool}

There is a more abstract viewpoint on theories and interpretations, explaining much of the categorical structure in $(\@T, ->_I)$, which we now sketch.  Roughly, a theory $(L, \sigma)$ can be seen as a ``presentation'' of a more intrinsic algebraic structure, where $L$ is the set of generators and $\sigma$ is the relations (combined into one).  An interpretation between theories is then simply a homomorphism of algebras; and general universal-algebraic constructions may be used to combine theories.

\begin{remark}
The notion of ``$\omega_1\omega$-Boolean algebra'' we are about to introduce is a special case of a much more general notion, \emph{hyperdoctrines}, in categorical logic (see \cite{Law}).  Similar notions have also appeared in model theory (often in dualized forms; see \cite[Section~3]{Hru}, \cite[Section~2.2]{Ben}).  However, for the sake of being self-contained, and because our point of view is different from both categorical logic and traditional model theory, we will not use these preexisting terminologies.
\end{remark}

Let $\*N$ denote the category of natural numbers and functions between them (where as usual, $n = \{0, \dotsc, n-1\}$ for $n \in \#N$).  

Recall that a \defn{Boolean $\sigma$-algebra} is an $\omega_1$-complete Boolean algebra; to avoid confusion with sentences $\sigma$, we will use the term \defn{$\omega_1$-Boolean algebra} throughout this section.  Let $\*{\omega_1 Bool}$ denote the category of $\omega_1$-Boolean algebras and $\omega_1$-homomorphisms (i.e., Boolean algebra homomorphisms preserving countable joins).


An \defn{$\omega_1\omega$-Boolean algebra} is a functor
\begin{align*}
T : \*N -> \*{\omega_1 Bool}
\end{align*}
satisfying certain conditions, as follows.  For a morphism $f : m -> n$ in $\*N$, the $\omega_1$-homomorphism $T(f) : T(m) -> T(n)$ is required to have a \defn{left adjoint}, denoted $\exists_f : T(n) -> T(m)$, i.e., a function (not necessarily an $\omega_1$-homomorphism) satisfying
\begin{align*}
\exists_f(\psi) \le \phi  \quad\iff\quad  \psi \le T(f)(\phi),
\tag{$*$}
\end{align*}
for all $\psi \in T(n)$ and $\phi \in T(m)$ (such $\exists_f$ is uniquely determined by $T(f)$).  The left adjoints are required to satisfy the \defn{Beck-Chevalley condition}: for any \defn{pushout} square
\begin{equation*}
\begin{tikzcd}
m_0 \dar[swap]{f_1} \rar{f_2} & m_2 \dar{g_2} \\
m_1 \rar[swap]{g_1} & n
\end{tikzcd}
\end{equation*}
in $\*N$ (i.e., the square commutes, and exhibits $n$ as (an isomorphic copy of) the quotient of $m_1 \sqcup m_2$ by the smallest equivalence relation identifying $f_1(i)$ with $f_2(i)$ for each $i \in m_0$), we have
\begin{align*}
T(f_1) \circ \exists_{f_2} = \exists_{g_1} \circ T(g_2)  \tag{$**$}
\end{align*}
as functions $T(m_2) -> T(m_1)$.  (Note: the $\ge$ inequality is automatic.)

\begin{remark}
In fact, ($**$) is symmetric (under exchanging $1$ and $2$ in subscripts); this can be seen by verifying that it is equivalent to:
\begin{align*}
\exists_{f_1}(\phi) \wedge \exists_{f_2}(\psi) = \exists_{g_1 \circ f_1}(T(g_1)(\phi) \wedge T(g_2)(\psi))
\end{align*}
for all $\phi \in T(m_1)$ and $\psi \in T(m_2)$ (again, the $\ge$ inequality is automatic).
\end{remark}

A \defn{homomorphism} between $\omega_1\omega$-Boolean algebras $T, U$ is a natural transformation of functors $\alpha : T -> U$, i.e., a system of $\omega_1$-Boolean homomorphisms $(\alpha_n : T(n) -> U(n))_{n \in \#N}$ intertwining $T(f), U(f)$ for $f : m -> n$ in $\*N$, which furthermore commutes with the operations $\exists_f$, i.e.,
\begin{align*}
\alpha_m \circ \exists_f^T = \exists_f^U \circ \alpha_n : T(n) -> U(m)
\end{align*}
for all $f : m -> n$ in $\*N$, where $\exists_f^T$ (respectively $\exists_f^U$) denotes the operation $\exists_f$ in $T$ (respectively $U$).  Let $\*{\omega_1\omega Bool}$ denote the category of $\omega_1\omega$-Boolean algebras and homomorphisms.

From its definition, the notion of $\omega_1\omega$-Boolean algebra is clearly (multisorted) \defn{algebraic}, in the sense that an $\omega_1\omega$-Boolean algebra is a system of sets $(T(n))_{n \in \#N}$ equipped with various (infinitary) operations between them which are required to satisfy some universal equational axioms.  Namely, the operations are: the Boolean operations $\neg$, (countable) $\bigvee, \bigwedge$ on each $T(n)$, and the unary operations $T(f) : T(m) -> T(n)$ and $\exists_f : T(n) -> T(m)$ for each morphism $f : m -> n$ in $\*N$; and the equational axioms are: the axioms of an $\omega_1$-Boolean algebra for each $T(n)$, the statements that each $T(f)$ is an $\omega_1$-homomorphism, and the axioms ($*$) and ($**$) for each choice of parameters $f, f_1, f_2, g_1, g_2$ (where ($*$) is replaced with equivalent equations using $\wedge$, say).  Furthermore, a homomorphism of $\omega_1\omega$-Boolean algebras is precisely a system of functions which are required to preserve all of the operations.

It follows from universal algebra that the category $\*{\omega_1\omega Bool}$ admits all sorts of constructions.  We describe some of these, while at the same time relating $\omega_1\omega$-Boolean algebras to theories.

Let $L$ be a language.  For $n \in \#N$, let $L(n) \subseteq L$ denote the $n$-ary relation symbols in $L$; we may thus regard $L$ as an $\#N$-graded set $(L(n))_{n \in \#N}$.  Similarly, let $L_{\omega_1\omega}(n)$ denote the $L_{\omega_1\omega}$-formulas with free variables from $x_0, \dotsc, x_{n-1}$, and let $(L_{\omega_1\omega}/{\equiv})(n) := L_{\omega_1\omega}(n)/{\equiv}$ denote the logical equivalence classes of such formulas.  We identify each relation $R \in L(n)$ with the atomic formula $R(x_0, \dotsc, x_{n-1}) \in L_{\omega_1\omega}(n)$.  Each $(L_{\omega_1\omega}/{\equiv})(n)$ is an $\omega_1$-Boolean algebra under the logical connectives (the \defn{Lindenbaum-Tarski algebra} with $n$ free variables).  For $f : m -> n$ in $\*N$, let
\begin{align*}
(L_{\omega_1\omega}/{\equiv})(f) : (L_{\omega_1\omega}/{\equiv})(m) &--> (L_{\omega_1\omega}/{\equiv})(n) \\
\phi(x_0, \dotsc, x_{m-1}) &|--> \phi(x_{f(0)}, \dotsc, x_{f(m-1)})
\end{align*}
be the variable substitution map.  Then

\begin{proposition}
$L_{\omega_1\omega}/{\equiv} : \*N -> \*{\omega_1 Bool}$ is the free $\omega_1\omega$-Boolean algebra generated by $L$, i.e., by the atomic formulas $R(x_0, \dotsc, x_{n-1}) \in (L_{\omega_1\omega}/{\equiv})(n)$ for $R \in L(n)$.
\end{proposition}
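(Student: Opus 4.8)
The plan is to verify the universal property of a free algebra: given any $\omega_1\omega$-Boolean algebra $T$ and any choice, for each $n$-ary $R \in L(n)$, of an element $R^T \in T(n)$, there should be a unique homomorphism $\alpha \colon L_{\omega_1\omega}/{\equiv} \to T$ with $\alpha_n(R(x_0, \dotsc, x_{n-1})) = R^T$. Before that, one checks that $L_{\omega_1\omega}/{\equiv}$ is indeed an $\omega_1\omega$-Boolean algebra. Each fiber $(L_{\omega_1\omega}/{\equiv})(n)$ is an $\omega_1$-Boolean algebra under the connectives $\neg, \bigvee, \bigwedge$ (the Lindenbaum–Tarski algebra), the substitution maps $(L_{\omega_1\omega}/{\equiv})(f)$ are $\omega_1$-homomorphisms and functorial in $f$, and the left adjoint $\exists_f$ is realized by existential quantification: for $f \colon m \to n$ and $\psi(x_0, \dotsc, x_{n-1})$, set
\[ \exists_f(\psi)(x_0, \dotsc, x_{m-1}) := \exists z_0 \dotsb \exists z_{n-1}\, \bigl(\psi(z_0, \dotsc, z_{n-1}) \wedge \textstyle\bigwedge_{j<m} x_j = z_{f(j)}\bigr). \]
Semantically, in any model $\exists_f(\psi)$ defines the image of $\psi$ under the reindexing map $(a_0, \dotsc, a_{n-1}) \mapsto (a_{f(0)}, \dotsc, a_{f(m-1)})$, while $(L_{\omega_1\omega}/{\equiv})(f)(\phi)$ defines the preimage of $\phi$; so the adjunction ($*$) and the Beck--Chevalley condition ($**$) reduce to the set-theoretic facts that image is left adjoint to preimage and that image and preimage commute across a pushout square of finite sets. (Note that equality is itself such an operation: $x_0 = x_1$ is $\exists_g(\top)$ for the unique $g \colon 2 \to 1$.)

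For uniqueness of $\alpha$, observe that every element of $L_{\omega_1\omega}/{\equiv}$ is a term in the atomic formulas $R(x_0, \dotsc, x_{n-1})$ ($R \in L$) under the $\omega_1\omega$-Boolean operations: the atomic formulas $R(x_{i_0}, \dotsc)$ are substitutions $(L_{\omega_1\omega}/{\equiv})(h)(R)$, equalities are $\exists_g(\top)$, and all compound formulas are built by $\neg, \bigvee, \bigwedge$ and the quantifiers $\exists_f$ (with $\forall_f := \neg \exists_f \neg$). A homomorphism preserves all of these operations, hence is determined by its values on the generators $R^T$; a routine induction on the construction of a formula makes this precise.

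For existence, I would define a map $\hat\alpha$ on representative formulas by recursion on complexity, sending each atom $R(x_0, \dotsc, x_{n-1})$ to $R^T$ and each logical operation to the corresponding operation of $T$ (Boolean connectives to the $\omega_1$-Boolean operations of the fiber $T(n)$, reindexing to $T(f)$, quantification to $\exists_f$). By construction $\hat\alpha$ respects all operations at the level of syntactic terms; the one thing left is that it descends to $\equiv$-classes, i.e.\ that $\phi \equiv \psi$ implies $\hat\alpha(\phi) = \hat\alpha(\psi)$.

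This last point is the crux, and it is exactly a completeness statement: it holds iff every semantic logical equivalence of $L_{\omega_1\omega}$-formulas is derivable from the equational axioms defining $\omega_1\omega$-Boolean algebras. One inclusion is soundness — each of ($*$), ($**$), the Boolean laws, and the substitution laws holds in $L_{\omega_1\omega}/{\equiv}$, which is precisely the structure check above — so axiom-equivalence implies logical equivalence. The reverse inclusion, that logically equivalent formulas are forced equal by the axioms, is the main obstacle; since $T$ is an \emph{arbitrary} model of the axioms, well-definedness of $\hat\alpha$ cannot hold without it. I would obtain it from the completeness theorem for $L_{\omega_1\omega}$ (Keisler): the finitely many formulas appearing in a given equivalence lie in a countable fragment, and within such a fragment provability in the infinitary proof system coincides with validity, the proof rules being precisely the algebraic forms of the adjunction, the Beck--Chevalley condition, and the countable-conjunction axiom. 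Granting this, $\hat\alpha$ factors through $L_{\omega_1\omega}/{\equiv}$ to give the required homomorphism $\alpha$, completing the verification of freeness.
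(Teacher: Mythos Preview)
Your proposal is correct and follows essentially the same strategy as the paper's proof sketch: both verify the $\omega_1\omega$-Boolean algebra structure on the Lindenbaum--Tarski algebras and then reduce freeness to a completeness theorem for $L_{\omega_1\omega}$ (the paper cites Lopez-Escobar's Gentzen system, you cite Keisler, but the content is the same --- the proof rules match the algebraic axioms). The only presentational difference is that you give a uniform formula for $\exists_f$ valid for all $f$, whereas the paper decomposes $f$ into an injection (where $\exists_f$ is literal existential quantification) and a surjection (where $\exists_f$ conjuncts an equality); both are equivalent.
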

\begin{proof}[Proof sketch]
One verifies that a complete proof system for $L_{\omega_1\omega}$, e.g., the Gentzen system in \cite{LE}, corresponds to the axioms of $\omega_1\omega$-Boolean algebras.  The details are tedious but straightforward; we only comment here on the treatment of quantifiers and equality in $\omega_1\omega$-Boolean algebras.  Both are encoded into the left adjoints $\exists_f$ in an $\omega_1\omega$-Boolean algebra.  Namely, when $f : m -> n$ is an injective morphism in $\*N$, say (for notational simplicity) $f : m -> m+1$ is the inclusion, then $\exists_f$ corresponds to existential quantification over the free variables not in the image of $f$:
\begin{align*}
\exists_f : (L_{\omega_1\omega}/{\equiv})(m+1) &--> (L_{\omega_1\omega}/{\equiv})(m) \\
\psi(x_0, \dotsc, x_m) &|--> \exists x_m\, \psi(x_0, \dotsc, x_m).
\end{align*}
The axiom ($**$) above (with $f_2 = f$) then says that substitution of variables along $f_1$ commutes with the quantifier $\exists x_m$, while the axiom ($*$) corresponds to the logical rule of inference characterizing the existential quantifier:
\begin{align*}
(\exists x_m\, \psi(x_0, \dotsc, x_m)) -> \phi(x_0, \dotsc, x_{m-1}) \quad\iff\quad \psi(x_0, \dotsc, x_m) -> \phi(x_0, \dotsc, x_{m-1}).
\end{align*}
When $f : m -> n$ is surjective, say $f : n+1 -> n$ is the identity on $n$ and $f(n) = n-1 = f(n-1)$, then $\exists_f$ corresponds to equating the variables in the kernel of $f$:
\begin{align*}
\exists_f : (L_{\omega_1\omega}/{\equiv})(n) &--> (L_{\omega_1\omega}/{\equiv})(n+1) \\
\phi(x_0, \dotsc, x_{n-1}) &|--> (x_n = x_{n-1}) \wedge \phi(x_0, \dotsc, x_{n-1}).
\end{align*}
Again ($**$) says that substitution preserves equality, while ($*$) is the Leibniz rule for equality.  We leave the details to the reader.
\end{proof}

Similarly, one can verify that for an $L_{\omega_1\omega}$-sentence $\sigma$, $L_{\omega_1\omega}/{\equiv_\sigma} : \*N -> \*{\omega_1\omega Bool}$ is the free $\omega_1\omega$-Boolean algebra generated by $L$, subject to the relation $\sigma = \top$ (where $\top$ is a tautology).  But these are precisely the countably presented $\omega_1\omega$-Boolean algebras, since countably many relations may be conjuncted into one; hence

\begin{proposition}
The functor
\begin{align*}
(\@T, ->_I) &--> \*{\omega_1\omega Bool} \\
(L, \sigma) &|--> L_{\omega_1\omega}/{\equiv_\sigma}
\end{align*}
(sending interpretations to homomorphisms) is an equivalence between the category $(\@T, ->_I)$ of theories and the full subcategory of $\*{\omega_1\omega Bool}$ consisting of the countably presented $\omega_1\omega$-Boolean algebras.
\end{proposition}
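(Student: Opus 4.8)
The plan is to deduce this essentially formally from the preceding proposition, which identifies $L_{\omega_1\omega}/{\equiv}$ as the free $\omega_1\omega$-Boolean algebra on $L$ and, more precisely, identifies $L_{\omega_1\omega}/{\equiv_\sigma}$ as the $\omega_1\omega$-Boolean algebra \emph{presented} by the generators $L$ (one sort-$n$ generator for each $n$-ary $R \in L$) subject to the single relation $[\sigma] = \top$. Granting this, I would establish the equivalence by checking three things: that the functor is faithful, that it is full, and that its essential image is exactly the subcategory of countably presented $\omega_1\omega$-Boolean algebras.

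First I would handle fullness and faithfulness simultaneously, via the universal property of a presented algebra. By the freeness statement, for any $\omega_1\omega$-Boolean algebra $B$ the homomorphisms $L_{\omega_1\omega}/{\equiv} -> B$ are in natural bijection with families $(b_R)_{R \in L}$, where $b_R \in B(n)$ for $n$-ary $R$; and those factoring through $L_{\omega_1\omega}/{\equiv_\sigma}$ are exactly the ones whose induced map sends $[\sigma]$ to $\top$. Specializing to $B = M_{\omega_1\omega}/{\equiv_\tau}$, a choice of $b_R \in (M_{\omega_1\omega}/{\equiv_\tau})(n)$ is precisely a choice of $\alpha(R)$, i.e.\ a $\tau$-class of $M_{\omega_1\omega}$-formulas in $n$ free variables; and the requirement that $[\sigma]$ map to $\top$ is exactly the condition that $\tau$ logically imply $\alpha(\sigma)$, i.e.\ that $\alpha$ be an interpretation $\sigma ->_I \tau$. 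Moreover the homomorphism so produced is exactly the substitution map $[\phi] |-> [\alpha(\phi)]$ attached to $\alpha$. Thus the resulting hom-set bijection is natural and gives faithfulness and fullness at once. Along the way I must dispatch the routine point that each $\alpha$ genuinely induces an $\omega_1\omega$-Boolean \emph{homomorphism}, i.e.\ that substitution for relation symbols commutes with the Boolean operations, with the variable-substitution maps $T(f)$, and with the adjoints $\exists_f$; this follows by induction on formulas, since substituting for relation symbols leaves variables, quantifiers, and equality untouched.

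Next I would verify the essential image. Each $L_{\omega_1\omega}/{\equiv_\sigma}$ is countably presented by construction (countably many generators, one relation), so the functor lands in the full subcategory. Conversely, given a countably presented algebra with a countable graded set of generators $G$ and countably many relations, I take $L := G$, regarding a sort-$n$ generator as an $n$-ary relation symbol, so that the free algebra on $G$ is $L_{\omega_1\omega}/{\equiv}$. Each relation, a pair of sort-$n$ elements represented by formulas $\rho, \rho'$, I rewrite as the $L_{\omega_1\omega}$-sentence $\forall \-x\, (\rho <-> \rho')$; conjoining the countably many resulting sentences (permissible in $L_{\omega_1\omega}$) yields a single sentence $\sigma$, and I claim the presented algebra is $L_{\omega_1\omega}/{\equiv_\sigma}$. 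Since the target subcategory is full, fullness of the functor onto it is then immediate from the previous paragraph.

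The main obstacle is precisely the claim just made: that the congruence on $L_{\omega_1\omega}/{\equiv}$ generated by forcing each of the chosen relations to hold coincides with $\equiv_\sigma/{\equiv}$. This is a completeness statement for $L_{\omega_1\omega}$, asserting that two formulas are identified by the algebraic congruence generated by $\{[\sigma] = \top\}$ if and only if $\sigma$ proves their biconditional; it is the same input (the Gentzen system of Lopez-Escobar, matched against the $\omega_1\omega$-Boolean algebra axioms) that underlies the preceding freeness proposition. Once this correspondence between provability from $\sigma$ and the generated congruence is granted, the remaining verifications are formal bookkeeping.
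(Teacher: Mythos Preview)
Your proposal is correct and follows exactly the approach the paper takes: the paper's argument consists of a single sentence preceding the proposition, observing that $L_{\omega_1\omega}/{\equiv_\sigma}$ is the $\omega_1\omega$-Boolean algebra presented by generators $L$ and the single relation $\sigma = \top$, and that countably many relations can be conjuncted into one. You have simply unpacked this into the standard three-part verification (full, faithful, essentially surjective), correctly identifying that the only nontrivial input is the completeness statement already used for the preceding freeness proposition.
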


The category $\*{\omega_1\omega Bool}$, being (multisorted) algebraic, has all (small) limits and colimits.  We have already seen (\cref{sec:interp-plus-times}) that the subcategory of countably presented algebras, or equivalently $(\@T, {->_I})$, has countable products and coproducts.  The latter can be seen as an instance of the standard fact that countable colimits of countably presented algebras are countably presented (see e.g., \cite[VI~2.1]{Joh} for the analogous statement for finitely presented algebras):

\begin{proposition}
The category $(\@T, ->_I)$ has all colimits of countable diagrams.
\end{proposition}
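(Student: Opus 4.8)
The plan is to deduce this from the equivalence $(\@T, {->_I}) \simeq \{\text{countably presented } \omega_1\omega\text{-Boolean algebras}\}$ established in the preceding proposition, together with the general universal-algebraic fact that the full subcategory of countably presented objects in any (multisorted) algebraic category is closed under countable colimits. Since $\*{\omega_1\omega Bool}$ is algebraic, it is cocomplete, so all countable colimits exist in $\*{\omega_1\omega Bool}$; the content of the proposition is that when the diagram consists of countably presented algebras indexed by a countable diagram, the colimit computed in $\*{\omega_1\omega Bool}$ is again countably presented, hence lies in (the essential image of) $(\@T, {->_I})$, where it necessarily remains a colimit of the original diagram.

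First I would recall the cited analogue \cite[VI~2.1]{Joh} for finitely presented algebras and explain the bookkeeping needed to upgrade ``finite'' to ``countable.'' Let $D : J -> (\@T, {->_I})$ be a diagram with $J$ countable, and identify each $D(j)$ with a countably presented $\omega_1\omega$-Boolean algebra $\langle L_j \mid \sigma_j \rangle$, i.e.\ the free algebra on the countable graded generating set $L_j$ modulo the single relation $\sigma_j = \top$ (countably many relations conjunct into one, as noted in the text). The key step is to write down an explicit presentation of $\operatorname{colim}_J D$. Take as generators the disjoint union $\bigsqcup_{j \in J} L_j$, which is countable since $J$ and each $L_j$ are. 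Take as relations: (i) for each $j$, the relation $\sigma_j = \top$; and (ii) for each morphism $u : j -> j'$ in $J$, with $D(u) : \langle L_j \mid \sigma_j \rangle -> \langle L_{j'} \mid \sigma_{j'} \rangle$ the corresponding homomorphism, the relations $R \approx D(u)(R)$ identifying each generator $R \in L_j$ with its image (an $(L_{j'})_{\omega_1\omega}$-formula) in $D(j')$. Since $J$ is countable and each $L_j$ is countable, this is a countable set of relations, which again may be conjuncted into a single sentence; hence the presented algebra is countably presented. I would then verify the universal property directly: a homomorphism out of this presented algebra is exactly a compatible family of homomorphisms out of the $D(j)$ respecting the transition maps $D(u)$, which is precisely the universal property of the colimit.

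The main obstacle I anticipate is purely notational rather than conceptual: making precise the passage from ``presentation by generators and relations'' to the colimit cocone, in the multisorted ($\#N$-graded) infinitary setting where the free functor, the generators $L_j$, and the relations all carry arity data, and where one must confirm that substituting the formulas $D(u)(R)$ for generators interacts correctly with the operations $T(f)$ and $\exists_f$. This is exactly the kind of ``tedious but straightforward'' verification the paper has been deferring to the reader elsewhere, so I would keep it brief, citing the standard theory of algebraic categories (e.g.\ that the forgetful functor to the product of sorts is monadic and hence the category is cocomplete, with colimits computed as presented quotients of free algebras) and the finitely-presented analogue in \cite{Joh}.

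Thus the proof is: colimits of countable diagrams exist in the cocomplete category $\*{\omega_1\omega Bool}$; the explicit presentation above shows the colimit of a countable diagram of countably presented algebras is countably presented; transporting along the equivalence with $(\@T, {->_I})$ gives the claim. I would remark that, dually, this recovers the countable products and coproducts exhibited concretely in \cref{sec:interp-plus-times} as the special cases $J$ discrete (for coproducts $\bigotimes$) and, via the opposite, furnishes a uniform conceptual explanation for those constructions.
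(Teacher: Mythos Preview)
Your proposal is correct and takes essentially the same approach as the paper: the paper's ``proof'' is just the sentence preceding the proposition, invoking the equivalence with countably presented $\omega_1\omega$-Boolean algebras and the standard fact (with \cite[VI~2.1]{Joh} as the finitary analogue) that countable colimits of countably presented algebras are countably presented; you simply spell out the presentation by generators and relations that the paper leaves implicit, and the paper's subsequent coequalizer example is exactly an instance of your recipe.

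One small slip in your closing remark: the proposition concerns colimits, so it recovers the coproducts $\bigotimes$ (discrete $J$) but not the products $\bigoplus$, which are limits; ``via the opposite'' does not bridge this gap, since $(\@T, {->_I})$ is not self-dual. Drop that clause.
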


For example, to construct the colimit of the diagram consisting of two theories $(L, \sigma)$ and $(M, \tau)$ and two interpretations $\alpha : \sigma ->_I \tau$ and $\beta : \sigma ->_I \tau$ (the \defn{coequalizer} of $\alpha, \beta$), one takes the ``quotient'' of $(M, \tau)$ by the ``relations'' $\alpha(R) <-> \beta(R)$ for each $R \in L$, i.e., the theory
\begin{align*}
\left(M, \tau \wedge \bigwedge_{R \in L} \forall \-x\, (\alpha(R)(\-x) <-> \beta(R)(\-x))\right).
\end{align*}

\section{Appendix: Representation of $\omega_1$-distributive lattices}
\label{sec:sdlat}

Recall the definition of \defn{$\omega_1$-distributive lattice} from \cref{sec:lattice}.  A \defn{Boolean $\sigma$-algebra} is an $\omega_1$-complete Boolean algebra, which is then automatically $\omega_1$-distributive.  The \defn{Loomis-Sikorski representation theorem} states that every Boolean $\sigma$-algebra is a quotient of a sub-Boolean $\sigma$-algebra of $2^X$ for some set $X$.  One proof (see \cite[29.1]{Sik}) is via the Stone representation theorem.

By replacing the Stone representation theorem with the Priestley representation theorem for distributive lattices, we may prove an analogous result for $\omega_1$-distributive lattices.  We could not find this result in the literature, although it is quite possibly folklore.  We give the proof here for the sake of completeness.

\begin{theorem}
\label{thm:sdlat}
Let $P$ be an $\omega_1$-distributive lattice.  Then $P$ is isomorphic to a quotient $\omega_1$-complete lattice of a sub-$\omega_1$-complete lattice of $2^X$ for some set $X$.
\end{theorem}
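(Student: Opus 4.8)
The plan is to mirror the standard Loomis--Sikorski argument, but replacing the Stone representation theorem (appropriate for Boolean algebras) with the Priestley representation theorem for distributive lattices, which is the natural tool once we drop complementation and only have a distributive lattice. First I would recall that any $\omega_1$-distributive lattice $P$ is in particular a bounded distributive lattice (the bounds $\bot = \bigwedge P$ and $\top = \bigvee P$ exist since $P$ has all countable meets and joins, and we may assume $P$ is bounded by adjoining bounds if necessary). Priestley duality then provides a Priestley space $X$ (a compact totally order-disconnected topological space carrying a partial order) together with a lattice embedding
\begin{equation*}
\eta : P \hooklongrightarrow 2^X, \qquad \eta(p) := \{x \in X \mid x \in \text{the clopen up-set corresponding to } p\},
\end{equation*}
which is the Priestley representation: $\eta$ is an injective homomorphism of bounded distributive lattices whose image is exactly the lattice of clopen up-sets of $X$.

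The key point, and the step that makes the theorem more than just Priestley duality, is that $\eta$ need not preserve the \emph{countable} meets and joins that exist in $P$. To repair this, I would let $L \subseteq 2^X$ be the smallest sub-$\omega_1$-complete lattice of $2^X$ containing the image $\eta(P)$; that is, $L$ is the closure of $\eta(P)$ under countable unions and countable intersections taken in $2^X$. Then $\eta$ factors as $P \to L \hookrightarrow 2^X$, with $L$ a sub-$\omega_1$-complete lattice of $2^X$ by construction. The remaining task is to produce an $\omega_1$-complete lattice quotient $q : L \twoheadrightarrow P$ (i.e., a surjective map preserving countable meets and joins) which is a one-sided inverse to $P \to L$, exhibiting $P$ as a quotient of $L$. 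The natural candidate for $q$ sends an element $A \in L$ to the corresponding countable lattice-polynomial evaluated back in $P$; concretely, since every $A \in L$ is obtained from elements of $\eta(P)$ by countably many countable unions and intersections, one reads off a corresponding countable expression in $P$ and evaluates it using the meets and joins of $P$. One must check this is well-defined (independent of the representation of $A$) and preserves countable meets and joins.

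The main obstacle will be precisely this well-definedness and $\omega_1$-completeness of the quotient map $q$, and this is exactly where the $\omega_1$-distributive hypothesis is used. The difficulty is that two different countable lattice expressions built from $\eta(P)$ may represent the same set $A \in L \subseteq 2^X$ while \emph{a priori} evaluating to different elements of $P$; to rule this out I would show that whenever such an equality of subsets of $X$ holds, the corresponding equality of elements already holds in $P$. The cleanest route is to argue that the congruence on the free $\omega_1$-complete lattice generated by $P$ given by ``equal in $2^X$'' coincides with the congruence given by ``equal in $P$''; the inclusion of one congruence in the other is formal, while the reverse inclusion is what forces the use of the $\omega_1$-distributive laws, since collapsing iterated countable meets and joins down to $P$ requires precisely the identities $x \wedge \bigvee_i y_i = \bigvee_i (x \wedge y_i)$ and $x \vee \bigwedge_i y_i = \bigwedge_i (x \vee y_i)$ to put every countable expression into a normal form that $\eta$ is guaranteed to reflect. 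I would isolate this as a lemma stating that every element of $L$ is the image under $\eta$-style evaluation of a canonical normal form, and that $\eta$ reflects equalities of normal forms because $\eta$ is an injective lattice embedding and the bounded distributive structure is faithfully represented by Priestley duality. Once this normal-form lemma is in place, verifying that $q$ is a surjective $\omega_1$-lattice homomorphism with $q \circ (P \to L) = \mathrm{id}_P$ is routine, completing the representation of $P$ as a quotient of a sub-$\omega_1$-complete lattice of $2^X$.
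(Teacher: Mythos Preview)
Your starting point via Priestley duality is the same as the paper's, but the approach diverges at the crucial step, and the ``normal form'' argument you sketch has a genuine gap.

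The problem is the well-definedness of $q$. You want: if two countable lattice polynomials in elements of $\eta(P)$ yield the same subset of $X$, then they evaluate to the same element of $P$. Your plan is to use $\omega_1$-distributivity to reduce such polynomials to a normal form that $\eta$ reflects. But $\eta$ is only a \emph{finitary} lattice embedding, not an $\omega_1$-embedding; it reflects equalities of the form $\eta(a) = \eta(b)$, not equalities between countable expressions. So any usable normal form would have to be finitary, which is impossible for general countable terms. And the $\omega_1$-distributive laws do not yield a countable disjunctive or conjunctive normal form: distributing through a nested term like $\bigvee_i \bigwedge_j \bigvee_k a_{ijk}$ produces uncountably many conjuncts (one per choice function). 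So neither horn of your proposed lemma goes through, and without it there is no reason equality in $2^X$ should force equality in $P$.

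The paper fills this gap with a topological device you have not invoked: it passes to the quotient $2^X/I$ by the $\sigma$-ideal $I$ of meager subsets of the Priestley space. Two lemmas, proved directly from $\omega_1$-distributivity and compactness, show that $\eta(\bigvee_i x_i) = \overline{\bigcup_i \eta(x_i)}$ and $\eta(\bigwedge_i x_i) = (\bigcap_i \eta(x_i))^\circ$. Since a set differs from its closure or interior by a nowhere-dense set, the composite $p \circ \eta : P \to 2^X/I$ is an $\omega_1$-homomorphism; it is injective by the Baire category theorem (nonempty clopen sets are nonmeager). Thus $P$ embeds as a sub-$\omega_1$-complete lattice of the quotient $2^X/I$, and hence is a quotient of a sub-$\omega_1$-complete lattice of $2^X$. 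The meager-ideal step is precisely what replaces the nonexistent normal form.
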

\begin{proof}
We recall the Priestley representation theorem; see \cite[II~4.5--8, VII~1.1]{Joh}.  Let $P$ be a distributive lattice.  A \defn{prime filter} on $P$ is an upward-closed subset $F \subseteq P$ which is closed under finite meets (including the top element) and whose complement is closed under finite joins.  Let $X$ be the set of prime filters on $P$.  We order $X$ by $\subseteq$.  For $x \in P$, let
\begin{align*}
\eta(x) := \{F \in X \mid x \in F\}.
\end{align*}
We equip $X$ with the topology generated by the sets $\eta(x)$ and their complements.
\begin{theorem}[Priestley]
\begin{enumerate}
\item[(i)]  $X$ is a compact Hausdorff zero-dimensional space, and the order $\subseteq$ on $X$ is closed as a subset of $X^2$.
\item[(ii)]  The map $\eta$ is an order-isomorphism between $P$ and the poset of clopen ($\subseteq$-)upward-closed subsets of $X$ under inclusion.
\end{enumerate}
\end{theorem}

Now let $P$ be an $\omega_1$-distributive lattice, $X$ be as above, $I \subseteq 2^X$ be the $\sigma$-ideal of meager subsets of $X$, and $p : 2^X -> 2^X/I$ be the quotient map.  Then $p \circ \eta : P -> 2^X/I$ is a lattice homomorphism.  It is injective, since if $x, y \in P$ with $x \ne y$ then $\eta(x) \mathbin{\triangle} \eta(y) \not\in I$ by the Baire category theorem.

\begin{lemma}
For $x_0, x_1, \dotsc \in P$, $\eta(\bigvee_i x_i) = \-{\bigcup_i \eta(x_i)}$ (the closure of $\bigcup_i \eta(x_i)$).
\end{lemma}
\begin{proof}
Clearly $\eta(\bigvee_i x_i)$ contains each $\eta(x_i)$ and is closed; thus $\eta(\bigvee_i x_i) \supseteq \-{\bigcup_i \eta(x_i)}$.  Suppose there were some $F \in \eta(\bigvee_i x_i) \setminus \-{\bigcup_i \eta(x_i)}$.  So $\bigvee_i x_i \in F$.  Let
\begin{align*}
\@G := \{G \in \-{\bigcup_i \eta(x_i)} \mid G \subseteq F\}.
\end{align*}
For each $G \in \@G$, we have some $y_G \in F \setminus G$.  Then $\@G \cap \bigcap_{G \in \@G} \eta(y_G) = \emptyset$, so since $\@G$ is compact, there are $G_1, \dotsc, G_k \in \@G$ such that $\@G \cap \eta(\bigwedge_j y_{G_j}) = \@G \cap \bigcap_j \eta(y_{G_j}) = \emptyset$.  Put $y := \bigwedge_j y_{G_j}$, so that $\@G \cap \eta(y) = \emptyset$.  Then for all $G \in \-{\bigcup_i \eta(y \wedge x_i)} = \-{\eta(y) \cap \bigcup_i \eta(x_i)} \subseteq \eta(y) \cap \-{\bigcup_i \eta(x_i)}$, we have $G \not\in \@G$, so by definition of $\@G$, $G \not\subseteq F$.  Also, we have $\bigvee_i (y \wedge x_i) = \bigwedge_j y_{G_j} \wedge \bigvee_i x_i \in F$ since $F$ is a filter.

Now for each $G \in \-{\bigcup_i \eta(y \wedge x_i)}$, we find $z_G \in G \setminus F$; then the union of $\eta(z_G)$ for all these $G$ covers $\-{\bigcup_i \eta(y \wedge x_i)}$, so by compactness of the latter, by taking the join of finitely many $z_G$'s, we get a $z \not\in F$ (because $F$ is prime) such that $\-{\bigcup_i \eta(y \wedge x_i)} \subseteq \eta(z)$.  But then $\eta(y \wedge x_i) \subseteq \eta(z)$, i.e., $y \wedge x_i \le z$, for every $i$, while $\bigvee_i (y \wedge x_i) \not\le z$ because $F$ separates them, a contradiction.
\end{proof}

Similarly (or simply by reversing the order), we have

\begin{lemma}
For $x_0, x_1, \dotsc \in P$, $\eta(\bigwedge_i x_i) = (\bigcap_i \eta(x_i))^\circ$ (the interior of $\bigcap_i \eta(x_i)$).
\end{lemma}

It follows from these lemmas that $p \circ \eta : P -> 2^X/I$ is an $\omega_1$-complete lattice homomorphism (i.e., preserves countable meets and joins), so $P$ is isomorphic to its image $p(\eta(P))$.  But $p(\eta(P))$ is a sub-$\omega_1$-complete lattice of a quotient of $2^X$, hence (as with any kind of algebra) also a quotient of a sub-$\omega_1$-complete lattice of $2^X$ (namely $p^{-1}(p(\eta(P)))/I$).
\end{proof}

\begin{corollary}
\label{thm:sdlat-thy}
If an equation between $\omega_1$-complete lattice terms (i.e., formal expressions built from variables, $\bigwedge$, and $\bigvee$) holds in the $\omega_1$-complete lattice $2$, then it holds in every $\omega_1$-distributive lattice.
\end{corollary}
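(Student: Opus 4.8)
The plan is to deduce this from the representation theorem \cref{thm:sdlat} together with the general principle that a class of algebras defined by equations is closed under the formation of products, subalgebras, and homomorphic images. Concretely, \cref{thm:sdlat} exhibits any $\omega_1$-distributive lattice $P$ as (isomorphic to) a surjective $\omega_1$-complete lattice homomorphic image of some sub-$\omega_1$-complete lattice $M$ of a power $2^X$. So it suffices to check that an equation $s = t$ between $\omega_1$-complete lattice terms which holds in $2$ is inherited at each of the three stages $2 \rightsquigarrow 2^X \rightsquigarrow M \rightsquigarrow P$.

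First I would treat the power $2^X$. An assignment of the (countably many) variables of $s, t$ into $2^X$ is the same as an $X$-indexed family of assignments into $2$, and since $\bigwedge$ and $\bigvee$ in $2^X$ are computed coordinatewise, the evaluation of any $\omega_1$-complete lattice term in $2^X$ is the coordinatewise evaluation in the factors $2$. Hence if $s$ and $t$ evaluate equally in $2$ under every assignment, they evaluate equally in $2^X$ under every assignment; that is, $s = t$ holds in $2^X$. Next, for the sub-$\omega_1$-complete lattice $M \subseteq 2^X$: the inclusion preserves the operations $\bigwedge$, $\bigvee$ by definition of ``sub-$\omega_1$-complete lattice'', so the evaluation of a term at an $M$-valued assignment agrees with its evaluation in $2^X$; thus $s = t$, holding in $2^X$, also holds in $M$. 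Finally, for the quotient $q : M \twoheadrightarrow P$: given any $P$-valued assignment, lift each variable along the surjection $q$ to an $M$-valued assignment; since $q$ is an $\omega_1$-complete lattice homomorphism, it commutes with evaluation of terms, so the $P$-evaluations of $s$ and $t$ are the $q$-images of their (equal) $M$-evaluations, and hence are equal. Therefore $s = t$ holds in $P$.

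There is essentially no obstacle once \cref{thm:sdlat} is granted; the content is just the routine verification that evaluation of a term commutes with projections, with the inclusion of a sub-$\omega_1$-complete lattice, and with an $\omega_1$-complete lattice homomorphism. The only point requiring minor care is that terms and assignments may involve countably infinitely many variables and that the operations $\bigwedge$, $\bigvee$ range over countable index sets, so one must use precisely that all the maps in play (the $X$-indexed projections, the inclusion $M \hookrightarrow 2^X$, and the quotient $q$) preserve countable meets and joins — which is exactly what ``$\omega_1$-complete lattice homomorphism'' guarantees — and that the lifting step in the quotient case uses surjectivity of $q$. Assembling the three inheritance steps along the chain provided by \cref{thm:sdlat} completes the proof.
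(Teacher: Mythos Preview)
Your proposal is correct and is precisely the standard HSP-style argument the paper implicitly relies on: the corollary is stated without proof in the paper, as an immediate consequence of \cref{thm:sdlat}, and your three inheritance steps (products, subalgebras, quotients) spell out exactly why it follows.
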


\begin{remark}
Another proof of \cref{thm:sdlat} may be given using the $L_{\omega_1\omega}$ completeness theorem for the Gentzen system in \cite{LE}.
\end{remark}

\printindex

\bigskip
\noindent Department of Mathematics

\noindent California Institute of Technology

\noindent Pasadena, CA 91125

\medskip
\noindent\textsf{rchen2@caltech.edu, kechris@caltech.edu}


\begin{thebibliography}{00000}

\bibitem[AFP]{AFP}  N.~Ackerman, C.~Freer, and R.~Patel, Invariant measures concentrated on countable structures, preprint, version~2, \url{http://arxiv.org/abs/1206.4011v2}, 2012.

\bibitem[AK]{AK}  S.~Adams and A.~S.~Kechris, Linear algebraic groups and countable Borel equivalence relations, \emph{J.\ Amer.\ Math.\ Soc.}\ \textbf{13(4)}~(2000), 909--943.

\bibitem[AS]{AS}  S.~Adams and R.~Spatzier, Kazhdan groups, cocycles and trees, \emph{Amer.\ J.\ Math.}\ \textbf{112}~(1990), 271--287.

\bibitem[Bar]{Bar}  J.~Barwise, \emph{Admissible Sets and Structures: An Approach to Definability Theory}, Perspectives in Mathematical Logic, vol.~7, Springer-Verlag, 1975.

\bibitem[BK]{BK}  H.~Becker and A.~S.~Kechris, \emph{The Descriptive Set Theory of Polish Group Actions}, London Math.\ Soc.\ Lecture Note Series \textbf{232}, Cambridge University Press, 1996.

\bibitem[Ben]{Ben}  I.~Ben~Yaacov, Positive model theory and compact abstract theories, \emph{J.\ Math.\ Log.}\ \textbf{3(1)}~(2003), 85--118.

\bibitem[CCM]{CCM}  J.~D.~Clemens, C.~Conley, and B.~D.~Miller, Borel homomorphisms of smooth $\sigma$-ideals, preprint, \url{http://wwwmath.uni-muenster.de/u/jclemens/public/Papers/homomorphism.pdf}, 2007.

\bibitem[DJK]{DJK}  R.~Dougherty, S.~Jackson, and A.~S.~Kechris, The structure of hyperfinite Borel equivalence relations, \emph{Trans.\ Amer.\ Math.\ Soc.}\ \textbf{341(1)}~(1994), 193--225.

\bibitem[FM]{FM}  J.~Feldman and C.~C.~Moore, Ergodic equivalence relations, cohomology, and von~Neumann algebras, \emph{Trans.\ Amer.\ Math.\ Soc.}\ \textbf{234(2)}~(1977), 289--324.

\bibitem[G]{G}  D.~Gaboriau, Invariants $\ell^2$ de relations d'équivalence et de groupes, \emph{Publ.\ Math.\ Inst.\ Hautes Études Sci.}\ \textbf{95}~(2002), 93--150.

\bibitem[GT]{GT}  D.~Gaboriau and R.~D.~Tucker-Drob, Approximations of standard equivalence relations and Bernoulli percolation at $p_u$, preprint, \url{http://arxiv.org/abs/1509.00247}, 2015.

\bibitem[HK]{HK}  G.~Hjorth and A.~S.~Kechris, \emph{Rigidity theorems for actions of product groups and countable Borel equivalence relations}, Mem.\ Amer.\ Math.\ Soc.\ \textbf{177(833)}~(2005).

\bibitem[Hod]{Hod}  W.~Hodges, \emph{Model theory}, Encyclopedia of Mathematics and its Applications, vol.~42, Cambridge University Press, 1993.

\bibitem[Hru]{Hru}  E.~Hrushovski, Simplicity and the Lascar group, preprint, \url{http://citeseerx.ist.psu.edu/viewdoc/summary?doi=10.1.1.41.8167}, 1997.

\bibitem[JKL]{JKL}  S.~Jackson, A.~S.~Kechris, and A.~Louveau, Countable Borel equivalence relations, \emph{J.\ Math.\ Log.}\ \textbf{2(1)}~(2002), 1--80.

\bibitem[Joh]{Joh}  P.~Johnstone, \emph{Stone spaces}, Cambridge studies in advanced mathematics 3, Cambridge University Press, 1982.

\bibitem[K91]{Kamt}  A.~S.~Kechris, Amenable equivalence relations and Turing degrees, \emph{J.\ Symb.\ Logic} \textbf{56}~(1991), 182--194.

\bibitem[K95]{Kcdst}  A.~S.~Kechris, \emph{Classical descriptive set theory}, Graduate Texts in Mathematics, vol.~156, Springer-Verlag, New York, 1995.

\bibitem[K10]{Kgaega}  A.~S.~Kechris, \emph{Global aspects of ergodic group actions}, Amer.\ Math.\ Soc.\ 2010.

\bibitem[KMd]{KMd}  A.~S.~Kechris and H.~L.~Macdonald, Borel equivalence relations and cardinal algebras, \emph{Fund. Math.}\ \textbf{235}~(2016), 183--198.


\bibitem[KM]{KM}  A.~S.~Kechris and B.~D.~Miller, \emph{Topics in orbit equivalence}, Lecture Notes in Mathematics, vol.~1852, Springer-Verlag, Berlin, 2004.

\bibitem[KST]{KST}  A.~S.~Kechris, S.~Solecki, and S.~Todorcevic, Borel chromatic numbers, \emph{Adv.\ Math.}\ \textbf{141}~(1999), 1--44.

\bibitem[Law]{Law}  F.~W.~Lawvere, Equality in hyperdoctrines and comprehension schema as an adjoint functor, \emph{Proc.\ New York Symposium on Applications of Categorical Algebra} (ed.\ A.~Heller), Amer.\ Math.\ Soc., 1970, 1--14.

\bibitem[LE]{LE}  E.~G.~K.~Lopez-Escobar, An interpolation theorem for denumerably long formulas, \emph{Fund.\ Math.}\ \textbf{57}~(1965), 253--272.

\bibitem[M]{M}  A.~Marks, Uniformity, universality, and computability theory, \emph{J. Math. Logic}\ \textbf{17, No. 01}~(2017), 1750003.


\bibitem[MSS]{MSS}  A.~Marks, T.~Slaman, and J.~Steel, Martin's conjecture, arithmetic equivalence, and countable Borel equivalence relations, \emph{Ordinal definability and recursion theory: The cabal seminar volume III}, Lecture Notes in Logic 43, Cambridge University Press, 2016, 200--219.

\bibitem[MK]{MK}  D.~A.~Martin and A.~S.~Kechris, Infinite games and effective descriptive set theory, in C.~A.~Rogers \emph{et al}, \emph{Analytic sets}, Academic Press, London, 1980, 404--470.

\bibitem[ML]{ML}  S.~Mac~Lane, \emph{Categories for the Working Mathematician}, Graduate Texts in Mathematics, vol.~5, Springer-Verlag, second edition, 1998.

\bibitem[Mi]{Mi}  B.~D.~Miller, Ends of graphed equivalence relations, I, \emph{Israel J.\ Math.}\ \textbf{169}~(2009), 375--392.

\bibitem[Mi2]{Mi2}  B.~D.~Miller, Ends, graphings and equivalence relations, unpublished notes.

\bibitem[N]{N}  M.~G.~Nadkarni, On the existence of a finite invariant measure, \emph{Proc.\ Indian Acad.\ Sci.\ Math.\ Sci.}\ \textbf{100}~(1991), 203-220.

\bibitem[OW]{OW}  D.~Ornstein and B.~Weiss, Ergodic theory of amenable group actions.  I.\ The Rohlin lemma, \emph{Bull.\ Amer.\ Math.\ Soc.}\ \textbf{2(1)}~(1980), 161--164.

\bibitem[SW]{SW}  K.~Schmidt and P.~Walters, Mildly mixing actions of locally compact groups, \emph{Proc.\ London Math.\ Soc.}\ \textbf{3(3)}~(1982), 506--518.

\bibitem[SS]{SS}  S.~Schneider and B.~Seward, Locally nilpotent groups and hyperfinite equivalence relations, preprint, \url{http://arxiv.org/abs/1308.5853}, 2013.

\bibitem[ST]{ST}  B.~Seward and R.~D.~Tucker-Drob, Borel structurability on the 2-shift of a countable group, \emph{Ann.\ Pure Appl.\ Logic} \textbf{167(1)}~(2016), 1--21.

\bibitem[Sik]{Sik}  R.~Sikorski, \emph{Boolean algebras}, Springer-Verlag, third edition, 1969.

\bibitem[T]{T}  S.~Thomas, Some applications of superrigidity to Borel equivalence relations, \emph{Set Theory}, S.~Thomas (Ed.), DIMACS Series, \textbf{58}, Amer.\ Math.\ Soc., 129--134, 2002.

\bibitem[T2]{T2}  S.~Thomas, Popa superrigidity and countable Borel equivalence relations, \emph{Ann.\ Pure Appl.\ Logic} \textbf{158(3)}~(2009), 175--189.

\bibitem[T3]{T3}  S.~Thomas, Universal Borel actions of countable groups, \emph{Groups Geom. Dyn.}\ \textbf{6}~(2012), 389--407.


\end{thebibliography}
\end{document}